\theoremstyle{change}
\newcommand{\R}{\mathbb{R}}
\newcommand{\C}{\mathbb{C}}
\newcommand{\Z}{\mathbb{Z}}
\newcommand{\Q}{\mathbb{Q}}
\newcommand{\q}{\mathcal{Q}}
\renewcommand{\S}{\mathcal{S}}
\newcommand{\A}{\mathbb{A}}
\newcommand{\bs}{\backslash}
\newcommand{\SH}{\mathfrak h}
\newcommand{\HH}{\mathbb{H}}
\newcommand{\St}{\mathrm{St}}
\newcommand{\GL}{\mathrm{GL}}
\newcommand{\PGL}{\mathrm{PGL}}
\newcommand{\GU}{\mathrm{GU}}
\newcommand{\U}{\mathrm{U}}
\newcommand{\SL}{\mathrm{SL}}
\newcommand{\SO}{\mathrm{SO}}
\newcommand{\SU}{\mathrm{SU}}
\renewcommand{\O}{\mathrm{O}}
\newcommand{\Sp}{\mathrm{Sp}}
\newcommand{\GSp}{\mathrm{GSp}}
\newcommand{\PGSp}{\mathrm{PGSp}}
\newcommand{\Res}{\mathrm{Res}}
\newcommand{\SSp}{\mathrm{Sp}}
\newcommand{\OF}{\mathfrak o}
\newcommand{\tr}{{\rm tr}}
\newcommand{\tl}{\widetilde l}
\renewcommand{\i}{\iota}
\newcommand{\G}{{G'_3}}
\newcommand{\p}{\mathfrak p}
\newcommand{\Aut}{\mathrm {Aut}}
\renewcommand{\P}{\mathfrak P}
\newcommand{\AI}{\mathcal{AI}}
\newcommand{\mat}[4]{{\setlength{\arraycolsep}{0.5mm}\left[
\begin{array}{cc}#1&#2\\#3&#4\end{array}\right]}}
\newcommand{\qed}{\hspace*{\fill}\rule{1ex}{1ex}}
\def\qdots{\mathinner{\mkern1mu\raise0pt\vbox{\kern7pt\hbox{.}}\mkern2mu
\raise3.4pt\hbox{.}\mkern2mu\raise7pt\hbox{.}\mkern1mu}}
\newcommand{\tR}{\widetilde R}
\newcommand{\Mat}{\text{Mat}}
\renewcommand{\St}{{\rm St}}
\renewcommand{\Re}{{\mathrm{Re}}}
\newenvironment{proof}{\vspace{1ex}\noindent{\bf Proof.}\hspace{0.5em}}
	{\hfill\qed\vspace{2ex}}
\newtheorem{thm}{Theorem}[subsection]
\newtheorem{theorem}[thm]{Theorem.}
\newtheorem{lemma}[thm]{Lemma.}
\newtheorem{corollary}[thm]{Corollary.}
\newtheorem{proposition}[thm]{Proposition.}
\newtheorem{rem}[thm]{Remark.}
\begin{document}

%

\begin{center}
{\Large Transfer of Siegel cusp forms of degree $2$}

\vspace{2ex}
Ameya Pitale\footnote{{\tt apitale@math.ou.edu}\qquad $^2${\tt abhishek.saha@bris.ac.uk}\qquad $^3${\tt rschmidt@math.ou.edu}\\[0.5ex] \phantom{xxx}MSC: 11F70, 11F46, 11F67}, Abhishek Saha$^2$, Ralf Schmidt$^3$


\vspace{3ex}
\begin{minipage}{80ex}
 \small
 {\sc Abstract.} Let $\pi$ be the automorphic representation of $\GSp_4(\A)$ generated by a full level cuspidal Siegel eigenform that is not a Saito-Kurokawa lift, and $\tau$ be an arbitrary cuspidal, automorphic representation of $\GL_2(\A)$. Using Furusawa's integral representation for $\GSp_4\times\GL_2$ combined with a pullback formula involving the unitary group $\GU(3,3)$, we prove that the $L$-functions $L(s,\pi\times\tau)$ are ``nice''. The converse theorem of Cogdell and Piatetski-Shapiro then implies that such representations $\pi$ have a functorial lifting to a cuspidal representation of $\GL_4(\A)$. Combined with the exterior-square lifting of Kim, this also leads to a functorial lifting of $\pi$ to a cuspidal representation of $\GL_5(\A)$. As an application, we obtain analytic properties of various $L$-functions related to full level Siegel cusp forms. We also obtain special value results for $\GSp_4\times\GL_1$ and $\GSp_4\times\GL_2$.
\end{minipage}

\end{center}

\tableofcontents

\section*{Introduction}
\addcontentsline{toc}{section}{Introduction}

   We will start by giving some general background on Siegel modular forms and automorphic representations and then go on to explain the contents of this work.

\vspace{3ex}
{\bf Siegel modular forms}

\vspace{2ex}
Classical elliptic modular forms, which are holomorphic functions on the complex upper half plane with certain transformation properties, can be generalized in various directions. One such generalization is the theory of \emph{Siegel modular forms}, which includes the elliptic case as the \emph{degree one} case. General references for Siegel modular forms are \cite{Freitag} and \cite{Klingen}. Just as in the elliptic case, Siegel modular forms come with a \emph{weight} and a \emph{level}. Parts of the theory generalize to the Siegel case in a straightforward way. For example, the space of Siegel modular forms of fixed weight and level is finite-dimensional. Siegel modular forms have Fourier expansions similar to that of elliptic modular forms.  Also, there is a theory of Hecke operators, hence a notion of eigenform, and each Siegel modular form is a linear combination of eigenforms. The most interesting Siegel modular forms are the \emph{cusp forms}, characterized by the vanishing of certain Fourier coefficients.

\vspace{3ex}
Beyond the elliptic case, much work has concentrated on Siegel modular forms of \emph{degree two}. For example, Igusa wrote two famous papers \cite{Igusa1962} and \cite{Igusa1964} in the 1960's, where he determined the structure of the ring of Siegel modular forms of degree two with respect to the full modular group $\SSp_4(\Z)$ (in analogy to the statement that the ring of modular forms with respect to $\SL_2(\Z)$ is generated by the algebraically independent Eisenstein series of weight $4$ and weight $6$). Another milestone came about a decade later, when Andrianov \cite{An1974} associated a degree-$4$ Euler product $L(s,F,{\rm spin})$, now known as the \emph{spin $L$-function}, to a Siegel modular form $F$ with respect to $\SSp_4(\Z)$ (assumed to be an eigenform for all Hecke operators) and proved its basic analytic properties: meromorphic continuation, functional equation, and control over the possible poles.

\vspace{3ex}
A few years after this, Saito and Kurokawa \cite{Ku1978} independently discovered the existence of degree two Siegel modular forms that violated the naive generalization of the Ramanujan conjecture: the statement that the roots of the Hecke polynomials in the denominator of the spin $L$-function of cusp forms have absolute value $1$. In a series of papers \cite{Maass1}, \cite{Maass2}, \cite{Maass3}, \cite{An1979}, \cite{Za1981} Maass, Andrianov and Zagier showed that such Siegel modular forms are precisely those that ``come from'' elliptic modular forms.\footnote{There exist  cuspidal automorphic representations of $\Sp_4(\A)$ (and of $\GSp_4(\A)$) which violate the
Ramanujan conjecture but are \emph{not} lifts from elliptic modular forms, such as the examples constructed by Howe and Piatetski-Shapiro \cite{howe-ps}. Such representations, however, cannot arise from holomorphic Siegel modular forms (of any level) with weight $k>2$; see Corollary 4.5 of \cite{PS2}.} More precisely, there is a construction, now known as the \emph{Saito-Kurokawa lifting}, which associates to an elliptic eigenform $f$ for $\SL_2(\Z)$ a Siegel eigenform $F$ for $\SSp_4(\Z)$, such that the spin $L$-function of $F$ is the product of the Hecke $L$-function of $f$ times two zeta factors. More precisely, with appropriate normalization and disregarding archimedean factors,
\begin{equation}\label{SKLfunctioneq}
 L(s,F,{\rm spin})=L(s,f)\zeta(s+1/2)\zeta(s-1/2).
\end{equation}
The zeta factors produce poles, and in fact Evdokimov \cite{Ev1980} and Oda \cite{Oda1981} have shown that the existence of a pole in the spin $L$-function characterizes Saito-Kurokawa liftings. The book \cite{eichzag} by Eichler and Zagier gives a streamlined account of the construction of Saito-Kurokawa lifts via the theory of Jacobi forms. If a holomorphic eigenform for $\SSp_4(\Z)$ is not Saito-Kurokawa, then in fact it satisfies the Ramanujan conjecture. This was proved by Weissauer \cite{weissram}.

\vspace{3ex}
Many questions that have been answered in the elliptic case remain open for higher degree Siegel modular forms. For example, there is as of yet no good theory of old- and newforms resembling the classical theory of Atkin and Lehner. Questions of level aside, even the case of Siegel modular forms with respect to the full modular group has many challenges remaining. For instance, it is not yet known if a Siegel modular form for $\SSp_4(\Z)$, assumed to be an eigenform for all Hecke operators, is determined by its Hecke eigenvalues (a statement known as \emph{multiplicity one}). This has to do with the difficulty of relating Hecke eigenvalues and Fourier coefficients (unlike in the elliptic case, where the Hecke eigenvalues \emph{are} the Fourier coefficients). Indeed, the Fourier coefficients of Siegel cusp forms of degree 2 are mysterious arithmetic objects, which are conjecturally related to central $L$-values of twisted spin $L$-functions; see~\cite{furusawa-shalika-fund} for a good discussion. The precise form of this relationship is known as \emph{B\"ocherer's conjecture}, and one of us has observed \cite{Sa2012} that a version of this conjecture implies multiplicity one.

\vspace{3ex}
{\bf Automorphic representations}

\vspace{2ex}
It is well known that the theory of elliptic modular forms embeds into the more general theory of automorphic forms on the group $\GL_2(\A)$. Here, $\A$ denotes the ring of adeles of the number field $\Q$. The details of this process are explained in \cite{Bu} and \cite{Ge1975}, and are roughly as follows. Given an elliptic cusp form $f$ of weight $k$ and level $N$ that is an eigenfunction of the Hecke operators $T(p)$ for all but finitely many primes $p$, one may associate to $f$ a complex-valued function $\Phi_f$ on $\GL_2(\A)$ satisfying certain invariance properties. In particular, $\Phi_f$ is left-invariant under the group of rational points $\GL_2(\Q)$, right invariant under a compact-open subgroup of the finite adeles depending on $N$, and transforms according to the character $e^{2\pi ikx}$ of the group $\SO(2)\cong\R/\Z$ at the archimedean place. Let $V$ be the space of functions spanned by right translates of $\Phi_f$. Then $V$ carries a representation of the group $\GL_2(\A)$. We denote this representation by $\pi_f$ and call it the \emph{automorphic representation attached to $f$}. Using the hypothesis that $f$ is an eigenform, one can prove that $\pi_f$ is irreducible. Like any irreducible representation of $\GL_2(\A)$, it factors as a restricted tensor product $\otimes_v\pi_v$, where $\pi_v$ is an irreducible, admissible representation of the local group $\GL_2(\Q_v)$. The product extends over all places $v$ of $\Q$, and for $v=\infty$ we understand that $\Q_v=\R$. The original modular form $f$, or rather its adelic version $\Phi_f$, can be recovered as a special vector in the representation $\pi$. In fact, if $f$ is a newform, then $\Phi_f$ is a pure tensor $\otimes_v \phi_v$, where each $\phi_p$ is a \emph{local newform} in $\pi_p$ for finite $p$, and $\phi_\infty$ is a lowest weight vector in $\pi_\infty$, a discrete series representation.

\vspace{3ex}
This procedure generalizes to Siegel modular forms $F$ of degree $n$ for the full modular group. The group $\GL_2$ is to be replaced by the symplectic similitude group $\GSp_{2n}$; see \cite{ASch} for details. The resulting representation $\pi_\infty$ of $\GSp_{2n}(\R)$ is a holomorphic discrete series representation with scalar minimal $K$-type determined by the weight of $F$. Unfortunately, for Siegel modular forms with level, the corresponding procedure is not quite as nice, due to the lack of a good theory of \emph{local newforms} for representations of the group $\GSp_{2n}(\Q_p)$, and our lack of knowledge of global multiplicity one. However, see~\cite{sahayosh} for a treatment of  adelization for Siegel cusp forms of arbitrary level that suffices for many applications.

\vspace{3ex}
Once a modular form is realized as a special vector in an automorphic representation, can the considerable machinery available for such representations be used to gain new insights into the classical theory? Sometimes, the answer is yes. For example,  a method of Langlands, formulated for automorphic representations, was used in \cite{ASch} to prove that the spin $L$-functions of Siegel modular forms of degree three have meromorphic continuation to the entire complex plane. As an example in the elliptic modular forms case, one might use the Langlands-Shahidi method to deduce analytic properties of several \emph{symmetric power $L$-functions} $L(s,f,{\rm sym}^n)$ attached to an elliptic cuspidal eigenform $f$.

\vspace{3ex}
There is however a serious limiting factor to the applicability of automorphic methods to Siegel modular forms of higher degree. Namely, if $F$ is an eigen-cuspform of degree $n>1$, assumed to be of full level for simplicity, then the associated automorphic representation $\pi_F$ of $\GSp_{2n}(\A)$ is \emph{non-generic} (meaning it has no Whittaker model). The obstruction comes from the archimedean place: If $\pi_F=\otimes_v\pi_v$, then the archimedean component $\pi_\infty$, which is a holomorphic discrete series representation, is not generic. If $\pi_F$ were generic, one could, for example, apply the Langlands-Shahidi method, and immediately obtain the analytic properties of a series of $L$-functions. Also, at least for the degree two case, questions of multiplicity one could be answered immediately; see \cite{JiSo2007}.

\vspace{3ex}
{\bf Functoriality}

\vspace{2ex}
Langlands' principle of functoriality, a central conjecture in the theory of automorphic representations, describes the relationships between automorphic objects living on two different algebraic groups. More precisely, let $G$ and $H$ be reductive, algebraic groups, which for simplicity we assume to be defined over $\Q$ and split. Attached to $G$ and $H$ are \emph{dual groups} $\hat G$ and $\hat H$, which are complex reductive Lie groups whose root systems are dual to those of $G$ and $H$, respectively. Then, according to the principle, every homomorphism of Lie groups $\hat G\rightarrow\hat H$ should give rise to a ``lifting'', or ``transfer'', of automorphic representations of $G(\A)$ to automorphic representations of $H(\A)$. For example, in \cite{GeJa1978}, Gelbart and Jacquet proved the existence of the \emph{symmetric square lifting} for $G=\GL_2$ to $H=\GL_3$. Here, $\hat G=\GL_2(\C)$ and $\hat H=\GL_3(\C)$, and $\hat G\rightarrow\hat H$ is an irreducible three-dimensional representation of $\hat G$. A more recent example, and one that we will use in this work, is Kim's exterior square lifting \cite{kim} from $G=\GL_4$ to $H=\GL_6$. Here, $\hat G=\GL_4(\C)$, $\hat H=\GL_6(\C)$, and $\hat G\rightarrow\hat H$ is the irreducible six-dimensional representation of $\hat G$ given as the exterior square of the four-dimensional standard representation.

\vspace{3ex}
What exactly qualifies as a ``lifting'' is often formulated in terms of the $L$-functions attached to automorphic representations. Let $G$ be as above, and let $\pi=\otimes_v\pi_v$ be an automorphic representation of $G(\A)$. As additional ingredient we need a finite-dimensional complex representation $\rho$ of $\hat G$. Attached to this data is an Euler product
$$
 L(s,\pi,\rho)=\prod_vL(s,\pi_v,\rho),
$$
where $s$ is a complex parameter. We ignore for a moment the fact that in many situations the local factors $L(s,\pi_v,\rho)$ may not be known; at least for almost all primes the factors \emph{are} known, and are of the form $Q(p^{-s})^{-1}$, where $Q(X)$ is a polynomial of degree equal to the dimension of $\rho$ and satisfying $Q(0)=1$. It is also known that the product converges in some right half plane. If $\rho$ is a natural ``standard'' representation, it is often omitted from the notation. Now if $H$ is a second group, and if $\varphi:\hat G\rightarrow\hat H$ is a homomorphism of Lie groups, then the associated ``lifting'' maps automorphic representations $\pi=\otimes\pi_v$ of $G(\A)$ to automorphic representations $\Pi=\otimes\Pi_v$ of $H(\A)$ in such a way that
\begin{equation}\label{functorialityLeq}
 L(s,\Pi,\rho)=L(s,\pi,\rho\circ\varphi)
\end{equation}
for all finite-dimensional representations $\rho$ of $\hat H$. Sometimes one can only prove that the Euler products coincide for almost all primes, in which case one may speak of a \emph{weak lifting}. For example, in \cite{kim}, Kim proved the equality of the relevant Euler products for the exterior square lifting at all primes outside $2$ and $3$. Later, Henniart \cite{He2009} showed the equality for the remaining primes, proving that Kim's lifting is in fact \emph{strong}.

\vspace{3ex}It seems worthwhile to emphasize here that each instance of lifting discovered so far has had numerous applications to number theory. Functoriality is a magic wand that forces additional constraints on the automorphic representations being lifted and allows us to prove various desirable local and global properties for them. To give just one example, Kim's \emph{symmetric fourth lifting} \cite{kim} from $\GL_2$ to $\GL_5$ has provided the best known bound towards the Ramanujan conjecture for cuspidal automorphic representations of $\GL_2$. By using these bounds for the case of $\GL_2$ over a totally real field, Cogdell, Piatetski-Shapiro and Sarnak \cite{CPSS} were able to confirm the last remaining
case of Hilbert's eleventh problem.

\vspace{3ex}
We mention that the Saito-Kurokawa lifting also fits into the framework of Langlands functoriality. Recall that this lifting maps elliptic modular forms to Siegel modular forms of degree $2$, so one would expect the relevant groups to be $G=\GL_2$ and $H=\GSp_4$, or rather, since all representations involved have trivial central character, the projective groups $G=\PGL_2$ and $H=\PGSp_4$. But in fact, one should really take $G=\PGL_2\times\PGL_2$; see \cite{La1979} and \cite{Sc2005}. The associated dual groups are $\hat G=\SL_2(\C)\times\SL_2(\C)$ and $\hat H=\SSp_4(\C)$, and the homomorphism of dual groups is given by
\begin{equation}\label{SKfunceq}
 \mat{a}{b}{c}{d},\mat{a'}{b'}{c'}{d'}\longmapsto\begin{bmatrix}a&&b\\&a'&&b'\\c&&d\\&c'&&d'\end{bmatrix}.
\end{equation}
The first factor $\PGL_2$ carries the representation $\pi_f$ associated to an elliptic eigenform $f$. The second, ``hidden'' factor $\PGL_2$ carries an \emph{anomalous} representation $\pi_{\rm an}$, which is a certain constituent of a parabolically induced representation. It is clear from \eqref{SKfunceq} that the lifting $\Pi$ of the pair $(\pi_f,\pi_{\rm an})$ 
has the property $L(s,\Pi)=L(s,\pi)L(s,\pi_{\rm an})$. Looking only at finite places, this identity is precisely the equality \eqref{SKLfunctioneq}. Hence, it is the presence of the anomalous representation $\pi_{\rm an}$ that accounts for the pole in the $L$-function, and the violation of the Ramanujan conjecture. Inside $\Pi$ one can find (the adelization of) the Siegel modular form $F$ that is the Saito-Kurokawa lifting of $f$.

\vspace{3ex}
{\bf The transfer of Siegel modular forms to $\GL_4$ and $\GL_5$}

\vspace{2ex}
Again we consider Siegel modular forms of degree $2$, restricting our attention to cusp forms and full level. If $F$ is an eigenform, we can attach to it a cuspidal, automorphic representation $\pi_F=\otimes_v \pi_v$ of $\GSp_4(\A)$. Now, the dual group of $\GSp_4$ is $\GSp_4(\C)$, which sits inside $\GL_4(\C)$. Interpreting the latter as the dual group of $\GL_4$, we see that the principle of functoriality predicts the existence of a lifting from $\GSp_4$ to $\GL_4$. In particular, we should be able to lift our modular form $F$ (or rather $\pi_F$) to an automorphic representation $\Pi$ of $\GL_4(\A)$. If $F$ is of Saito-Kurokawa type, it is obvious, but not very exciting, how to construct the lifting; the result will be a representation globally induced from the $(2,2)$-parabolic of $\GL_4(\A)$. In particular, the lifting is not cuspidal. It is much more intricate to lift non-Saito-Kurokawa forms; this is, in fact, the main theme of this work.

\vspace{3ex}
 Let us move our focus away from $\pi_F$ and consider for a moment all cuspidal representations of $\GSp_4(\A)$. What is the current status of the lifting from $\GSp_4$ to $\GL_4$ predicted by Langlands functoriality? In \cite{AS} Asgari and Shahidi have achieved the lifting for all \emph{generic} cuspidal, automorphic representations of $\GSp_4(\A)$. The reason for the restriction to generic representations lies in their use of the Langlands-Shahidi method. As emphasized already, Siegel cusp forms correspond to non-generic representations; so this method cannot be used to lift them.

\vspace{3ex}
Another commonly used tool to prove functoriality is the trace formula. Trace formula methods have the potential to prove the existence of liftings for \emph{all} automorphic representations. This method has been much developed by Arthur, but for specific situations is still subject to various versions of the \emph{fundamental lemma}. At the time of this writing it is unclear to the authors whether all the necessary ingredients for the lifting from $\GSp_4$ to $\GL_4$ are in place. Certainly, a complete proof is not yet published.

\vspace{3ex}
\emph{In this work we use the Converse Theorem to prove that full-level Siegel cusp forms of degree two can be lifted to $\GL_4$.} To the best of our knowledge, the Converse Theorem has not been used before to prove functorial transfer for a non-generic representation on a quasi-split group. Given $F$ and $\pi_F$ as above, it is easy enough to predict what the lifting $\Pi=\otimes\Pi_v$ to $\GL_4$ should be. In fact, we can \emph{define} $\Pi_v$, which is an irreducible, admissible representation of $\GL_4(\Q_v)$, for all places $v$ in such a way that the required lifting condition \eqref{functorialityLeq} is automatically satisfied. The only question is then: Is the representation $\Pi$ of $\GL_4(\A)$ thus defined automorphic? This is the kind of question the Converse Theorem is designed to answer. According to the version of the Converse Theorem given in \cite{CPS}, the answer is affirmative if the twisted (Rankin-Selberg) $L$-functions $L(s,\Pi\times\tau)$ are ``nice'' for \emph{all} automorphic representations $\tau$ of $\GL_2(\A)$, or alternatively, for all \emph{cuspidal} automorphic representations of $\GL_2(\A)$ and $\GL_1(\A)$. The $\GL_1$ twists are not a serious problem, so our main task will be to prove niceness for the $\GL_2$ twists. Recall that ``nice'' means the $L$-functions can be analytically continued to entire functions, satisfy a functional equation, and are bounded in vertical strips.

\vspace{3ex}
Once we establish the transfer of $\pi_F$ to $\GL_4$, we will go one step further and lift $\pi_F$ to the group $\GL_5$ as well. Recall that $\pi_F$ is really a representation of the projective group $\PGSp_4(\A)$, the dual group of which is $\SSp_4(\C)$. The lifting to $\GL_4$ comes from the inclusion $\SSp_4(\C)\rightarrow\GL_4(\C)$, or, in other words, the natural representation $\rho_4$ of $\SSp_4(\C)$ on $\C^4$. The ``next'' irreducible representation of $\SSp_4(\C)$ is five-dimensional, and we denote it by $\rho_5$. Interpreting $\rho_5$ as a homomorphism of dual groups $\SSp_4(\C)\rightarrow\GL_5(\C)$, the principle of functoriality predicts the existence of a lifting from $\PGSp_4$ to $\GL_5$. Using Kim's exterior square lifting \cite{kim}, one can in fact show that the transfer of $\pi_F$ to $\GL_5$ exists. To summarize, we will prove the following lifting theorem.

\vspace{2ex}
{\bf Theorem A:} \emph{Let $F$ be a cuspidal Siegel modular form of degree $2$ with respect to $\SSp_4(\Z)$. Assume that $F$ is an eigenform for all Hecke operators, and not of Saito-Kurokawa type. Let $\pi_F$ be the associated cuspidal, automorphic representation of $\GSp_4(\A)$. Then $\pi_F$ admits a strong lifting to an automorphic representation $\Pi_4$ of $\GL_4(\A)$, and a strong lifting to an automorphic representation $\Pi_5$ of $\GL_5(\A)$. Both $\Pi_4$ and $\Pi_5$ are cuspidal.}

\vspace{3ex}
For more precise statements of these results, see Theorem \ref{liftingtheorem} and Theorem \ref{liftingGL5theorem}.

\vspace{3ex}
{\bf Bessel models}

\vspace{2ex}
We have yet to explain how to prove ``niceness'' for the $L$-functions relevant for the Converse Theorem. Before doing so, let us digress and explain the important notion of \emph{Bessel model} for representations of $\GSp_4$. These models can serve as a substitute for the often missing Whittaker models. We start by explaining the local, non-archimedean notion of Bessel model. Thus, let $F$ be a $p$-adic field. We fix a non-trivial character $\psi$ of $F$. Recall that the \emph{Siegel parabolic subgroup} of $\GSp_4$ is the standard maximal parabolic subgroup whose radical $U$ is abelian. Let $S$ be a non-degenerate, symmetric matrix with coefficients in $F$. Then $S$ defines a character $\theta$ of $U(F)$ via
$$
 \theta(\mat{1}{X}{}{1})=\psi({\rm tr}(SX)).
$$
Let $T$ be the identity component of the subgroup of the Levi component of the Siegel parabolic fixing $\theta$. Hence, the elements $t$ of $T(F)$ satisfy $\theta(tut^{-1})=\theta(u)$ for all $u\in U(F)$. The group $T$ turns out to be abelian. In fact, it is a two-dimensional torus which is split exactly if $-\det(S)$ is a square in $F^\times$. The semidirect product $R=TU$ is called the \emph{Bessel subgroup} of $\GSp_4$ with respect to $\theta$. Every character $\Lambda$ of $T(F)$ gives rise to a character $\Lambda\otimes\theta$ of $R(F)$ via $(\Lambda\otimes\theta)(tu)=\Lambda(t)\theta(u)$.

\vspace{3ex}
Now, let $(\pi,V)$ be an irreducible, admissible representation of $\GSp_4(F)$. Let $\theta$ and $\Lambda$ be as above. We consider functionals $\beta:\:V\rightarrow\C$ with the property
$$
 \beta(\pi(tu)v)=\Lambda(t)\theta(u)\beta(v)
$$
for $t\in T(F)$, $u\in U(F)$ and $v\in V$. A non-zero such functional is called a \emph{$(\Lambda,\theta)$-Bessel functional} for $\pi$. It is known that, for fixed $\theta$ and $\Lambda$, there can be at most one such functional up to multiples; see \cite{NoPi1973}. It is also known that, unless $\pi$ is one-dimensional, there always exists a Bessel functional for \emph{some} choice of $\theta$ and $\Lambda$. If $\theta$ and $\Lambda$ are such that a $(\Lambda,\theta)$-Bessel functional for $\pi$ exists, then $\pi$ can be realized as a subspace of the space of functions $B:\:\GSp_4(F)\rightarrow\C$ with the transformation property
\begin{equation}\label{besseltranspropeq}
 B(tuh)=\Lambda(t)\theta(u)B(h)\qquad\text{for all }t\in T(F),\;u\in U(F),\;h\in\GSp_4(F),
\end{equation}
with the action of $\GSp_4(F)$ on this space given by right translation. Sugano \cite{Su} has determined the explicit formula in the above realization for the spherical vector in an unramified representation~$\pi$.

\vspace{3ex}
Similar definitions can be made, and similar statements hold, in the archimedean context. See \cite{PS} for explicit formulas for Bessel models for a class of lowest weight representations of $\GSp_4(\R)$. All we will need in this work are formulas for holomorphic discrete series representations with scalar minimal $K$-type. These have already been determined in \cite{Su}.

\vspace{3ex}
Next, consider global Bessel models. Given $S$ as above but with entries in $\Q$, we obtain a character $\theta$ of $U(\A)$ via a fixed non-trivial character $\psi$ of $\Q\backslash\A$. The resulting torus $T$ can be adelized, and is then isomorphic to the group of ideles $\A_L^\times$ of a quadratic extension $L$ of $\Q$. We assume that $-\det(S)$ is not a square in $\Q^\times$, so that $L$ is a field (and not isomorphic to $\Q\oplus\Q$). Let $\pi=\otimes\pi_v$ be a cuspidal, automorphic representation of $\GSp_4(\A)$, and let $V$ be the space of automorphic forms realizing $\pi$. Assume that a Hecke character $\Lambda$ of $\A_L^\times$ is chosen such that the restriction of $\Lambda$ to $\A^\times$ coincides with the central character of $\pi$. For each $\phi\in V$ consider the corresponding Bessel function
\begin{equation}\label{Bphieq2}
 B_\phi(g)=\int\limits_{Z_H(\A)R(\Q)\backslash R(\A)}
 (\Lambda\otimes\theta)(r)^{-1}\phi(rg)\,dr,
\end{equation}
where $Z_H$ is the center of $H:=\GSp_4$. If these integrals are non-zero, then we obtain a model of $\pi$ consisting of functions on $\GSp_4(\A)$ with a left transformation property analogous to \eqref{besseltranspropeq}. In this case, we say that $\pi$ has a \emph{global Bessel model} of type $(S,\Lambda,\psi)$. It implies that the corresponding \emph{local} Bessel models exist for all places $v$ of $\Q$. The local uniqueness of Bessel models implies global uniqueness. However, if we are given $\pi=\otimes\pi_v$ and some triple $(S,\Lambda,\psi)$ such that $\pi_v$ has a local Bessel model of type $(S_v,\Lambda_v,\psi_v)$ for each place $v$, then it does \emph{not} necessarily imply that $\pi$ has a global Bessel model of type $(S,\Lambda,\psi)$. Indeed, conjecturally, a global Bessel model exists in the above case if and only if a certain central $L$-value is non-vanishing; see~\cite{prasadbighash} for a discussion of this point.

\vspace{3ex}
So, what can be said about the \emph{existence} of global Bessel models for those automorphic representations coming from Siegel modular forms? Assume that $\pi_F=\otimes\pi_v$ is attached to a full level Siegel cusp form, as above. Then $\pi_p$, for each prime $p$, is a spherical representation. Such representations admit Bessel models for which the character $\Lambda_p$ is unramified. We would like our global Bessel data to be as unramified as possible. So the question arises, can we find a global Bessel model for which the Hecke character $\Lambda$ is unramified \emph{everywhere}? Not only that, we would like $L$ to be an \emph{imaginary} quadratic extension, and would like the archimedean component of $\Lambda$, which is a character of $\C^\times$, to be trivial. The existence of such a $\Lambda$ turns out to be related to the non-vanishing of certain Fourier coefficients of $F$; see Lemma \ref{piFBesselFourierlemma} for a precise statement. Using analytic methods and half-integral weight modular forms, the second author has recently proved \cite{squarefree} that the required non-vanishing condition is always satisfied. Hence, a particularly nice global Bessel model always exists for $\pi_F$. This removes assumption (0.1) of \cite{Fu}, and will make our results hold unconditionally for all cuspidal Siegel eigenforms of full level.

\vspace{3ex}
{\bf Furusawa's integrals}

\vspace{2ex}
We now return from the world of Bessel models to the problem of  proving ``niceness'' for the $L$-functions relevant for the Converse Theorem. Recall that, in order to apply the Converse Theorem for $\GL_4$, the essential task is to control the Rankin-Selberg $L$-functions $L(s,\Pi\times\tau)$, where $\Pi$ is the predicted transfer to $\GL_4(\A)$, and $\tau$ is an arbitrary cuspidal, automorphic representation of $\GL_2(\A)$. By the very definition of $\Pi$, this $L$-function equals $L(s,\pi\times\tau)$, where $\pi=\pi_F$ is the cuspidal representation of $\GSp_4(\A)$ attached to $F$. For this type of Rankin-Selberg product, Furusawa \cite{Fu} has pioneered an \emph{integral representation}, which we now explain. This integral representation involves an Eisenstein series on a unitary similitude group $\GU(2,2)$. Unitary groups are defined with respect to a quadratic extension $L$. Here, \emph{the appropriate quadratic field extension $L/\Q$ is the one coming from a global Bessel model for $\pi$}. Hence, given the cusp form $F$, we first find a particularly good Bessel model for $\pi$, with the Hecke character $\Lambda=\otimes\Lambda_v$ unramified everywhere, as explained above. The quadratic extension is then $L=\Q(\sqrt{-\det(S)})$. For the precise definition of $\GU(2,2)$ see \eqref{Gjdefinition}. Note that this group contains $\GSp_4$, which we henceforth abbreviate by $H$.

\vspace{3ex}
Let us next explain the Eisenstein series $E(h,s;f)$ appearing in Furusawa's integral representation. Eisenstein series come from sections in global parabolically induced representations. The relevant parabolic $P$ of $\GU(2,2)$ is the \emph{Klingen parabolic subgroup}, i.e., the maximal parabolic subgroup with non-abelian radical. There is a natural map from $\A_L^\times\times\A_L^\times\times\GL_2(\A)$ to the adelized Levi component of $P$. Therefore, via suitably chosen Hecke characters $\chi_0$ and $\chi$ of $\A_L^\times$, the $\GL_2(\A)$ cuspidal representation $\tau$ can be extented to a representation of the Levi component of $P(\A)$. Parabolic induction to all of $\GU(2,2)(\A)$ then yields representations $I(s,\chi,\chi_0,\tau)$, where $s$ is a complex parameter (see Sect.\ \ref{parabolicinductionsec} for details). The Eisenstein series is constructed from an analytic section $f$ in this family of induced representations via a familiar summation process; see \eqref{Edefeq}. By general theory, $E(h,s;f)$ is convergent in some right half plane, has meromorphic continuation to all of $\C$ and satisfies a functional equation.

\vspace{3ex}
With the Eisenstein series in place, Furusawa considers integrals of the form
\begin{equation}\label{globalintegraleq2}
 Z(s,f,\phi)=\int\limits_{H(\Q)Z_H(\A)\backslash H(\A)}E(h,s;f)\phi(h)\,dh.
\end{equation}
Here, as before, $Z_H$ denotes the center of $H=\GSp_4$. The function $\phi$ is a vector in the space of automorphic forms realizing $\pi$. Furusawa's ``basic identity'' (here equation (\ref{basicidentityeq})) shows that, if all the data are factorizable, then the integral $Z(s,f,\phi)$ is Eulerian, i.e., it factors into a product of local zeta integrals. More precisely, assume that $f=\otimes f_v$, with $f_v$ an analytic section in the local induced representation $I(s,\chi_v,\chi_{0,v},\tau_v)$. Assume also that $\phi=\otimes\phi_v$, a pure tensor in $\pi=\otimes\pi_v$. Then the local zeta integrals are of the form
\begin{equation}\label{localZseq2}
 Z(s,W_{f_v},B_{\phi_v})=\int\limits_{R(\Q_v)\backslash H(\Q_v)}W_{f_v}(\eta h,s)B_{\phi_v}(h)\,dh.
\end{equation}
Here, $\eta$ is a certain element in $\GU(2,2)(\Q_v)$ defined in \eqref{etadefeq}. The function $W_{f_v}$ is a Whittaker-type function depending on $f_v$, and $B_{\phi_v}$ is the vector corresponding to $\phi_v$ in the local $(\Lambda_v,\theta_v)$-Bessel model of $\pi_v$. We see how important it is that all the local Bessel models exist. We also see the usefulness of explicit formulas for $B_{\phi_v}$ in order to be able to evaluate the integrals \eqref{localZseq2}.

\vspace{3ex}
Furusawa has calculated the local integrals \eqref{localZseq2} in the non-archimedean case when all the local data is unramified. The result is
\begin{equation}\label{unramifiedfactorseq}
 Z(s,W_{f_p},B_{\phi_p})=\frac{L(3s+\frac 12, \tilde\pi_p \times \tilde\tau_p)}
  {L(6s+1,\chi_p|_{\Q_p^\times})L(3s+1,\tau_p \times \AI(\Lambda_p) \times \chi_p|_{\Q_p^\times})},
\end{equation}
where $\tilde\pi$ and $\tilde\tau$ are the contragredient representations, and where $\chi_p$ and $\Lambda_p$ are the local components of the Hecke characters $\chi$ and $\Lambda$ mentioned above. The symbol $\mathcal{AI}$ denotes automorphic induction; thus, the second $L$-factor in the denominator is a factor for $\GL_2\times\GL_2$. By taking the product of~\eqref{unramifiedfactorseq} over all unramified places, it follows that the quantity $Z(s,f,\phi)$ given by the integral (\ref{globalintegraleq2}) is essentially equal to the global $L$-function $L(3s +\frac12,\widetilde{\pi}\times\widetilde{\tau})$ divided by some well-understood global $L$-functions for $\GL_1$ and $\GL_2 \times \GL_2$ (here, ``essentially" means that we ignore a finite number of local factors corresponding to the ramified places). Consequently, if we can control the local factors at these bad (ramified) places, the integral (\ref{globalintegraleq2}) can be used to study $L(s,\pi\times\tau)$. In the end, $L(s,\pi\times\tau)$ will inherit analytic properties, like meromorphic continuation and functional equation, from the Eisenstein series appearing in \eqref{globalintegraleq2}. This is the essence of the method of integral representations.

\vspace{3ex}
{\bf The art of choosing distinguished vectors in local representations}

\vspace{2ex}

Recall that the identity \eqref{unramifiedfactorseq} for the local zeta integrals holds only if \emph{all} the local ingredients are unramified, including $\pi_p$, $\tau_p$, $\Lambda_p$ and $\chi_p$. The representations $\pi_p$ are always unramified since the modular form $F$ has full level. The character $\Lambda_p$ is also unramified by our choice of Bessel model. But, in order to apply the Converse Theorem, we need to be able to twist by arbitrary $\GL_2$ representations, meaning that $\tau_p$ could be any irreducible, admissible, infinite-dimensional representation of $\GL_2(\Q_p)$.

\vspace{3ex}
There is a natural choice for the function $\phi$ appearing in \eqref{globalintegraleq2}, namely, the adelization of the modular form $F$. The local vectors $\phi_p$ are then unramified at each finite place, and a lowest weight vector at the archimedean place. By the discussion at the end of the previous subsection, the quantity $Z(s,f,\phi)$  is equal to a ratio of global $L$-functions up to a finite number of factors coming from the bad places. We need to be able to explicitly evaluate these bad factors, and in particular, make sure that the local zeta integrals are all non-zero. This is where the correct choice of local data entering the zeta integrals becomes very important. In short, for each place $v$ where $\tau_v$ is not an unramified principal series, we have to make a choice of local section $f_v$ defining the Eisenstein series, and a wrong choice of $f_v$ may lead to integrals that are not computable, or worse, that are zero. There are in fact two important requirements that $f_v$ must satisfy:
\begin{enumerate}
 \item $f_v$ must be such that the local zeta integral $Z(s,W_{f_v},B_{\phi_v})$ is non-zero and explicitly computable.
 \item $f_v$ should be uniquely characterized by \emph{right transformation properties}.
\end{enumerate}
This second requirement is important in view of the calculation of \emph{local intertwining operators}, which are essential for obtaining the functional equation of $L(s,\pi\times\tau)$. The local intertwining operators map each $f_v$ to a vector in a similar parabolically induced representation via an explicit integral. This integral involves left transformations of $f_v$, and hence preserves all right transformation properties. If $f_v$ is indeed characterized by its properties on the right, we know a priori that the result of applying an intertwining operator is the function analogous to $f_v$. By uniqueness, the intertwining operator can then be calculated by evaluating at a single point.

\vspace{3ex}
For a finite prime $p$, it turns out that the local induced representations admit a \emph{local newform theory}. This will be the topic of Sect.~\ref{distvecnonarchsec}. In particular, there is a distinguished vector in $I(s,\chi_p,\chi_{0,p},\tau_p)$, unique up to multiples and characterized by being invariant under a certain congruence subgroup. Suitably normalized, this vector is a good and natural choice for $f_p$.

\vspace{3ex}
The choice of $f_v$ for $v=\infty$ is rather intricate and is the topic of Sect.~\ref{distvecarchsec}. It comes down to finding a suitable function on $\GU(2,2)(\C)$ with certain transformation properties on the left and on the right. Moreover, one has to assure that this function is $K$-finite, where $K$ is the maximal compact subgroup of $\GU(2,2)(\C)$. We will cook up an appropriate function as a certain polynomial in matrix coefficients; see Proposition \ref{W0l1l2distvecprop}.

\vspace{3ex}
Having defined all the local sections in this way, it is then possible to calculate the local zeta integrals at all places. The result is a formula similar to \eqref{unramifiedfactorseq}, namely
\begin{equation}\label{ramifiedfactorseq}
 Z(s,W_{f_v},B_{\phi_v})=\frac{L(3s+\frac 12, \tilde\pi_v \times \tilde\tau_v)}
  {L(6s+1,\chi_v|_{\Q_v^\times})L(3s+1,\tau_v \times \AI(\Lambda_v) \times \chi_v|_{\Q_v^\times})}Y_v(s),
\end{equation}
with an explicitly given correction factor $Y_v(s)$. The details are given in Theorem \ref{nonarchlocalzetatheorem} for the non-archimedean case and Corollary \ref{archlocalzetatheoremcor} for the archimedean case.

\vspace{3ex}
We would like to remark that the kind of careful selection of distinguished local vectors as described above is quite typical when one wants to precisely understand automorphic representations at highly ramified places. We will have to play a similar game again later, when we prove the pullback formula.

\vspace{3ex}

{\bf The global integral representation}

\vspace{2ex}
Let us summarize what we have so far. We started with a cuspidal Siegel eigenform $F$ for $\SSp_4(\Z)$. Its adelization $\phi$ generates an irreducible, cuspidal, automorphic representation $\pi=\pi_F$ of $\GSp_4(\A)$. Using a non-vanishing theorem for the Fourier coefficients of $F$, we can find a particularly nice global Bessel model for $\pi$. The involved quadratic extension $L/\Q$ gives rise to a unitary group $\GU(2,2)$. For $\tau=\otimes\tau_p$ an arbitrary cuspidal, automorphic representation of $\GL_2(\A)$ and some auxiliary characters $\chi$ and $\chi_0$, we consider the representation $I(s,\chi,\chi_0,\tau)$ induced from the Klingen parabolic subgroup of $\GU(2,2)$. It is possible to choose sections $f_v$ in the local representations $I(s,\chi_v,\chi_{0,v},\tau_v)$, for \emph{all} places $v$, so that the identity \eqref{ramifiedfactorseq} holds with an explicit factor $Y_v(s)$. Via Furusawa's ``basic identity'', the product of all the local zeta integrals equals the integral $Z(s,f,\phi)$ in \eqref{globalintegraleq2}. Hence, we obtain the \emph{global integral representation}
\begin{equation}\label{globalintegralrepresentationtheoremeq1a}
  Z(s,f,\phi)=\frac{L(3s+\frac 12, \tilde\pi \times \tilde\tau)}
  {L(6s+1,\chi|_{\A^\times})L(3s+1,\tau \times \AI(\Lambda) \times \chi|_{\A^\times})}Y(s),
\end{equation}
with an explicitly known function $Y(s)$. At this stage we obtain our first result about $L(s,\pi\times\tau)$, namely, that this $L$-function has meromorphic continuation to all of $\C$. This is because the same is true for the Eisenstein series appearing in $Z(s,f,\phi)$, and for the other functions in \eqref{globalintegralrepresentationtheoremeq1a} as well.

\vspace{3ex}
The integral representation \eqref{globalintegralrepresentationtheoremeq1a} may also be used to prove the expected functional equation satisfied by $L(s,\pi\times\tau)$. Since the functional equations for the other $L$-functions in \eqref{globalintegralrepresentationtheoremeq1a} are known, all one needs is the functional equation of the Eisenstein series $E(h,s;f)$. This in turn comes down to a calculation of local intertwining operators, which we carry out in Sects.~\ref{inter-non-arch} (non-archimedean case) and \ref{inter-arch} (archimedean case). As already mentioned, the characterization of our local sections by right transformation properties means that the intertwining operators need to be evaluated only at one specific point. We caution however that this evaluation is \emph{very difficult}, and our description in Sects.~\ref{inter-non-arch} and \ref{inter-arch} is essentially an overview that hides the actual length of the calculations involved. The determination of the functional equation, given the results of the intertwining operator calculations, is carried out in Sect.~\ref{functleqsec}. The result is exactly as it should be:

\vspace{3ex}
{\bf Theorem B:} \emph{The $L$-function $L(s,\pi\times\tau)$ has meromorphic continuation to all of $\C$ and satisfies the functional equation
 \begin{equation}\label{functionalequationtheoremeq2a}
  L(s,\pi\times\tau)=\varepsilon(s,\pi\times\tau)L(1-s,\tilde\pi\times\tilde\tau),
 \end{equation}
where $\varepsilon(s,\pi\times\tau)$ is the global $\varepsilon$-factor attached to the representation $\pi\times\tau$ via the local Langlands correspondence at every place.}

\vspace{3ex}
In Theorem \ref{functionalequationtheorem} this result is actually obtained under a mild hypothesis on the ramification of $\tau$, which however will be removed later in Theorem \ref{Lrhonsigmaranalyticpropertiestheorem}.

\vspace{3ex}
{\bf The pullback formula}

\vspace{2ex}
As mentioned earlier, we need to prove that the $L$-functions $L(s,\pi\times\tau)$ are nice, i.e., they can be analytically continued to entire functions, satisfy a functional equation, and are bounded in vertical strips. So far, using the global integral representation \eqref{globalintegralrepresentationtheoremeq1a}, we have proved meromorphic continuation and functional equation for $L(s,\pi\times\tau)$. It turns out that boundedness in vertical strips follows from a general theorem of Gelbart and Lapid~\cite{GL} once entireness is known. So it all boils down to showing that $L(s,\pi\times\tau)$ has no poles anywhere in the complex plane. Unfortunately, the global integral representation \eqref{globalintegralrepresentationtheoremeq1a} cannot be directly used to control the poles of this $L$-function. The reason is that the analytic properties of the Klingen-type Eisenstein series $E(h,s;f)$ are not understood to the required extent.

\vspace{3ex}
To control the poles, we will prove a \emph{pullback formula} and express our Eisenstein series $E(h,s;f)$ as an integral of a $\GL_2$ automorphic form against a restricted Eisenstein series on a larger group $\U(3,3)$. Let us briefly describe the general philosophy behind pullback formulas. Let $G_1$, $G_2$ and $G_3$ be semisimple groups such that there is an embedding $G_1 \times G_2 \rightarrow G_3$. Suppose that we want to understand a complicated Eisenstein series $E(g_2, s; f)$ on $G_2$ for which the inducing data $f$ essentially comes from an automorphic representation $\sigma$ on $G_1$. Then, one can often find a simpler (degenerate) Eisenstein series $E(g,s; \Upsilon)$ on the larger group $G_3$ such that there is a precise formula of the form
\begin{equation}\label{pullbackintro}
 \int\limits_{G_1(\Q) \bs G_1(\A)} E((g_1, g_2), s; \Upsilon) \Psi(g_1) dg_1= T(s)  E(g_2, s; f)
\end{equation}
 where $\Psi$ is a suitable vector in the space of $\sigma$ and $T(s)$ is an explicitly determined correction factor.

\vspace{3ex}
Pullback formulas have a long history. Garrett \cite{Ga1983} used pullback formulas for Eisenstein series on symplectic groups to study the triple product $L$-function, as well as to establish the algebraicity of certain
ratios of inner products of Siegel modular forms. Pullback formulas for Eisenstein series on unitary groups were first proved in a classical setting by Shimura~\cite{shibook1}. Unfortunately, Shimura only considers certain special types of Eisenstein series in his work, which do not include ours except in the very specific case when the local data is unramified everywhere.

\vspace{3ex}
In Theorem~\ref{theorem-global-pullback} we prove a pullback formula in the form~\eqref{pullbackintro}  when $G_i = \U(i,i)$ (for $i=1,2,3$), $\sigma$ is essentially the representation $\chi_0 \times \tau$ and $E(g_2,s;f)$ is (the restriction from $\GU(2,2)$ to $\U(2,2)$ of) the Eisenstein series involved in~\eqref{globalintegraleq2}. This results in a \emph{second global integral representation} for $L(s,\pi\times\tau)$ involving $E((g_1, g_2), s; \Upsilon)$; see Theorem \ref{theoremsecondintegralrep}. Since $E((g_1, g_2), s; \Upsilon)$ is a degenerate Siegel type Eisenstein series on $\U(3,3)$, its analytic properties are better understood. Indeed, by the work of Tan \cite{Tan}, we deduce that $L(s,\pi\times\tau)$ has at most one possible pole  in ${\rm Re}(s)\geq1/2$, namely at the point $s=1$ (Proposition \ref{atmostonepoleprop}). The proof of holomorphy at this point requires additional arguments.

\vspace{3ex}
We have not yet discussed how one goes about proving a formula like~\eqref{pullbackintro}. There are two main ingredients involved. The first ingredient is combinatorial and involves the computation of a certain double coset space. In our case, this has already been done by Shimura~\cite{shibook1}; see the proof of Theorem~\ref{theorem-global-pullback}. The second ingredient is local and involves a careful choice of vectors in local representations. Indeed, the double coset computation reduces the task of proving the pullback formula to making a delicate choice for the local sections $\Upsilon_v$ at all archimedean and non-archimedean places, and then proving certain identities (``local pullback formulas") involving local zeta integrals. See Sect.\ \ref{pullbacknonarchsectionssec} for the definition of local sections  in the non-archimedean case(s) and Sect.\ \ref{pullbackarchsectionssec} for the definition in the archimedean case. The local zeta integrals are calculated in Sect.\ \ref{pullbacknonarchsec} (non-archimedean case) and Sect.\ \ref{pullbackarchsec} (archimedean case).

\vspace{3ex}
{\bf The Siegel-Weil formula, entireness, functoriality}

\vspace{2ex}
We have so far proved that $L(s,\pi\times\tau)$ has only one possible pole in ${\rm Re}(s)\geq1/2$, namely at the point $s=1$. In order to prove the holomorphy at this point, it suffices to show (because of the second integral representation) that the residue of the $\U(3,3)$ Eisenstein series $E((g_1, g_2), s; \Upsilon)$ at a relevant point $s_0$, when integrated against the adelization $\phi$ of our Siegel cusp form $F$, vanishes.

\vspace{3ex}
To do this, we employ the regularized Siegel-Weil formula for $\U(n,n)$ due to Ichino \cite{ich}, which asserts that this residue of $E((g_1, g_2), s; \Upsilon)$ at $s_0$ is equal to a  regularized theta integral. Consequently, if $L(s,\pi\times\tau)$ has a pole, then the integral of the (adelized) Siegel modular form $F$ against a regularized theta integral is non-zero (Proposition \ref{propintegralnonzero}). An argument using the \emph{seesaw diagram}

$$
 \xymatrix{\U(2,2)\ar@{-}[dr]\ar@{-}[d]&\O(2,2)\ar@{-}[d]\\
    \Sp(4)\ar@{-}[ur]&\U(1,1)}
$$

then shows that $\pi_1$, the cuspidal, automorphic representation of $\SSp_4(\A)$ generated by $F$, participates in the theta correspondence with a split orthogonal group ${\rm O}(2,2)$. But this is impossible by explicit knowledge of the archimedean local theta correspondence \cite{tomasz}. This proves the holomorphy of $L(s,\pi\times\tau)$ at the point $s=1$.

\vspace{3ex}
Thus, $L(s,\pi\times\tau)$ has no poles in the region ${\rm Re}(s)\geq1/2$. By the functional equation, it follows that it has no poles in the region ${\rm Re}(s)\leq1/2$. We thus obtain Theorem \ref{entirenesstheorem}, which states that $L(s,\pi\times\tau)$ is an entire function. As observed earlier, the theorem of Gelbart and Lapid~\cite{GL} now implies boundedness in vertical strips. We have finally achieved our goal of proving the ``niceness" --- analytic continuation to an entire function that satisfies the functional equation and is bounded in vertical strips --- of $L(s,\pi\times\tau)$. By the Converse Theorem and Kim's exterior square lifting, our main lifting theorem (Theorem A) now follows.

\vspace{3ex}
{\bf Applications}

\vspace{2ex}
Having established the liftings, we now turn to applications.  Applying a backwards lifting from $\GL_4$ to $\SO_5(\A)\cong\PGSp_4(\A)$, we prove in Theorem \ref{genericswitchtheorem} the existence of a globally generic representation on $\GSp_4(\A)$ in the same $L$-packet as $\pi$. Also thanks to our liftings, the machinery of Rankin-Selberg $L$-functions on $\GL_n\times\GL_m$ is available for the study of $L$-functions related to Siegel modular forms.

\vspace{3ex}
All this is exploited in Sect.\ \ref{analyticpropertiesapplicationssec}. We obtain the niceness of a host of $L$-functions associated to Siegel cusp forms, including $L$-functions for $\GSp_4\times\GL_n$ for any $n$, and for $\GSp_4\times\GSp_4$; here, on the $\GSp_4$-factors, we can have the $4$-dimensional or the $5$-dimensional representation of the dual group. We also obtain niceness for the degree 10 (adjoint) $L$-function of Siegel modular forms, as well as some analytic properties for the degree $14$ and the degree $16$ $L$-functions. For the precise results, see Theorems~\ref{Lrhonanalyticpropertiestheorem},~\ref{Lrhonsigmaranalyticpropertiestheorem}
and~\ref{Lrhonrhoranalyticpropertiestheorem}.

\vspace{3ex}
To give a flavor of the results obtained, we restate below part of the $\GSp_4 \times \GSp_4$ result in classical language.

 \vspace{3ex}
{\bf Theorem C:} \emph{Let $F$ and $G$ be Siegel cusp forms of full level and weights $k$, $l$ respectively, and suppose that neither of them is a Saito-Kurokawa lift. Assume further that $F$ and $G$ are eigenfunctions for all the Hecke operators $T(n)$, with eigenvalues $\lambda_F(n)$ and $\lambda_G(n)$ respectively. Let $L(s,F,{\rm spin})$ and $L(s,G,{\rm spin})$ denote their spin $L$-functions, normalized so that the functional equation takes $s$ to $1-s$. Concretely, these $L$-functions are defined by Dirichlet series
$$
 L(s,F,{\rm spin}) = \zeta(2s+1) \sum_{n=1}^\infty \frac{\lambda_F(n)}{n^{s+k-3/2}}, \quad L(s,G,{\rm spin}) = \zeta(2s+1) \sum_{n=1}^\infty \frac{\lambda_G(n)}{n^{s+l-3/2}},
$$
that analytically continue to entire functions, and possess Euler products,
$$
 L(s,F,{\rm spin}) = \prod_p \prod_{i=1}^4 \left(1-\beta_{F,p}^{(i)}\,p^{-s}\right)^{-1}, \quad L(s,G,{\rm spin}) = \prod_p \prod_{i=1}^4 \left(1-\beta_{F,p}^{(i)}\,p^{-s}\right)^{-1}.
$$
Define the degree 16 convolution $L$-function $L(s, F\times G)$ by the following Euler product:
$$
 L(s,F\times G) = \prod_p \prod_{i=1}^4 \prod_{j=1}^4 \left(1-\beta_{F,p}^{(i)}\beta_{G,p}^{(j)}\,p^{-s}\right)^{-1}.
$$
Then $L(s, F\times G)$ is absolutely convergent for ${\rm Re}(s)>1$, has meromorphic continuation to the entire complex plane, and is non-vanishing on ${\rm Re}(s) = 1$. Moreover, $L(s, F\times G)$ is entire, except in the special case $k=l$ and $\lambda_F(n) = \lambda_G(n)$ for all $n$, when it has a simple pole at $s=1$.}

\vspace{3ex}
By combining our lifting results with the results of \cite{La2003}, we also prove that $L(1/2,F,{\rm spin}) \ge 0$. We prove similar  non-negativity results for the ``spin$\,\times\,$standard'' $L$-function as well as for suitable $L$-functions on $\GSp_4 \times \GL_2$ and $\GSp_4 \times \GL_3$; see Theorem~\ref{Lnonnegativitytheorem} for the precise statement.

\vspace{3ex}
We also obtain critical value results in the spirit of Deligne's conjecture for $\GSp_4 \times \GL_1$ (Theorem~\ref{criticalgsp4gl1}) and for $\GSp_4 \times \GL_2$ (Theorem~\ref{specialgsp4gl2}). Theorem~\ref{criticalgsp4gl1} follows by combining our lifting theorem with a critical value result for $\GL_4 \times \GL_1$ proved by Grobner and Raghuram~\cite{raghugrob}. Theorem~\ref{specialgsp4gl2}, on the other hand, follows directly from the second global integral representation (Theorem \ref{theoremsecondintegralrep}) using the methods of~\cite{pullback}.

\vspace{3ex}
{\bf Further remarks}

\vspace{2ex}
As for related works, the transfer from $\GSp_4$ to $\GL_4$ for \emph{all} cuspidal, automorphic representations should eventually follow from the trace formula. At the time of this writing, we do not know whether all the necessary elements for this far reaching program of Arthur's have been completed. The existence of a globally generic representation of $\GSp_4(\A)$ in the same $L$-packet as $\pi$ (Theorem \ref{genericswitchtheorem}) is also proved in \cite{We2008} using theta liftings and the topological trace formula. We hope, however, that our present work is of independent interest, both because it provides a ``proof of concept" that certain cases of non-generic transfer can be established without resorting to trace formula arguments, and because the explicit nature of our integral representation makes it a useful tool to attack other problems related to Siegel cusp forms. As an example of the latter, we would like to mention Gross-Prasad type questions for $\GSp_4 \times \GL_2$ as a potential future application. Also, the above mentioned special value result for $\GSp_4 \times \GL_2$, which is an application of our integral representation, does not immediately follow from the transfer obtained via the trace formula.

\vspace{3ex}
{\bf Acknowledgements}

\vspace{2ex}
We would like to thank Paul-Olivier Dehaye, Mark McKee and Paul Nelson for their help with various parts of this paper. We would also like to thank the FIM and Emmanuel Kowalski at ETH for providing an excellent working environment for us during the final phase in which this paper was written.

\vspace{3ex}

\section*{Notation}
\addcontentsline{toc}{section}{Notation}
\subsubsection*{Basic objects}
\begin{enumerate}
\item The symbols $\Z$, $\Z_{\ge0}$, $\Q$, $\R$, $\C$, $\Z_p$ and $\Q_p$ have the usual meanings. The symbol $\A_F$ denotes the ring of adeles of an algebraic number field $F$, and $\A_F^\times$ denotes its group of ideles. The symbols $\A$ and $\A^\times$ will always denote $\A_\Q$ and $\A_\Q^\times$ respectively.
\item For any commutative ring $R$ and positive integer $n$, let $\text{Mat}_{n,n}(R)$
denote the ring of $n\times n$ matrices with entries in $R$, and let $\GL_n(R)$ denote the group of invertible elements in $\text{Mat}_{n,n}(R)$. We use $R^{\times}$ to denote $\GL_1(R)$. If $A\in \text{Mat}_{n,n}(R)$, we let $^t\!A$ denote its transpose.
 \item Define $J_n\in\text{Mat}_{n,n}(\Z)$  by
  $$
   J_n =\begin{bmatrix}0 & I_n\\-I_n & 0\\\end{bmatrix}.
  $$
 \item In this paper, all non-archimedean local fields will be understood to be of characteristic zero. If $F$ is such a field, let $\OF$ be its ring of integers and $\p$ be the maximal ideal of $\OF$. Let $\varpi$ be a generator of $\p$, and let $q$ be the  cardinality of the residue class field $\OF/\p$.
 \item Let $F$ be as above. If $L$ is a quadratic field extension of $F$, or $L=F\oplus F$, let $\big(\frac L{\p}\big)$ be the Legendre symbol. By definition, $\big(\frac L{\p}\big)=-1$ if $L/F$ is an unramified field extension (the \emph{inert case}), $\big(\frac L{\p}\big)=0$ if $L/F$ is a ramified field extension (the \emph{ramified case}), and $\big(\frac L{\p}\big)=1$ if $L=F\oplus F$ (the \emph{split case}). In the field case, let $\bar x$ denote the Galois conjugate of $x\in L$. In the split case, let $\overline{(x,y)}=(y,x)$. In all cases, the norm is defined by $N(x)=x\bar x$. If $L$ is a field, then let $\OF_L$ be its ring of integers. If $L = F \oplus F$, then let $\OF_L = \OF \oplus \OF$. Let $\varpi_L$ be a generator of $\p_L$ if $L$ is a field, and set $\varpi_L = (\varpi,1)$ if $L$ is not a field. We fix the following ideal in $\OF_L$,
 \begin{equation}\label{ideal defn}\renewcommand{\arraystretch}{1.3}
  \P := \p\OF_L = \left\{
                  \begin{array}{l@{\qquad\text{if }}l}
                    \p_L & \big(\frac L{\p}\big) = -1,\\
                    \p_L^2 & \big(\frac L{\p}\big) = 0,\\
                    \p \oplus \p & \big(\frac L{\p}\big) = 1.
                  \end{array}
                \right.
 \end{equation}
 Here, $\p_L$ is the maximal ideal of $\OF_L$ when $L$ is a field extension. Note that $\P$ is prime only if $\big(\frac L\p\big)=-1$. We have $\P^n\cap\OF=\p^n$ for all $n\geq0$.
 \item We fix additive characters once and for all, as follows. If $F$ is a non-archimedean local field, $\psi$ is required to have conductor $\OF$. If $F=\R$, then $\psi(x)=e^{-2\pi ix}$. For any $a\in F$, let $\psi^a(x)=\psi(ax)$.
\end{enumerate}
\subsubsection*{The quadratic extension}
Let $F$ be a non-archimedean local field of characteristic zero, or $F=\R$. The unitary groups we shall be working with are defined with respect to a quadratic extension $L/F$. We shall now explain the conventions for this quadratic extension. We fix three elements $\mathbf{a}, \mathbf{b}, \mathbf{c} \in F$ such that $\mathbf{d}:=\mathbf{b}^2-4\mathbf{a}\mathbf{c}\neq0$. Then let
\begin{equation}\label{Ldefeq}\renewcommand{\arraystretch}{1.2}
 L = \left\{
      \begin{array}{l@{\qquad\mbox{if }}l}
        F(\sqrt{\mathbf{d}})&\mathbf{d} \notin F^{\times2},\\
        F \oplus F&\mathbf{d} \in F^{\times2}.
      \end{array}
    \right.
\end{equation}
We shall make the following {\bf assumptions}.
\begin{itemize}
 \item If $F$ is non-archimedean, assume that $\mathbf{a},\mathbf{b}\in\OF$ and $\mathbf{c}\in\OF^\times$. Assume moreover that if $\mathbf{d} \notin F^{\times2}$, then $\mathbf{d}$ is the generator of the discriminant of $L/F$, and if $\mathbf{d} \in F^{\times2}$, then $\mathbf{d} \in \OF^{\times}$.
 \item If $F=\R$, assume that $S=\mat{\mathbf{a}}{\mathbf{b}/2}{\mathbf{b}/2}{\mathbf{c}}\in {\rm Mat}_{2,2}(\R)$ is a positive definite matrix. Equivalently, $\mathbf{c}>0$ and $\mathbf{d}<0$.
\end{itemize}
Hence, if $F=\R$, we always assume that $L=\C$. In all cases let
\begin{equation}\label{alphadefeq}
 \alpha=\left\{\begin{array}{l@{\qquad\text{if }L}l}
 \displaystyle\frac{\mathbf{b}+\sqrt{\mathbf{d}}}{2\mathbf{c}}&\text{ is a field},\\[2ex]
 \displaystyle\Big(\frac{\mathbf{b}+\sqrt{\mathbf{d}}}{2\mathbf{c}},\frac{\mathbf{b}-\sqrt{\mathbf{d}}}{2\mathbf{c}}\Big)&=F\oplus F.
 \end{array}\right.
\end{equation}
An important role will be played by the matrix
\begin{equation}\label{etadefeq}
 \eta = \begin{bmatrix}1&0&&\\\alpha&1&&\\&&1&-\bar{\alpha}\\&&0&1\end{bmatrix}.
\end{equation}
We further define
\begin{equation}\label{eta0defeq2}
 \eta_0=\left\{\begin{array}{l@{\qquad\text{if }}l}
   \eta&F\text{ is $p$-adic},\\
   \frac1{\sqrt{2}}\begin{bmatrix}1&i\\i&1\\&&1&i\\&&i&1\end{bmatrix}&F=\R.
 \end{array}\right.
\end{equation}
\subsubsection*{Algebraic groups}
For simplicity we will make all definitions over the local field $F$, but it is clear
how to define the corresponding global objects.
\begin{enumerate}
 \item Let $H=\GSp_4$ and $G_j=\GU(j,j;L)$ be the algebraic $F$-groups whose $F$-points are given by
 \begin{align}
  H(F)&=\{g \in \GL_4(F)\;|\; ^tgJ_2g = \mu_2(g)J_2,\:\mu_2(g)\in F^{\times} \},  \label{HFdefinition}\\
  G_j(F)&=\{g \in \GL_{2j}(L)\;|\; ^t\bar{g}J_jg = \mu_j(g)J_j,\:\mu_j(g)\in F^{\times}\}. \label{Gjdefinition}
 \end{align}
 \item We define, for $\zeta\in L^\times$ and $\mat{a}{b}{c}{d}\in G_1(F)$,
  \begin{equation}\label{m1m2defeq}
   m_1(\zeta)=\begin{bmatrix}\zeta\\&1\\&&\bar\zeta^{-1}\\&&&1\end{bmatrix},\qquad
   m_2(\mat{a}{b}{c}{d})=\begin{bmatrix}1\\&a&&b\\&&\bar ad-b\bar c\\&c&&d\end{bmatrix}.
  \end{equation}
 \item Let $P$ be the standard maximal parabolic subgroup of $G_2(F)$ with a non-abelian unipotent radical. Let $P = MN$ be the Levi decomposition of $P$. We have $M = M^{(1)}M^{(2)}$, where
 \begin{align}
  M^{(1)}(F)&=\{m_1(\zeta)\;|\;\zeta\in L^\times\} \label{M1defn},\\
  M^{(2)}(F)&=\{m_2(g)\;|\;g\in G_1(F)\} \label{M2defn},\\
  N(F) &=\{\begin{bmatrix}
                 1 & z &  &  \\
                  & 1 &  &  \\
                  &  & 1 &  \\
                  &  & -\overline{z} & 1 \\
               \end{bmatrix}
               \begin{bmatrix}
                 1 &  & x & y \\
                  & 1 & \overline{y} &  \\
                  &  & 1 &  \\
                  &  &  & 1 \\
               \end{bmatrix}
              \;|\;x\in F,\;y,z \in L \}\label{Ndefn}.
 \end{align}
 The modular factor of the parabolic $P$ is given by
 \begin{equation}\label{deltaPformulaeq}
  \delta_P(m_1(\zeta)m_2(g))
  =|N(\zeta)\mu_1^{-1}(g)|^3,
 \end{equation}
 where $|\cdot|$ is the normalized absolute value on $F$.

\item Let $P_{12}$ be the maximal parabolic subgroup of $G_3$, defined by
\begin{equation}\label{P12defeq}
 P_{12}=G_3\cap\begin{bmatrix} *&*&*&*&*&*\\ *&*&*&*&*&*\\ *&*&*&*&*&*\\
 &&&*&*&*\\&&&*&*&*\\&&&*&*&*\end{bmatrix}.
\end{equation}

Let $P_{12} = M_{12}N_{12}$ be the Levi decomposition, with
$$
 M_{12}(F):=\left\{ m(A,v) = \begin{bmatrix}A & 0\\0&
  v\; ^t\!\bar{A}^{-1}\end{bmatrix} \big|\;A \in \GL_3(L),\;v \in F^\times \right\},
$$
$$
 N_{12}(F):= \left\{\begin{bmatrix}1 & b\\0&1 \end{bmatrix}\big|\;b\in {\rm Mat}_{3,3}(L),\;^t\bar b =b \right\}.
$$

The modular function of $P_{12}$ is given by
\begin{equation}\label{P12modulareq}
 \delta_{12}(\mat{A}{}{}{v\,^t\!\bar A^{-1}})=|v^{-3}N(\det(A))|^3,
 \qquad v\in F^\times,\:A\in\GL_3(L).
\end{equation}

 \item Let $\iota$ be the embedding of $\{(g_1,g_2)\in G_1(F)\times G_2(F):\:\mu_1(g_1)=\mu_2(g_2)\}$ into $G_3(F)$ defined by
  \begin{equation}\label{GUembeddingeq}
   \iota \big(\mat{a}{b}{c}{d},\mat{A}{B}{C}{D}\big) = \begin{bmatrix}A&&B\\&a&&-b\\C&&D\\&-c&&d\end{bmatrix}.
  \end{equation}
\end{enumerate}
\subsubsection*{Congruence subgroups}
Assuming that $F$ is $p$-adic, we will use the following notation for congruence subgroups,
\begin{align}
 K^{(0)}(\P^n)&=G_1(\OF)\cap\mat{\OF_L}{\OF_L}{\P^n}{\OF_L},\label{congruencesubgroupeq1}\\
 K^{(1)}(\P^n)&=G_1(\OF)\cap\mat{1+\P^n}{\OF_L}{\P^n}{\OF_L},\label{congruencesubgroupeq2}\\
  K^{(1)}_1(\P^n)&=U(1,1;L)(\OF)\cap K^{(1)}(\P^n)\nonumber\\
   &=U(1,1;L)(\OF)\cap\mat{1+\P^n}{\OF_L}{\P^n}{\OF_L}
   =U(1,1;L)(\OF)\cap\mat{1+\P^n}{\OF_L}{\P^n}{1+\P^n},\label{congruencesubgroupeq3}\\
 K^{(1)}(\p^n)&=\GL_2(\OF)\cap\mat{1+\p^n}{\OF}{\p^n}{\OF}.\label{congruencesubgroupeq4}
\end{align}
If $\tau$ is an irreducible, admissible representation of $\GL_n(F)$, we let $a(\tau)$ be the non-negative integer such that $\p^{a(\tau)}$ is the conductor of $\tau$; see Theorem \ref{GL2newformtheorem} for a characterization in the $\GL_2$ case. If $\chi$ is a character of $F^\times$, then $a(\chi)$ is the smallest non-negative integer such that $\chi$ is trivial on $\OF^\times\cap(1+\p^{a(\chi)})$.
\subsubsection*{Representations of $\GL_2(\R)$}
If $p$ is a positive integer and $\mu\in\C$, we let $\mathcal{D}_{p,\mu}$ be the irreducible representation of $\GL_2(\R)$ with minimal weight $p+1$ and central character satisfying $a\mapsto a^{2\mu}$ for $a>0$. Every other irreducible, admissible representation of $\GL_2(\R)$ is of the form $\beta_1\times\beta_2$ with characters $\beta_1,\beta_2$ of $\R^\times$; see (\ref{gl2inducedeq}). Note that, if $\mu\in i\R$, then $\mathcal{D}_{p,\mu}$ is a discrete series representation.

\section{Distinguished vectors in local representations}
In this section we will develop some local theory, both archimedean and non-archimedean, which will be utilized in subsequent sections on global integral representations.  Recall the definitions of the groups $G_j=\GU(j,j;L)$ from~\eqref{Gjdefinition}. The local theory will exhibit distinguished vectors in certain parabolically induced representations of $G_2(F)$, where $F=\R$ or $F$ is $p$-adic. We will also study the behavior of these vectors under local intertwining operators. Since the distinguished vectors are characterized by right transformation properties, the intertwining operators map distinguished vectors to distinguished vectors. This fact will later be applied to obtain the functional equation of global $L$-functions.

\vspace{3ex}

Unless otherwise noted, $F$ is a non-archimedean local field of characteristic zero, or $F=\R$. We let $L,\alpha,\eta$ be as in (\ref{Ldefeq}), (\ref{alphadefeq}), (\ref{etadefeq}), respectively.

\subsection{Parabolic induction to $\GU(2,2)$}\label{parabolicinductionsec}
Let $(\tau,V_\tau)$ be an irreducible, admissible, infinite-dimensional representation
of $\GL_2(F)$, and let $\chi_0$ be a character of $L^\times$ such
that $\chi_0\big|_{F^\times}$ coincides with $\omega_{\tau}$, the
central character of $\tau$. Then the pair $(\chi_0,\tau)$ defines a representation
of $G_1(F)\cong M^{(2)}(F)$ on the same space $V_\tau$ via
\begin{equation}\label{M2representationseq}
 \tau(\lambda g)=\chi_0(\lambda)\tau(g),\qquad\lambda\in L^\times,\;g\in\GL_2(F).
\end{equation}
We denote this representation by $\chi_0\times\tau$. Every irreducible, admissible representation of $G_1(F)$ is of the form (\ref{M2representationseq}). If $V_\tau$ is a space of functions on $\GL_2(F)$ on which
$\GL_2(F)$ acts by right translation, then $\chi_0\times\tau$ can
be realized as a space of functions on $M^{(2)}(F)$ on which
$M^{(2)}(F)$ acts by right translation. This is accomplished by
extending every $W\in V_\tau$ to a function on $M^{(2)}(F)$ via
\begin{equation}\label{extendedWformulaeq}
 W(\lambda g)=\chi_0(\lambda)W(g),\qquad\lambda\in L^\times,\:g\in\GL_2(F).
\end{equation}
If $s$ is a complex parameter, $\chi$ is any character of $L^\times$ and $\chi_0\times\tau$ is a representation of $M^{(2)}(F)$ as above, we denote by $I(s,\chi,\chi_0,\tau)$ the induced representation of $G_2(F)$ consisting of functions $f:\:G_2(F)\rightarrow V_\tau$ with the transformation property
\begin{equation}\label{Isfctnspropeq}
 f(m_1(\zeta)m_2(b)ng)=\big|N(\zeta)\mu_1^{-1}(b)\big|^{3(s+\frac12)}
 \chi(\zeta)(\chi_0\times\tau)(b)f(g)
\end{equation}
for $\zeta\in L^\times$ and $b\in G_1(F)$.

Now taking $V_\tau$ to be the Whittaker model of $\tau$ with respect to the character $\psi$, if we associate to each $f$ as above the function on $G_2(F)$ given by $W_f(g)=f(g)(1)$, then we obtain another model $I_W(s,\chi,\chi_0,\tau)$ of $I(s,\chi,\chi_0,\tau)$ consisting of functions $W:\:G_2(F)\rightarrow\C$. These functions satisfy
\begin{equation}\label{Wsharpproperty1eq}
 W(m_1(\zeta)m_2(\mat{\lambda}{}{}{\lambda})g)
 =|N(\zeta\lambda^{-1})|^{3(s+\frac12)}
 \chi(\zeta)\chi_0(\lambda)W(g),\qquad \zeta,\lambda\in L^\times,
\end{equation}
and
\begin{equation}\label{Wsharpproperty2eq}
 W(\begin{bmatrix}1 & z\\& 1\\&  & 1\\&  & -\overline{z} & 1 \\\end{bmatrix}
  \begin{bmatrix}1&&x&y\\& 1 & \overline{y} &w\\&  & 1\\&  &  & 1 \\\end{bmatrix}g)
 =\psi(w)W(g),\qquad w,x\in F,\;y,z\in L.
\end{equation}
Assume on the other hand that $\tau$ is a parabolically induced representation $\beta_1\times\beta_2$, not necessarily irreducible, with characters $\beta_1,\beta_2:\:F^\times\rightarrow\C^\times$. The standard model of $\beta_1\times\beta_2$ consists of functions $\varphi:\:\GL_2(F)\rightarrow\C$ with the transformation property
\begin{equation}\label{gl2inducedeq}
 \varphi(\mat{a}{b}{}{d}g)=|ad^{-1}|^{1/2}\beta_1(a)\beta_2(d)\varphi(g)\qquad
 \text{for all }a,d\in F^\times,\:b\in F,\:g\in\GL_2(F).
\end{equation}
If we associate to $f$ as in (\ref{Isfctnspropeq}), now taking values in the standard model of $\beta_1\times\beta_2$, the function $\Phi_f$ on $G_2(F)$ given by $\Phi_f(g)=f(g)(1)$, then we obtain another model of $I(s,\chi,\chi_0,\tau)$, which we denote by $I_\Phi(s,\chi,\chi_0,\tau)$. It consists of functions $\Phi:\:G_2(F)\rightarrow\C$ with the transformation property
\begin{align}\label{Wsharpproperty3eq}
 &\Phi(\begin{bmatrix}\zeta&*&*&*\\&\lambda&*&*\\&&\bar\zeta^{-1}N(\lambda)\\
   &&*&\lambda\end{bmatrix}
 \begin{bmatrix}1\\&a\\&&ad\\&&&d\end{bmatrix}g)\nonumber\\
 &\hspace{20ex}=|N(\zeta\lambda^{-1})|^{3(s+\frac12)}|a|^{-3s-1}|d|^{-3s-2}
 \chi(\zeta)\chi_0(\lambda)\beta_1(a)\beta_2(d)\Phi(g)
\end{align}
for all $\zeta,\lambda\in L^\times$, $a,d\in F^\times$.

\subsubsection*{Intertwining operators}
Assume that $\tau$ is generic, and let $\chi,\chi_0$ be as above. For $f\in I(s,\chi,\chi_0,\tau)$ with ${\rm Re}(s)$ large enough, the local intertwining
operator is defined by
\begin{equation}\label{locintdefeq1}
 (M(s)f)(g)=\int\limits_{N(F)}f(w_1ng)\,dn,\qquad
  w_1=\begin{bmatrix}&&1&\\&1&&\\-1&&&\\&&&1\end{bmatrix}.
\end{equation}
Calculations show that $M(s)$ defines an intertwining map
\begin{equation}\label{Mstargeteq}
 M(s):\:I(s,\chi,\chi_0,\tau)\longrightarrow
  I(-s,\bar\chi^{-1},\chi\bar\chi\chi_0,\chi\tau),
\end{equation}
where by $\chi\tau$ we mean the twist $(\chi\big|_{F^\times})\otimes\tau$.  It is easily checked that the above formula (\ref{locintdefeq1}) also defines intertwining operators $M(s)$ from $I_\Phi(s,\chi,\chi_0,\tau)$ to $I_\Phi(-s,\bar\chi^{-1},\chi\bar\chi\chi_0,\chi\tau)$ and from $I_W(s,\chi,\chi_0,\tau)$ to $I_W(-s,\bar\chi^{-1},\chi\bar\chi\chi_0,\chi\tau)$. In Corollary \ref{distinguishedvectornonarchtheorem} (non-archimedean case) and Corollary \ref{distinguishedvectorarchtheoremcor} (archimedean case) we will identify a distinguished element
$$
 W^\#=W^\#(\,\cdot\,,s,\chi,\chi_0,\tau)
$$
in $I_W(s,\chi,\chi_0,\tau)$. This distinguished function will have the property
\begin{equation}\label{Ksdefeq}
 M(s)W^\#(\,\cdot\,,s,\chi,\chi_0,\tau)
  =K(s)W^\#(\,\cdot\,,-s,\bar\chi^{-1},\chi\bar\chi\chi_0,\chi\tau).
\end{equation}
with a ``constant'' $K(s)$ (independent of $g\in G_2(F)$, but dependent on $s$, as well as $\chi$, $\chi_0$ and $\tau$). In most cases $K(s)$ exists because $W^\#$ is characterized, up to scalars, by \emph{right} transformation properties. An exception is the archimedean ``different parity'' Case C, defined in Sect.\ \ref{distvecarchsec}, in which case said right transformation properties characterize a \emph{two}-dimensional space. In this case the existence of the function $K(s)$ such that (\ref{Ksdefeq}) holds will follow from explicit calculations. Note that if $\eta_0\in G_2(F)$ is such that $W^\#(\eta_0)=1$, then we obtain the formula
\begin{equation}\label{Ksformulaeq}
 K(s)=\int\limits_{N(F)}W^\#(w_1n\eta_0,s,\chi,\chi_0,\tau)\,dn
\end{equation}
by evaluating at $\eta_0$. Explicitly,
\begin{equation}\label{Ksformulaexpliciteq}
 K(s)=\int\limits_L \int\limits_L \int\limits_F
  W^\#(w_1\begin{bmatrix}1&z&&\\&1&&\\&&1&\\&&-\bar{z}&1\end{bmatrix}
  \begin{bmatrix}1&&x&y\\&1&\bar{y}&\\&&1&\\&&&1\end{bmatrix}\eta_0)\,dx\,dy\,dz.
\end{equation}
Our goal in Sects.\ \ref{inter-non-arch} and \ref{inter-arch} will be to calculate the function $K(s)$. We will then also be more precise about the measures on $F$ and $L$ used in (\ref{Ksformulaexpliciteq}).

\subsection{Distinguished vectors: non-archimedean case}\label{distvecnonarchsec}
In this section let $F$ be a non-archimedean local field of characteristic zero. Let $\tau$ be any irreducible, admissible representation of $\GL_2(F)$, and let $\chi_0$ be a character of $L^\times$ such that $\chi_0 |_{F^\times} = \omega_\tau$, the central character of $\tau$. Let $\Lambda$ be an unramified character of $L^\times$, and let $\chi$ be the character of $L^\times$ defined by
\begin{equation}\label{chi-lambda-char-condition}
 \chi(\zeta) = \Lambda(\bar{\zeta})^{-1} \chi_0(\bar{\zeta})^{-1}.
\end{equation}
For a complex parameter $s$, let $I(s, \chi, \chi_0, \tau)$ be as in Sect.\ \ref{parabolicinductionsec}.
Let $K^{G_2}=G_2(F)\cap\GL_4(\OF_L)$, a maximal compact
subgroup. We define the principal congruence subgroups $ \Gamma(\P^r):= \{ g \in G_2(F)\;|\;g \equiv 1 \pmod{\P^r} \} $ with $\P$ as in (\ref{ideal defn}). For $r=0$ we understand that $\Gamma(\P^r)=K^{G_2}$. For any $m \geq 0$, we let
\begin{equation}\label{etamdef2eq}
 \eta_m = \begin{bmatrix}1&0&&\\\alpha \varpi^m&1&&\\
  &&1&-\bar{\alpha}\varpi^m\\&&0&1\end{bmatrix}.
\end{equation}
For systematic reasons, we let $\eta_\infty$ be the identity matrix. Note that $\eta_0=\eta$; see (\ref{etadefeq}).
\begin{proposition}\label{disj-doub-coset-decomp-general-n-prop}
 For any $r\geq0$ the following disjoint double coset decompositions hold,
 \begin{equation}\label{double-coset-decomp-eqn}
    G_2(F) = \bigsqcup\limits_{0 \leq m \leq \infty}P(F)\eta_m K^H = \bigsqcup\limits_{0 \leq m \leq r}P(F)\eta_m K^H \Gamma(\P^r).
 \end{equation}
 Moreover, for any $0\leq m<r$, we have
 \begin{equation}\label{double-coset-decomp-eqn2}
  P(F)\eta_m K^H\Gamma(\P^r)=P(F)\eta_m K^H.
 \end{equation}
\end{proposition}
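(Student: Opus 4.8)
The plan is to work with the group $H = \GSp_4 \subset G_2 = \GU(2,2)$ and reduce the double coset statement \eqref{double-coset-decomp-eqn} to a statement about $H(F)$ alone, then analyze the interaction with the congruence subgroups $\Gamma(\P^r)$. First I would recall that $P(F) \cap H(F) = Q(F)$ is the Klingen parabolic of $H$, and that the element $\eta_m$ conjugates the situation in a controlled way: since $\eta_m \in G_2(F)$ has entries involving $\alpha$ (or its reduction), one checks that $\eta_m^{-1} P(F) \eta_m \cap K^H$ is a specific open compact subgroup of $H$. The key computational input is to describe, for $h \in K^H$, the double coset $P(F) h$ intersected with the $H$-orbit; this is essentially the $p$-adic Bruhat/Iwasawa-type analysis for the pair $(G_2, H)$ that underlies Furusawa's local zeta integral, so I expect the ``$\bigsqcup_{0 \le m \le \infty}$'' decomposition over $K^H$ to follow from known orbit computations (the parameter $m$ measuring the ``distance'' of $h$ from the big cell, with $m = \infty$ being the closed orbit where $\eta_\infty = 1$).

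Next I would handle the refinement by $\Gamma(\P^r)$. The containment $\bigsqcup_{0 \le m \le r} P(F)\eta_m K^H \Gamma(\P^r) \subseteq G_2(F)$ is automatic, and surjectivity follows from the first decomposition once one shows that for $m > r$ one has $P(F)\eta_m K^H \subseteq \bigcup_{0 \le m' \le r} P(F) \eta_{m'} K^H \Gamma(\P^r)$ — indeed $\eta_m \equiv \eta_r \pmod{\P^r}$ as soon as $m \geq r$ (both having lower-left entry $\alpha \varpi^m$ resp.\ $\alpha \varpi^r$, congruent mod $\P^r$ since $\varpi^m \in \p^m \subseteq \p^r$), so $\eta_m \in \eta_r \Gamma(\P^r)$ and hence $\eta_m K^H \subseteq \eta_r K^H \Gamma(\P^r)$ using that $\Gamma(\P^r)$ is normalized appropriately, or more carefully $\eta_m \in \eta_r \Gamma(\P^r) \subseteq \eta_r K^H \Gamma(\P^r)$. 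The same congruence $\eta_m \equiv \eta_0 = \eta \pmod{\P^{m+1}}$... wait — rather, the cleaner statement is $\eta_m \in \eta \cdot (\text{something})$; I would instead directly use $\eta_m \in \eta_r \Gamma(\P^r)$ for $m \ge r$. Then \eqref{double-coset-decomp-eqn2} for $0 \le m < r$ is the assertion that no extra collapsing happens below the threshold: one must show $K^H \Gamma(\P^r) \subseteq K^H$ \emph{after conjugating by $\eta_m$}, i.e.\ $\eta_m K^H \Gamma(\P^r) \eta_m^{-1} \subseteq P(F) \eta_m K^H \eta_m^{-1}$, equivalently $\eta_m^{-1} P(F) \eta_m \supseteq \eta_m^{-1}(\text{conjugate})$; the concrete point is that $\Gamma(\P^r) \subseteq \eta_m K^H \eta_m^{-1} \cdot (\eta_m^{-1} P(F) \eta_m)$ for $m < r$, which one verifies by a matrix computation showing the lower-left $\P^m$-block of $\eta_m^{-1} g \eta_m$ is absorbed into $N(F)$ when $g \equiv 1 \pmod{\P^r}$ with $r > m$.

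The main obstacle, I expect, is the disjointness in \eqref{double-coset-decomp-eqn} and the precise verification of \eqref{double-coset-decomp-eqn2} — i.e.\ proving that the cosets $P(F)\eta_m K^H$ are genuinely distinct for distinct $m \in \{0, 1, \dots, \infty\}$ and do not merge after multiplying by $\Gamma(\P^r)$ when $m < r$. Disjointness requires an invariant distinguishing the $m$'s: the natural candidate is to look at the valuation of some matrix entry (or a ratio of entries) of $\eta_m$ modulo the left action of $P(F)$ and right action of $K^H$ — concretely, the quantity measuring how far $P(F) g$ is from $P(F)$, which for $\eta_m$ is governed by $v(\varpi^m) = m$. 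I would make this rigorous by exhibiting, for each $g \in G_2(F)$, a well-defined invariant $m(g) \in \{0, 1, \ldots, \infty\}$ (perhaps via the image of $g$ in $P(F)\backslash G_2(F)$, which is a flag-variety-type space over $F$, together with a filtration by $K^H$-orbits), showing $m(\eta_m) = m$, and checking $m$ is unchanged under right multiplication by $\Gamma(\P^r)$ precisely when $m < r$. Once disjointness over $K^H$ is in hand, the $\Gamma(\P^r)$-refinement is bookkeeping; the real content is the orbit classification, which — as noted — is closely parallel to the support computation for Furusawa's integral \eqref{localZseq2} and to Sugano's analysis, so I would lean on those techniques.
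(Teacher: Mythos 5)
Your overall scaffolding matches the paper's: reduce \eqref{double-coset-decomp-eqn} to a decomposition of $K^{G_2}$ via Iwasawa, use the congruence $\eta_m \equiv \eta_r \pmod{\P^r}$ for $m\ge r$ (indeed $\eta_r^{-1}\eta_m$ has lower-left entry $\alpha\varpi^r(\varpi^{m-r}-1)\in\P^r$, and $\eta_\infty=1 \in \Gamma(\P^r)$), and isolate disjointness plus orbit classification as the real content. But two essential steps are asserted rather than proved, and these are precisely where the work lies. First, for disjointness you write ``I would make this rigorous by exhibiting \dots a well-defined invariant $m(g)$'' without exhibiting one; the paper supplies the concrete invariant
$K^{G_2}\ni g\mapsto \min\bigl(v((gJ\,^{t}g)_{3,2}),\,v((gJ\,^{t}g)_{3,4})\bigr)$,
which separates the $P(\OF)\eta_m K^H$ cosets, and the same function taken modulo $\P^r$ separates the $P(\OF)\eta_m K^H\Gamma(\P^r)$ cosets. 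Without something of this kind the disjointness claim is unsupported. Second, for surjectivity of $\bigsqcup_{0\le m\le\infty}P(\OF)\eta_m K^H = K^{G_2}$ you propose to ``lean on'' Furusawa's and Sugano's orbit computations; but the paper has to actually run a two-step argument (first $K^{G_2}=P(\OF)K^H\Gamma(\P)\sqcup P(\OF)\eta K^H\Gamma(\P)$, done case-by-case in the inert/ramified/split cases, then a matrix-identity analysis of representatives $\gamma_0$ to extract the unique $m$). This is the heart of the proposition, and it is not an off-the-shelf citation.

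For \eqref{double-coset-decomp-eqn2} you propose a direct computation to absorb $\Gamma(\P^r)$ into $(\eta_m^{-1}P(F)\eta_m)K^H$ when $m<r$ --- note your set product $\eta_m K^H\eta_m^{-1}\cdot(\eta_m^{-1}P(F)\eta_m)$ is in the wrong order; you need $\Gamma(\P^r)\subseteq(\eta_m^{-1}P(F)\eta_m)K^H$. This direct route can be made to work, but the paper avoids the computation entirely by a complement/counting argument: having both disjoint partitions of $G_2(F)$ and the inclusion $\bigsqcup_{m\ge r}P(F)\eta_m K^H \subseteq P(F)\eta_r K^H\Gamma(\P^r)$ (which follows from your congruence $\eta_m\in\Gamma(\P^r)$ for $m\ge r$), the termwise inclusions for $m<r$ are forced to be equalities. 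That trick is worth internalizing --- it turns what would be a delicate matrix verification into bookkeeping.
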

\begin{proof} Using the Iwasawa decomposition, (\ref{double-coset-decomp-eqn}) follows from
\begin{equation}\label{K-double-coset-decomp}
 K^{G_2}=\bigsqcup_{0\leq m\leq\infty} P(\OF)\eta_mK^H =\bigsqcup\limits_{0 \leq m \leq r}P(\OF)\eta_m K^H \Gamma(\P^r).
\end{equation}
One can show that the double cosets on the right hand side of (\ref{K-double-coset-decomp}) are disjoint by observing that the function
$$
 K^{G_2} \ni g \mapsto {\rm min}\big(v((gJ\,^t\!g)_{3,2}),v((gJ\,^t\!g)_{3,4})\big)
$$
takes different values on the double cosets. We take the above function modulo $\P^r$ for the disjointness of the double cosets involving $\Gamma(\P^r)$. Knowing disjointness, one obtains the second equality in (\ref{K-double-coset-decomp}) by multiplying the double cosets by $\Gamma(\P^r)$ on the right. We only sketch a proof of the first equality in (\ref{K-double-coset-decomp}). The first step consists in showing that $K^{G_2} = P(\OF)K^H\Gamma(\P)\,\sqcup\,P(\OF)\eta K^H\Gamma(\P)$, which can be done explicitly by considering the three cases -- inert, ramified and split -- separately. Then, for $g\in P(\OF) \gamma_0 K^H$ with $\gamma_0\in\Gamma(\P)$ or $\gamma_0\in\eta \Gamma(\P)$, observe that
$$
 \gamma_0\in G_2(F)\cap\begin{bmatrix}\OF_L^\times&\P&\OF_L&\OF_L\\
  \OF_L&\OF_L^\times&\OF_L&\OF_L\\\P&\P&\OF_L^\times&\OF_L\\\P&\P&\P&\OF_L^\times
  \end{bmatrix}.
$$
Using appropriate matrix identities one can show, for any $\gamma_0$ in this set, that there exist $p \in P(\OF)$, $h \in K^H$ and a unique $m \in \{0, 1, 2, \cdots, \infty \}$ such that $\gamma_0 = p \eta_m h$; this completes the proof of (\ref{double-coset-decomp-eqn}).

\vspace{3ex}
To prove (\ref{double-coset-decomp-eqn2}), we rewrite (\ref{double-coset-decomp-eqn}) as
$$
 G_2(F) = \bigsqcup\limits_{0 \leq m<r}P(F)\eta_m K^H\sqcup X,\qquad
 X=\bigsqcup\limits_{r \leq m\leq\infty}P(F)\eta_m K^H,
$$
and also
$$
 G_2(F)=\bigsqcup_{0\leq m<r}P(F)\eta_mK^H\Gamma(\P^r)\;\sqcup Y,
 \qquad Y=P(F)\;\eta_rK^H\Gamma(\P^r).
$$
For $m\geq r$, we have $\eta_m\in P(F)K^H\Gamma(\P^r)=P(F)\eta_rK^H\Gamma(\P^r)$.
Hence $X\subset Y$.
Evidently, for $m<r$, we have $P(F)\eta_m K^H\subset P(F)\eta_mK^H\Gamma(\P^r)$.
It follows that $P(F)\eta_m K^H= P(F)\eta_mK^H\Gamma(\P^r)$.
\end{proof}

We recall the standard newform theory for $\GL_2$. Let the congruence subgroup $K^{(1)}(\p^n)$ be as in (\ref{congruencesubgroupeq4}). The following result is well known (see \cite{Cas}, \cite{De}).
\begin{theorem}\label{GL2newformtheorem}
 Let $(\tau,V_\tau)$ be a generic, irreducible, admissible representation of $\GL_2(F)$.
 Then the spaces
 $$
  V_\tau(n)=\{v\in V_\tau\;|\;\tau(g)v=v\text{ for all }g\in K^{(1)}(\p^n)\}
 $$
 are non-zero for $n$ large enough. If $n$ is minimal with $V_\tau(n)\neq0$, then
 $\dim(V_\tau(n))=1$. For $r \geq n$, we have $\dim(V_\tau(r)) = r-n+1$.
\end{theorem}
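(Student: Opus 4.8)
The plan is to follow Casselman's classical argument via the Kirillov model; since the paper already flags the result as well known, the proposal is really to recall the structure of that proof (with \cite{Cas} and \cite{De} as the reference). Because $\tau$ is generic, irreducible and admissible it is automatically infinite-dimensional, so with our fixed additive character $\psi$ of conductor $\OF$ it has a Whittaker, hence a Kirillov, model. First I would realize $V_\tau$ as a space $\mathcal K(\tau,\psi)$ of functions $\xi\colon F^\times\to\C$ containing $C_c^\infty(F^\times)$ with finite-dimensional quotient, on which the mirabolic subgroup acts by $(\tau(\mat{a}{x}{}{1})\xi)(y)=\psi(xy)\,\xi(ay)$, the center acts through $\omega_\tau$, and the only remaining generator of $\GL_2(F)$ left to understand is $w=\mat{}{1}{-1}{}$.

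Next I would translate membership in $V_\tau(r)$ into explicit conditions on $\xi$. Invariance under $\mat{1}{x}{}{1}$ for $x\in\OF$ forces $\mathrm{supp}(\xi)\subseteq\OF\setminus\{0\}$; invariance under the diagonal elements $\mat{u}{}{}{1}$ with $u\in 1+\p^r$, combined with the behaviour of the central character, constrains how $\xi$ transforms under $\OF^\times$ on each shell $\{v(x)=j\}$, so that $\xi$ is pinned down by finitely many of its values $c_j=\xi(\varpi^j)$. The genuinely restrictive condition is invariance under the lower unipotent elements $\mat{1}{}{c}{1}$ with $c\in\p^r$; rewriting these in terms of $w$ and upper-triangular matrices turns this into a self-duality condition for $\xi$ under the $w$-action on $\mathcal K(\tau,\psi)$, which is governed by the local functional equation $Z(s,w.\xi)=\gamma(s,\tau,\psi)\,Z(1-s,\xi)$, equivalently by the Whittaker/Bessel function of $\tau$.

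The hard part will be to extract the exact dimension count from this self-duality condition; this is precisely where the conductor exponent $n=a(\tau)$ enters. One has to show: (i) for $r$ large there is an explicit eventually $w$-stable $\xi$, so $V_\tau(r)\neq 0$; (ii) at the minimal such $r$ the self-duality condition cuts the finite-dimensional space of candidate $\xi$'s down to dimension exactly one --- the delicate step, carried out in the textbook treatments by a case analysis over unramified and ramified principal series, twists of Steinberg, and supercuspidal $\tau$, or uniformly via the theory of local new vectors and $\varepsilon$-factors of Jacquet--Piatetski-Shapiro--Shalika; and (iii) the shift $\xi\mapsto\tau(\mat{\varpi}{}{}{1})\xi$ (legitimate because $\mat{\varpi}{}{}{1}\,K^{(1)}(\p^{r+1})\,\mat{\varpi}{}{}{1}^{-1}\subseteq K^{(1)}(\p^r)$) embeds $V_\tau(r)$ into $V_\tau(r+1)$ with one-dimensional cokernel, so that $\dim V_\tau(r)=r-n+1$ for $r\ge n$ follows by induction on $r$. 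Since all of this is entirely classical, in the paper itself I would simply invoke \cite{Cas} and \cite{De} and omit the computation.
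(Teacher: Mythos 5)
Your plan coincides with the paper's: the theorem is stated there without proof, with \cite{Cas} and \cite{De} simply cited, which is exactly what you conclude you would do, and your sketch of Casselman's Kirillov-model argument is a fair précis of the classical proof. One small slip worth flagging in step (iii): the inclusion $\mat{\varpi}{}{}{1}\,K^{(1)}(\p^{r+1})\,\mat{\varpi}{}{}{1}^{-1}\subseteq K^{(1)}(\p^r)$ that you record is correct, but for $\tau(g)$ to carry $V_\tau(r)$ into $V_\tau(r+1)$ one needs $g^{-1}K^{(1)}(\p^{r+1})\,g\subseteq K^{(1)}(\p^r)$; so the level-raising operator is $\tau\bigl(\mat{\varpi^{-1}}{}{}{1}\bigr)$ (equivalently, up to $\omega_\tau$, $\tau\bigl(\mat{1}{}{}{\varpi}\bigr)$), not $\tau\bigl(\mat{\varpi}{}{}{1}\bigr)$. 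Also note that the assertion that this injection has cokernel of dimension exactly one is not a formal consequence of the shift alone; it is really where the content of the dimension formula lives, and — as you yourself anticipate in (ii) — comes out of the delicate analysis of the $w$-action via the local functional equation, so (iii) is not logically independent of (ii). None of this affects the conclusion, since the theorem is standard and in the text you would, as the authors do, simply cite \cite{Cas} and \cite{De}.
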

If $n$ is minimal such that $V_\tau(n)\neq0$, then $\p^n$ is called the
\emph{conductor} of $\tau$, and we write $n=a(\tau)$. Any non-zero vector in $V_\tau(a(\tau))$ is called a \emph{local newform}. The following theorem is a local newforms result for the induced representations $I(s,\chi,\chi_0,\tau)$ with respect to the congruence subgroups $K^H\Gamma(\P^r)$.

\begin{theorem}\label{unique-W-theorem}
 Let $(\tau,V_\tau)$ be a generic, irreducible, admissible representation of $\GL_2(F)$ with central character $\omega_{\tau}$ and conductor $\p^n$. Let $\chi_0$ be a character of $L^\times$ such that $\chi_0\big|_{F^\times}=\omega_\tau$ and $\chi_0((1+\P^n)\cap\OF_L^\times)=1$, and let $\chi$ be the character of $L^\times$ defined by (\ref{chi-lambda-char-condition}), where $\Lambda$ is unramified. Let
 $$
  V(r) := \{\phi\in I(s,\chi,\chi_0,\tau)\;|\;\phi(g \gamma, s) = \phi(g, s)
   \mbox{ for all } g \in G(F),\:\gamma \in K^H\Gamma(\P^r) \}
 $$
 for a non-negative integer $r$. Then
 $$
  \dim(V(r)) = \left\{\begin{array}{ll}
   \displaystyle\frac{(r-n+1)(r-n+2)}2& \hbox{ if } r \geq n,\\
   0&\hbox{ if } r < n.\end{array}\right.
 $$
\end{theorem}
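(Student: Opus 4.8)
The plan is to reduce the dimension count to the $\GL_2$ newform theory of Theorem~\ref{GL2newformtheorem} via the double coset decomposition of Proposition~\ref{disj-doub-coset-decomp-general-n-prop}. By \eqref{double-coset-decomp-eqn} one has $G_2(F)=\bigsqcup_{0\le m\le r}P(F)\eta_m K^H\Gamma(\P^r)$, so any $\phi$ in the space $V(r)$ is determined by its values $\phi(\eta_m)\in V_\tau$, $0\le m\le r$: on the coset of $\eta_m$, $\phi$ is recovered from $\phi(\eta_m)$ by applying the left transformation rule \eqref{Isfctnspropeq} to the $P(F)$-part, after using right $K^H\Gamma(\P^r)$-invariance to discard the $K^H\Gamma(\P^r)$-part. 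Since these double cosets are disjoint there are no relations between the $\phi(\eta_m)$ for different $m$; the only constraint is the self-consistency of this recipe on each coset separately. Hence $V(r)\cong\bigoplus_{m=0}^{r}W_m$, where $W_m\subseteq V_\tau$ is the space of admissible values at $\eta_m$, and the problem splits into $r+1$ independent conditions.

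First I would make the condition defining $W_m$ explicit. Write an element of the overlap group $P(F)\cap\eta_m K^H\Gamma(\P^r)\eta_m^{-1}$ as $p=m_1(\zeta)m_2(b)n$. Since $\eta_m\in K^{G_2}$ and $\Gamma(\P^r)$ is normal in $K^{G_2}$, such a $p$ lies in the compact group $K^{G_2}\cap P(F)$, so $\zeta\in\OF_L^\times$, $b\in G_1(\OF)$, $n\in N(\OF)$, and in particular the modular factor in \eqref{Isfctnspropeq} is trivial. The self-consistency condition then reads: $\phi(\eta_m)$ is fixed by $\chi(\zeta)(\chi_0\times\tau)(b)$ for every such $p$. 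Projecting $P\to M^{(2)}\cong G_1$, the elements $b$ occurring here run over a compact open subgroup $\Gamma_m$ of $G_1(\OF)$; and, because of \eqref{chi-lambda-char-condition}, the unramifiedness of $\Lambda$, and the normalization $\chi_0((1+\P^n)\cap\OF_L^\times)=1$, the combined character factor $\chi(\zeta)\chi_0(\cdot)$ is trivial on $\Gamma_m$ whenever $\Gamma_m$ is small enough to affect the count. So $W_m$ is, up to this harmless twist, the space of $\tau$-fixed vectors under the image of $\Gamma_m$ in $\GL_2(F)$.

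The crux is to identify $\Gamma_m$ explicitly. I would compute $\eta_m K^H\Gamma(\P^r)\eta_m^{-1}\cap P(F)$ by the same kind of matrix manipulations already used in the proof of Proposition~\ref{disj-doub-coset-decomp-general-n-prop}, treating the inert, ramified and split cases for $L/F$ in turn; the expected outcome is that, after projection to the $\GL_2$-factor, $\Gamma_m$ is $\GL_2(F)$-conjugate to the congruence subgroup $K^{(1)}(\p^{r-m})$ of \eqref{congruencesubgroupeq4}. The appearance of $\alpha\varpi^m$ in the $(2,1)$ and $(3,4)$ entries of $\eta_m$ is what makes the level fall one step at a time, from $\p^r$ at $m=0$ down to $\OF$ at $m=r$. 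This matrix analysis, together with the bookkeeping of the (trivial) character contributions, is the only genuine work and the main obstacle. Granting it, Theorem~\ref{GL2newformtheorem} gives $\dim W_m=\dim V_\tau(r-m)=\max(0,\,r-m-n+1)$.

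It remains to add up. Since the $W_m$ are independent,
\[
 \dim V(r)=\sum_{m=0}^{r}\dim V_\tau(r-m)=\sum_{j=0}^{r}\max(0,\,j-n+1),
\]
which vanishes for $r<n$ and equals $\sum_{j=n}^{r}(j-n+1)=\dfrac{(r-n+1)(r-n+2)}{2}$ for $r\ge n$, as claimed. (Taking $\tau$ unramified, so $n=0$, yields $\frac{(r+1)(r+2)}{2}$, a convenient consistency check on the bookkeeping.)
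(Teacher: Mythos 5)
Your outline coincides with the paper's own proof in structure: reduce via Proposition~\ref{disj-doub-coset-decomp-general-n-prop} to the values $v_m=\phi(\eta_m)$, show that the admissibility constraint at each $m$ is a fixed-vector condition under (a twist of) $\tau$ restricted to a congruence subgroup, then sum $\dim V_\tau(r-m)$ via Theorem~\ref{GL2newformtheorem}.

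The step you set aside as the ``main obstacle'' --- computing the overlap $P(F)\cap\eta_m K^H\Gamma(\P^r)\eta_m^{-1}$ and checking the character contributions cancel --- is precisely the substance of the paper's proof, and your expected answer is essentially right with one wrinkle worth noting. For necessity, the paper shows directly that $m_2(g)\in M(F)N(F)\cap\eta_m K^H\Gamma(\P^r)\eta_m^{-1}$ for $g\in K^{(1)}(\p^{r-m})$, giving $v_m\in V_\tau(r-m)$. For sufficiency (well-definedness), the overlap group's $M^{(2)}$-component, with entries $\tilde a,\tilde b,\tilde c,\tilde d\in\OF_L$ and $M^{(1)}$-component $\zeta\in\OF_L^\times$, is not literally a conjugate of $K^{(1)}(\p^{r-m})$; rather one finds $\tilde a\bar\zeta^{-1}\in 1+\P^{r-m}$ and $\tilde c\in\P^{r-m}$ with $\tilde a\in\OF_L^\times$ (not merely $\OF^\times$). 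One must then scale out $\tilde a$ via $\chi_0$, observe that $\mat{1}{\tilde b/\tilde a}{\tilde c/\tilde a}{\tilde d/\tilde a}$ lies in $K^{(1)}(\p^{r-m})$ and hence fixes $v_m$, and check that the residual scalar $\chi(\zeta)\chi_0(\tilde a)$ is $1$ by combining (\ref{chi-lambda-char-condition}), the unramifiedness of $\Lambda$, and the hypothesis $\chi_0((1+\P^n)\cap\OF_L^\times)=1$. So your plan is sound and reproduces the paper's argument; the deferred matrix check, together with this $\OF_L^\times$-scaling bookkeeping, is exactly what remains to be written out.
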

\begin{proof} Let $\phi\in V(r)$. By Proposition \ref{disj-doub-coset-decomp-general-n-prop},
$\phi$ is completely determined by its values on $\eta_m$, $0 \leq m \leq r$. For such $m$, and any $g=\mat{a}{b}{c}{d} \in K^{(1)}(\p^{r-m})$, we have
$A:=m_2(g)\in M(F) N(F) \cap \eta_m K^H\Gamma(\P^r) \eta_m^{-1}$. It follows that $\phi(\eta_m) = \phi(A \eta_m)= \tau(g)\phi(\eta_m)$. Hence, for $0 \leq m \leq r$, the vector $v_m:=\phi(\eta_m)$ lies in $V_\tau(r-m)$. Since the conductor of $\tau$ is $\p^n$, we conclude that $v_m = 0$ if $r-m < n$. Therefore $\dim(V(r)) = 0$ for all $r < n$.

\vspace{2ex}
Now suppose that $r \geq n$. We will show that, for any $m$ such that $r-m \geq n$,
if $v_m$ is chosen to be any vector in $V_\tau(r-m)$, then we obtain a well-defined
function $\phi$ in $V(r)$. For $m=r$ this is easy to check, since in this case $n=0$
and all the data is unramified. Assume therefore that $r>m$.
We have to show that for $m_1n_1\eta_m k_1\gamma_1=m_2n_2\eta_m k_2\gamma_2$,
with $m_i\in M(F)$, $n_i\in N(F)$, $k_i\in K^H$ and $\gamma_i\in\Gamma(\P^{r})$,
\begin{equation}\label{Wsharpwelldeflemmanaeq1}
 |N(\zeta_1)\cdot\mu_1^{-1}|^{3(s+1/2)}\chi(\zeta_1)\,
   (\chi_0 \times \tau) (\mat{a_1}{b_1}{c_1}{d_1}) v_m
 =|N(\zeta_2)\cdot\mu_2^{-1}|^{3(s+1/2)}\chi(\zeta_2)\,
   (\chi_0 \times \tau)(\mat{a_2}{b_2}{c_2}{d_2}) v_m.
\end{equation}
We have $\eta_m^{-1}m_2^{-1}m_1 n^\ast \eta_m \in K^H \Gamma(\P^{r})$, where
$n^\ast \in N(F)$ depends on $m_1, m_2, n_1, n_2$. Write
$$
 m_2^{-1}m_1=\begin{bmatrix}\zeta\\&\tilde a&&\tilde b\\
  &&\mu\bar{\zeta}^{-1}\\&\tilde c&&\tilde d\end{bmatrix}.
$$
By definition, $\zeta_1=\zeta_2\zeta$ and $\mu_1=\mu_2\mu$. We have $\zeta\in\OF_L^\times$ and $\mu\in\OF^\times$.
 Hence
(\ref{Wsharpwelldeflemmanaeq1}) is equivalent to
\begin{equation}\label{Wsharpwelldeflemmanaeq2}
 \chi(\zeta)\,(\chi_0 \times \tau)(\mat{a_1}{b_1}{c_1}{d_1}) v_m
 = (\chi_0 \times \tau)(\mat{a_2}{b_2}{c_2}{d_2}) v_m.
\end{equation}
One can check that $\tilde a\bar\zeta^{-1}\in 1+\P^{r-m}$ and
$\tilde c\in \P^{r-m}$. Hence, using the definition of $\chi, \chi_0$
(with unramified $\Lambda$) and the fact that $v_m \in V_\tau(r-m)$,
\begin{align*}
 \chi(\zeta)\,(\chi_0 \times \tau)(\mat{a_1}{b_1}{c_1}{d_1})v_m
  &=\chi(\zeta)\,(\chi_0 \times \tau) (\mat{a_2}{b_2}{c_2}{d_2}
   \mat{\tilde a}{\tilde b}{\tilde c}{\tilde d}) v_m\\
  &=\chi(\zeta)\chi_0(\tilde a)\,(\chi_0 \times \tau)(\mat{a_2}{b_2}{c_2}{d_2}
   \mat{1}{\tilde b/\tilde a}{\tilde c/\tilde a}{\tilde d/\tilde a})v_m\\
  &=\chi_0(\bar\zeta^{-1})\chi_0(\tilde a)\,(\chi_0 \times \tau)(\mat{a_2}{b_2}{c_2}{d_2})v_m\\
  &=(\chi_0 \times \tau)(\mat{a_2}{b_2}{c_2}{d_2})v_m,
\end{align*}
as claimed. Now, using the formula for $\dim(V_\tau(r-m))$ from Theorem \ref{GL2newformtheorem}
completes the proof of the theorem.
\end{proof}

Assume that $W^{(0)}$ is the newform in the Whittaker model of $\tau$ with respect to an additive character of conductor $\OF$. Then it is known that $W^{(0)}(1)\neq0$, so that this function can be normalized by $W^{(0)}(1)=1$. The following is an immediate consequence of the above theorem (and its proof).

\begin{corollary}\label{distinguishedvectornonarchtheorem}
There exists a unique element $W^\#(\,\cdot\,,s)$ in $I_W(s,\chi,\chi_0,\tau)$ with the properties \begin{equation}\label{distinguishedvectornonarchtheoremeq}
   W^\#(gk,s)=W^\#(g,s)\qquad\text{for }g\in G_2(F),\;k\in K^H\Gamma(\P^n),
 \end{equation}
 and
 \begin{equation}\label{distinguishedvectornonarchtheoremeq2}
   W^\#(\eta_0,s)=1,
 \end{equation}
 where $\eta_0=\eta$ as in (\ref{eta0defeq2}). The function $W^\#(\,\cdot\,,s)$ is supported on $P(F)\eta_0 K^H\Gamma(\P^n)$. On this double coset,
 \begin{equation}\label{Wsharpformulaeq}
     W^\#(m_1(\zeta)m_2(g)\eta_0,s)
    =|N(\zeta)\cdot\mu_1^{-1}(g)|^{3(s+1/2)}\chi(\zeta)\,
    W^{(0)}(g),
 \end{equation}
 where $\zeta\in L^\times$, $g\in G_1(F)$, and $W^{(0)}$ is the newform in $V_\tau$, normalized by $W^{(0)}(1)=1$, and extended to a function on $G_1(F)$ via the character $\chi_0$ (see (\ref{extendedWformulaeq})).
\end{corollary}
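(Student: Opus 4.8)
The plan is to obtain the corollary as a direct specialization of Theorem \ref{unique-W-theorem} to the case $r = n = a(\tau)$. First I would observe that in this case the dimension formula in Theorem \ref{unique-W-theorem} gives $\dim(V(n)) = \frac{1 \cdot 2}{2} = 1$, so the space of vectors in $I(s,\chi,\chi_0,\tau)$ right-invariant under $K^H\Gamma(\P^n)$ is one-dimensional. Transferring to the Whittaker model $I_W(s,\chi,\chi_0,\tau)$ via $f \mapsto W_f$, $W_f(g) = f(g)(1)$, this yields a unique-up-to-scalar function $W^\#(\,\cdot\,,s)$ satisfying \eqref{distinguishedvectornonarchtheoremeq}. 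The normalization \eqref{distinguishedvectornonarchtheoremeq2} then pins it down uniquely, provided one knows that a generator of $V(n)$ does not vanish at $\eta_0 = \eta = \eta_m$ with $m = 0$; this is exactly the content of the proof of Theorem \ref{unique-W-theorem}, where the value at $\eta_m$ is the vector $v_m \in V_\tau(r-m)$, so at $m=0$, $r=n$ we get $v_0 \in V_\tau(n)$, the (one-dimensional) newform line, which we may take to be nonzero.

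Next I would extract the support statement. By Proposition \ref{disj-doub-coset-decomp-general-n-prop} with $r = n$, we have $G_2(F) = \bigsqcup_{0 \le m \le n} P(F)\eta_m K^H\Gamma(\P^n)$, and $\phi \in V(n)$ is determined by the values $v_m = \phi(\eta_m) \in V_\tau(n-m)$ for $0 \le m \le n$. Since the conductor of $\tau$ is $\p^n$, the space $V_\tau(n-m)$ is zero whenever $n - m < n$, i.e. whenever $m \ge 1$. Hence $v_m = 0$ for all $m \ge 1$, so $W^\#(\,\cdot\,,s)$ is supported on the single double coset $P(F)\eta_0 K^H\Gamma(\P^n)$, and $v_0$ lies on the newform line $V_\tau(n)$. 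Choosing the normalization so that $v_0 = W^{(0)}$, the newform in the Whittaker model normalized by $W^{(0)}(1) = 1$, is consistent with \eqref{distinguishedvectornonarchtheoremeq2} because $W^\#(\eta_0,s) = W_f(\eta_0) = f(\eta_0)(1) = v_0(1) = W^{(0)}(1) = 1$.

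Finally I would write down the explicit formula \eqref{Wsharpformulaeq} on the supporting double coset. For $h = m_1(\zeta)m_2(g)\eta_0$ with $\zeta \in L^\times$, $g \in G_1(F)$, the transformation property \eqref{Isfctnspropeq} defining $I(s,\chi,\chi_0,\tau)$ gives
\[
 f(m_1(\zeta)m_2(g)\eta_0) = |N(\zeta)\mu_1^{-1}(g)|^{3(s+\frac12)}\chi(\zeta)\,(\chi_0\times\tau)(g)\,f(\eta_0),
\]
and applying evaluation at $1 \in \GL_2(F)$ and using $(\chi_0 \times \tau)(g) W^{(0)}$ evaluated at $1$ — which by the extension convention \eqref{extendedWformulaeq} and right translation is $W^{(0)}(g)$ — yields
\[
 W^\#(m_1(\zeta)m_2(g)\eta_0,s) = |N(\zeta)\mu_1^{-1}(g)|^{3(s+\frac12)}\chi(\zeta)\,W^{(0)}(g).
\]
I expect the only genuine subtlety — and thus the main thing to verify carefully rather than the routine bookkeeping — is the assertion that the newform $W^{(0)}$ in the Whittaker model satisfies $W^{(0)}(1) \ne 0$, so that the normalization is legitimate; this is a known fact (the nonvanishing of the newform at the identity, see \cite{Cas}, \cite{De}), which I would simply cite. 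Everything else is a translation of already-proven statements through the three models $I$, $I_W$, $I_\Phi$ of the induced representation.
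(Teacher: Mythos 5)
Your proof is correct and follows exactly the route the paper intends: the paper presents Corollary \ref{distinguishedvectornonarchtheorem} as ``an immediate consequence'' of Theorem \ref{unique-W-theorem} (and its proof), citing only the nonvanishing $W^{(0)}(1)\neq 0$ before stating the result. Your specialization to $r=n$, the extraction of the support from the vanishing of $v_m$ for $m\geq 1$, and the derivation of \eqref{Wsharpformulaeq} from the transformation law \eqref{Isfctnspropeq} together with the extension convention \eqref{extendedWformulaeq} are precisely the steps the paper leaves implicit.
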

\subsection{Distinguished vectors: archimedean case}\label{distvecarchsec}
Let $F=\R$ and $L=\C$, and $G_2=\GU(2,2;\C)$ as in the notations. Consider the symmetric domains $\HH_2 := \{Z \in {\rm Mat}_{2,2}(\C)\;|\; i(\,^{t}\!\bar{Z}-Z) \mbox{ is positive definite}\}$ and $\SH_2 :=\{Z \in \HH_2\;|\;^{t}Z = Z \}$. The group $G_2^{+}(\R) := \{g \in G_2(\R)\;|\; \mu_2(g) > 0 \}$ acts on $\HH_2$ via $(g,Z) \mapsto g\langle Z \rangle$, where
$$
 g\langle Z \rangle = (AZ+B)(CZ+D)^{-1}, \mbox{ for } g =
 \mat{A}{B}{C}{D} \in G_2^{+}(\R), Z \in \HH_2.
$$
Under this action, $\SH_2$ is stable by $H^+(\R) = \GSp_4^+(\R)$.
The group $K^{G_2}_\infty=\{g\in G_2^{+}(\R): \mu_2(g) = 1,\, g\langle
i_2\rangle = i_2 \}$ is a maximal compact subgroup of $G_2^{+}(\R)$.
Here, $i_2=\mat{i}{}{}{i} \in\HH_2$. By the Iwasawa decomposition
\begin{equation}\label{realiwasawaeq}
 G_2(\R)=M^{(1)}(\R)M^{(2)}(\R)N(\R)K^{G_2}_\infty,
\end{equation}
where $M^{(1)}(\R)$, $M^{(2)}(\R)$ and $N(\R)$ are as defined in
(\ref{M1defn}), (\ref{M2defn}) and (\ref{Ndefn}).  For $g\in G_2^+(\R)$ and $Z\in\mathbb{H}_2$, let $J(g,Z)=CZ+D$ be
the automorphy factor. Then, for any integer $l$, the map
\begin{equation}\label{realcompactcharactereq}
 k\longmapsto \det(J(k,i_2))^l
\end{equation}
defines a character $K^{G_2}_\infty\rightarrow\C^\times$. Let $K^H_\infty =K_{\infty} \cap H^+(\R)$. Then $K^H_\infty$ is a maximal compact subgroup of $H^+(\R)$.

\vspace{3ex}
Let $(\tau,V_\tau)$ be a  generic, irreducible, admissible representation of $\GL_2(\R)$ with central character $\omega_{\tau}$. Let $l_2$ be an integer of the same parity as the weights of $\tau$ (the precise value of $l_2$ is largely irrelevant, and we will later make a specific choice). Let $\chi_0$ be the character of $\C^\times$ such that $\chi_0\big|_{\R^\times}=\omega_\tau$ and $\chi_0(\zeta)=\zeta^{l_2}$ for $\zeta\in\C^\times,\:|\zeta|=1$. Let $\chi$ be the character of $\C^\times$ given by
\begin{equation}\label{chi-lambda-char-condition-arch}
 \chi(\zeta)=\chi_0(\bar{\zeta})^{-1}.
\end{equation}
We interpret $\chi$ as a character of $M^{(1)}(\R)\cong\C^\times$. We extend $\tau$ to a representation of $G_1(\R)$ as in (\ref{M2representationseq}). \emph{In the archimedean case, we can always realize $\tau$ as a subrepresentation of a parabolically induced representation $\beta_1\times\beta_2$}, with characters $\beta_1,\beta_2:\:\R^\times\rightarrow\C^\times$ (see (\ref{gl2inducedeq})). We define the complex numbers $t_1,t_2,p,q$ by
\begin{equation}\label{t1t2pqeq}
 \beta_1(a)=a^{t_1},\qquad\beta_2(a)=a^{t_2},\qquad p=t_1-t_2,\qquad q=t_1+t_2
\end{equation}
for $a>0$.

\vspace{3ex}
{\bf Remark:} Evidently, $q$ is related to the central character of $\tau$ via $\omega_\tau(a)=a^q$ for $a>0$. The number $p$ could also be more intrinsically defined via the eigenvalue of the Laplace operator. Note that if $\tau$ belongs to the principal series and $\beta_1$ and $\beta_2$ are interchanged, then $p$ changes sign; this ambiguity will be irrelevant. We also note that if $\tau$ is a discrete series representation of lowest weight $l_1$, then $p=l_1-1$.

\vspace{3ex}
The induced representation $I_\Phi(s,\chi,\chi_0,\tau)$ is now a subrepresentation of $I_\Phi(s,\chi,\chi_0,\beta_1\times\beta_2)$. Any $\Phi \in I_\Phi(s,\chi,\chi_0,\tau) $ satisfies the transformation property (\ref{Wsharpproperty3eq}); in view of the Iwasawa decomposition, $\Phi$ is determined by its restriction to $K^{G_2}_\infty$. Conversely, given a function $\Phi:\:K^{G_2}_\infty\rightarrow\C$, it can be extented to a function on $G_2(\R)$ with the property (\ref{Wsharpproperty3eq}) if and only if
\begin{equation}\label{Wsharpwelldeflemmaeq2}
    \Phi(\hat\zeta_1k)=\zeta^{l_2}\,\Phi(k), \qquad
    \Phi(\hat\zeta_2k)=\zeta^{l_2}\,\Phi(k),
\end{equation}
for all $\zeta\in S^1$ and $k\in K^{G_2}_\infty$. Here, we used the notation
$$
 \hat\zeta_1 = \begin{bmatrix}\zeta\\&1\\&&\zeta\\&&&1\end{bmatrix},\qquad
 \hat\zeta_2 = \begin{bmatrix}1\\&\zeta\\&&1\\&&&\zeta\end{bmatrix}.
$$
We will therefore study certain spaces of functions on $K^{G_2}_\infty$ with the property (\ref{Wsharpwelldeflemmaeq2}).

\subsubsection*{The spaces $W^\Delta_{m,l,l_2}$}
We begin by describing the Lie algebra and the finite-dimensional representations of $K^{G_2}_\infty$. Let $\mathfrak{g}$ be the Lie algebra of $U(2,2)$. Let $X \mapsto -\,^t\!\bar X$ be the Cartan involution on $\mathfrak{g}$.
Let $\mathfrak{k}$ be the $+1$ eigenspace and let $\mathfrak{p}$ be the
$-1$ eigenspace of the Cartan involution. We denote by $\mathfrak{k}_\C$ and $\mathfrak{p}_\C$ the complexifications
of $\mathfrak{k}$ and $\mathfrak{p}$, respectively. Then
$$
 \mathfrak{k}_\C=\{\mat{A}{B}{-B}{A}\;|\;A,B\in{\rm Mat}_{2,2}(\C)\}, \qquad
 \mathfrak{p}_\C=\{\mat{A}{B}{B}{-A}\;|\;A,B\in{\rm Mat}_{2,2}(\C)\}.
$$
Hence $\mathfrak{g}_\C=\mathfrak{k}_\C\oplus\mathfrak{p}_\C=\mathfrak{gl}(4,\C)$.
The following eight elements constitute a convenient basis for $\mathfrak{k}_\C$.
\begin{align*}
 &U_1={\textstyle\frac12}\left[\begin{smallmatrix}1&&-i\\&0\\i&&1\\&&&0
  \end{smallmatrix}\right],\qquad
 U_2={\textstyle\frac12}\left[\begin{smallmatrix}0\\&1&&-i\\&&0\\&i&&1
  \end{smallmatrix}\right],\qquad
 V_1={\textstyle\frac12}\left[\begin{smallmatrix}-1&&-i\\&0\\i&&-1\\&&&0
  \end{smallmatrix}\right],\qquad
 V_2={\textstyle\frac12}\left[\begin{smallmatrix}0\\&-1&&-i\\&&0\\&i&&-1
  \end{smallmatrix}\right],\\
 &P_+={\textstyle\frac12}\left[\begin{smallmatrix}0&1&&-i\\&0\\&i&0&1\\&&&0
   \end{smallmatrix}\right],\qquad
 P_-={\textstyle\frac12}\left[\begin{smallmatrix}0\\1&0&-i\\&&0\\i&&1&0
   \end{smallmatrix}\right],\qquad
 Q_+={\textstyle\frac12}\left[\begin{smallmatrix}0\\-1&0&-i\\&&0\\i&&-1&0
   \end{smallmatrix}\right],\qquad
 Q_-={\textstyle\frac12}\left[\begin{smallmatrix}0&-1&&-i\\&0\\&i&0&-1\\&&&0
   \end{smallmatrix}\right].
\end{align*}
We have
\begin{equation}\label{kCgl2gl2eq}
 \mathfrak{k}_\C=\langle U_1,U_2,P_+,P_-\rangle\oplus\langle V_1,V_2,Q_+,Q_-\rangle
 \cong\mathfrak{gl}(2,\C)\oplus\mathfrak{gl}(2,\C).
\end{equation}
The center of $\mathfrak{k}$ is $2$-dimensional, spanned by
\begin{equation}\label{kcentereq}
 i(U_1+U_2+V_1+V_2)=\left[\begin{smallmatrix}&&1\\&&&1\\-1\\&-1\end{smallmatrix}\right]
 \qquad\text{and}\qquad
 i(U_1+U_2-V_1-V_2)=\left[\begin{smallmatrix}i\\&i\\&&i\\&&&i\end{smallmatrix}\right].
\end{equation}
The Casimir operators for the two copies of $\mathfrak{sl}(2,\C)$ are given by
$$
 \Delta_1=(U_1-U_2)^2+2(P_+P_-+P_-P_+),\qquad
 \Delta_2=(V_1-V_2)^2+2(Q_+Q_-+Q_-Q_+).
$$
The irreducible representations of $\mathfrak{k}_\C$ which lift to representations of $K^{G_2}_\infty$ are parametrized by four integers,
\begin{align*}
 \text{$m_1$:}\;\;\text{highest weight of }\langle U_1-U_2,P_+,P_-\rangle,& \quad
 \text{$n_1$:}\;\;\text{eigenvalue of }U_1+U_2,\\
 \text{$m_2$:}\;\;\text{highest weight of }\langle V_1-V_2,Q_+,Q_-\rangle,& \quad
 \text{$n_2$:}\;\;\text{eigenvalue of }V_1+V_2,
\end{align*}
subject to the condition that they all have the same parity and that
$m_1,m_2\geq0$. The parity condition is a consequence of overlapping one-parameter subgroups generated by $U_1\pm U_2$ and $V_1\pm V_2$. Let $\rho_{m_1,n_1,m_2,n_2}$ be the irreducible representation of $K^{G_2}_\infty$ corresponding to $(m_1,n_1,m_2,n_2)$. Then $\dim \rho_{m_1,n_1,m_2,n_2}=(m_1+1)(m_2+1)$, and the contragredient representation is given by $\tilde\rho_{m_1,n_1,m_2,n_2}=\rho_{m_1,-n_1,m_2,-n_2}$.
\begin{lemma}\label{rhom1m2n1n2characterizationlemma}
 Let $m_1,n_1,m_2,n_2$ be integers of the same parity with $m_1,m_2\geq0$.
 \begin{enumerate}
 \item Any vector $v$ in $\rho_{m_1,n_1,m_2,n_2}$ satisfies
  \begin{equation}\label{rhom1m2n1n2characterizationlemmaeq1}
   \Delta_1v=m_1(m_1+2)v,\quad\Delta_2v=m_2(m_2+2)v,\quad (U_1+U_2)v=n_1v,\quad(V_1+V_2)v=n_2v.
  \end{equation}
 \item The representation $\rho_{m_1,n_1,m_2,n_2}$ of $K^{G_2}_\infty$ contains the trivial
 representation of $K^H_\infty$ if and only if $m_1=m_2$ and $n_1=-n_2$.
 If these conditions are satisfied, then the trivial representation of $K^H_\infty$
 appears in $\rho_{m_1,n_1,m_2,n_2}$ exactly once.
 \end{enumerate}
\end{lemma}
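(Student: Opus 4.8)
Here is how I would attack Lemma \ref{rhom1m2n1n2characterizationlemma}.

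\emph{Part (i).} The plan is to exploit the direct sum decomposition (\ref{kCgl2gl2eq}). Since $\mathfrak{k}_\C=\mathfrak{gl}(2,\C)\oplus\mathfrak{gl}(2,\C)$ is a direct sum of ideals, the irreducible $\rho_{m_1,n_1,m_2,n_2}$ factors as an external tensor product $\sigma_1\otimes\sigma_2$, where $\sigma_1$ is the irreducible representation of the first copy $\langle U_1,U_2,P_+,P_-\rangle$ with $\mathfrak{sl}(2,\C)$-highest weight $m_1$ on which $U_1+U_2$ acts by $n_1$, and $\sigma_2$ is the analogous representation of the second copy. The operator $\Delta_1$ is (a suitable multiple of) the Casimir of the first $\mathfrak{sl}(2,\C)=\langle U_1-U_2,P_+,P_-\rangle$; hence it commutes with that $\mathfrak{sl}(2,\C)$, with the central element $U_1+U_2$, and with the entire second summand, so it is central in $U(\mathfrak{k}_\C)$. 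By Schur's lemma it acts on $\rho_{m_1,n_1,m_2,n_2}$ by a scalar, and the standard $\mathfrak{sl}(2,\C)$ computation on a highest weight vector of $\sigma_1$ evaluates that scalar as $m_1(m_1+2)$; likewise $\Delta_2$ acts by $m_2(m_2+2)$. Finally $U_1+U_2$ spans the center of the first $\mathfrak{gl}(2,\C)$ and commutes with the second, hence is central in $U(\mathfrak{k}_\C)$ and acts by the scalar $n_1$ by the very definition of $\rho_{m_1,n_1,m_2,n_2}$; similarly for $V_1+V_2$ and $n_2$. This yields (\ref{rhom1m2n1n2characterizationlemmaeq1}).

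\emph{Part (ii).} Since $K^H_\infty$ is connected, a copy of the trivial representation inside $\rho_{m_1,n_1,m_2,n_2}$ is precisely a nonzero vector annihilated by $\mathfrak{k}^H_\C$, the complexified Lie algebra of $K^H_\infty$, viewed inside $\mathfrak{k}_\C$; one checks that $\mathfrak{k}^H_\C=\mathfrak{k}_\C\cap\mathfrak{sp}(4,\C)$ is a copy of $\mathfrak{gl}(2,\C)$. The two structural inputs I would establish first, by a short direct computation with the explicit basis above, are: (a) the center of $\mathfrak{k}^H_\C$ is spanned by $i(U_1+U_2+V_1+V_2)$, which by (\ref{kcentereq}) equals $J_2$, and which has component $i(U_1+U_2)$ in the first summand and $i(V_1+V_2)$ in the second; and (b) the semisimple part $[\mathfrak{k}^H_\C,\mathfrak{k}^H_\C]\cong\mathfrak{sl}(2,\C)$ projects isomorphically onto each of the two copies of $\mathfrak{sl}(2,\C)$ in (\ref{kCgl2gl2eq}) --- for this it is enough to note that $P_++Q_+\in\mathfrak{sp}(4,\C)$, so that neither projection is zero, whence both are isomorphisms by simplicity and a dimension count. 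Granting (a), the central element $J_2$ acts on $\sigma_1\otimes\sigma_2$ by the scalar $i(n_1+n_2)$, so a nonzero $K^H_\infty$-fixed vector forces $n_1+n_2=0$. Granting (b), the isomorphism $\mathfrak{sl}(2,\C)\to\mathfrak{sl}(2,\C)$ whose graph is $[\mathfrak{k}^H_\C,\mathfrak{k}^H_\C]$ is inner, so as a module over this $\mathfrak{sl}(2,\C)$ the representation $\rho_{m_1,n_1,m_2,n_2}$ is just the tensor product of $\mathfrak{sl}(2,\C)$-modules of highest weights $m_1$ and $m_2$; by the Clebsch--Gordan rule the trivial submodule occurs there if and only if $m_1=m_2$, and then exactly once. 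Combining: if $m_1=m_2$ and $n_1=-n_2$, the unique trivial $[\mathfrak{k}^H_\C,\mathfrak{k}^H_\C]$-subspace is also killed by $J_2$ (which now acts by $0$ everywhere), hence is fixed by all of $K^H_\infty$ and is one-dimensional; conversely any $K^H_\infty$-fixed vector lies in that subspace and forces $n_1=-n_2$. This proves both the equivalence and the multiplicity-one assertion.

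\emph{Where the difficulty lies.} Part (i) is essentially formal once (\ref{kCgl2gl2eq}) is available. The real work is the pair of structural facts (a) and (b): one must pin down the exact position of $\mathfrak{k}^H_\C$ inside $\mathfrak{k}_\C$ --- in particular that the central direction of $\mathfrak{k}^H_\C$ couples the two $\mathfrak{gl}(2,\C)$-centers with equal sign (this is what produces the condition $n_1=-n_2$ rather than $n_1=n_2$), and that its semisimple part is the ``full'' diagonal $\mathfrak{sl}(2,\C)$ rather than sitting inside a single summand. These are honest but brief matrix computations with the listed generators; once they are in hand the rest is just Clebsch--Gordan bookkeeping.
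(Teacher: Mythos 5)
Your proposal is correct and follows essentially the same route as the paper: both proofs of part (ii) hinge on identifying $\mathfrak{k}^H_\C=\langle U_1+V_1,\,U_2+V_2,\,P_++Q_+,\,P_-+Q_-\rangle$ as a diagonal copy of $\mathfrak{gl}(2,\C)$ inside the two summands of (\ref{kCgl2gl2eq}), then applying the tensor-product/contragredient criterion. The only cosmetic difference is that you split $\mathfrak{k}^H_\C$ into its center (to force $n_1=-n_2$) and its semisimple $\mathfrak{sl}(2,\C)$ part (to force $m_1=m_2$ via Clebsch--Gordan), whereas the paper phrases both conditions at once as the second $\mathfrak{gl}(2,\C)$-factor being the contragredient $\rho_{m_1,-n_1}$ of the first; these are the same computation packaged slightly differently.
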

\begin{proof} Equation (\ref{rhom1m2n1n2characterizationlemmaeq1}) holds by definition of $\rho_{m_1,n_1,m_2,n_2}$ and because the Casimir
operator acts as the scalar $m(m+2)$ on the irreducible representation of
$\mathfrak{sl}(2,\C)$ of dimension $m+1$. The complexification of the Lie algebra of $K^H_\infty$ is given by $
 \mathfrak{k}^H_\C=\langle U_1+V_1,\,U_2+V_2,\,P_++Q_+,\,P_-+Q_-\rangle$.
This Lie algebra is isomorphic to $\mathfrak{gl}(2,\C)$ and sits
diagonally in the product of the two copies of $\mathfrak{gl}(2,\C)$ in
(\ref{kCgl2gl2eq}). It follows that the restriction of the representation
$\rho_{m_1,n_1,m_2,n_2}$ to $\mathfrak{k}^H_\C$ is given by
$\rho_{m_1,n_2}\otimes\rho_{m_2,n_2}$, with each factor being a representation
of $\mathfrak{k}^H_\C\cong\mathfrak{gl}(2,\C)$. Such a tensor product contains
the trivial representation (and then with multiplicity one) if and only if
the second factor is the contragredient of the first, i.e., if and only if
$(m_2,n_2)=(m_1,-n_1)$.\
\end{proof}

Let $m$ be a non-negative integer, and $l$ and $l_2$ be any integers. Recall that $l_2$ determines the extension of the central character of $\tau$ to $\C^\times$. In our later applications $l$ will indicate the scalar minimal $K$-type of a lowest weight representation of $\GSp_4(\R)$, but for now $l$ is just an integer. Let $W^\Delta_{m,l,l_2}$ be the space of smooth, $K^{G_2}_\infty$-finite functions $\Phi:\:K^{G_2}_\infty\rightarrow\C$ with the properties
\begin{align}
 \Phi(\hat\zeta_1g)&=\Phi(\hat\zeta_2g)=\zeta^{l_2}\Phi(g)\qquad
  \text{for }g\in K^{G_2}_\infty,\;\zeta\in S^1,\label{WDeltacondeq1}\\
 \Phi(gk)&=\det(J(k,i_2))^{-l}\Phi(g)\qquad
  \text{for }g\in K^{G_2}_\infty,\;k\in K_\infty^H,\label{WDeltacondeq2}\\
 \Delta_1\Phi&=\Delta_2\Phi=m(m+2)\Phi\label{WDeltacondeq3}.
\end{align}
In (\ref{WDeltacondeq3}), the Casimir elements $\Delta_i$ are understood to act by right translation. As noted above, property (\ref{WDeltacondeq1}) is required to extend $\Phi$ to an element of $I_\Phi(s,\chi,\chi_0,\tau)$. Property (\ref{WDeltacondeq2}) will become important when we evaluate local zeta integrals in Sect.\ \ref{localzetasec}. Imposing the additional condition (\ref{WDeltacondeq3}) will result in a certain uniqueness which is useful for calculating intertwining operators; see Sect.\ \ref{inter-arch}. Evidently, the group consisting of all elements
$$
 \hat{r}(\theta) := m_2(\mat{\cos(\theta)}{\sin(\theta)}{-\sin(\theta)}{\cos(\theta)}),\qquad\theta\in\R,
$$
acts on $W^\Delta_{m,l,l_2}$ by left translation. Let $W^\Delta_{m,l,l_2,l_1}$ be the subspace of $W^\Delta_{m,l,l_2}$ consisting of $\Phi$ with the additional property
\begin{equation}\label{WDeltacondeq4}
 \Phi(\hat r(\theta)g)=e^{il_1\theta}\Phi(g)\qquad
  \text{for }g\in G_2(\R),\;\theta\in\R.
\end{equation}
Then
\begin{equation}\label{WDeltaWDeltal1eq}
 W^\Delta_{m,l,l_2}=\bigoplus_{l_1\in\Z}W^\Delta_{m,l,l_2,l_1}.
\end{equation}
Let $D$ be the function on $K^{G_2}_\infty$ given by $D(g)=\det(J(g,i_2))$. It is easily verified that
$$
 (U_1-U_2)D=(V_1-V_2)D=P_\pm D=Q_\pm D=0.
$$
Hence $\Delta_iD=0$ for $i=1,2$, and consequently $\Delta_i(fD^l)=(\Delta_if)D^l$ for any smooth function $f$ on $K^{G_2}_\infty$. It is then easy to see that the map $\Phi\mapsto\Phi D^l$ provides isomorphisms
\begin{equation}\label{Wll1l2isoeq}
 W^\Delta_{m,l,l_2}\stackrel{\sim}{\longrightarrow}W^\Delta_{m,0,l_2+l}
 \qquad\text{and}\qquad
 W^\Delta_{m,l,l_2,l_1}\stackrel{\sim}{\longrightarrow}W^\Delta_{m,0,l_2+l,l_1-l}.
\end{equation}
Let $L^2(K^{G_2}_\infty)_{\rm fin}$ be the space of smooth, $K^{G_2}_\infty$-finite functions
$K^{G_2}_\infty\rightarrow\C$. It is a module for $K^{G_2}_\infty\times K^{G_2}_\infty$ via $
 ((g_1,g_2).f)(h)=f(g_1^{-1}hg_2)$. By the Peter-Weyl theorem, as $K^{G_2}_\infty\times K^{G_2}_\infty$-modules,
$$
 L^2(K^{G_2}_\infty)_{\rm fin}\cong\bigoplus_\rho(\tilde\rho\otimes\rho)\qquad
 \text{(algebraic direct sum)},
$$
where $\rho$ runs through all equivalence classes of irreducible representations
of $K^{G_2}_\infty$, and where $\tilde\rho$ denotes the contragredient. Evidently,
\begin{equation}\label{WDeltarhodecompeq}
 W^\Delta_{m,0,l_2+l,l_1-l}=\bigoplus_\rho\big(W^\Delta_{m,0,l_2+l,l_1-l}\cap(\tilde\rho\otimes\rho)\big),
\end{equation}
and analogously for $W^\Delta_{m,0,l_2+l}$.
\begin{lemma}\label{Wrhocontributionlemma}
 Let $m$ be a non-negative integer, and $l$ and $l_2$ be any integers.
 \begin{enumerate}
  \item Let $\rho=\rho_{m_1,n_1,m_2,n_2}$. Then, for $l_1\in\Z$,
   $$
    \dim\big(W^\Delta_{m,0,l_2+l,l_1-l}\cap(\tilde\rho\otimes\rho)\big)
    =\left\{\begin{array}{l@{\qquad}l}
    1&\text{if }m_1=m_2=m,\;n_1=l_2+l,\;n_2=-(l_2+l),\\
    &|l_1-l|\leq m,\;l_1-l\equiv l_2+l\equiv m\;{\rm mod}\;2,\\
    0&\text{otherwise}.\end{array}\right.
   $$
  \item For $l_1\in\Z$,
   $$
    \dim\big(W^\Delta_{m,0,l_2+l,l_1-l}\big)
    =\left\{\begin{array}{l@{\qquad}l}
    1&\text{if }|l_1-l|\leq m,\;l_1-l\equiv l_2+l\equiv m\;{\rm mod}\;2,\\
    0&\text{otherwise}.\end{array}\right.
   $$
  \item
   $$
    \dim\big(W^\Delta_{m,l,l_2}\big)
    =\left\{\begin{array}{l@{\qquad}l}
    m+1&\text{if }l_2+l\equiv m\;{\rm mod}\;2,\\
    0&\text{otherwise}.\end{array}\right.
   $$
 \end{enumerate}
\end{lemma}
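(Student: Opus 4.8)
Here is a plan for proving Lemma~\ref{Wrhocontributionlemma}. The strategy is to establish part (i) by combining the Peter--Weyl decomposition~(\ref{WDeltarhodecompeq}) with Lemma~\ref{rhom1m2n1n2characterizationlemma} and a direct computation of how $\hat\zeta_1,\hat\zeta_2$ and $\hat r(\theta)$ act on weight vectors, and then to read off parts (ii) and (iii) formally by summing part (i).

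\emph{Part (i).} Fix $\rho=\rho_{m_1,n_1,m_2,n_2}$ and consider a function $\Phi$ in the summand $\tilde\rho\otimes\rho$ of $L^2(K^{G_2}_\infty)_{\mathrm{fin}}$; recall that in this realization the first (left-translation) factor of $K^{G_2}_\infty\times K^{G_2}_\infty$ acts via $\tilde\rho$ and the second (right-translation) factor via $\rho$. The defining conditions of $W^\Delta_{m,0,l_2+l,l_1-l}$ split into right-translation conditions, which constrain only $\rho$, and left-translation conditions, which constrain only $\tilde\rho$. On the right side: condition~(\ref{WDeltacondeq3}) requires $\Delta_1,\Delta_2$ to act by $m(m+2)$, and since by Lemma~\ref{rhom1m2n1n2characterizationlemma}(i) they act by $m_1(m_1+2)$ and $m_2(m_2+2)$, we get $m_1=m_2=m$; condition~(\ref{WDeltacondeq2}) with third index $0$ says the $\rho$-factor must be $K^H_\infty$-fixed, so Lemma~\ref{rhom1m2n1n2characterizationlemma}(ii) forces in addition $n_2=-n_1$ and gives $\dim\rho^{K^H_\infty}=1$. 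Hence $\Phi$ is determined by a single vector $\lambda\in\tilde\rho=\rho_{m,-n_1,m,n_1}$, and it remains to impose the left conditions~(\ref{WDeltacondeq1}) and~(\ref{WDeltacondeq4}) on $\lambda$.

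To do this I would first identify $\hat\zeta_1,\hat\zeta_2,\hat r(\theta)$ as one-parameter subgroups: a short matrix computation with the basis $U_i,V_i,P_\pm,Q_\pm$ from the excerpt shows $U_1-V_1=\mathrm{diag}(1,0,1,0)$, $U_2-V_2=\mathrm{diag}(0,1,0,1)$ and $i(U_2+V_2)=\bigl[\begin{smallmatrix}0\\&0&&1\\&&0\\&-1&&0\end{smallmatrix}\bigr]$, so that $\hat\zeta_1=\exp(\theta\,i(U_1-V_1))$, $\hat\zeta_2=\exp(\theta\,i(U_2-V_2))$ and $\hat r(\theta)=\exp(\theta\,i(U_2+V_2))$ with $\zeta=e^{i\theta}$. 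Decomposing $\tilde\rho$ into weight lines for the two copies of $\mathfrak{sl}(2,\C)$ in~(\ref{kCgl2gl2eq}), on the line where $U_1-U_2$ has weight $a$ and $V_1-V_2$ has weight $b$ (with $a,b\in\{-m,\dots,m\}$ of parity $m$) one reads off the scalars by which $U_1,U_2,V_1,V_2$ act --- using that $U_1+U_2$, $V_1+V_2$ act by the central values $-n_1$, $n_1$ --- and hence the scalars by which $\hat\zeta_1,\hat\zeta_2,\hat r(\theta)$ act. Conditions~(\ref{WDeltacondeq1}) and~(\ref{WDeltacondeq4}) become three linear equations: the two from $\hat\zeta_1,\hat\zeta_2$ force $a=b$ and the relation $2n_1=\pm 2(l_2+l)$, which together with $n_2=-n_1$ fixes $n_1=l_2+l$, $n_2=-(l_2+l)$ exactly as in the statement; the equation from $\hat r(\theta)$ then pins down the common value of $a=b$ in terms of $l_1-l$. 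Since every weight line is one-dimensional, this yields $\dim\bigl(W^\Delta_{m,0,l_2+l,l_1-l}\cap(\tilde\rho\otimes\rho)\bigr)=1$ precisely when $\rho$ has the listed parameters and the resulting weight is admissible --- that is, when $|l_1-l|\le m$ and $l_1-l\equiv l_2+l\equiv m\pmod 2$ --- and $0$ otherwise. This proves (i).

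\emph{Parts (ii) and (iii).} Part (ii) follows by summing (i) over all $\rho$ in the decomposition~(\ref{WDeltarhodecompeq}): the constraints $m_1=m_2=m$, $n_1=l_2+l$, $n_2=-(l_2+l)$ determine $\rho$ uniquely, so at most one summand is nonzero, and it equals $1$ exactly under the stated conditions. For part (iii), the isomorphisms~(\ref{Wll1l2isoeq}) reduce to $W^\Delta_{m,0,l_2+l}$, and~(\ref{WDeltaWDeltal1eq}) gives $\dim W^\Delta_{m,l,l_2}=\sum_{l_1\in\Z}\dim W^\Delta_{m,0,l_2+l,l_1-l}$; by part (ii) the nonzero terms occur precisely for $l_1-l\in\{-m,-m+2,\dots,m\}$ when $l_2+l\equiv m\pmod2$, giving $m+1$ in that case and $0$ otherwise. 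The only genuinely delicate step is the Lie-algebra bookkeeping in the third paragraph --- correctly realizing $\hat\zeta_1,\hat\zeta_2,\hat r(\theta)$ as one-parameter subgroups of $K^{G_2}_\infty$ and tracking the signs through the passage to the contragredient $\tilde\rho$; everything else is a formal consequence of Peter--Weyl and Lemma~\ref{rhom1m2n1n2characterizationlemma}.
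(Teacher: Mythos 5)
Your proof is correct and follows essentially the same route as the paper: decompose under Peter--Weyl, constrain the $\rho$-factor via Lemma~\ref{rhom1m2n1n2characterizationlemma} and~(\ref{WDeltacondeq3}), impose the left conditions as Lie-algebra eigenvalue equations on the $\tilde\rho$-factor, and then read off parts~(ii) and~(iii) via~(\ref{WDeltarhodecompeq}), (\ref{WDeltaWDeltal1eq}) and~(\ref{Wll1l2isoeq}). One small slip in the write-up: adding and subtracting the two $\hat\zeta$-equations gives $n_1=l_2+l$ and $a=b$ directly, with no sign ambiguity $2n_1=\pm 2(l_2+l)$ to resolve; the stated conclusion is nevertheless correct.
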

\begin{proof} i) By the right $K_\infty^H$-invariance of functions in $W^\Delta_{m,0,l_1-l,l_2+l}$  and Lemma \ref{rhom1m2n1n2characterizationlemma} ii), if $\tilde\rho\otimes\rho$ contributes to $W^\Delta_{m,0,l_2+l,l_1-l}$, then necessarily $m_1=m_2$ and $n_1=-n_2$. Condition (\ref{WDeltacondeq3}) forces $m_1=m_2=m$. Assume all of this is satisfied, say $\rho=\rho_{m,n,m,-n}$. Then, by Lemma \ref{rhom1m2n1n2characterizationlemma} ii), there exists a non-zero vector $v_0\in\rho$, unique up to multiples, such that $v_0$ is fixed by $K^H$. Hence, any element $w\in W^\Delta_{m,0,l_2+l,l_1-l}\cap(\tilde\rho\otimes\rho)$ is of the form $w=v\otimes v_0$ for some $v\in\tilde\rho=\rho_{m,-n,m,n}$. Taking into account that the first element of the center of $\mathfrak{k}_\C$ in (\ref{kcentereq}) acts trivially on $W^\Delta_{m,0,l_2+l,l_1-l}$, any element $\Phi$ of this space has the following transformation properties under left translation $L$,
\begin{alignat}{2}
 L(U_1-U_2)\Phi&=(l_1-l)\Phi,\qquad&L(V_1-V_2)\Phi
  &=(l_1-l)\Phi,\label{Wrhocontributionlemmaeq1}\\
 L(U_1+U_2)\Phi&=-(l_2+l)\Phi,\qquad
  &L(V_1+V_2)\Phi&=(l_2+l)\Phi.\label{Wrhocontributionlemmaeq2}
\end{alignat}
Since $U_1+U_2$ and $V_1+V_2$ are in the center, (\ref{Wrhocontributionlemmaeq2}) implies that $R(U_1+U_2)\Phi=(l_2+l)\Phi$ and $R(V_1+V_2)\Phi=-(l_2+l)\Phi$, where $R$ is right translation. It follows that $n=l_2+l$. This number must have the same parity as $m$. From (\ref{Wrhocontributionlemmaeq1}) we conclude that $v$ is a vector of weight $(l_1-l,l_1-l)$ in $\tilde\rho$. There exists such a vector $v$ in $\tilde\rho$ if and only if $-m\leq l_1-l\leq m$ and $l_1-l\equiv m$ mod $2$, and in this case $v$ is unique up to multiples.

\vspace{2ex}
ii) follows from i) and (\ref{WDeltarhodecompeq}).

\vspace{2ex}
iii) For $l=0$ the statement follows from ii) and (\ref{WDeltaWDeltal1eq}). For other values of $l$, it follows from the $l=0$ case and (\ref{Wll1l2isoeq}).
\end{proof}

Our next task will be to find an explicit formula for the function spanning the one-dimensional space $W^\Delta_{m,l,l_2,l_1}$. We define, for $g\in K^{G_2}_\infty$,
\begin{align*}
& \hat a(g)=(1,1)\text{--coefficient of }J(g\,^t\!g,i_2), \qquad
 \hat b(g)=(1,2)\text{--coefficient of }J(g\,^t\!g,i_2),\\
& \hat c(g)=(2,1)\text{--coefficient of }J(g\,^t\!g,i_2), \qquad
 \hat d(g)=(2,2)\text{--coefficient of }J(g\,^t\!g,i_2).
\end{align*}
Since they can be written in terms of matrix coefficients, these are $K^{G_2}_\infty$-finite functions. It is not difficult to calculate the action of $P_\pm,Q_\pm$, the torus elements and the Casimir elements on the functions $\hat a,\hat b,\hat c,\hat d$ under left and  right translation explicitly. The following lemma summarizes the results.
\begin{lemma}\label{Deltaabcdlemma}
 Let $m$ be a non-negative integer.
 \begin{enumerate}
  \item If $f=\hat a^{i_1}\,\hat b^{i_2}\,\hat c^{i_3}\,\hat d^{i_4}$ with non-negative integers $i_1,\ldots,i_4$ such that $i_1+i_2+i_3+i_4=m$, then, under  right translation,
   $$
    \Delta_i f^m = m(m+2) f^m\qquad\text{for }i=1,2.
   $$
  \item The functions $f$ as in i) are contained in $\tilde\rho\otimes\rho$ with $\rho=\rho_{m,m,m,-m}$ and are right invariant under $K^H_\infty$.
  \item Let $f=\hat b^{m-j}\,\hat c^{j}$ with $0\leq j\leq m$. Then, with $L$ being left translation,
   \begin{alignat}{2}
    L(U_1-U_2)f&=(m-2j)f,&\qquad L(V_1-V_2)f&=(m-2j)f,\label{Deltaabcdlemmaeq1}\\
    L(U_1+U_2)f&=-mf,&\qquad L(V_1+V_2)f&=mf.\label{Deltaabcdlemmaeq2}
   \end{alignat}
 \end{enumerate}
\end{lemma}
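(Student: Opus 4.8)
All three parts reduce to the explicit computation of the action of the eight basis elements $U_1\pm U_2$, $V_1\pm V_2$, $P_\pm$, $Q_\pm$ of $\mathfrak{k}_\C$ --- under both left and right translation --- on the four generators $\hat a,\hat b,\hat c,\hat d$, followed by the Leibniz rule for monomials. The starting point is the cocycle relation $J(g_1g_2,Z)=J(g_1,g_2\langle Z\rangle)J(g_2,Z)$ together with the elementary observation that transpose preserves $\mathfrak{k}_\C$, i.e.\ $X+{}^tX\in\mathfrak{k}_\C$ whenever $X\in\mathfrak{k}_\C$; this lets one differentiate $\hat a(ge^{tX})$ and $\hat a(e^{tX}g)$ at $t=0$ by combining the infinitesimal cocycle $\frac{d}{dt}|_{t=0} J(e^{tX},Z)$ with the differential of the $G_2^{+}(\R)$-action on $\HH_2$. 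I would also record here that $k\,{}^tk=I_4$ for $k\in K^H_\infty$ (the standard realization of $U(2)$ inside $\Sp_4(\R)$), which immediately gives the right $K^H_\infty$-invariance of $\hat a,\hat b,\hat c,\hat d$, hence of every monomial, as asserted in (ii).

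First I would carry out the degree-one computation: explicit formulas for $R(X)\hat x$ and $L(X)\hat x$ for each $X$ in the above list and each $\hat x\in\{\hat a,\hat b,\hat c,\hat d\}$. From the right-hand formulas one reads off that $\Delta_1\hat x=\Delta_2\hat x=3\hat x=1\cdot(1+2)\hat x$ and that the right $K^{G_2}_\infty$-weights are those of $\rho_{1,1,1,-1}$, which is the case $m=1$ of (i) and (ii); from the left-hand formulas one gets the eigenvalues $L(U_1-U_2)\hat b=\hat b$, $L(U_1-U_2)\hat c=-\hat c$, $L(U_1+U_2)\hat b=-\hat b$, $L(U_1+U_2)\hat c=-\hat c$, and the analogues for $V_1\pm V_2$, which is the case $m=1$ of (iii).

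Next, for a degree-$m$ monomial $f=\hat a^{i_1}\hat b^{i_2}\hat c^{i_3}\hat d^{i_4}$ I would apply the Leibniz rule. For the first-order operators $U_1\pm U_2$, $V_1\pm V_2$ this is immediate and yields (iii) --- e.g.\ $L(U_1-U_2)(\hat b^{m-j}\hat c^{j})=\big((m-j)-j\big)f=(m-2j)f$ --- as well as the central-character data $R(U_1+U_2)f=mf$, $R(V_1+V_2)f=-mf$ needed to pin down the $K^{G_2}_\infty$-type in (ii). For the second-order Casimir $\Delta_i$ the Leibniz rule produces, besides the diagonal terms, polarized cross-terms; writing $B_1(f,g)=(U_1-U_2)f\,(U_1-U_2)g+2\big(P_+f\,P_-g+P_-f\,P_+g\big)$ (and $B_2$ analogously with $V_1-V_2,Q_\pm$), one has $\Delta_i(fg)=(\Delta_if)g+f(\Delta_ig)+2B_i(f,g)$, and $B_i$ itself is a first-order derivation in each slot, so $B_i(fg,h)=f\,B_i(g,h)+g\,B_i(f,h)$. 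Using the degree-one formulas I would verify the key identities $B_i(\hat x,\hat y)=\hat x\hat y$ for all $\hat x,\hat y\in\{\hat a,\hat b,\hat c,\hat d\}$ --- equivalently, the fact that these four particular functions satisfy exactly the quadratic relation that forces every degree-$m$ monomial into the ``Cartan'' $K^{G_2}_\infty$-type $\rho_{m,m,m,-m}$. A short induction on $m$ then gives $B_i(f,g)=(\deg f)(\deg g)\,fg$ for monomials, and hence $\Delta_if=m(m+2)f$, which is (i); combined with the weight bookkeeping above this also gives (ii).

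The main obstacle is exactly the degree-one step together with the verification of the polarized identities $B_i(\hat x,\hat y)=\hat x\hat y$: differentiating $J(g\,{}^tg,i_2)$ demands careful, element-by-element bookkeeping of the $G_2^{+}(\R)$-action on $\HH_2$ and of the automorphy cocycle, and the ten-times-two identities $B_i(\hat x,\hat y)=\hat x\hat y$ are where the special structure of these four functions (as opposed to generic matrix coefficients of $\rho_{1,1,1,-1}$) genuinely enters. Everything afterwards --- Leibniz plus the two inductions --- is formal.
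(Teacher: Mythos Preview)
Your proposal is correct and follows exactly the route the paper indicates: the paper does not give a proof at all, merely stating that ``it is not difficult to calculate the action of $P_\pm,Q_\pm$, the torus elements and the Casimir elements on the functions $\hat a,\hat b,\hat c,\hat d$ under left and right translation explicitly,'' and that the lemma ``summarizes the results.'' Your organization via the polarized bilinear form $B_i$ and the induction $B_i(f,g)=(\deg f)(\deg g)\,fg$ is a clean way to package the Casimir step that the paper leaves implicit; the observation $k\,{}^tk=I_4$ for $k\in K^H_\infty$ giving right $K^H_\infty$-invariance is exactly the right shortcut for (ii).
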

Using this lemma, it is easy to verify that the function
$$
 \hat b^{\frac{m+l_1-l}2}\,\hat c^{\frac{m-l_1+l}2}
 (\hat a\hat d-\hat b\hat c)^{\frac{l_2+l-m}2}
$$
lies in $W^\Delta_{m,0,l_2+l,l_1-l}$, provided all exponents are integers and the first two are non-negative. In view of (\ref{Wll1l2isoeq}), we obtain the following result.
\begin{proposition}\label{W0l1l2distvecprop}
 Let $m$ be a non-negative integer, and $l,l_1,l_2$ be any integers. We assume that
 $|l_1-l|\leq m$ and $l_1-l\equiv l_2+l\equiv m\;{\rm mod}\;2$, so that the space $W^\Delta_{m,l,l_2,l_1}$ is one-dimensional. Then this space is spanned by the function
 \begin{equation}\label{W0l1l2distvecpropeq1}
  \Phi^\#_{m,l,l_2,l_1}:=(-i)^m\,\hat b^{\frac{m+l_1-l}2}\,\hat c^{\frac{m-l_1+l}2}
  (\hat a\hat d-\hat b\hat c)^{\frac{l_2+l-m}2}D^{-l},
 \end{equation}
 where $D(g)=\det(J(g,i_2))$. This function has the property that
 \begin{equation}\label{W0l1l2distvecpropeq2}
  \Phi^\#_{m,l,l_2,l_1}(\eta_0)=1,
 \end{equation}
 with $\eta_0$ as in (\ref{eta0defeq2}).
\end{proposition}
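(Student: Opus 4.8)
The plan is to show that $\Phi^\#_{m,l,l_2,l_1}$ is a nonzero element of $W^\Delta_{m,l,l_2,l_1}$ and then to compute its value at $\eta_0$; since $W^\Delta_{m,l,l_2,l_1}$ is one-dimensional under the stated hypotheses (Lemma \ref{Wrhocontributionlemma} ii) together with \eqref{Wll1l2isoeq}), any nonzero element spans it, so \eqref{W0l1l2distvecpropeq1} and \eqref{W0l1l2distvecpropeq2} follow together. First I would record that the hypotheses $|l_1-l|\le m$ and $l_1-l\equiv l_2+l\equiv m\bmod 2$ are exactly what is needed for the exponents $\tfrac{m+l_1-l}{2}$, $\tfrac{m-l_1+l}{2}$ to be nonnegative integers and $\tfrac{l_2+l-m}{2}$ to be an integer; the last may be negative, but this is harmless because $D(g)=\det J(g,i_2)$ and $\hat a\hat d-\hat b\hat c=\det J(g\,{}^t\!g,i_2)$ are nowhere vanishing on the compact group $K^{G_2}_\infty$ (indeed $J(g,i_2)$ is unitary there, so $|D(g)|=1$). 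All the powers occurring in \eqref{W0l1l2distvecpropeq1} thus define smooth functions, and being built from the matrix coefficients $\hat a,\hat b,\hat c,\hat d$ and from $D^{\pm1}$ they are $K^{G_2}_\infty$-finite.

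For the membership I would use the isomorphism $\Phi\mapsto\Phi D^l$ of \eqref{Wll1l2isoeq} to reduce to showing that $\psi:=(-i)^m\,\hat b^{\frac{m+l_1-l}{2}}\hat c^{\frac{m-l_1+l}{2}}(\hat a\hat d-\hat b\hat c)^{\frac{l_2+l-m}{2}}$ lies in $W^\Delta_{m,0,\,l_2+l,\,l_1-l}$, i.e.\ satisfies \eqref{WDeltacondeq1}--\eqref{WDeltacondeq4} with $l$ replaced by $0$, $l_2$ by $l_2+l$, and $l_1$ by $l_1-l$. This is the computation declared ``easy to verify'' just before the statement, and I would carry it out with Lemma \ref{Deltaabcdlemma}. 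Part i), together with $\Delta_iD=0$, gives the Casimir condition \eqref{WDeltacondeq3}. Part ii) gives the right $K^H_\infty$-invariance \eqref{WDeltacondeq2} (the automorphy factor being trivial here) and identifies the relevant $K^{G_2}_\infty$-type, under right translation, as $\rho_{m,m,m,-m}$. Part iii), applied to the degree-$m$ factor $\hat b^{\frac{m+l_1-l}{2}}\hat c^{\frac{m-l_1+l}{2}}$ with $j=\tfrac{m-l_1+l}{2}$ so that $m-2j=l_1-l$, gives the left-translation weights $l_1-l$ for $U_1-U_2,V_1-V_2$ and $\mp m$ for $U_1+U_2,V_1+V_2$, which translate into \eqref{WDeltacondeq4} and into the $\hat\zeta_i$-equivariance of \eqref{WDeltacondeq1}; the factor $\hat a\hat d-\hat b\hat c$ enters only as a ``scalar'' (right $K^H_\infty$-invariant, annihilated by $\Delta_1,\Delta_2$, of $\hat\zeta_i$-weight $2$), so raising it to the power $\tfrac{l_2+l-m}{2}$ merely shifts the $S^1$-weight in \eqref{WDeltacondeq1} from $m$ to $l_2+l$.

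Finally I would evaluate at $\eta_0=\tfrac1{\sqrt2}\begin{bmatrix}1&i&&\\i&1&&\\&&1&i\\&&i&1\end{bmatrix}=\mat{A}{0}{0}{A}$ with $A=\tfrac1{\sqrt2}\mat{1}{i}{i}{1}$. Since $\eta_0$ is block-diagonal, $\eta_0\langle i_2\rangle=A\,i_2\,A^{-1}=i_2$ and $\mu_2(\eta_0)=1$ (a quick check), so $\eta_0\in K^{G_2}_\infty$; moreover $J(\eta_0,i_2)=A$ has $\det A=1$, so $D(\eta_0)=1$, and since $A\,{}^t\!A=\mat{0}{i}{i}{0}$ one gets $J(\eta_0\,{}^t\!\eta_0,i_2)=A\,{}^t\!A=\mat{0}{i}{i}{0}$, hence $\hat a(\eta_0)=\hat d(\eta_0)=0$, $\hat b(\eta_0)=\hat c(\eta_0)=i$, and $(\hat a\hat d-\hat b\hat c)(\eta_0)=-i^2=1$. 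Substituting into \eqref{W0l1l2distvecpropeq1},
\[
 \Phi^\#_{m,l,l_2,l_1}(\eta_0)=(-i)^m\cdot i^{\frac{m+l_1-l}{2}}\cdot i^{\frac{m-l_1+l}{2}}\cdot 1\cdot 1=(-i)^m\,i^{m}=1,
\]
which also shows $\Phi^\#_{m,l,l_2,l_1}\ne 0$ and completes the proof; the prefactor $(-i)^m$ in \eqref{W0l1l2distvecpropeq1} is there precisely to cancel the $i^m$ contributed by $\hat b,\hat c$ at $\eta_0$.

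I expect the main difficulty to be organizational rather than conceptual: setting up the precise dictionary between the four defining conditions of $W^\Delta_{m,0,l_2+l,l_1-l}$ and the eigenvalue data of Lemma \ref{Deltaabcdlemma}, and in particular checking that $(\hat a\hat d-\hat b\hat c)^{\frac{l_2+l-m}{2}}$ really does behave as a scalar for all the operators in play even when the exponent is negative. Everything else --- the exponent arithmetic, the non-vanishing of $D$ and $\hat a\hat d-\hat b\hat c$, and the evaluation at $\eta_0$ --- is short and elementary.
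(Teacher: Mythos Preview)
Your proposal is correct and follows essentially the same approach as the paper: reduce via the isomorphism \eqref{Wll1l2isoeq} to membership in $W^\Delta_{m,0,l_2+l,l_1-l}$, then verify the defining conditions using Lemma \ref{Deltaabcdlemma}, and finally evaluate at $\eta_0$. In fact you supply more detail than the paper does---the paper merely says ``Using this lemma, it is easy to verify'' and does not spell out the evaluation \eqref{W0l1l2distvecpropeq2} at all, whereas your computation of $J(\eta_0\,{}^t\eta_0,i_2)=\mat{0}{i}{i}{0}$ and the resulting cancellation $(-i)^m i^m=1$ is exactly what is needed and explains the normalizing constant $(-i)^m$.
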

\subsubsection*{Special vectors in $I(s,\chi,\chi_0,\tau)$}
We return to the induced representation $I_\Phi(s,\chi,\chi_0,\tau)$, considered a subspace of the Borel induced representation $I_\Phi(s,\chi,\chi_0,\beta_1\times\beta_2)$. Since the functions $\Phi^\#_{m,l,l_2,l_1}$ defined in Proposition \ref{W0l1l2distvecprop} satisfy condition (\ref{Wsharpwelldeflemmaeq2}), they extend to elements of $I_\Phi(s,\chi,\chi_0,\beta_1\times\beta_2)$. We use the same notation for the extended functions.
\begin{lemma}\label{Phisharptaulemma}
 The function $\Phi^\#_{m,l,l_2,l_1}$ belongs to $I_\Phi(s,\chi,\chi_0,\tau)$ if and only if the weight $l_1$ occurs in $\tau$.
\end{lemma}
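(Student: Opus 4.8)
The plan is to use the explicit realization of $I_\Phi(s,\chi,\chi_0,\tau)$ as a subspace of $I_\Phi(s,\chi,\chi_0,\beta_1\times\beta_2)$. Recall that to a section $f$ of the latter, taking values in the standard model (\ref{gl2inducedeq}) of $\beta_1\times\beta_2$, one attaches $\Phi_f(g)=f(g)(1)$, and conversely $f$ is recovered from $\Phi=\Phi_f$ by $f(g)(h)=\Phi(m_2(h)g)$ for $h\in\GL_2(\R)$, using that $m_2(\GL_2(\R))\subseteq M^{(2)}(\R)$ together with the $M^{(2)}$-equivariance $f(m_2(h)g)=(\beta_1\times\beta_2)(h)f(g)$. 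Under this dictionary, $\Phi$ lies in the subspace $I_\Phi(s,\chi,\chi_0,\tau)$ exactly when its section takes values in $V_\tau\subseteq\beta_1\times\beta_2$, i.e.\ when $h\mapsto\Phi(m_2(h)g)$ lies in $V_\tau$ for every $g\in G_2(\R)$. Since $\Phi^\#_{m,l,l_2,l_1}$ already belongs to $I_\Phi(s,\chi,\chi_0,\beta_1\times\beta_2)$ by the remark preceding the lemma, it remains to analyze the section $f$ attached to it.

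First I would transport property (\ref{WDeltacondeq4}) across this correspondence. Writing $\hat{r}(\theta)=m_2(k_\theta)$ with $k_\theta=\mat{\cos\theta}{\sin\theta}{-\sin\theta}{\cos\theta}$, the $M^{(2)}$-equivariance gives $f(\hat{r}(\theta)g)=(\beta_1\times\beta_2)(k_\theta)f(g)$; evaluating at $1\in\GL_2(\R)$ and invoking (\ref{WDeltacondeq4}) yields $f(g)(k_\theta)=e^{il_1\theta}f(g)(1)=e^{il_1\theta}\Phi^\#_{m,l,l_2,l_1}(g)$ for all $\theta\in\R$ and $g\in G_2(\R)$. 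Next I invoke the Iwasawa decomposition $\GL_2(\R)=B\,\SO(2)$ (with $B$ the upper-triangular Borel, which contains the center and a matrix of determinant $-1$): an element of the standard model of $\beta_1\times\beta_2$ is determined by its restriction to $\SO(2)$, and if that restriction is a scalar multiple of $\theta\mapsto e^{il_1\theta}$ then the element is a vector of weight $l_1$ for the right $\SO(2)$-action (such a vector exists by the parity hypothesis of Proposition \ref{W0l1l2distvecprop}). Hence the identity above forces $f(g)=\Phi^\#_{m,l,l_2,l_1}(g)\,\varphi_{l_1}$, where $\varphi_{l_1}$ is the weight-$l_1$ vector of $\beta_1\times\beta_2$ normalized by $\varphi_{l_1}(1)=1$, independent of $g$.

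Finally, $f(g)\in V_\tau$ for all $g$ holds if and only if $\varphi_{l_1}\in V_\tau$: for the ``only if'' direction use $\Phi^\#_{m,l,l_2,l_1}(\eta_0)=1$ from (\ref{W0l1l2distvecpropeq2}), so that $f(\eta_0)=\varphi_{l_1}$; for the ``if'' direction, every $f(g)$ is a scalar multiple of $\varphi_{l_1}$. Since $V_\tau$ is $\SO(2)$-stable and the weight-$l_1$ subspace of $\beta_1\times\beta_2$ is one-dimensional, $\varphi_{l_1}\in V_\tau$ precisely when the weight $l_1$ occurs in $\tau$, which is the assertion. The argument has no genuinely hard step; the only things to be careful about are the bookkeeping in pushing (\ref{WDeltacondeq4}) through the $\Phi\leftrightarrow f$ dictionary and the elementary Iwasawa argument recovering $\varphi_{l_1}$ from its restriction to $\SO(2)$.
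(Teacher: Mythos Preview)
Your argument is correct and follows essentially the same route as the paper's proof: both identify membership in $I_\Phi(s,\chi,\chi_0,\tau)$ with the condition that the associated $M^{(2)}$-section takes values in $V_\tau\subset\beta_1\times\beta_2$, and both use property (\ref{WDeltacondeq4}) to see that this section value is (a scalar multiple of) the weight-$l_1$ vector $\varphi_{l_1}$. The paper phrases this via the Iwasawa factorization $\Phi(m_1m_2nk)=\delta_P^{s+1/2}\chi(m_1)\varphi(m_2)J(k)$ and checks that $m_2\mapsto\Phi(m_2)\delta_P(m_2)^{-s-1/2}$ has weight $l_1$, whereas you phrase it via the section $g\mapsto f(g)\in\beta_1\times\beta_2$ and verify $f(g)=\Phi^\#(g)\,\varphi_{l_1}$; these are the same computation in slightly different language, with your version spelling out the ``only if'' direction more explicitly via $\Phi^\#(\eta_0)=1$.

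One small inaccuracy: the recovery formula should read $f(g)(h)=|\det h|^{3(s+1/2)}\Phi(m_2(h)g)$, because the transformation rule (\ref{Isfctnspropeq}) carries the factor $|\mu_1^{-1}(b)|^{3(s+1/2)}$. This does not affect your argument, since you only evaluate at $h=k_\theta\in\SO(2)$ where $\det k_\theta=1$, but it is worth stating the formula correctly.
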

\begin{proof} As a subspace of $I_\Phi(s,\chi,\chi_0,\beta_1\times\beta_2)$, the representation $I_\Phi(s,\chi,\chi_0,\tau)$ consists of all functions $\Phi:\:G_2\rightarrow\C$ of the form
$$
 \Phi(m_1m_2nk)=\delta_P(m_1m_2)^{s+1/2}\chi(m_1)\varphi(m_2)J(k),
 \qquad m_i\in M_i(\R),\:n\in N(\R),\:k\in K^{G_2}_\infty,
$$
where $\varphi$ lies in $\chi_0\times\tau$, and where $J$ is an appropriate function on $K^{G_2}_\infty$. It follows that $\Phi\in I_\Phi(s,\chi,\chi_0,\beta_1\times\beta_2)$ lies in $I_\Phi(s,\chi,\chi_0,\tau)$ if and only if the function
$$
 M_2(\R)\ni m_2\longmapsto\Phi(m_2)\delta_P(m_2)^{-s-1/2}
$$
belongs to $\chi_0\times\tau$. Since $\Phi^\#_{m,l,l_2,l_1}$ satisfies (\ref{WDeltacondeq4}), the function $m_2\mapsto\Phi^\#_{m,l,l_2,l_1}(m_2)$ has weight $l_1$. The assertion follows.
\end{proof}

For simplicity, we will from now on let $\mathbf{c}=1$ for the rest of this section; this is all we need for the global application. Let the classical Whittaker function $W_{k,m}$ be the same as in \cite[p.\ 244]{Bu} or \cite[7.1.1]{MOS}. We fix a point $t^+\in\R_{>0}$, depending on $p$, such that
\begin{equation}\label{tplusdefeq}
 W_{\pm\frac{l_1}2, \frac p2}(t^+)\neq0\qquad\text{for all }l_1\in\Z.
\end{equation}
Note that, if $p$ is a positive integer (corresponding to $\tau$ being a discrete series representation), we can choose $t^+=1$, since $W_{\pm\frac{l_1}2, \frac p2}$ is essentially an exponential function. Let $W_{l_1}$ be the vector of weight $l_1$ in the $\psi^{-1}$ Whittaker model of $\tau$. Using differential operators and solving differential equations, one can show that there exist constants $a^+,a^-\in\C$ such that
\begin{equation}\label{W1Whittakerformulaeq}
 W_{l_1}(\mat{t}{0}{0}{1}) = \renewcommand{\arraystretch}{1.3}
  \left\{\begin{array}{ll}
 a^+\omega_\tau((4\pi t)^{1/2})W_{\frac{l_1}2, \frac p2}(4\pi t) & \hbox{ if } t > 0, \\
 a^-\omega_\tau((-4\pi t)^{1/2})
  W_{-\frac{l_1}2, \frac p2}(-4\pi t)& \hbox{ if } t <0 .\end{array}\right.
\end{equation}
Our choice of additive character implies that $a^+$ is non-zero as long as $l_1>0$. We will normalize the constant $a^+=a^+_{l_1,p,q}$ such that
$$
 W_{l_1}(\mat{t^+}{}{}{1})=1.
$$
i.e.,
\begin{equation}\label{aplusdefeq}
 a^+=a^+_{l_1,p,q}=\big(\omega_\tau((4\pi t^+)^{1/2})W_{\frac{l_1}2, \frac p2}(4\pi t^+)\big)^{-1}
 =(4\pi t^+)^{-q/2}\,W_{\frac{l_1}2, \frac p2}(4\pi t^+)^{-1}.
\end{equation}
Consider the Whittaker realization $I_W(s,\chi,\chi_0,\tau)$ of $I(s,\chi,\chi_0,\tau)$, with $\tau$ given in its $\psi^{-1}$ Whittaker model (see (\ref{Wsharpproperty1eq}), (\ref{Wsharpproperty2eq})).
We extent $W_{l_1}$ to a function on $G_1(\R)$ via the character $\chi_0$; see (\ref{extendedWformulaeq}). Using the Iwasawa decomposition, we define a function $W^\#_{m,l,l_2,l_1}$ in $I_W(s,\chi,\chi_0,\tau)$ by
\begin{equation}\label{Wsharpwelldeflemmaeq1}
  W^\#_{m,l,l_2,l_1}(m_1m_2nk,s)=(t^+)^{q/2}\delta_P^{s+1/2}(m_1m_2)\,\chi(m_1)\,W_{l_1}(m_2)\,\Phi^\#_{m,l,l_2,l_1}(k),
\end{equation}
where $m_1\in M^{(1)}(\R)$, $m_2\in M^{(2)}(\R)$, $n\in N(\R)$ and
$k\in K^{G_2}_\infty$, and where $\Phi^\#_{m,l,l_2,l_1}$ is the same function as in (\ref{W0l1l2distvecpropeq1}); this is well-defined by the transformation properties of $W_{l_1}$ and $\Phi^\#_{m,l,l_2,l_1}$. Note that

$$
 W^\#_{m,l,l_2,l_1}({\rm diag}(\sqrt{t^+},t^+,\sqrt{t^+},1)\eta_0,s)=1.
$$
There is an intertwining operator $\Phi\mapsto W_\Phi$ from $I_\Phi(s,\chi,\chi_0,\tau)$ to $I_W(s,\chi,\chi_0,\tau)$, which, in the region of convergence, is given by
\begin{equation}\label{WPhieq}
 W_\Phi(g)=\int\limits_\R e^{-2\pi ix}\,\Phi(\begin{bmatrix}1\\&&&1\\&&1\\&-1\end{bmatrix}\begin{bmatrix}1\\&1&&x\\&&1\\&&&1\end{bmatrix}g)\,dx.
\end{equation}
Outside the region of convergence the intertwining operator is given by the analytic continuation of this integral. This operator is simply an extension of a standard intertwining operator for the underlying $\GL_2(\R)$ representation $\beta_1\times\beta_2$. It is easy to see that under this intertwining operator the function $\Phi^\#_{m,l,l_2,l_1}$ maps to a multiple of $W^\#_{m,l,l_2,l_1}$. Let $\kappa_{l_1,p,q}$ be the constant such that
\begin{equation}\label{PhiWkappaeq}
 W_{\Phi^\#_{m,l,l_2,l_1}}=\kappa_{l_1,p,q}\,W^\#_{m,l,l_2,l_1}
\end{equation}
We will distinguish three disjoint cases A, B, C according to the type of $\tau$ and the constellation of its weights relative to the integer $l$ (which later will be a minimal $\GSp_4$ weight).
\begin{align}\label{ABCdefeq}
 &\bullet\quad\text{{\bf Case A}: Neither the weight $l$ nor the weight $l-1$ occur in $\tau$.}\nonumber\\
 &\bullet\quad\text{{\bf Case B}: The weight $l$ occurs in $\tau$.}\\
 &\bullet\quad\text{{\bf Case C}: The weight $l-1$ occurs in $\tau$.}\nonumber
\end{align}
Note that in Case A necessarily $\tau=\mathcal{D}_{p,q/2}$, a discrete series representation with Harish-Chandra parameter $p\geq l$ (and central character satisfying $a\mapsto a^q$ for $a>0$). In this case let us set $l_1=p+1$, which is the minimal weight. It satisfies $l_1\geq2$. In each of the three cases we will define a non-negative integer $m$ and a distinguished function $\Phi^\#$ as a linear combination of certain $\Phi^\#_{m,l,l_2,l_1}$ as in (\ref{W0l1l2distvecpropeq1}). The definition is as in the following table. The last column of the table shows $W^\#$, by definition the image of $\Phi^\#$ under the intertwining operator $\Phi\mapsto W_\Phi$.
\begin{equation}\label{ABCtableeq}\renewcommand{\arraystretch}{1.5}
 \begin{array}{cccc}
  \text{Case}&m&\Phi^\#&W^\#\\\hline
  \text{A}&l_1-l&\Phi^\#_{m,l,l_2,l_1}&\kappa_{l_1,p,q}W^\#_{m,l,l_2,l_1}\\
  \text{B}&0&\Phi^\#_{m,l,l_2,l}&\kappa_{l,p,q}W^\#_{m,l,l_2,l}\\
  \text{C}&1&\Phi^\#_{m,l,l_2,l+1}+\big(3s-\frac{p+q}2\big)\Phi^\#_{m,l,l_2,l-1}
   &\kappa_{l+1,p,q}W^\#_{m,l,l_2,l+1}\\
   &&&+\big(3s-\frac{p+q}2\big)\kappa_{l-1,p,q}W^\#_{m,l,l_2,l-1}
 \end{array}
\end{equation}
In all cases, by Lemma \ref{Phisharptaulemma}, the function $\Phi^\#$ lies in $I_\Phi(s,\chi,\chi_0,\tau)$.

\begin{theorem}\label{distinguishedvectorarchtheorem}
 Let $(\tau,V_\tau)$ be a generic, irreducible, admissible representation of $\GL_2(\R)$ with central character $\omega_{\tau}$. We realize $\tau$ as a subrepresentation of an induced representation $\beta_1\times\beta_2$, and define $p,q\in\C$ by (\ref{t1t2pqeq}).
 Let $l_2$ be an integer of the same parity as the weights of $\tau$. Let $\chi_0$ be the character of $\C^\times$ such that $\chi_0\big|_{\R^\times}=\omega_\tau$ and $\chi_0(\zeta)=\zeta^{l_2}$ for $\zeta\in S^1$, and let $\chi$ be the character of $\C^\times$ defined by (\ref{chi-lambda-char-condition-arch}).
 Assume that $l$ is a positive integer. Let $m$ and $\Phi^\#$ be chosen according to table (\ref{ABCtableeq}).
 \begin{enumerate}
  \item The function $\Phi^\#$ satisfies
   \begin{equation}\label{distinguishedvectorarchtheoremeq5}
    \Phi^\#(gk)=\det(J(k,i_2))^{-l}\Phi^\#(g)
     \qquad\text{for }g\in G_2(\R),\;k\in K_\infty^H
   \end{equation}
   and
   \begin{equation}\label{distinguishedvectorarchtheoremeq6}
    \Delta_1\Phi^\#=\Delta_2\Phi^\#=m(m+2)\Phi^\#.
   \end{equation}
  \item Assume we are in Case A or B. Then, up to scalars, $\Phi^\#$ is the unique $K^{G_2}_\infty$-finite element of $I_\Phi(s,\chi,\chi_0,\tau)$ with the properties (\ref{distinguishedvectorarchtheoremeq5}) and (\ref{distinguishedvectorarchtheoremeq6}).
  \item Assume we are in Case C. Then the space of $K^{G_2}_\infty$-finite functions in $I_\Phi(s,\chi,\chi_0,\tau)$ with the properties (\ref{distinguishedvectorarchtheoremeq5}) and (\ref{distinguishedvectorarchtheoremeq6}) is two-dimensional, spanned by $\Phi^\#_{m,l,l_2,l-1}$ and $\Phi^\#_{m,l,l_2,l+1}$.
 \end{enumerate}
\end{theorem}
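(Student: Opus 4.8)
The plan is to treat part (i) as immediate from the construction, and then to obtain (ii) and (iii) together by first reducing an arbitrary test function to the spaces $W^\Delta_{m,l,l_2}$ of this section and afterwards imposing membership in the \emph{smaller} induced representation $I_\Phi(s,\chi,\chi_0,\tau)$ via Lemma \ref{Phisharptaulemma}. For (i): by table \eqref{ABCtableeq}, in each of the three cases $\Phi^\#$ is a linear combination of functions $\Phi^\#_{m,l,l_2,l_1'}$ all sharing the \emph{same} value of $m$, and each of these lies in $W^\Delta_{m,l,l_2,l_1'}\subseteq W^\Delta_{m,l,l_2}$ by Proposition \ref{W0l1l2distvecprop} and \eqref{Wll1l2isoeq}. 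The defining conditions \eqref{WDeltacondeq2} and \eqref{WDeltacondeq3} of $W^\Delta_{m,l,l_2}$ are exactly the restrictions to $K^{G_2}_\infty$ of the asserted identities \eqref{distinguishedvectorarchtheoremeq5} and \eqref{distinguishedvectorarchtheoremeq6}; both are conditions on \emph{right} translation ($K^H_\infty\subseteq K^{G_2}_\infty$, and the Casimir operators lie in the enveloping algebra of $\mathfrak{k}_\C$, acting by right translation). Writing $g\in G_2(\R)$ in the Iwasawa form \eqref{realiwasawaeq} as $g=m_1m_2nk$ and noting that $gk'=m_1m_2n(kk')$ still has Iwasawa part $kk'\in K^{G_2}_\infty$ when $k'\in K^H_\infty$, both properties propagate from $K^{G_2}_\infty$ to all of $G_2(\R)$; here one uses that $K^H_\infty$ consists of orthogonal matrices fixing $i_2$, so the cocycle for the automorphy factor gives $\det J(kk',i_2)=\det J(k,i_2)\det J(k',i_2)$. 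This proves (i).

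For (ii) and (iii), let $\Phi$ be any $K^{G_2}_\infty$-finite element of $I_\Phi(s,\chi,\chi_0,\tau)$ satisfying \eqref{distinguishedvectorarchtheoremeq5} and \eqref{distinguishedvectorarchtheoremeq6}. Since $I_\Phi(s,\chi,\chi_0,\tau)\subseteq I_\Phi(s,\chi,\chi_0,\beta_1\times\beta_2)$, the rule \eqref{Wsharpproperty3eq} holds for $\Phi$, so $\Phi|_{K^{G_2}_\infty}$ obeys \eqref{Wsharpwelldeflemmaeq2}, i.e.\ \eqref{WDeltacondeq1}; together with \eqref{distinguishedvectorarchtheoremeq5}$=$\eqref{WDeltacondeq2} and \eqref{distinguishedvectorarchtheoremeq6}$=$\eqref{WDeltacondeq3} this gives $\Phi|_{K^{G_2}_\infty}\in W^\Delta_{m,l,l_2}$. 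A parity check ($l_2$ has the parity of the weights of $\tau$, while $m=p+1-l,\,0,\,1$ in Cases A, B, C) yields $l_2+l\equiv m\pmod 2$ in every case, so by Lemma \ref{Wrhocontributionlemma} (ii) and (iii) and \eqref{WDeltaWDeltal1eq} we have $W^\Delta_{m,l,l_2}=\bigoplus_{l_1'}W^\Delta_{m,l,l_2,l_1'}$, with $l_1'$ running over $\{l-m,\,l-m+2,\,\dots,\,l+m\}$, each summand one-dimensional and spanned by $\Phi^\#_{m,l,l_2,l_1'}$ (Proposition \ref{W0l1l2distvecprop}). Hence $\Phi=\sum_{l_1'}c_{l_1'}\Phi^\#_{m,l,l_2,l_1'}$. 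To cut this down I would use Lemma \ref{Phisharptaulemma} together with the weight computation in its proof: the $\GL_2(\R)$-section $m_2\mapsto\Phi(m_2)\delta_P(m_2)^{-s-\frac12}$ attached to $\Phi$ decomposes under the rotation subgroup $\SO(2)\subseteq\GL_2(\R)$ into the pieces $c_{l_1'}\cdot(\text{weight-}l_1'\text{ vector})$; since $\Phi\in I_\Phi(s,\chi,\chi_0,\tau)$ is equivalent to this section lying in $\chi_0\times\tau$, and $\chi_0\times\tau$ is $\SO(2)$-stable inside $\chi_0\times(\beta_1\times\beta_2)$ and contains the weight-$l_1'$ vector exactly when $l_1'$ is a weight of $\tau$, we get $c_{l_1'}=0$ whenever $l_1'$ is not a weight of $\tau$. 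Thus the space of $K^{G_2}_\infty$-finite $\Phi\in I_\Phi(s,\chi,\chi_0,\tau)$ satisfying \eqref{distinguishedvectorarchtheoremeq5}, \eqref{distinguishedvectorarchtheoremeq6} is the span of those $\Phi^\#_{m,l,l_2,l_1'}$ with $l-m\le l_1'\le l+m$ and $l_1'$ a weight of $\tau$.

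It then remains to count the weights of $\tau$ in the window $\{l-m,\dots,l+m\}$, and this case analysis is the step I expect to carry the real content. In Case A, $\tau=\mathcal{D}_{p,q/2}$ with $p\ge l$, so $m=p+1-l$ and the window is $\{2l-p-1,\dots,p+1\}$; the weights of $\tau$ are $\{\pm(p+1),\pm(p+3),\dots\}$, and as $l\ge1$ the lower endpoint $2l-p-1$ exceeds $-(p+1)$, so the unique weight of $\tau$ in the window is $l_1'=p+1=l_1$; the space is thus spanned by $\Phi^\#=\Phi^\#_{m,l,l_2,l_1}$ (nonzero by \eqref{W0l1l2distvecpropeq2}), giving (ii). In Case B, $m=0$, the window is the single point $\{l\}$, which is a weight of $\tau$ by hypothesis, and the space is spanned by $\Phi^\#=\Phi^\#_{0,l,l_2,l}$, giving (ii). In Case C, $m=1$ and the window is $\{l-1,l+1\}$; $l-1$ is a weight of $\tau$ by hypothesis, and $l+1$ is one as well --- if $\tau$ is an irreducible principal series this is clear (all its weights are the integers of the parity of $l-1$), and if $\tau=\mathcal{D}_{p,q/2}$ then $l-1\ge0$ (as $l\ge1$) together with the fact that $0$ is not a weight of a discrete series forces $l-1\ge p+1$, hence $l+1\ge p+3$ is a weight --- so the space is two-dimensional, spanned by $\Phi^\#_{1,l,l_2,l-1}$ and $\Phi^\#_{1,l,l_2,l+1}$, giving (iii). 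The genuine difficulty is exactly this dichotomy between one and two weights in the window, plus making sure the reduction to the coefficient pattern above survives the bookkeeping with parities and with left- versus right-$\SO(2)$-weights; the rest is formal once Lemmas \ref{Wrhocontributionlemma} and \ref{Phisharptaulemma} and Proposition \ref{W0l1l2distvecprop} are in hand.
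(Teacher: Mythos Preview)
Your proof is correct and follows essentially the same route as the paper's: part (i) is immediate from $\Phi^\#\in W^\Delta_{m,l,l_2}$, and for (ii)--(iii) you restrict to $K^{G_2}_\infty$ to land in $W^\Delta_{m,l,l_2}$, decompose via Lemma \ref{Wrhocontributionlemma} and Proposition \ref{W0l1l2distvecprop}, and then use Lemma \ref{Phisharptaulemma} to kill the coefficients $c_{l_1'}$ for which $l_1'$ is not a weight of $\tau$. Your case analysis of which weights of $\tau$ fall in the window $\{l-m,\dots,l+m\}$ is exactly the paper's argument, only spelled out in more detail (in particular your verification in Case C that $l+1$ is automatically a weight whenever $l-1$ is, and your explicit justification that the $\SO(2)$-weight projection respects the subrepresentation $\chi_0\times\tau$, are steps the paper leaves implicit).
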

\begin{proof} i) is obvious, since $\Phi^\#$ lies in $W^\Delta_{m,l,l_2}$.

\vspace{2ex}
ii) Assume first we are in Case A. By our hypotheses, $0<l<l_1$. Assume that $\Phi\in I_\Phi(s,\chi,\chi_0,\tau)$ is $K^{G_2}_\infty$-finite and satisfies (\ref{distinguishedvectorarchtheoremeq5}) and (\ref{distinguishedvectorarchtheoremeq6}). Then, evidently, the restriction of $\Phi$ to $K^{G_2}_\infty$ lies in $W^\Delta_{m,l,l_2}$. By (\ref{WDeltaWDeltal1eq}) and Proposition \ref{W0l1l2distvecprop},
$$
 W^\Delta_{m,l,l_2}=\bigoplus_{\substack{j\in\Z\\|j-l|\leq m\\j-l\equiv m\;{\rm mod}\;2}}\C\Phi^\#_{m,l,l_2,j}.
$$
If a $\Phi^\#_{m,l,l_2,j}$ occurring in this direct sum is an element of $I_\Phi(s,\chi,\chi_0,\tau)$, then, by Lemma \ref{Phisharptaulemma}, the weight $j$ occurs in $\tau$. Since $\tau$ has minimal weight $l_1$, this implies $j\leq-l_1$ or $j\geq l_1$. The first inequality leads to a contradiction, and the second inequality implies $j=l_1$. This proves the uniqueness in Case A. In Case B, as before, the restriction of any $K^{G_2}_\infty$-finite $\Phi\in I_\Phi(s,\chi,\chi_0,\tau)$ satisfying (\ref{distinguishedvectorarchtheoremeq5}) and (\ref{distinguishedvectorarchtheoremeq6}) to $K^{G_2}_\infty$ lies in $W^\Delta_{m,l,l_2}$. By Lemma \ref{Wrhocontributionlemma}, this space is one-dimensional.

\vspace{2ex}
iii) Again, the restriction of any $K^{G_2}_\infty$-finite $\Phi\in I_\Phi(s,\chi,\chi_0,\tau)$ satisfying (\ref{distinguishedvectorarchtheoremeq5}) and (\ref{distinguishedvectorarchtheoremeq6}) to $K^{G_2}_\infty$ lies in $W^\Delta_{m,l,l_2}$. By Lemma \ref{Wrhocontributionlemma}, this space is two-dimensional.
\end{proof}

Since the functions $\Phi^\#$ and $W^\#$ have the same right transformation properties, the following is an immediate consequence of Theorem \ref{distinguishedvectorarchtheorem}.
\begin{corollary}\label{distinguishedvectorarchtheoremcor}
 Let the non-negative integer $m$ and the function $W^\#$ in $I_W(s,\chi,\chi_0,\tau)$ be chosen according to table (\ref{ABCtableeq}).
 \begin{enumerate}
  \item The function $W^\#$ satisfies
   \begin{equation}\label{distinguishedvectorarchtheoremcoreq5}
    W^\#(gk)=\det(J(k,i_2))^{-l}W^\#(g)
     \qquad\text{for }g\in G_2(\R),\;k\in K_\infty^H
   \end{equation}
   and
   \begin{equation}\label{distinguishedvectorarchtheoremcoreq6}
    \Delta_1W^\#=\Delta_2W^\#=m(m+2)W^\#.
   \end{equation}
  \item Assume we are in Case A or B. Then, up to scalars, $W^\#$ is the unique $K^{G_2}_\infty$-finite element of $I_W(s,\chi,\chi_0,\tau)$ with the properties (\ref{distinguishedvectorarchtheoremcoreq5}) and (\ref{distinguishedvectorarchtheoremcoreq6}).
  \item Assume we are in Case C. Then the space of $K^{G_2}_\infty$-finite functions in $I_W(s,\chi,\chi_0,\tau)$ with the properties (\ref{distinguishedvectorarchtheoremcoreq5}) and (\ref{distinguishedvectorarchtheoremcoreq6}) is two-dimensional, spanned by $W^\#_{m,l,l_2,l-1}$ and $W^\#_{m,l,l_2,l+1}$.
 \end{enumerate}
\end{corollary}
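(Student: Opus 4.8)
The plan is to deduce the corollary directly from Theorem \ref{distinguishedvectorarchtheorem} by means of the intertwining operator $\Phi\mapsto W_\Phi$ of (\ref{WPhieq}). Two structural facts drive everything. First, by table (\ref{ABCtableeq}) the function $W^\#$ is, in each of Cases A, B and C, exactly the image $W_{\Phi^\#}$ of $\Phi^\#$ under this operator: the formulas in the last column of the table are nothing but the expansion of $W_{\Phi^\#}$ using (\ref{PhiWkappaeq}). Second, $\Phi\mapsto W_\Phi$ is $G_2(\R)$-equivariant — a glance at (\ref{WPhieq}) shows it commutes with right translation, even after analytic continuation in $s$ — so it carries the right $K^H_\infty$-transformation property (\ref{distinguishedvectorarchtheoremeq5}) to (\ref{distinguishedvectorarchtheoremcoreq5}) and the Casimir eigenvalue equations (\ref{distinguishedvectorarchtheoremeq6}) to (\ref{distinguishedvectorarchtheoremcoreq6}).

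Granting these, part i) is immediate: $\Phi^\#$ satisfies (\ref{distinguishedvectorarchtheoremeq5}) and (\ref{distinguishedvectorarchtheoremeq6}) by Theorem \ref{distinguishedvectorarchtheorem} i), hence $W^\#=W_{\Phi^\#}$ satisfies (\ref{distinguishedvectorarchtheoremcoreq5}) and (\ref{distinguishedvectorarchtheoremcoreq6}) by equivariance. For parts ii) and iii), I would first note that, restricted to $I_\Phi(s,\chi,\chi_0,\tau)\subset I_\Phi(s,\chi,\chi_0,\beta_1\times\beta_2)$, the operator $\Phi\mapsto W_\Phi$ is — fibrewise over the Levi factor $M^{(2)}(\R)$ in the Iwasawa decomposition — precisely the passage from the standard model of $\tau$ to its $\psi^{-1}$-Whittaker model; this is the content of the remark following (\ref{WPhieq}) together with the description of $I_\Phi(s,\chi,\chi_0,\tau)$ used in the proof of Lemma \ref{Phisharptaulemma}. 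As $\tau$ is generic and irreducible, that $\GL_2(\R)$-map is an isomorphism, so $\Phi\mapsto W_\Phi$ is an isomorphism $I_\Phi(s,\chi,\chi_0,\tau)\to I_W(s,\chi,\chi_0,\tau)$. By equivariance it maps the space of $K^{G_2}_\infty$-finite vectors in $I_\Phi(s,\chi,\chi_0,\tau)$ satisfying (\ref{distinguishedvectorarchtheoremeq5}), (\ref{distinguishedvectorarchtheoremeq6}) \emph{isomorphically} onto the space of $K^{G_2}_\infty$-finite vectors in $I_W(s,\chi,\chi_0,\tau)$ satisfying (\ref{distinguishedvectorarchtheoremcoreq5}), (\ref{distinguishedvectorarchtheoremcoreq6}). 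In Cases A and B, Theorem \ref{distinguishedvectorarchtheorem} ii) says the source space is one-dimensional, spanned by $\Phi^\#$; hence the target is one-dimensional, spanned by $W^\#=W_{\Phi^\#}$. In Case C, Theorem \ref{distinguishedvectorarchtheorem} iii) says the source space is two-dimensional, spanned by $\Phi^\#_{m,l,l_2,l-1}$ and $\Phi^\#_{m,l,l_2,l+1}$; hence the target is two-dimensional, spanned by their images, which by (\ref{PhiWkappaeq}) are nonzero multiples of $W^\#_{m,l,l_2,l-1}$ and $W^\#_{m,l,l_2,l+1}$ (the scalars $\kappa_{l\mp1,p,q}$ being nonzero precisely because the isomorphism sends the nonzero vectors $\Phi^\#_{m,l,l_2,l\pm1}$ to nonzero vectors).

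The only point deserving genuine care — the main obstacle, modest as it is — is the claim that $\Phi\mapsto W_\Phi$ is honestly injective (indeed an isomorphism) on $I_\Phi(s,\chi,\chi_0,\tau)$, rather than merely well defined after analytic continuation of (\ref{WPhieq}). As indicated, this reduces to the standard fact that the $\psi^{-1}$-Whittaker functional is nonzero on any generic irreducible representation of $\GL_2(\R)$, so that $\tau$ is realized faithfully in its Whittaker model; and since the spaces appearing in ii) and iii) are cut out by conditions independent of $s$, even an isomorphism valid for $s$ outside a discrete set would already suffice. Everything else is a direct transcription of the proof of Theorem \ref{distinguishedvectorarchtheorem}.
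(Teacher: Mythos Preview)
Your argument is correct and is essentially the paper's own: the paper dispatches the corollary in one sentence — ``Since the functions $\Phi^\#$ and $W^\#$ have the same right transformation properties, the following is an immediate consequence of Theorem \ref{distinguishedvectorarchtheorem}'' — and you have simply made explicit the mechanism (equivariance of $\Phi\mapsto W_\Phi$, and the fact that it is an isomorphism of models of the same induced representation) that the paper leaves implicit. Your care about injectivity of the Whittaker map is well placed but, as you note, unnecessary once one observes that $I_\Phi$ and $I_W$ are just two realizations of the same abstract $I(s,\chi,\chi_0,\tau)$, so the $K^{G_2}_\infty$-type analysis of Lemma \ref{Wrhocontributionlemma} and Lemma \ref{Phisharptaulemma} applies verbatim in either model.
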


\subsubsection*{A relation between unknown constants}
In this section we defined the constants $\kappa_{l_1,p,q}$ and $a^+_{l_1,p,q}$; see (\ref{PhiWkappaeq}) and (\ref{aplusdefeq}). Note that these constants also depend on the choice of the point $t^+$, which is not reflected in the notation. We do not know the explicit value of any of these constants. However, the following lemma describes a relation between these constants which will become important in the proof of Lemma \ref{Xarchlemma}.
\begin{lemma}\label{kappaalemma}
 Let $\beta_1$ and $\beta_2$ be characters of $\R^\times$ such that the induced representation $\beta_1\times\beta_2$ is irreducible. Let $p,q\in\C$ be as in (\ref{t1t2pqeq}). Then, for any integer $l$ whose parity is different from the parity of the weights of $\beta_1\times\beta_2$,
 \begin{equation}\label{kappaalemmaeq}
  \frac{\kappa_{l-1,p,q}\:a^+_{l-1,p,q}}{\kappa_{l+1,p,q}\:a_{l+1,p,q}^+}=-\frac12(p+l).
 \end{equation}
\end{lemma}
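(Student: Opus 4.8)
The plan is to compute both sides of \eqref{kappaalemmaeq} by evaluating a suitable linear functional — namely the intertwining integral \eqref{WPhieq} — applied to the functions $\Phi^\#_{l\pm1,l,l_2,\text{weight}}$, but restricted to a small piece of $\GL_2(\R)$ where everything is explicit. More precisely, recall from \eqref{PhiWkappaeq} that $\kappa_{l_1,p,q}$ is defined by $W_{\Phi^\#_{m,l,l_2,l_1}} = \kappa_{l_1,p,q}\,W^\#_{m,l,l_2,l_1}$, and $a^+_{l_1,p,q}$ is the normalizing constant in \eqref{aplusdefeq} making $W_{l_1}(\mathrm{diag}(t^+,1))=1$. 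The key observation is that the product $\kappa_{l_1,p,q}\,a^+_{l_1,p,q}$ has an interpretation that is \emph{independent of the normalization of the Whittaker function}: it is essentially the ratio between the value at a fixed base point of the $\GL_2$-intertwining image of the induced-from-$\beta_1\times\beta_2$ vector restricted to $M^{(2)}$, and the value of the base vector $\Phi^\#$. So the plan is to descend the whole identity from $G_2$ to the underlying $\GL_2$ and recognize \eqref{kappaalemmaeq} as a statement purely about the standard $\GL_2(\R)$ intertwining operator $\beta_1\times\beta_2 \to \beta_2\times\beta_1$ acting on the two specific weight vectors of weights $l-1$ and $l+1$ (these being the two weights of the parity opposite to that of $\beta_1\times\beta_2$ that are closest to $l$).

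The concrete steps I would carry out: (1) Using \eqref{W0l1l2distvecpropeq1} and the structure of $\Phi^\#$ in Cases relevant here, reduce the left side of \eqref{WPhieq} — after restricting $g$ to lie in $M^{(2)}(\R) = \{m_2(g'):g'\in G_1(\R)\}$ — to the classical $\GL_2(\R)$ intertwining integral $\int_\R \varphi_j(w n(x) g')\,dx$ for the section $\varphi_j$ in $\beta_1\times\beta_2$ of weight $j$ ($j = l\pm1$). (2) Evaluate this $\GL_2$ intertwining integral on the element $\mathrm{diag}(t^+,1)$ (or rather the base point used in \eqref{Wsharpwelldeflemmaeq1}), using the explicit Whittaker-function formula \eqref{W1Whittakerformulaeq}–\eqref{aplusdefeq}: the intertwining integral of the weight-$j$ vector in $\beta_1\times\beta_2$ produces a multiple of $W_j$, and the multiple times $a^+_{j,p,q}$ is exactly $\kappa_{j,p,q}\,a^+_{j,p,q}$ by definition of $\kappa$. (3) Now the ratio in \eqref{kappaalemmaeq} becomes the ratio of the two $\GL_2$ intertwining-operator proportionality constants for the weights $l-1$ and $l+1$. (4) Compute this ratio. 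The cleanest way is to note that the intertwining operator $\beta_1\times\beta_2\to\beta_2\times\beta_1$ commutes with the raising/lowering operators in the Lie algebra $\mathfrak{sl}_2(\C)$ up to known scalars: applying the weight-raising operator $E_+$ carries the weight-$(l-1)$ vector to a multiple of the weight-$(l+1)$ vector, and the two multiples (one in the domain, one in the target) differ precisely by the factor $-\tfrac12(p+l)$. This is a one-dimensional computation with the $\mathfrak{gl}_2$ action on the induced model \eqref{gl2inducedeq}, using $t_1-t_2 = p$ and the parity hypothesis.

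**Main obstacle.** The bookkeeping of constants is the delicate part: I must be careful that the "hidden" dependence on $t^+$ genuinely cancels in the ratio (it does, because both $\kappa$'s and both $a^+$'s are defined via the same $t^+$, and the $t^+$-dependence of $\kappa_{j,p,q}\,a^+_{j,p,q}$ is through a factor depending only on $p,q$ and not on $j$ — this needs to be checked from \eqref{W1Whittakerformulaeq}), and that the reduction from $G_2$ down to $\GL_2$ in step (1) does not introduce a $j$-dependent discrepancy from the $M^{(1)}$-part or the archimedean $\Phi^\#$-factor $\hat b^{(m+l_1-l)/2}\hat c^{(m-l_1+l)/2}(\hat a\hat d-\hat b\hat c)^{\cdots}D^{-l}$. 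Once the problem is correctly localized to the $\mathfrak{sl}_2$-triple acting on the two opposite-parity weight vectors straddling $l$, the value $-\tfrac12(p+l)$ should drop out of the standard recursion for the Jacquet integral / intertwining operator on $\GL_2(\R)$; establishing that localization cleanly, rather than the final Lie-algebra computation, is where the real work lies. I would also double-check the sign and the precise shift ($p+l$ versus $p-l$ or $p+l-1$) against the known special case where $\tau$ is a discrete series of Harish-Chandra parameter $p$, using $t^+=1$ and the exponential form of $W_{k,m}$.
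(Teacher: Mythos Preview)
Your overall strategy is correct and is essentially the one the paper uses: reduce everything to the underlying $\GL_2(\R)$, interpret $\kappa_{l_1,p,q}$ via the Jacquet integral $\varphi\mapsto W_\varphi$ of \eqref{archgl2whittakereq1}, and then compare adjacent weights using a Lie-algebra raising/lowering operator and the fact that this operator commutes with the intertwiner. The paper carries this out with the lowering element $L=\tfrac12\left[\begin{smallmatrix}1&-i\\-i&-1\end{smallmatrix}\right]$: one computes $\tau(L)\varphi_k=\lambda_k\varphi_{k-2}$ and $\tau(L)W_k=\mu_kW_{k-2}$, whence $\kappa_k\mu_k=\lambda_k\kappa_{k-2}$, and the identity \eqref{kappaalemmaeq} drops out after substituting $k=l+1$.

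There is, however, a genuine gap in your step~(4). You write that the target of the relevant intertwining operator is $\beta_2\times\beta_1$ and that the final step ``is a one-dimensional computation with the $\mathfrak{gl}_2$ action on the induced model.'' This is not right on either count. The operator defined in \eqref{WPhieq} (and its $\GL_2$ version \eqref{archgl2whittakereq1}) lands in the \emph{Whittaker model}, not in $\beta_2\times\beta_1$; and only half of the computation---namely $\lambda_k=\tfrac{p+1-k}{2}$---takes place in the induced model. The other half is the action of $L$ on the \emph{normalized} Whittaker vectors $W_k$, and this is where the constants $a^+_{k,p,q}$ enter: one has to compute $\mu_k$, which amounts to differentiating $w_k(t)=a^+_{k,p,q}(4\pi t)^{q/2}W_{k/2,p/2}(4\pi t)$ and re-expressing the answer as a multiple of $w_{k-2}$. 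That step requires the classical identity
\[
 z\,W'_{\kappa,b}(z)=\Big(\kappa-\frac{z}{2}\Big)W_{\kappa,b}(z)-\Big(b^2-\Big(\kappa-\tfrac12\Big)^2\Big)W_{\kappa-1,b}(z),
\]
which yields $\mu_k=-\tfrac14\big(p^2-(k-1)^2\big)\,a^+_{k,p,q}/a^+_{k-2,p,q}$. Without this special-functions input you cannot close the argument; the induced-model side alone does not see the $a^+$ constants at all. Your suggestion to route through the standard intertwiner $\beta_1\times\beta_2\to\beta_2\times\beta_1$ does not help: by uniqueness of Whittaker models that only inserts a weight-independent scalar and you are still left with the same Whittaker-side computation.

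A minor point: your step~(1) is more elaborate than needed. The reduction from $G_2$ to $\GL_2$ is immediate once you observe that the integral \eqref{WPhieq} only integrates over the $M^{(2)}$-unipotent variable; restricted to $m_2(\GL_2(\R))$ it \emph{is} the $\GL_2$ Jacquet integral \eqref{archgl2whittakereq1}, so the constants $\kappa_{l_1,p,q}$ in \eqref{PhiWkappaeq} coincide with the $\GL_2$ ones by definition, without any need to track the $\hat a,\hat b,\hat c,\hat d$ factors of $\Phi^\#$.
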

\begin{proof}
We consider the intertwining operator $\varphi\mapsto W_\varphi$ from $\tau$ to the Whittaker model $\mathcal{W}(\beta_1\times\beta_2,\psi^{-\mathbf{c}})$ which, in the region of convergence, is given by
\begin{equation}\label{archgl2whittakereq1}
 W_\varphi(g)=\int\limits_\R e^{-2\pi ix}\varphi(\mat{}{1}{-1}{}\mat{1}{x}{}{1}g)\,dx.
\end{equation}
For a weight $k$ occurring in $\beta_1\times\beta_2$ let $\varphi_k$ be the element of $\beta_1\times\beta_2$ of weight $k$ satisfying $\varphi_k(1)=1$, and let $W_k$ be the element of $\mathcal{W}(\beta_1\times\beta_2,\psi^{-1})$ of weight $k$ satisfying $W_k(\mat{t^+}{}{}{1})=1$. Then $W_{\varphi_k}=\kappa_{k,p,q}(t^+)^{q/2}\,W_k$ with the same $\kappa_{k,p,q}$ as in (\ref{PhiWkappaeq}).

\vspace{3ex}
Recall that the constants $a^+_{k,p,q}$ defined in (\ref{aplusdefeq}) were designed so that
\begin{equation}\label{wkdefeq}
 w_k(t):=W_k(\mat{t}{}{}{1})=a_{k,p,q}^+\,(4\pi t)^{q/2}\,W_{\frac k2,\frac p2}(4\pi t)
\end{equation}
satisfies $w_k(t^+)=1$. If $L$ denotes the Lie algebra element $\frac12\mat{1}{-i}{-i}{-1}$, then straightforward calculations show that
\begin{equation}\label{kappaalemmaeq2}
  \tau(L)\varphi_k=\frac{p+1-k}2\varphi_{k-2},\qquad  (\tau(L)W_k)(\mat{t}{}{}{1})=\Big(-\frac q2-\frac k2+2\pi t\Big)w_k(t)+tw_k'(t)
\end{equation}
(where $\tau$ stands for the right translation action on both $\beta_1\times\beta_2$ and its Whittaker model). For $k$ one of the weights appearing in $\beta_1\times\beta_2$, define constants $\lambda_{k,p,q}$ and $\mu_{k,p,q}$ by
$$
 \tau(L)\varphi_k=\lambda_{k,p,q}\,\varphi_{k-2}\qquad\text{and}\qquad
 \tau(L)W_k=\mu_{k,p,q}W_{k-2}.
$$
By our normalizations, $\lambda_{k,p,q}=(\tau(L)\varphi_k)(1)$ and $\mu_{k,p,q}=(\tau(L)W_k)(\mat{t^+}{}{}{1})$.
Hence, by (\ref{kappaalemmaeq2}),
\begin{equation}\label{muvalueeq}
 \lambda_{k,p,q}=\frac{p+1-k}2,\qquad\mu_{k,p,q}=-\frac q2-\frac k2+2\pi t^++t^+w_k'(t^+).
\end{equation}
Following the function $\varphi_k$ through the commutative diagram
$$
 \begin{CD}
  \beta_1\times\beta_2@>\tau(L)>>\beta_1\times\beta_2\\
  @VVV @VVV\\
  \mathcal{W}(\tau,\psi^{-1})@>>\tau(L)>\mathcal{W}(\tau,\psi^{-1})
 \end{CD}
$$
we get the identity
\begin{equation}\label{kappamulambdarelationeq}
 \kappa_{k,p,q}\,\mu_{k,p,q}=\lambda_{k,p,q}\,\kappa_{k-2,p,q}.
\end{equation}
To further calculate the constant $\mu_{k,p,q}$, we will take the derivative of the function $w_k$ defined in (\ref{wkdefeq}). We will make use of the following identity for Whittaker functions,
\begin{equation}\label{whittakerderivativeeq}
 z\,W'_{k,b}(z)=\Big(k-\frac z2\Big)W_{k,b}(z)
 -\Big(b^2-\Big(k-\frac12\Big)^2\Big)W_{k-1,b}(z)
\end{equation}
(see \cite[7.2.1]{MOS}). Using this, one obtains from~\eqref{aplusdefeq},~\eqref{wkdefeq} and~\eqref{muvalueeq} that
$$
 \mu_{k,p,q}=-\frac{p^2-(k-1)^2}4\frac{a_{k,p,q}^+}{a^+_{k-2,p,q}}.
$$
Substituting the values of $\lambda_{k,p,q}$ and $\mu_{k,p,q}$ into (\ref{kappamulambdarelationeq}) and setting $k=l+1$ proves the asserted identity.
\end{proof}

\subsection{Intertwining operator: non-archimedean case}\label{inter-non-arch}
In this section let $F$ be $p$-adic. We use the notation from Theorem \ref{unique-W-theorem}. In addition, we will assume that $\Lambda\big|_{F^\times}=1$; this will be sufficient for our global applications. In this section we will calculate the function $K(s)$ given in (\ref{Ksformulaeq}).

\vspace{3ex}
Let us be precise about the measure on $N(F)$. Recall that $N(F)$ consists of one copy of $F$ and two copies of $L$. The measure on $F$ is the one that is self-dual with respect to the character $\psi$, and the measure on $L$ is the one that is self-dual with respect to the character $\psi\circ{\rm tr}_{L/F}$. Since we are assuming that $\psi$ has conductor $\OF$, it follows (see Sect.\ 2.2 of \cite{TT}) that
\begin{equation}\label{OFOLvolumeseq}
 {\rm vol}(\OF)=1\qquad\text{and}\qquad{\rm vol}(\OF_L)=N(\mathbf{d})^{-1/2}.
\end{equation}
Recall here that the norm of the different is the discriminant, and that $\mathbf{d}=\mathbf{b}^2-4\mathbf{a}\mathbf{c}$ generates the discriminant of $L/F$ by our conventions. If we let $\mathbf{d}\OF=\varpi^\delta\OF$ (where $\delta=0$ unless $L/F$ is a ramified field extension), then ${\rm vol}(\OF_L)=q^{-\delta/2}$. This explains the factor $q^{-\delta}$ in the following result.
\begin{proposition}[Gindikin-Karpelevich Formula]\label{GKprop}
 Let $\delta$ be the valuation of the discriminant of $L/F$ if $L/F$ is a ramified field extension, and $\delta=0$ otherwise. If $\tau$ is unramified, then
   $$
    K(s)=q^{-\delta}\frac{L(6s,\chi\big|_{F^\times})
    L(3s,\tau\times\mathcal{AI}(\Lambda)\times\chi\big|_{F^\times})}
    {L(6s+1,\chi\big|_{F^\times})
    L(3s+1,\tau\times\mathcal{AI}(\Lambda)\times\chi\big|_{F^\times})}.
   $$
\end{proposition}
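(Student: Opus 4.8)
The plan is to compute the integral
$$
 K(s)=\int\limits_{N(F)}W^\#(w_1n\eta_0,s,\chi,\chi_0,\tau)\,dn
$$
from \eqref{Ksformulaexpliciteq} by reducing it to a sum over the support of $W^\#$. Recall from Corollary \ref{distinguishedvectornonarchtheorem} that, since $\tau$ is now unramified, $n=a(\tau)=0$, so $W^\#$ is supported on the single double coset $P(F)\eta_0 K^H$ (here $K^H\Gamma(\P^0)=K^{G_2}$). Thus the first step is to determine, for which $n=n(x,y,z)\in N(F)$ (in the coordinates of \eqref{Ksformulaexpliciteq}) the element $w_1n\eta_0$ lies in $P(F)\eta_0 K^H$, and for those $n$ to write $w_1n\eta_0=m_1(\zeta)m_2(g)\eta_0 k$ explicitly. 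This is an Iwasawa-decomposition computation in $G_2(F)$: one has to carry out matrix manipulations over the quadratic algebra $L$, splitting into the inert, ramified, and split cases as in the proof of Proposition \ref{disj-doub-coset-decomp-general-n-prop}. The discriminant valuation $\delta$ enters here through $\mathrm{vol}(\OF_L)=q^{-\delta/2}$ as recorded in \eqref{OFOLvolumeseq}, and the two copies of $L$ inside $N(F)$ each contribute such a volume factor, producing the overall $q^{-\delta}$.

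The second step is to substitute the formula \eqref{Wsharpformulaeq},
$$
 W^\#(m_1(\zeta)m_2(g)\eta_0,s)=|N(\zeta)\mu_1^{-1}(g)|^{3(s+\frac12)}\chi(\zeta)\,W^{(0)}(g),
$$
with $W^{(0)}$ the normalized unramified Whittaker newform of $\tau$, and then integrate. Because $\tau$, $\chi$, $\chi_0$ and $\Lambda$ are all unramified, the resulting sum is a geometric-type series in $q^{-s}$ whose terms involve the Satake parameters of $\tau$, the value $\chi(\varpi_L)$, and $\Lambda(\varpi_L)$. Collecting these, one recognizes the sum as a ratio of local $L$-factors. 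The numerator $L(6s,\chi|_{F^\times})L(3s,\tau\times\AI(\Lambda)\times\chi|_{F^\times})$ comes from the ``new'' contributions to the sum, and the denominator $L(6s+1,\chi|_{F^\times})L(3s+1,\tau\times\AI(\Lambda)\times\chi|_{F^\times})$ from the normalization; the bookkeeping here is the familiar Gindikin--Karpelevich pattern, where integrating an unramified section over the unipotent radical of a maximal parabolic yields a quotient of completed $L$-functions attached to the adjoint action of the Levi on the nilradical. In practice one would reduce to known rank-one Gindikin--Karpelevich computations for the relevant root subgroups: the $F$-copy in $N(F)$ (an $\SL_2$ or $\GU(1,1)$-type root) contributes the $\GL_2\times\GL_2$ factor, and the two $L$-copies (the ``long'' roots) contribute the $\GL_1$ factor involving $\chi|_{F^\times}$, with the automorphic induction $\AI(\Lambda)$ entering precisely because the torus direction is governed by the quadratic extension $L/F$.

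The main obstacle I expect is the Iwasawa decomposition bookkeeping in the first step: writing $w_1 n(x,y,z)\eta_0$ in the form $m_1(\zeta)m_2(g)\eta_0 k$ requires tracking which of the three cases (inert/ramified/split) one is in, handling the conjugation by $\eta_0$ (which involves $\alpha,\bar\alpha$ from \eqref{alphadefeq}), and determining the exact domains of $(x,y,z)$ contributing to each ``shell'' of the support. Once $\zeta$ and $g$ are expressed as functions of $(x,y,z)$, the remaining integration is a routine — if somewhat lengthy — evaluation of a product of geometric series, using the Macdonald-type formula for the unramified Whittaker function $W^{(0)}$ on the diagonal torus. A useful sanity check along the way is that setting $\chi,\tau,\Lambda$ to be trivial should recover the classical $\GU(2,2)$ Gindikin--Karpelevich formula, and comparing with Furusawa's unramified zeta integral \eqref{unramifiedfactorseq} (which also features $L(6s+1,\chi_p|_{\Q_p^\times})L(3s+1,\tau_p\times\AI(\Lambda_p)\times\chi_p|_{\Q_p^\times})$ in the denominator) gives independent confirmation of the shape of the answer.
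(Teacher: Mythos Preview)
Your proposal is correct and matches the paper's approach: the paper simply says ``this formula can be obtained by a straightforward integral calculation; we omit the details,'' and what you outline is precisely such a calculation via the Iwasawa decomposition and the unramified Whittaker formula, with the $q^{-\delta}$ arising from the self-dual measure on $L$ as in \eqref{OFOLvolumeseq}. One small simplification: since $\tau$ is unramified we have $n=a(\tau)=0$, so $K^H\Gamma(\P^0)=K^{G_2}$ and $W^\#$ is the spherical section, right $K^{G_2}$-invariant and supported on all of $G_2(F)=P(F)K^{G_2}$; hence there is no support condition to check and you may drop $\eta_0$ entirely, computing $W^\#(w_1 n)$ directly from the Iwasawa decomposition $w_1 n = pk$ with $k\in K^{G_2}$.
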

This formula can be obtained by a straightforward integral calculation; we omit the details. For non-spherical $\tau$ it will be necessary to distinguish the inert, split and ramified cases. For our global applications it turns out that explicit knowledge of $K(s)$ at finitely many finite places is not necessary. Thus, we will only calculate $K(s)$ in the inert and split cases.

\vspace{3ex}
We will first assume that $L/F$ is an unramified field extension. We write the explicit formula (\ref{Ksformulaexpliciteq}) as $I_1+I_2$, where in $I_1$ the $z$-integration is restricted to the set $\OF_L$, and in $I_2$ the $z$-integration is restricted to $L\setminus\OF_L$. After some changes of variables, we get
\begin{equation}\label{intertwining-int-I1}
 I_1=\int\limits_{\OF_L}\int\limits_L\int\limits_F
 W^\#(\begin{bmatrix}1&&&\\y&1&&\\x&&1&-\bar{y}\\&&&1\end{bmatrix} w_1
 \begin{bmatrix}1&z&&\\&1&&\\&&1&\\&&-\bar{z}&1\end{bmatrix} \eta, s)\,dz\,dy\,dx
\end{equation}
and
\begin{align}\label{intertwining-int-I2}
 I_2&=\int\limits_{L\setminus\OF_L}\int\limits_L\int\limits_F|z\bar z|\,
  W^\#(\begin{bmatrix}\bar{z}^{-1}&&&1\\&z^{-1}&1&\\&&z&\\&&&\bar{z}\end{bmatrix}
  \begin{bmatrix}1&&&\\&1&&xz\bar{z}-\bar{y}z-y\bar{z}\\&&1&\\&&&1\end{bmatrix}\nonumber\\
 &\qquad \times \begin{bmatrix}1&&&\\y&1&&\\x&&1&-\bar{y}\\&&&1\end{bmatrix}
  \begin{bmatrix}&&&1\\-1&&&\\&-1&&\\&&-1&\end{bmatrix}
  \begin{bmatrix}1\\z^{-1}&1\\&&1&-\bar{z}^{-1}\\&&&1\end{bmatrix}\eta,s)\,dx\,dy\,dz.
\end{align}
The argument of $W^\#$ needs to be written as $pk$, where $p \in P(F)$ and $k \in K^{G_2}$.   For both $I_1$ and $I_2$ the key is decomposing the matrix $g = \begin{bmatrix}1&&&\\y&1&&\\x&&1&-\bar{y}\\&&&1\end{bmatrix}$ in this way. There are five cases depending on the values of $x$ and $y$. For instance, if $x \in \OF, y \in \OF_L$ then $g$ already lies in $K^{G_2}$. On the other hand if $x \in \OF, y \not\in \OF_L$ then
  \begin{equation}\label{xinO-ynotinO}
   g = \begin{bmatrix}-y^{-1}&-1&&\\&-y&&\\&&-\bar{y}&\\&&1&-\bar{y}^{-1}\end{bmatrix} \begin{bmatrix}&1&&\\-1&-y^{-1}&&\\-x\bar{y}^{-1}&&-\bar{y}^{-1}&1\\-x&&-1&\end{bmatrix}.
  \end{equation}
Similar matrix identities (which we omit for reasons of brevity) exist in the three remaining cases
$$
 x \notin \OF,\;y \in \OF_L,\qquad
 x \notin \OF,\;y \not\in \OF_L,\;y x^{-1} \in \OF_L,\qquad
 x \notin \OF,\;y \not\in \OF_L,\;y x^{-1} \not\in \OF_L.
$$
We now have ten cases, five for $z\in\OF_L$ and five for $z\in L\setminus\OF_L$. In each case let $k$ denote the $K^{G_2}$ component of the argument of $W^\#$. Using the fact that $W^\#$ is supported on $P(F)\eta_0 K^H\Gamma(\P^n)$ gives the following conditions on $k$. The notation is such that $y = y_1 + \alpha y_2$ and $z = z_1 + \alpha z_2$ with $y_1, y_2, z_1, z_2 \in \OF$.
$$
 \begin{array}{cccccc}
  \text{case}&x&y&z&yx^{-1}&\text{condition for $k$ to be in the support of $W^\#$}\\\hline
   i)&\in\OF&\in\OF_L&\in\OF_L&&y_2 + (x+yz+\bar{y}\bar{z})
   \in \OF^\times\quad\mbox{or}\quad z_2 - z\bar{z} \in \OF^\times\\
   ii)&\in\OF&\notin\OF_L&\in\OF_L&&z_2 - z\bar{z}\in \OF^\times\\
   iii)&\notin\OF&\in\OF_L&\in\OF_L&&\text{always}\\
   iv)&\notin\OF&\notin\OF_L&\in\OF_L&\in\OF_L&\frac{y_2}x-\frac{y\bar{y}}{x^2}z_2
    +\big(\frac{x+yz}{x}\big)\big(\frac{x+\bar{y}\bar{z}}{x}\big) \in \OF^\times\\
   v)&\notin\OF&\notin\OF_L&\in\OF_L&\notin\OF_L
    &\frac 1{\alpha-\bar{\alpha}}\big(\frac x{\bar{y}}-\frac xy\big) - z_2
     + \big(\frac xy + z\big)\big(\frac x{\bar{y}} + \bar{z}\big)\in \OF^\times\\
   vi)&\in\OF&\in\OF_L&\notin\OF_L&&\text{always}\\
   vii)&\in\OF&\notin\OF_L&\notin\OF_L&&\text{always}\\
   viii)&\notin\OF&\in\OF_L&\notin\OF_L&&\text{never}\\
   ix)&\notin\OF&\notin\OF_L&\notin\OF_L&\in\OF_L
    &\frac{y\bar{y}}{x^2}+\frac{y_2}x\in\OF^\times\\
   x)&\notin\OF&\notin\OF_L&\notin\OF_L&\notin\OF_L&\text{always}
 \end{array}
$$
According to these cases, $K(s)$ is the sum of ten integrals $I_{i)},\ldots,I_{x)}$. By the support conditions, $I_{viii)}=0$. We split the first case up into $i)a$, the case where $z_2-z\bar z\in\OF^\times$, and $i)b$, the case where $z_2-z\bar z\in\p$ and $y_2 + (x+yz+\bar{y}\bar{z})\in\OF^\times$.
To evaluate the function $W^\#$ in $I_1$ and $I_2$, we will write the argument of $W^\#$ as $p\eta\kappa$ with $p\in P(F)$ and $\kappa\in K^H$. Only the $p$ part is important for the evaluation. Once the argument of $W^\#$ is written as $p\eta\kappa$, it is straightforward to perform an initial evaluation of the integrals. We list only the results.
\begin{align*}
   I_{i)a}&=\Big(\int\limits_{\substack{\OF_L\\z_2-z\bar z\in\OF^\times}}
    \chi_0(z_2-z\bar z)\,dz\Big)\:W^{(0)}(\mat{}{1}{1}{})\\
   I_{i)b}&=\int\limits_{\substack{\OF_L\\z_2-z\bar z\in\p}}
    \int\limits_{\OF^\times}W^{(0)}(\mat{x}{}{}{1}\mat{1}{}{z_2-z\bar z}{1})\,dx\,dz\\
   I_{ii)}&=\int\limits_{\substack{\OF_L\\z_2-z\bar z\in\OF^\times}}
    \int\limits_{L\setminus\OF_L}
    |y|_L^{-3(s+\frac12)}\chi\Big(\frac1{z_2-z\bar z}\Big)\Lambda(\bar y)
    W^{(0)}(\mat{y\bar y\:}{y_2+yz+\bar y\bar z}{}{1}\mat{}{-1}{1}{})\,dy\,dz\\
   I_{iii)}&=\int\limits_{\OF_L}\int\limits_{\OF_L}\int\limits_{F\setminus\OF}
    |x|^{-6(s+\frac12)}\chi(x^{-1})
    W^{(0)}(\mat{\frac{y_2+x+yz+\bar y\bar z}x}{}{}{1}
    \mat{1}{}{\frac{z_2-z\bar z}x}{1})\,dx\,dy\,dz\\
   I_{iv)}&=\int\limits_{\OF_L}\int\limits_{\substack{\OF_L\\u\in \OF^\times}}
    \int\limits_{\substack{F\setminus\OF\\yx\notin\OF_L}}
    |x|^{-6s-1}\chi(x^{-1})\\
    &\hspace{20ex}W^{(0)}(\mat{1}{y\bar yx}{}{1}
     \mat{y_2-y\bar{y}z_2+(1+yz)(1+\bar y\bar z)}{}{}{1}
     \mat{1}{}{\frac{z_2-z\bar z}x}{1})\,dx\,dy\,dz\\
   I_{v)}&=\int\limits_{\substack{\OF_L\\z_2-z\bar z\in\OF^\times}}
    \int\limits_{L\setminus\OF_L}\int\limits_{F\setminus\OF}
    |x|^{-6s-1}\chi(x^{-1})\,|y|_L^{-3(s+\frac12)}\Lambda(\bar y)\\
    &\hspace{20ex} W^{(0)}(\mat{1}{y\bar yx}{}{1}
    \mat{y_2-y\bar yz_2+(1+yz)(1+\bar y\bar z)}{}{}{1}
    \mat{1}{}{\frac{z_2-z\bar z}x}{1})\,dx\,dy\,dz\\
   I_{vi)}&=\int\limits_{L\setminus\OF_L}|z|_L^{-3s-\frac12}\Lambda(z)
    W^{(0)}(\mat{1}{}{}{\;z\bar z(1-\frac{z_2}{z\bar z})}\mat{}{1}{-1}{})\,dz\\
   I_{vii)}&=\int\limits_{L\setminus\OF_L}\int\limits_{L\setminus\OF_L}
    |y|_L^{-3(s+\frac12)}|z|_L^{-3s-\frac12}\Lambda(\bar yz)\\
    &\hspace{20ex}
    \psi^{-\mathbf{c}}\Big(-\frac yz-\frac{\bar y}{\bar z}+\frac{y_2}{z_2-z\bar z}\Big)
    W^{(0)}(\mat{y\bar y}{}{}{z\bar z}
    \mat{1}{}{}{\frac{z_2-z\bar z}{z\bar z}}\mat{}{1}{-1}{})\,dy\,dz\\
   I_{ix)}&=\int\limits_{L\setminus\OF_L}
    \int\limits_{\substack{\OF_L^\times\\y_2-y\bar y\in\OF^\times}}
    \int\limits_{F\setminus\OF}
    |z|_L^{-3s-\frac12}\Lambda(z)|x|^{-6s-1}\chi(-x^{-1})
    \psi^{-\mathbf{c}}\Big(\frac{x(y\bar y+z\bar z+\bar yz+y\bar z)}{z\bar z}\Big)\\
    &\hspace{20ex}
     W^{(0)}(\mat{1}{}{}{z\bar z}\mat{y_2-y\bar y+\frac{y\bar yz_2}{z\bar z}}{}{}{1}
     \mat{1}{}{-\frac{z_2-z\bar z}{xz\bar z}}{1})\,dx\,dy\,dz\\
   I_{x)}&=\int\limits_{L\setminus\OF_L}\int\limits_{L\setminus\OF_L}\int\limits_{F\setminus\OF}
    |y|_L^{-3(s+\frac12)}|z|_L^{-3s-\frac12}\Lambda(\bar yz)|x|^{-6s-1}\chi_0(x)\\
    &\hspace{5ex}W^{(0)}(\mat{1}{}{}{z\bar z}
    \mat{1}{x(y\bar y+z\bar z-\bar yz-y\bar z)}{}{1}
    \mat{y\bar y(1+\frac{y_2}{y\bar y}-\frac{z_2}{z\bar z})}{}{}{1}
    \mat{1}{}{\frac{z_2-z\bar z}{xz\bar z}}{1})\,dx\,dy\,dz.
\end{align*}

These integrals can be calculated further, using standard $p$-adic techniques and known properties of the $\GL_2$ Whittaker function $W^{(0)}$. We will omit the details of the calculation for reasons of brevity.

\vspace{2ex}

The calculations for the split case (when $L = F \oplus F$) are similar. In this case the explicit formula~\eqref{Ksformulaexpliciteq} gives us an integral over five $F$-variables (coming from the two $L$-variables and one $F$-variable). Also, note that in the split case, we have the isomorphism
\begin{align*}
\GU(2,2; F \oplus F)   &\cong \GL_4(F) \times \GL_1(F) \\
g := (g_1, g_2) & \rightarrow (g_1, \mu(g)).
\end{align*}
Using this, we can break up the integral~\eqref{Ksformulaexpliciteq} into several smaller integrals, which we evaluate in a manner similar to the inert case.
After all the integrals are computed and combined, one obtains the following result, which is true in the inert as well as the split case.

\begin{theorem}\label{inertintertwiningtheorem}
 Let $(\tau,V_\tau)$ be an irreducible, admissible, generic representation of $\GL_2(F)$. Assume that $L/F$ is either an unramified field extension or $L = F \oplus F$. Assume also that the conductor $\p^n$ of $\tau$ satisfies $n \ge 1$. Let the character $\chi_0$ of $L^\times$ be such that $\chi_0\big|_{F^\times}=\omega_\tau$ and $\chi_0((1+\P^n)\cap\OF_L^\times)=1$. Let $\Lambda$ be an unramified character of $L^\times$ such that $\Lambda\big|_{F^\times}=1$. Let the character $\chi$ of $L^\times$ be defined by (\ref{chi-lambda-char-condition}). Let $W^\#(\,\cdot\,,s)$ be the distinguished function in $I(s,\chi,\chi_0,\tau)$ from Corollary \ref{distinguishedvectornonarchtheorem}, normalized such that $W^\#(\eta_0,s)=1$. Then the function $K(s)$ defined by (\ref{Ksdefeq}) is given by
 $$
  K(s) = \chi_{L/F}(\varpi)^n  \omega_\tau(\mathbf{c}^2/\mathbf{d}) \frac{\varepsilon(3s+1,\tilde\tau,\psi^{-\mathbf{c}})^2}
 {\varepsilon(6s,\omega_\tau^{-1},\psi^{-\mathbf{c}})} \frac{L(6s,\chi\big|_{F^\times})
    L(3s,\tau\times\mathcal{AI}(\Lambda)\times\chi\big|_{F^\times})}
    {L(1-6s,\chi^{-1}\big|_{F^\times})
    L(3s+1,\tau\times\mathcal{AI}(\Lambda)\times\chi\big|_{F^\times})}.
 $$
 \end{theorem}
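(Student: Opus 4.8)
The plan is to compute the integral $K(s)=\int_{N(F)}W^\#(w_1n\eta_0,s)\,dn$ explicitly, following the route already set up in this section. The starting point is the expression $K(s)=I_1+I_2$ from \eqref{intertwining-int-I1} and \eqref{intertwining-int-I2}, obtained by splitting the $z$-integration into $z\in\OF_L$ and $z\in L\setminus\OF_L$. The key combinatorial input is the Iwasawa-type decomposition of the matrix $g=\left[\begin{smallmatrix}1&&&\\y&1&&\\x&&1&-\bar y\\&&&1\end{smallmatrix}\right]$ into a product in $P(F)K^{G_2}$, for which there are exactly five cases according to the sizes of $x$ and $y$ (one of them, $x\in\OF,\ y\notin\OF_L$, is spelled out in \eqref{xinO-ynotinO}). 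Combined with the two ranges of $z$, this produces ten integrals $I_{i)},\dots,I_{x)}$, together with the support conditions on the $K^{G_2}$-component recorded in the table, so that $I_{viii)}=0$ and case $i)$ splits further into $i)a$ and $i)b$. The explicit forms of $I_{i)a},\dots,I_{x)}$ listed in the text reduce everything to one- and two-variable $p$-adic integrals of the $\GL_2$ Whittaker newform $W^{(0)}$.

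The first main step, then, is to evaluate each of these ten integrals. This uses standard $p$-adic harmonic analysis on $\GL_2$: the explicit support and values of $W^{(0)}(\mathrm{diag}(t,1))$ and $W^{(0)}(\mathrm{diag}(t,1)w)$ in terms of the Satake/Whittaker data of $\tau$, the formula for $\int W^{(0)}(\mathrm{diag}(t,1))|t|^{s}\,d^\times t$ in terms of $L(s,\tau)$, the local functional equation of $\GL_2$ (to produce the $\varepsilon$-factors $\varepsilon(3s+1,\tilde\tau,\psi^{-\mathbf c})$), and Tate-style computations of $\int_{L^\times}\Lambda(y)|y|_L^{w}\,dy$ and $\int_{F^\times}\chi|_{F^\times}(x)|x|^{w}\,dx$ for the denominator and numerator $L$-factors involving $\chi|_{F^\times}$ and $\AI(\Lambda)$. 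The volumes \eqref{OFOLvolumeseq}, in particular $\mathrm{vol}(\OF_L)=N(\mathbf d)^{-1/2}$, enter; in the inert and split cases $\delta=0$ so these are harmless, but the bookkeeping of the additive characters $\psi^{-\mathbf c}$ accounts for the $\omega_\tau(\mathbf c^2/\mathbf d)$ factor and the root-number contributions. The second main step is to add the ten pieces and recognize the sum: the local Euler factors assemble into $L(6s,\chi|_{F^\times})L(3s,\tau\times\AI(\Lambda)\times\chi|_{F^\times})$ in the numerator and $L(3s+1,\tau\times\AI(\Lambda)\times\chi|_{F^\times})$ in the denominator, while applying the $\GL_1$ and $\GL_2$ local functional equations converts the remaining $L$-factors into $\varepsilon$-factors and a factor $L(1-6s,\chi^{-1}|_{F^\times})$, producing exactly the stated formula; the sign $\chi_{L/F}(\varpi)^n$ emerges from the interplay of the quadratic character of $L/F$ with the conductor exponent $n$.

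For the split case one runs the same argument, but first uses the isomorphism $\GU(2,2;F\oplus F)\cong\GL_4(F)\times\GL_1(F)$ to rewrite \eqref{Ksformulaexpliciteq} as an integral over five $F$-variables; the double coset analysis and the integral evaluations are structurally parallel to the inert case (with $\AI(\Lambda)$ now the pair of characters corresponding to the two factors of $L$), and one checks that the final answer takes the same shape, with $\chi_{L/F}$ the quadratic character attached to the split algebra.

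The main obstacle is the sheer bulk and delicacy of step one: correctly writing each $g$ (and then the full argument of $W^\#$) as $p\,\eta\,\kappa$ with $p\in P(F)$, $\kappa\in K^H$, so that $W^\#$ can be evaluated via \eqref{Wsharpformulaeq}, and then carrying the many changes of variables and convergence/analytic-continuation arguments through all ten cases without sign or normalization errors. In particular, keeping track of the additive character twists so that the root numbers come out as $\varepsilon(3s+1,\tilde\tau,\psi^{-\mathbf c})^2/\varepsilon(6s,\omega_\tau^{-1},\psi^{-\mathbf c})$ rather than some incorrect variant, and confirming that the nontrivial cross-terms (e.g.\ $I_{vii)}$, $I_{ix)}$, $I_{x)}$) contribute only to the expected $L$- and $\varepsilon$-factors and not to spurious extra terms, is where essentially all the work lies. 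The final recognition step, by contrast, is a formal manipulation of $\GL_1$ and $\GL_2$ local functional equations once the individual integrals are in hand.
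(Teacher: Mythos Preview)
Your proposal is correct and follows essentially the same route as the paper's own proof: split $K(s)=I_1+I_2$ according to $z\in\OF_L$ or not, decompose the matrix $g$ into the five cases to get the ten integrals $I_{i)},\dots,I_{x)}$ with the listed support conditions, evaluate each via the formula \eqref{Wsharpformulaeq} and standard $\GL_2$ Whittaker/$p$-adic techniques, and then sum and reorganize using the $\GL_1$ and $\GL_2$ local functional equations; the split case is handled in parallel after passing through the isomorphism $\GU(2,2;F\oplus F)\cong\GL_4(F)\times\GL_1(F)$. The paper itself omits the detailed evaluation of the ten integrals ``for reasons of brevity,'' and your description of what that entails (and where the $\varepsilon$-factors, $\omega_\tau(\mathbf c^2/\mathbf d)$, and $\chi_{L/F}(\varpi)^n$ come from) is accurate.
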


\subsection{Intertwining operator: archimedean case}\label{inter-arch}
In this section let $F=\R$. We use the notation and setup from Sect.\ \ref{distvecarchsec}. Hence, $(\tau,V_\tau)$ is an irreducible, admissible, generic representation of $\GL_2(\R)$, and $l_2\in\Z$ has the same parity as the weights of $\tau$. The character $\chi_0$ of $\C^\times$ is such that $\chi_0\big|_{\R^\times}=\omega_\tau$ and $\chi_0(\zeta)=\zeta^{l_2}$ for $\zeta\in\C^\times,\:|\zeta|=1$, and $\chi(\zeta)=\chi_0(\bar\zeta)^{-1}$. We realize $\tau$ as a subrepresentation of some $\beta_1\times\beta_2$, and the quantities $p,q$ are defined by (\ref{t1t2pqeq}). Let $W^\#$ be the distinguished function in $I_W(s,\chi,\chi_0,\tau)$ defined in table (\ref{ABCtableeq}). In this section we calculate the function $K(s)$ defined by (\ref{Ksdefeq}). It is easily checked that the operator $M(s)$, defined by the same integral formula (\ref{locintdefeq1}), defines an intertwining map from $I_\Phi(s,\chi,\chi_0,\tau)$ to $I_\Phi(-s,\bar\chi^{-1},\chi\bar\chi\chi_0,\chi\tau)$. In fact, there is a commutative diagram
$$
 \begin{CD}
  I_\Phi(s,\chi,\chi_0,\tau)@>{M(s)}>>
   I_\Phi(-s,\bar\chi^{-1},\chi\bar\chi\chi_0,\chi\tau)\\
   @VVV @VVV\\
  I_W(s,\chi,\chi_0,\tau)@>>{M(s)}>I_W(-s,\bar\chi^{-1},\chi\bar\chi\chi_0,\chi\tau)
 \end{CD}
$$
in which the vertical maps are the intertwining operators $\Phi\mapsto W_\Phi$ given, in the region of convergence, by formula (\ref{WPhieq}). The commutativity follows from a straightforward calculation in the region of convergence, and by analytic continuation outside this region. It follows that the function $K(s)$, instead of (\ref{Ksdefeq}), can also be determined from the equation
\begin{equation}\label{KsPhieq}
 M(s)\Phi^\#(\,\cdot\,,s,\chi,\chi_0,\tau)
  =K(s)\,\Phi^\#(\,\cdot\,,-s,\bar\chi^{-1},\chi\bar\chi\chi_0,\chi\tau).
\end{equation}
Here, $\Phi^\#\in I_\Phi(s,\chi,\chi_0,\tau)$ is defined in table (\ref{ABCtableeq}). At this point, we do not yet know in all cases that a function $K(s)$ with the property (\ref{KsPhieq}) actually exists. We \emph{do} know that it exists in the Cases A and B defined in (\ref{ABCdefeq}); since $M(s)$ preserves right transformation properties, this follows from the uniqueness statement in Theorem \ref{distinguishedvectorarchtheorem} ii) and iii). In view of the normalization (\ref{W0l1l2distvecpropeq2}), we have the formula
\begin{equation}\label{Ksformula3eq}
 K(s)=\int\limits_{N(\R)}\Phi^\#(w_1n\eta_0,s,\chi,\chi_0,\tau)\,dn
\end{equation}
in Cases A and B. In Case C, part iv) of Theorem \ref{distinguishedvectorarchtheorem} assures that the left side of (\ref{KsPhieq}) is a linear combination of $\Phi^\#_{m,l,l_2,l+1}$ and $\Phi^\#_{m,l,l_2,l-1}$. It would be more precise to write these functions as
$$
 \Phi^\#_{m,l,l_2,l\pm1}(\,\cdot\,,-s,\bar\chi^{-1},\chi\bar\chi\chi_0,\chi\tau)
 \qquad\text{or}\qquad
 \Phi^\#_{m,l,l_2,l\pm1}(\,\cdot\,,-s,\beta_2^{-1}\times\beta_1^{-1})
$$
since they are defined with respect to the data $(-s,\bar\chi^{-1},\chi\bar\chi\chi_0,\chi\tau)$, and $\chi\tau$ is a subrepresentation of $\beta_2^{-1}\times\beta_1^{-1}$. The calculation will show that this linear combination is precisely a function $K(s)$ times the distinguished vector $\Phi^\#(\,\cdot\,,-s,\bar\chi^{-1},\chi\bar\chi\chi_0,\chi\tau)$ for the data $(-s,\bar\chi^{-1},\chi\bar\chi\chi_0,\chi\tau)$. This will establish the existence of $K(s)$ with the property (\ref{KsPhieq}) in all cases.

\vspace{3ex}
Concerning the measure on $N(\R)$, similar remarks as in the $p$-adic case apply. As a measure space, $N(\R)\cong\R\times\C\times\C$. The measure on $\R$ is the usual Lebesgue measure, but the measure on $\C$ is \emph{twice} the usual Lebesgue measure; see Sect.\ 2.2 of \cite{TT}.

\vspace{3ex}
{\bf Remark:} The reason we are calculating $K(s)$ from equation (\ref{KsPhieq}) and not from equation (\ref{Ksdefeq}) is that the relevant archimedean integrals are much easier to handle in the induced model than in the Whittaker model. The price one has to pay for this procedure are the non-explicit constants $\kappa_{l_1,p,q}$ defined in (\ref{PhiWkappaeq}). They will not appear any further in this section, but later in Sect.\ \ref{localzetasec} when we calculate local zeta integrals; see Corollary \ref{archlocalzetatheoremcor}. In our application to the functional equation in Sect.\ \ref{functleqsec}, the unknown constants $\kappa_{l_1,p,q}$ will cancel out with the constants $a_{l_1,p,q}$ defined in (\ref{aplusdefeq}), via the identity given in Lemma \ref{kappaalemma}.

\begin{theorem}\label{archintertwiningtheorem}
 Let $(\tau,V_\tau)$ be a  generic, irreducible, admissible representation of $\GL_2(\R)$ with central character $\omega_{\tau}$. We assume that $\tau$ is isomorphic to a subrepresentation of $\beta_1\times\beta_2$ with characters $\beta_1,\beta_2$ of $\R^\times$. Let the complex numbers $p$ and $q$ be as defined in (\ref{t1t2pqeq}).
 Let $l$ be a fixed positive integer. Let $l_2=-l_1$ in Case A, $l_2=-l$ in Case B, and $l_2=1-l$ in Case C. Let $\chi_0$ be the character of $\C^\times$ such that $ \chi_0\big|_{\R^\times}=\omega_\tau$ and $\chi_0(\zeta)=\zeta^{l_2}$ for $\zeta\in\C^\times,\:|\zeta|=1$. Let $\chi$ be the character of $\C^\times$ given by (\ref{chi-lambda-char-condition-arch}). Let $W^\#\in I_W(s,\chi,\chi_0,\tau)$ be the distinguished function defined in table (\ref{ABCtableeq}). Then the identity (\ref{Ksdefeq}) holds with the function $K(s)$ given as follows.
 \begin{enumerate}
  \item In Case A,
\begin{equation}\label{archinterresult1}
    K(s)=4\pi^{5/2}\,i^{2l-l_1}\frac{\Gamma(3s-\frac q2+\frac{1}{2})\Gamma(3s-\frac{q}{2})}
     {(3s-\frac{q}2+\frac{l_1}2-\frac{1}{2})^2\Gamma(3s-\frac q2+\frac{l_1}2+\frac32-l)
      \Gamma(3s-\frac q2+l-\frac{l_1}2-\frac12)}.
\end{equation}
  \item In Case B,
\begin{equation}\label{archinterresult2}
    K(s)=4\pi^{5/2}\,i^l\frac{\Gamma(3s-\frac{q}{2} + \frac{1}{2})\Gamma(3s-\frac{q}{2})}
    {(3s-\frac q2+\frac p2)(3s-\frac q2-\frac p2)\Gamma(3s-\frac q2-\frac l2+\frac12)
    \Gamma(3s-\frac q2+\frac l2+\frac12)}.
\end{equation}
  \item In Case C,
   \begin{align}\label{archinterresult3}
    K(s)&=-4\pi^{5/2}i^{l+1}\frac{(3s-\frac q2-\frac l2)(3s-\frac q2-1-\frac{p}2)}
     {(3s-\frac q2+1+\frac{p}{2})(3s-\frac q2+\frac{p}2)(3s-\frac q2-\frac{p}2)}\nonumber\\
    &\hspace{20ex}\times \frac{\Gamma(3s-\frac{q}{2}+\frac{1}{2})\Gamma(3s-\frac{q}{2})}
     {\Gamma(3s-\frac{q}{2}+1-\frac{l}{2})\Gamma(3s-\frac{q}{2}+1+\frac{l}{2})}.
   \end{align}
 \end{enumerate}
\end{theorem}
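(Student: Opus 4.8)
The plan is to compute the integral defining $K(s)$ directly from the induced-model identity \eqref{KsPhieq}, exploiting the fact that $M(s)$ preserves right transformation properties so that the output is forced to lie in the space spanned by the relevant $\Phi^\#_{m,l,l_2,l_1}$. First I would reduce the computation to an integral over $N(\R)\cong\R\times\C\times\C$ by writing, for $g = w_1 n \eta_0$, the Iwasawa decomposition $g = m_1(\zeta)m_2(b)n' k$ with $k\in K^{G_2}_\infty$, so that $\Phi^\#(w_1 n \eta_0, s, \chi,\chi_0,\tau)$ becomes $\delta_P^{s+1/2}\chi(\zeta)\varphi(b)$ times the value of $\Phi^\#$ on $k$; the right-transformation equivariance under $K^H_\infty$ (property \eqref{distinguishedvectorarchtheoremeq5}) together with the explicit polynomial formula \eqref{W0l1l2distvecpropeq1} for $\Phi^\#_{m,l,l_2,l_1}$ in terms of $\hat a,\hat b,\hat c,\hat d$ makes this value computable as a product of powers of matrix coefficients of $k$. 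Because of the argument in Theorem \ref{distinguishedvectorarchtheorem}, the resulting function is automatically a multiple (in Cases A, B) or the correct specific linear combination (in Case C) of the distinguished vector on the target data $(-s,\bar\chi^{-1},\chi\bar\chi\chi_0,\chi\tau)$, so it suffices to evaluate both sides of \eqref{KsPhieq} at a single convenient point, namely $\eta_0$, giving \eqref{Ksformula3eq}: $K(s) = \int_{N(\R)}\Phi^\#(w_1 n \eta_0, s,\chi,\chi_0,\tau)\,dn$.

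Next I would carry out this integral explicitly. Parametrizing $n$ by $(x,y,z)\in\R\times\C\times\C$ as in \eqref{Ndefn}, one multiplies out $w_1 n \eta_0$, performs row/column operations to extract the $M^{(1)}M^{(2)}N$ part, and identifies the $K^{G_2}_\infty$-component; the integrand then becomes a power of a quadratic form in $(x,y,z)$ (coming from $\delta_P^{s+1/2}$ and $\chi$) times the polynomial expression for $\Phi^\#$ on the compact part. The $z$- and $y$-integrations over $\C$ (with the doubled Lebesgue measure noted in the text, cf.\ Sect.\ 2.2 of \cite{TT}) and the $x$-integration over $\R$ are then standard beta-type / Gamma-type integrals; one expects to encounter integrals reducible to the classical formulas for $\int_{\C}(1+|w|^2)^{-a}|w|^{2b}\,dw$ and $\int_\R(1+x^2)^{-a}\,dx$, producing the ratios of $\Gamma$-functions appearing in \eqref{archinterresult1}, \eqref{archinterresult2}, \eqref{archinterresult3}. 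In Case C one must in addition track the two weight components $l\pm1$ and use the precise coefficient $(3s - \tfrac{p+q}{2})$ from table \eqref{ABCtableeq}; the two pieces $\Phi^\#_{m,l,l_2,l+1}$ and $\Phi^\#_{m,l,l_2,l-1}$ each contribute, and one must verify that the two contributions recombine into $K(s)$ times the distinguished vector, which simultaneously establishes existence of $K(s)$ in Case C (not a priori guaranteed because the relevant right-transformation properties there cut out a two-dimensional space).

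The genuinely hard part is the explicit evaluation of the archimedean integral $\int_{N(\R)}\Phi^\#(w_1 n \eta_0)\,dn$ and the bookkeeping of all the constants: writing $w_1 n \eta_0$ in Iwasawa form requires a careful (and lengthy) matrix computation in $\GU(2,2;\C)$, and the polynomial $\hat b^{(m+l_1-l)/2}\hat c^{(m-l_1+l)/2}(\hat a\hat d - \hat b\hat c)^{(l_2+l-m)/2}D^{-l}$ evaluated on the $K^{G_2}_\infty$-part must be expanded and integrated term by term against the $\delta_P$-factor. Tracking powers, signs (the $i^{2l-l_1}$, $i^l$, $i^{l+1}$ factors), the poles coming from the boundary of the region of convergence (the denominators $(3s - \tfrac q2 + \tfrac{l_1}{2} - \tfrac12)^2$, etc.), and performing the analytic continuation of the integral outside its domain of absolute convergence, is where essentially all the work lies; the conceptual scaffolding — reduce to one point via uniqueness of the distinguished vector, then compute — is routine, but the computation is not, and the paper's own remark that its description "hides the actual length of the calculations involved" applies here. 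I would therefore present the reduction and the setup of the integral in detail, then state the values \eqref{archinterresult1}--\eqref{archinterresult3} as the outcome of the (omitted, or appendix-relegated) explicit evaluation, with the Case C recombination checked carefully since it carries the extra burden of establishing existence of $K(s)$.
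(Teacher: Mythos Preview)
Your approach is essentially the same as the paper's: work in the induced model via \eqref{KsPhieq}, use uniqueness of the distinguished vector (Cases A, B) or explicit verification of the recombination (Case C) to reduce to evaluating the integral \eqref{Ksformula3eq} at $\eta_0$, then carry out the Iwasawa decomposition of $w_1 n \eta_0$ and integrate. The paper does exactly this, writing the $K^{G_2}_\infty$-component as a product of explicit rotation matrices $r_1,r_2,r_3$ (one for each of $x,y,z$), computing $\hat b$ and $D$ on $w_1 r_3 r_2 r_1 \eta_0$, and passing to polar coordinates for $y$ and $z$.

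The one point where your expectations are too optimistic is the claim that the $x,y,z$-integrations are ``standard beta-type / Gamma-type integrals'' that decouple into products like $\int_\C(1+|w|^2)^{-a}|w|^{2b}\,dw$. They do not decouple: after polar coordinates the integrand contains a factor of the form
\[
\big((1-g)(2-(1-f)(1+ix)) + 4i\sqrt{(1+x^2)fg}\,\cos\theta\big)^{l_1-l},
\]
which couples all variables. The paper isolates this as a separate technical lemma (Lemma~\ref{t:archintcalcs}), whose proof requires a full multinomial expansion, termwise integration, and then a nontrivial combinatorial identity in Pochhammer symbols to collapse the resulting multiple sum. So the conceptual scaffolding you describe is correct, but the ``hard part'' is not merely bookkeeping of constants and signs: it is this coupled integral identity, which you should expect to be a standalone result rather than a routine application of beta integrals.
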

\begin{proof} i) In Case A, by (\ref{ABCtableeq}), we have $\Phi^\#=\Phi^\#_{m,l,l_2,l_1}$, where $m=l_1-l$ and $l_1$ is the lowest weight of the representation $\tau$. The function $\Phi^\#_{m,l,l_2,l_1}$ is given in Proposition \ref{W0l1l2distvecprop}. By (\ref{Ksformula3eq}), we have to calculate
\begin{equation}\label{Ksformula3Aeq}
 K(s)=\int\limits_{N(\R)}\Phi^\#_{m,l,-l_1,l_1}(w_1n\eta_0,s,\chi,\chi_0,\tau)\,dn.
\end{equation}
We abbreviate
$$
 u=\sqrt{1+x^2},\qquad v=\sqrt{1+y\bar y},\qquad w=\sqrt{1+z\bar z},
$$
and
$$
 r_1=\begin{bmatrix}w^{-1}&zw^{-1}\\-\bar zw^{-1}&w^{-1}\\&&w^{-1}&zw^{-1}\\
  &&-\bar zw^{-1}&w^{-1}\end{bmatrix},\qquad
 r_2=\begin{bmatrix}u^{-1}&&xu^{-1}\\&1\\-xu^{-1}&&u^{-1}\\&&&1\end{bmatrix},
$$
$$
 r_3=\begin{bmatrix}v^{-1}&&&yv^{-1}\\&v^{-1}&\bar yv^{-1}\\&-yv^{-1}&v^{-1}\\
 -\bar yv^{-1}&&&v^{-1}\end{bmatrix}.
$$
The elements $r_1,r_2,r_3$ lie in $K^{G_2}_\infty$. Starting from (\ref{Ksformula3Aeq}), it is not difficult to show that
\begin{equation}\label{Ksformula4eq}
 K(s)=\int\limits_\C\int\limits_\C\int\limits_\R u
  (uvw)^{q-6s-2}(vw^{-1})^p\,\Phi^\#_{m,l,-l_1,l_1}
  (w_1r_3r_2r_1\eta_0)\,dx\,dy\,dz.
\end{equation}
A calculation verifies that, with $k=w_1r_3r_2r_1\eta_0$,


$$
 \det(J(k,i_2))=-i\frac{1-ix}u,
 \qquad
 \det(J(\,^tk,i_2))=i\frac{1+ix}u,
$$
and
$$
 \hat b(k)=(1-z\bar z)\Big(\frac{v^2 - ix(1-y \bar y)}{uv^2w^2}\Big)+2i\frac{(y\bar z+\bar yz)}{v^2w^2}.
$$
Hence
\begin{align*}
 \Phi^\#_{m,l,-l_1,l_1}(k)
   &=(-i)^m\,\hat b(k)^{l_1-l}\det(J(\,^tk,i_2))^{l-l_1}\det(J(k,i_2))^{-l_1}\\
 &=i^{2l-l_1}\bigg((1-z\bar z)\Big(\frac{v^2 - ix(1-y \bar y)}{uv^2w^2}\Big)
   +2i\frac{(y\bar z+\bar yz)}{v^2w^2}\bigg)^{l_1-l}\Big(\frac{1+ix}u\Big)^l,
\end{align*}
so that
\begin{align*}
 K(s)&=i^{2l-l_1}\int\limits_\C\int\limits_\C\int\limits_\R u
  (uvw)^{q-6s-2}(vw^{-1})^p\\
 &\hspace{5ex}\bigg((1-z\bar z)\Big(\frac{v^2 - ix(1-y \bar y)}{uv^2w^2}\Big)
   +2i\frac{(y\bar z+\bar yz)}{v^2w^2}\bigg)^{l_1-l}\Big(\frac{1+ix}u\Big)^l\,dx\,dy\,dz.
\end{align*}
We now introduce polar coordinates for $y$ and $z$. More precisely, put $y = \sqrt{f}e^{ i \theta_1}$, $z=\sqrt{g}e^{ i \theta_2}$, and let $\theta = \theta_1 - \theta_2$.
Also, put $s_0 = 3s + \frac{1}{2} + \frac{l_1-l}{2} - \frac{q}{2}$. With these substitutions, and using the fact that $p=l_1-1$, the intertwining integral becomes
\begin{align*}
 K(s)&=i^{2l-l_1}2\pi\int\limits_0^{2\pi}\int\limits_\R
 \int\limits_0^\infty \int\limits_0^\infty(1+x^2)^{-s_0}
   (1 + f)^{-s_0 -1 + \frac{l}{2}}(1 + g)^{-s_0+\frac l2-l_1}
   \Big(\frac{1+ix}u\Big)^l\\
 &\hspace{5ex}\Big( (1-g) (2 - (1 - f)(1+ix)) + 4iu\sqrt{fg}\,\cos\theta \Big)^{l_1-l}
   \,df\,dg\,dx\,d\theta.
\end{align*}
Note here that the measure on $\C$ is twice the usual Lebesgue measure. By Lemma \ref{t:archintcalcs} further below, we get the result.

\vspace{2ex}
ii) Next we evaluate the intertwining integral in the case where $\tau$ contains the weight $l$. As in the previous case, $K(s)$ is given by formula (\ref{Ksformula3eq}). The same calculation that led to
(\ref{Ksformula4eq}) now shows that
\begin{equation}\label{Ksformula5eq}
 K(s)=\int\limits_\C\int\limits_\C\int\limits_\R u
  (uvw)^{q-6s-2}(vw^{-1})^p\,\Phi^\#_{m,l,-l,l}(w_1r_3r_2r_1\eta_0)\,dx\,dy\,dz.
\end{equation}
This time $\Phi^\#_{m,l,-l,l}=i^l\big(\frac{1+ix}u\big)^l$, so that
$$
 K(s)=i^l\int\limits_\C\int\limits_\C\int\limits_\R u
  (uvw)^{q-6s-2}(vw^{-1})^p\Big(\frac{1+ix}u\Big)^l\,dx\,dy\,dz.
$$
This integral can be calculated as before by using polar coordinates. The result follows.

\vspace{2ex}
iii) This case is the most complicated one, since we do not yet know that a function $K(s)$ with the property (\ref{KsPhieq}) exists. We do know, however, by part iv) of Theorem \ref{distinguishedvectorarchtheorem}, that there exist functions $K_1(s)$ and $K_2(s)$ such that
\begin{align}\label{KsPhiCeq}
 M(s)\Phi^\#(\,\cdot\,,s,\chi,\chi_0,\tau)
  &=K_1(s)\,\Phi^\#_{1,l,l_2,l+1}
   (\,\cdot\,,-s,\bar\chi^{-1},\chi\bar\chi\chi_0,\chi\tau)\nonumber\\
  &\qquad+K_2(s)\,\Phi^\#_{1,l,l_2,l-1}
   (\,\cdot\,,-s,\bar\chi^{-1},\chi\bar\chi\chi_0,\chi\tau).
\end{align}
The calculation of $K_1(s)$ and $K_2(s)$ is in the same spirit as in Cases A and B, and we omit the details. Eventually it turns out that (\ref{KsPhieq}) holds with
$K(s)$ as in (\ref{archinterresult3}).
\end{proof}

We would like to thank Paul-Olivier Dehaye for his help with the proof of the following lemma, which was used in the above calculations.
\begin{lemma}\label{t:archintcalcs}
 For non-negative integers $l$ and $t$, and for all $s\in\C$ with $\Re(s)$ large enough,
 \begin{align*}
  &\int\limits_0^{2\pi}\int\limits_\R\int\limits_0^\infty \int\limits_0^\infty
   (1+x^2)^{-s} (1 + f)^{-s-1 + \frac{l}{2}}(1 + g)^{-s-\frac{l}{2} - t}
   \Big(\frac{1+ix}u\Big)^l\\
  &\hspace{15ex}\Big( (1-g) (2 - (1 - f)(1+ix)) + 4iu\sqrt{fg}\,\cos\theta \Big)^{t}
   \,df\,dg\,dx \,d\theta\\
  &\qquad=\frac{2 \pi^{3/2} \Gamma(s-\frac{t}{2})\Gamma(s-\frac{t}{2}-\frac{1}{2})}
   {(s+\frac{l}{2}-1)^2 \Gamma(s-\frac{l}{2}+1) \Gamma(s+\frac{l}{2}-t-1)}.
 \end{align*}
\end{lemma}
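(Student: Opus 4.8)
The plan is to reduce the four-dimensional integral to a product of elementary Beta-type integrals by carrying out the integrations one variable at a time, starting with the ones that decouple most easily.

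First I would dispose of the $\theta$-integration. Expanding the factor $\bigl((1-g)(2-(1-f)(1+ix)) + 4iu\sqrt{fg}\cos\theta\bigr)^t$ by the binomial theorem, the term with $\cos^k\theta$ integrates over $[0,2\pi]$ to $0$ when $k$ is odd and to $2\pi\binom{k}{k/2}2^{-k}$ when $k$ is even; so only even powers of $\sqrt{fg}$ survive, which is what makes the subsequent radial integrals in $f$ and $g$ rational-exponent Beta integrals rather than something worse. After this step one is left with a finite sum (over the even binomial index $k=2j$, $0\le j\le \lfloor t/2\rfloor$) of triple integrals over $x\in\R$, $f>0$, $g>0$ of the shape $u^{2j}(fg)^j$ times a product of powers of $(1+x^2)$, $(1+f)$, $(1+g)$, $(1+ix)$, and $\bigl(2-(1-f)(1+ix)\bigr)^{t-2j}$.

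Next I would expand $(1-g)^{t}$ (the power of $1-g$ attached to each binomial term) and $\bigl(2-(1-f)(1+ix)\bigr)^{t-2j}$ by the binomial theorem as well, turning everything into a (finite) linear combination of genuinely separated integrals $\int_\R (1+x^2)^{a}(1+ix)^{b}\,dx \cdot \int_0^\infty f^{c}(1+f)^{d}\,df \cdot \int_0^\infty g^{e}(1+g)^{h}\,dg$. The $f$- and $g$-integrals are standard: $\int_0^\infty f^{c-1}(1+f)^{-c-d}\,df = B(c,d) = \Gamma(c)\Gamma(d)/\Gamma(c+d)$, valid for $\Re(s)$ large. The $x$-integral over $\R$ with the factor $(1+ix)^b$ is handled by writing $(1+x^2)^a = (1+ix)^a(1-ix)^a$ and using the standard formula $\int_\R (1+ix)^{-\alpha}(1-ix)^{-\beta}\,dx = \pi\,2^{2-\alpha-\beta}\Gamma(\alpha+\beta-1)/\bigl(\Gamma(\alpha)\Gamma(\beta)\bigr)$; this is where the half-integer shift and the $\pi^{3/2}$ (together with the $\pi$ from the $\theta$-integral) come from.

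The main obstacle — and the only genuinely delicate point — is the recombination of the resulting multiple finite sums of Gamma quotients into the single closed form on the right-hand side. Each individual integral is routine, but after expanding three binomials one has a triple sum of products of Gamma functions with shifted arguments, and collapsing it requires repeated use of the Chu–Vandermonde / Gauss summation identity ${}_2F_1(a,b;c;1) = \Gamma(c)\Gamma(c-a-b)/\bigl(\Gamma(c-a)\Gamma(c-b)\bigr)$ (or equivalently finite hypergeometric identities), applied once for each summation variable. Tracking the parameters carefully so that the telescoping actually produces the factor $1/\bigl((s+\tfrac{l}{2}-1)^2\Gamma(s-\tfrac{l}{2}+1)\Gamma(s+\tfrac l2-t-1)\bigr)$ — in particular the squared linear factor, which signals that two of the hypergeometric collapses conspire — is the bookkeeping-intensive heart of the argument. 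Once this is done for $\Re(s)$ large, both sides are meromorphic in $s$, so the identity extends by analytic continuation, completing the proof.
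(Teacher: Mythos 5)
Your setup is the same as the paper's: expand the $t$-th power by the binomial/multinomial theorem, integrate each variable separately (Wallis formula for $\cos^k\theta$, Beta integrals in $f$ and $g$, and $\int_\R(1+ix)^{-\alpha}(1-ix)^{-\beta}\,dx=\pi\,2^{2-\alpha-\beta}\Gamma(\alpha+\beta-1)/(\Gamma(\alpha)\Gamma(\beta))$ for the $x$-integral), and then collapse the resulting finite sum of Gamma quotients. Up to that point the two proofs coincide in substance.

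The gap is in your final step. You assert that the multi-index sum collapses by ``repeated use of the Chu--Vandermonde / Gauss summation identity, applied once for each summation variable,'' but you give no evidence that this actually works, and there are good reasons to doubt it. After the multinomial expansion one arrives at a sum over \emph{four} coupled indices $(k,j,r,v)$ in which the binomial coefficients $\binom{t}{2k,j,t-2k-j}\binom{t-2k}{v}\binom{j}{v}$ tie the indices together, and the $\Gamma$-arguments mix several indices at once (e.g.\ $\Gamma(s-v-k-\tfrac l2)$, $\Gamma(-1+s+t-k-r+\tfrac l2)$). A single-variable Gauss summation does not factor cleanly out of such a sum, and indeed the paper does \emph{not} proceed by iterated ${}_2F_1(1)$-evaluations: after writing down the quadruple sum the authors invoke Mathematica to reduce it, and what comes out is not a closed form but a still-nontrivial \emph{double}-sum identity in Pochhammer symbols,
\[
\sum_{k=0}^T\;\sum_{\substack{v,n\ge0\\ v+n\le 4T+1}}4^k\,\frac{(-1)^{k+v}}{n!\,v!\,(2T-2k)!}\cdot
\frac{(x+k-n)^{(n)}\,(x-k+1)^{(v)}}{(x-2T+v)^{(k+1)}\,(x+2T-k-n)^{(k+1)}}
=-\frac{1}{x^2(2T)!},
\]
which they then prove by partial-fraction decomposition in $x$, matching residues at each pole. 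The factor $(s+\tfrac l2-1)^{-2}$ you flag as signalling ``two hypergeometric collapses conspiring'' is, in the paper's argument, the \emph{double} pole at $x=0$ in the partial-fraction decomposition, not the output of a Gauss summation; a ${}_2F_1(1)$-evaluation produces only a ratio of $\Gamma$-factors and cannot by itself produce a squared linear factor in the denominator. So your plan correctly identifies where the work lies, but the proposed tool for that step is unverified and almost certainly insufficient as stated; some replacement for the paper's computer-assisted reduction plus partial-fraction argument would be needed to turn this into a proof.
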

\begin{proof}
Let $LHS$ denote the quantity on the left hand side of the asserted formula.
We start off by completely expanding $\big( (1-g) (2 - (1 - f)(1+ix)) + 4iu\sqrt{fg}\,\cos\theta \big)^{t}$ using the bimomial theorem. Then, using (6.16), (6.17) of \cite{Bu} and the following well-known formulas,
$$
 \int\limits_0^{2\pi}\cos(\theta)^k\,d\theta= \begin{cases}
  0 & \text{ if } k \text{ is odd, }\\
  \displaystyle2 \sqrt{\pi}\,\frac{\Gamma(\frac{k+1}{2})}{\Gamma(\frac{k+2}{2})}
   & \text{ if } k \text{ is even,} \end{cases}
$$
$$
 \int\limits_0^\infty r^{t_1} (1+r)^{-t_2} \,dr = \frac{\Gamma(1+t_1)\Gamma(-1 + t_2 -t_1)}{\Gamma(t_2)},
$$
we arrive at (using the multinomial symbol)
\begin{align*}
 LHS &= \sum_{k,j,r,v}\bigg((-1)^{k+r+v+j}2^{2k +t -j +1} \binom{t}{2k, j, t-2k-j}   \binom{t-2k}{v} \binom{j}{v} \frac{\Gamma(k + \frac12)\Gamma(1+k+r) }{\Gamma(k+1)\Gamma(s+\frac{l}{2} + t)}\pi \\
  & \quad\frac{\Gamma(-1 +s +t-k-r+\frac{l}{2})  \Gamma(s-v-k-\frac{l}{2}) \Gamma(1+k+v) \Gamma(s-k-\frac{j}{2})\Gamma(s-k - \frac{j}{2} -\frac12)}{\Gamma(s - \frac{l}{2} - k -j) \Gamma(s + \frac{l}{2} -k)\Gamma(s-\frac{l}{2} + 1)}\bigg),
\end{align*}
where the sum is taken over non-negative integers $k$, $j$, $r$, $v$ satisfying $2k + j \le t$, $r+2k \le t$ and $v \le j$. Next, using well-known summation formulas for the gamma functions and algebraic manipulations (we omit the details of this step, which were performed with the aid of Mathematica), it turns out that the above expression simplifies significantly. As a result, the lemma reduces to proving a certain algebraic identity. Let $x^{(n)} = x(x+1)\ldots (x+n-1)$ denote the Pochhammer symbol. Then the identity we are reduced to proving is
$$
 \sum_{k=0}^T \sum_{\substack{v\ge 0,\:n \ge 0 \\ v+n \le 4T+1}}4^k \frac{(-1)^{k+v}}{n!v!(2T-2k)!} \cdot \frac{(x+k-n)^{(n)} (x-k+1)^{(v)}}{(x-2T+v)^{(k+1)} (x+2T-k-n)^{(k+1)}} = - \frac{1}{x^2 (2T)!},
$$
where $T$ is any non-negative integer, and $x$ is an indeterminate. To prove this identity, observe that each summand above can be written using the partial fraction decomposition as a sum of rational functions, where each numerator is a rational number and the denominators are  terms of the form $(x-a)^b$ with $b$ equal to 1 or 2, and $-3T-1 \le a \le 3T+1$. So to prove the identity, it is enough to show that the sum of the numerators coincide on both sides for each such denominator. This is straightforward combinatorics, and we omit the details.
\end{proof}

\section{Global $L$-functions for $\GSp_4\times\GL_2$}
In this section, we will use the integral defined by Furusawa in \cite{Fu} to obtain an integral representation of the $L$-function $L(s,\pi\times\tau)$, where $\pi$ is a cuspidal, automorphic representation of $\GSp_4(\A)$ of the type corresponding to full level Siegel cusp forms, and where $\tau$ is an arbitrary cuspidal, automorphic representation of $\GL_2(\A)$. We will use this to obtain the functional equation of the $L$-function, with some restriction on the $\GL_2$ representation. We will first do the non-archimedean calculation, followed by the archimedean calculation and put it all together to get the global result.
\subsection{Bessel models for $\GSp_4$}\label{besselsec}
Let $F$ be an algebraic number field and $\A_F$ its ring of adeles. We fix three elements $\mathbf{a},\mathbf{b},\mathbf{c}\in F$ such that $\mathbf{d}=\mathbf{b}^2-4\mathbf{a}\mathbf{c}$ is a non-square in $F^\times$. Then $L=F(\sqrt{\mathbf{d}})$ is a quadratic field extension of $F$. Let
$$
 S=\mat{\mathbf{a}}{\frac{\mathbf{b}}2}{\frac{\mathbf{b}}2}{\mathbf{c}},\qquad
 \xi=\mat{\frac{\mathbf{b}}2}{\mathbf{c}}{-\mathbf{a}}{\frac{-\mathbf{b}}2}.
$$
Then $F(\xi)=F+F\xi$ is a two-dimensional $F$-algebra isomorphic to $L$, an isomorphism being given by $x+y\xi\mapsto x+y\frac{\sqrt{\mathbf{d}}}2$. The determinant map on $F(\xi)$ corresponds to the norm map on $L$. Let
\begin{equation}\label{Tdefeq}
 T=\{g\in\GL_2\;|\;^tgSg=\det(g)S\}.
\end{equation}
This is an algebraic $F$-group with $T(F)=F(\xi)^\times\cong L^\times$ and $T(\A_F)\cong\A_L^\times$. We consider $T$ a subgroup of $H=\GSp_4$ via
$$
 T\ni g\longmapsto\mat{g}{}{}{\det(g)\,^tg^{-1}}\in H.
$$
Let
$$
 U=\{\mat{1_2}{X}{}{1_2}\in H\;|\;^tX=X\}
$$
and $R=TU$. We call $R$ the \emph{Bessel subgroup} of $H$ (with respect to the given data $\mathbf{a},\mathbf{b},\mathbf{c}$). Let $\psi$ be a non-trivial character $F\backslash\A_F\rightarrow\C^\times$, chosen once and for all. Let $\theta:\:U(\A_F)\rightarrow\C^\times$ be the character given by
\begin{equation}\label{thetadefeq}
 \theta(\mat{1}{X}{}{1})=\psi(\tr(SX)).
\end{equation}
We have $\theta(t^{-1}ut)=\theta(u)$ for all $u\in U(\A_F)$ and $t\in T(\A_F)$. Hence, if $\Lambda$ is any character of $T(\A_F)\cong\A_L^\times$, then the map $tu\mapsto\Lambda(t)\theta(u)$ defines a character of $R(\A_F)$. We denote this character by $\Lambda\otimes\theta$.

\vspace{3ex}
Analogous definitions can be made over any local field $F$.
In this case, let $\pi$ be an irreducible, admissible representation of $H(F)$. Let $\Lambda$ be a character of $T(F)\cong L^\times$ such that the restriction of $\Lambda$ to $F^\times$ coincides with the central character of $\pi$. Let $\Lambda\otimes\theta$ be the character of $R(F)$ defined above. We say that $\pi$ has a \emph{Bessel model} of type $(S,\Lambda,\psi)$ if $\pi$ is isomorphic to a space of functions $B:\:H(F)\rightarrow\C$ with the transformation property
$$
 B(tuh)=\Lambda(t)\theta(u)B(h)\qquad\text{for all }t\in T(F),\;u\in U(F),\;h\in H(F),
$$
with the action of $H(F)$ on this space given by right translation. Such a model, if it exists, is known to be unique; we denote it by $\mathcal{B}_{S,\Lambda,\psi}(\pi)$.

\vspace{3ex}
Now let $F$ be global, and let $\pi=\otimes\pi_v$ be a cuspidal, automorphic representation of $H(\A_F)$. Let $V_\pi$ be the space of automorphic forms realizing $\pi$. Assume that a Hecke character $\Lambda$ as above is chosen such that the restriction of $\Lambda$ to $\A_F^\times$ coincides with $\omega_\pi$, the central character of $\pi$. For each $\phi\in V_\pi$ consider the corresponding Bessel function
\begin{equation}\label{Bphieq}
 B_\phi(g)=\int\limits_{Z_H(\A_F)R(F)\backslash R(\A_F)}
 (\Lambda\otimes\theta)(r)^{-1}\phi(rg)\,dr,
\end{equation}
where $Z_H$ is the center of $H$. If one of these integrals is non-zero, then all
are non-zero, and we obtain a model $\mathcal{B}_{S,\Lambda,\psi}(\pi)$
of $\pi$ consisting of functions on $H(\A_F)$ with the obvious transformation property on the left with respect to $R(\A_F)$. In this case, we say that $\pi$ has a \emph{global Bessel model} of type $(S,\Lambda,\psi)$. It implies that the \emph{local} Bessel model $\mathcal{B}_{S,\Lambda_v,\psi_v}(\pi_v)$ exists for every place $v$ of $F$. In fact, there is a canonical isomorphism
$$
 \bigotimes\limits_v\mathcal{B}_{S,\Lambda_v,\psi_v}(\pi_v)\cong
 \mathcal{B}_{S,\Lambda,\psi}(\pi).
$$
If $(B_v)_v$ is a collection of local Bessel functions $B_v\in
\mathcal{B}_{S,\Lambda_v,\psi_v}(\pi_v)$ such that $B_v\big|_{H(\OF_v)}=1$
for almost all $v$, then this isomorphism is such that $\otimes_vB_v$ corresponds
to the global function
\begin{equation}\label{locglobBreleq}
 B(g)=\prod_vB_v(g_v),\qquad g=(g_v)_v\in H(\A_F).
\end{equation}
\subsubsection*{Explicit formulas: the spherical Bessel function}
Explicit formulas for local Bessel functions are only known in a few cases. One of these is the $p$-adic unramified case, which we review next. Hence, let $F$ be a non-archimedean local field of characteristic zero. Let the character $\psi$ of $F$ have conductor $\OF$, the ring of integers. Let $(\pi,V_\pi)$ be an unramified, irreducible, admissible representation of $H(F)$. Let $\Lambda$ be an unramified character of $T(F)\cong L^\times$. We assume that $V_\pi=\mathcal{B}_{S,\Lambda,\psi}(\pi)$ is the Bessel model with respect to the character $\Lambda\otimes\theta$ of $R(F)$. Let $B\in V_\pi$ be a spherical vector. By \cite{Su}, Proposition 2-5, we have $B(1)\neq0$. For $l,m\in\Z$ let
\begin{equation}\label{hlmdefeq}
 h(l,m)=\begin{bmatrix}\varpi^{2m+l}\\&\varpi^{m+l}\\&&1\\&&&\varpi^m\end{bmatrix}.
\end{equation}
Then, as in (3.4.2) of \cite{Fu},
\begin{equation}\label{RFKHrepresentativeseq}
 H(F)=\bigsqcup_{l\in\Z}\bigsqcup_{m\geq0}R(F)h(l,m)K^H,\qquad
 K^H=H(\OF).
\end{equation}
By Lemma (3.4.4) of \cite{Fu} we have $B(h(l,m))=0$ for $l<0$, so that $B$ is
determined by the values $B(h(l,m))$ for $l,m\geq0$. In \cite{Su}, 2-4, Sugano has given a formula for $B(h(l,m))$ in terms of a generating function. The full formula is required only in the case where the $\GL_2$ representation $\tau$ is unramified; this case has been treated in \cite{Fu}. For other cases
we only require the values $B(h(l,0))$, which are given by
\begin{equation}\label{suganox0eq}
 \sum_{l\geq0}B(h(l,0))y^l= \frac{H(y)}{Q(y)},
\end{equation}
where
\begin{align}\label{suganoQeq}
 Q(y)&=\prod_{i=1}^4\big(1-\gamma^{(i)}(\varpi)q^{-3/2}y\big)
\end{align}
and
\begin{equation}\label{suganoHeq}
 H(y) = \left\{\begin{array}{ll}
    1-q^{-4}\Lambda(\varpi)y^2& \hbox{ if }
     \big(\frac L\p\big)=-1,\\[1ex]
    1-q^{-2}\Lambda(\varpi_L)y& \hbox{ if }
     \big(\frac L\p\big)= 0,\\[1ex]
    1-q^{-2}\big(\Lambda(\varpi_L)+\Lambda(\varpi\varpi_L^{-1})\big)y
     +q^{-4}\Lambda(\varpi)y^2& \hbox{ if } \big(\frac L\p\big)= 1.
 \end{array}\right.
\end{equation}
The $\gamma^{(i)}$ are the Satake parameters of $\pi$, as in Sect.\ (3.6) of \cite{Fu}.
\subsubsection*{Explicit formulas: the highest weight case}
Another situation where an explicit formula for a Bessel function is known is the archimedean lowest weight case. Hence, let $F=\R$. Let $l$ be an integer such that $l\geq2$. Let $\pi$ be the discrete series representation (or limit of such if $l=2$) of $\PGSp_4(\R)$ with minimal $K$-type $(l,l)$; here, we write elements of the weight lattice as pairs of integers, precisely as in \cite{PS}, Sect.\ 2.1. Such representations $\pi$ appear as the archimedean components of the automorphic representations of $H(\A)$ attached to (scalar valued) Siegel modular forms of weight $l$. Recall that $S$ is a positive definite matrix. Let the function $B:\:H(\R)\rightarrow\C$ be defined by
\begin{equation}\label{archBesselformula2eq}\renewcommand{\arraystretch}{1.2}
 B(h) := \left\{\begin{array}{ll}
 \mu_2(h)^l\:\overline{\det(J(h, i_2))^{-l}}\,e^{-2\pi i\,
 {\rm tr}(S\overline{h\langle i_2\rangle})}& \hbox{ if } h \in H^+(\R),\\
 0& \hbox{ if } h \notin H^+(\R),\\
\end{array}\right.
\end{equation}
where $i_2=\mat{i}{}{}{i}$. One can check that $B$ satisfies the
Bessel transformation property with the character $\Lambda \otimes
\theta$ of $R(\R)$, where $\Lambda$ is trivial. Also
\begin{equation}\label{realBproperty2eq}
 B(hk)=\det(J(k,i_2))^lB(h)\qquad\text{for }
 h\in H(\R),\;k\in K^H_\infty.
\end{equation}
In fact, by the considerations in \cite{Su} 1-3, or by \cite{PS} Theorem 3.4, $B$
is the highest weight vector (weight $(-l,-l)$) in $\mathcal{B}_{S,\Lambda,\psi}(\pi)$. Note that the function $B$ is determined by its values on a set of representatives for $R(\R)\backslash H(\R)/K^H_\infty$. Such a set can be obtained as follows. Let $T^1(\R)=T(\R)\cap\SL(2,\R)$. Then $T(\R)=T^1(\R)\cdot\{\mat{\zeta}{}{}{\zeta}\;|\;\zeta>0\}$. As in \cite{Fu}, p.\ 211, let $t_0\in\GL_2(\R)^+$ be such that
$T^1(\R)=t_0\SO(2)t_0^{-1}$. (We will make a specific choice of $t_0$ when we choose the matrix $S$ below.) It is not hard to see that
\begin{equation}\label{HRBesseldecompeq}
 H(\R)=R(\R)\cdot\big\{\begin{bmatrix}\lambda t_0\mat{\zeta}{}{}{\zeta^{-1}}&\\
  &^tt_0^{-1}\mat{\zeta^{-1}}{}{}{\zeta}\end{bmatrix}\;|\;\lambda\in\R^\times,\,\zeta\geq1\big\}
  \cdot K^H_\infty.
\end{equation}
One can check that the double cosets in (\ref{HRBesseldecompeq}) are pairwise disjoint.
\subsection{Local zeta integrals}\label{localzetasec}
Let $F$ be a non-archimedean local field of characteristic zero, or $F=\R$. Let $\mathbf{a},\mathbf{b},\mathbf{c}\in F$ and $L,\alpha,\eta$ be according to our conventions; see (\ref{Ldefeq}), (\ref{alphadefeq}), (\ref{etadefeq}). Let $\tau,\chi_0,\chi$ be as in Corollary \ref{distinguishedvectornonarchtheorem} (non-archimedean case) resp.\ Corollary \ref{distinguishedvectorarchtheorem} (archimedean case). Let $W^\#(\,\cdot\,,s)$ be the unique vector in $I(s,\chi,\chi_0,\tau)$ exhibited in these corollaries. The calculation in the proof of Theorem (2.4) of \cite{Fu} shows that
\begin{equation}\label{Wetatransformationpropertyeq}
 W^\#(\eta tuh,s)=\Lambda(t)^{-1}\theta(u)^{-1}W^\#(\eta h,s)\qquad
 \text{for all }t\in T(F),\:u\in U(F),\;h\in H(F).
\end{equation}
Here, $\Lambda$ is an unramified character of $L^\times$ in the non-archimedean case, and $\Lambda=1$ in the archimedean case; we always have $\chi(\zeta) = \Lambda(\bar{\zeta})^{-1} \chi_0(\bar{\zeta})^{-1}$. Let $\pi$ be an irreducible, admissible representation of $H(F)$ which has a Bessel model of type $(S,\Lambda,\psi)$. Then, for any $B\in\mathcal{B}_{S,\Lambda,\psi}(\pi)$, equation (\ref{Wetatransformationpropertyeq}) shows that the integral
\begin{equation}\label{localZseq}
 Z(s,W^\#,B)=\int\limits_{R(F)\backslash H(F)}W^\#(\eta h,s)B(h)\,dh
\end{equation}
makes sense. We shall now explicitly calculate these integrals in the case of $B$ being the spherical vector in an unramified $p$-adic representation $\pi$, and $B$ being the highest weight vector in an archimedean (limit of) discrete series representation with scalar minimal $K$-type.
\subsubsection*{The non-archimedean case}
Assume that $F$ is non-archimedean. Recall the explicit formula for the distinguished function $W^\#(\,\cdot\,,s)$ given in Corollary \ref{distinguishedvectornonarchtheorem}.
It involves $W^{(0)}$, the normalized local newform in the Whittaker model of $\tau$ with respect to the character $\psi^{-\mathbf{c}}(x)=\psi(-\mathbf{c}x)$. Since this character has conductor $\OF$, the values $W^{(0)}(\mat{\varpi^l}{}{}{1})$ are zero for negative $l$. For non-negative $l$, one can use formulas for the local newform with respect to the congruence subgroup $\GL_2(\OF)\cap\mat{\OF}{\OF}{\p^n}{1+\p^n}$ (given, amongst other places, in \cite{Sc1}), together with the local functional equation, to obtain the following.
\begin{equation}\label{nonarchnewformtableeq}\renewcommand{\arraystretch}{1.8}
 \begin{array}{cc}
 \tau &\rule[-3ex]{0ex}{7.4ex}
   W^{(0)}({\renewcommand{\arraystretch}{1.0}\mat{\varpi^l}{}{}{1}})\;\;(l\geq0)\\ \hline
 \beta_1\times \beta_2 \mbox{ with } \beta_1, \beta_2 \mbox{ unramified, } \beta_1 \beta_2^{-1}
  \neq|\,|^{\pm 1} &q^{-l/2}
  \sum_{k=0}^l\beta_1(\varpi)^k\beta_2(\varpi)^{l-k}\\
 \beta_1\times \beta_2 \mbox{ with } \beta_1 \mbox{ unramified, } \beta_2 \mbox{ ramified}
  & \beta_2(\varpi^l)q^{-\frac l2} \\
 \Omega\,{\rm St}_{\GL_2} \mbox{ with } \Omega \mbox{ unramified }
  & \Omega(\varpi^l)q^{-l} \\
 \mbox{ supercuspidal OR ramified twist of Steinberg}&1\quad\mbox{ if }l=0\\[-2ex]
  \mbox{ OR } \beta_1\times \beta_2 \mbox{ with } \beta_1, \beta_2 \mbox{ ramified, } \beta_1\beta_2^{-1} \neq|\,|^{\pm 1}  & 0 \quad\mbox{ if } l > 0
\end{array}
\end{equation}
Assume that $\pi$ is an unramified representation and that $B\in\mathcal{B}_{S,\Lambda,\psi}(\pi)$ is the spherical Bessel function as in (\ref{suganox0eq}). In the following we shall assume that the conductor $\p^n$ of $\tau$ satisfies $n>0$, since for unramified $\tau$ the local integral has been computed by Furusawa; see Theorem (3.7) in \cite{Fu}. Since both functions $B$ and $W^\#$ are right $K^H$-invariant, it follows from (\ref{RFKHrepresentativeseq}) that the integral (\ref{localZseq}) is given by
\begin{equation}\label{zeta-integral-sum}
 Z(s,W^\#,B)=\sum\limits_{l,m \geq 0} B(h(l,m))W^\#(\eta h(l,m),s)V_mq^{3m+3l}.
\end{equation}
Here, as in Sect.\ 3.5 of \cite{Fu}, $V_m = {\rm vol}\big(T(F)\backslash T(F)\mat{\varpi^m}{}{}{1}\GL_2(\OF)\big)$. Calculations confirm that $\eta h(l,m)$ lies in the support of $W^\#(\,\cdot\,,s)$ if and only if $m=0$. It follows that the sum (\ref{zeta-integral-sum}) reduces to
\begin{equation}\label{zeta-integral-sum-m=0}
 Z(s,W^\#,B)= \sum\limits_{l \geq 0} B(h(l,0))W^\#(\eta h(l,0),s)q^{3l}.
\end{equation}
By  (\ref{Wsharpformulaeq}),
\begin{equation}\label{W-value-h(l,0)}
 W^\#(\eta h(l,0),s)=q^{-3(s+1/2)l} \omega_\pi(\varpi^{-l}) \omega_\tau(\varpi^{-l})
  W^{(0)}(\mat{\varpi^l}{}{}{1}).
\end{equation}
Substituting the values of $W^{(0)}(\mat{\varpi^l}{}{}{1})$ from the table above and the values of $B(h(l,0))$ from (\ref{suganox0eq}), we get the following result.
\begin{theorem}\label{nonarchlocalzetatheorem}
 Let $\tau,\chi,\chi_0,\Lambda$ and $W^\#(\,\cdot\,,s)$ be as in Corollary \ref{distinguishedvectornonarchtheorem}. Let $\pi$ be an irreducible, admissible, unramified representation of $H(F)$, and let $B$ be the unramified Bessel function given by formula (\ref{suganox0eq}). Then the local zeta integral $Z(s,W^\#,B)$ defined in (\ref{localZseq}) is given by
 \begin{equation}\label{final-integral-l-fn-formula}
  Z(s,W^\#,B)=\frac{L(3s+\frac 12, \tilde\pi \times \tilde\tau)}
  {L(6s+1,\chi|_{F^\times})L(3s+1,\tau \times \AI(\Lambda) \times \chi|_{F^\times})}Y(s),
 \end{equation}
 where
 \begin{align*}
  Y(s)=\left\{\begin{array}{l@{\hspace{-5ex}}l}
   1&\text{if }\tau=\beta_1\times\beta_2,\;\beta_1,\beta_2\text{ unramified},\\
   L(6s+1,\chi|_{F^\times})
    &\text{if }\tau=\beta_1\times\beta_2,\;\beta_1\text{ unram.},\:\beta_2\text{ ram.},\:
     \big(\frac L\p\big)=\pm1,\\
&\qquad\text{OR }\tau=\beta_1\times\beta_2,\;\beta_1\text{ unram.},\:\beta_2\text{ ram.},\\
      &\qquad\qquad\big(\frac L\p\big)=0 \text{ and }\beta_2\chi_{L/F}\text{ ramified},\\
&\qquad\text{OR } \tau=\Omega\St_{\GL(2)},\;\Omega\text{ unramified},\\
     \displaystyle\frac{L(6s+1,\chi|_{F^\times})}
      {1-\Lambda(\varpi_L)(\omega_\pi \beta_2)^{-1}(\varpi)q^{-3s-1}}
    &\text{if }\tau=\beta_1\times\beta_2,\;\beta_1\text{ unram.},\:\beta_2\text{ ram.},\:
     \big(\frac L\p\big)=0,\\
    &\qquad\text{and }\beta_2\chi_{L/F}\text{ unramified},\\[1ex]
   L(6s+1,\chi|_{F^\times})L(3s+1,\tau \times \AI(\Lambda) \times \chi|_{F^\times})
    &\hspace{5ex}\text{ if } \tau=\beta_1\times\beta_2,\;\beta_1,\beta_2\text{ ramified},\\
    &\qquad\text{OR } \tau=\Omega\St_{\GL(2)},\:\Omega\text{ ramified},\\
    &\qquad\text{OR } \tau \text{ supercuspidal}.
  \end{array}\right.
 \end{align*}
 In (\ref{final-integral-l-fn-formula}), $\tilde\pi$ and $\tilde\tau$ denote the
 contragredient of $\pi$ and $\tau$, respectively. The symbol $\AI(\Lambda)$
 stands for the $\GL_2(F)$ representation attached to the character $\Lambda$
 of $L^\times$ via automorphic induction, and $\chi_{L/F}$ stands for the quadratic character of $F^\times$ associated with the extension $L/F$.  The function $L(3s+1,\tau \times \AI(\Lambda) \times \chi|_{F^\times})$ is a standard $L$-factor for $\GL_2\times\GL_2\times\GL_1$.
\end{theorem}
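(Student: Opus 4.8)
The plan is to compute the zeta integral $Z(s,W^\#,B)$ by reducing it to the sum~\eqref{zeta-integral-sum-m=0} over the torus-type representatives $h(l,0)$, then substituting the two explicit formulas we already have in hand: Sugano's generating function~\eqref{suganox0eq} for the spherical Bessel values $B(h(l,0))$, and the table~\eqref{nonarchnewformtableeq} for the Whittaker newform values $W^{(0)}(\mathrm{diag}(\varpi^l,1))$, the latter fed through~\eqref{W-value-h(l,0)}. First I would justify that the sum over $m$ collapses: by~\eqref{RFKHrepresentativeseq} and the $K^H$-invariance of both $B$ and $W^\#$, the integral~\eqref{localZseq} is the sum~\eqref{zeta-integral-sum}; since $W^\#(\,\cdot\,,s)$ is supported on $P(F)\eta_0K^H\Gamma(\P^n)$ by Corollary~\ref{distinguishedvectornonarchtheorem}, one checks directly that $\eta h(l,m)$ lands in this support only when $m=0$, so only the $m=0$ terms survive. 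For those terms, $V_0=1$, giving~\eqref{zeta-integral-sum-m=0}.

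**The generating-function computation.** The heart of the proof is then a formal power series identity in the variable $y$ (playing the role of $q^{-3s-1/2}$ up to the central-character twist). Writing $X = q^{-3s-1/2}\omega_\pi(\varpi)^{-1}\omega_\tau(\varpi)^{-1}$, one has
$$
 Z(s,W^\#,B)=\sum_{l\geq0}B(h(l,0))\,W^{(0)}(\mat{\varpi^l}{}{}{1})\,q^{-3(s+1/2)l}\omega_\pi(\varpi)^{-l}\omega_\tau(\varpi)^{-l}q^{3l},
$$
and one substitutes the relevant row of table~\eqref{nonarchnewformtableeq} for $W^{(0)}$ and expands $\sum_l B(h(l,0))y^l = H(y)/Q(y)$ from~\eqref{suganox0eq}. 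In each of the five cases of the theorem the resulting series is a product/ratio of geometric-type series, which I would resum in closed form. The target is to recognize the numerator as $L(3s+\tfrac12,\tilde\pi\times\tilde\tau)^{-1}$-inverted — i.e.\ the degree-8 (or lower, when $\tau$ is ramified) Euler factor — with $Q(y)$ supplying the four spin Satake parameters $\gamma^{(i)}$ of $\tilde\pi$ and the $W^{(0)}$-sum supplying the $\GL_2$ parameters of $\tilde\tau$, and to identify the leftover factors with $H(y)$ and the denominators $L(6s+1,\chi|_{F^\times})^{-1}$, $L(3s+1,\tau\times\AI(\Lambda)\times\chi|_{F^\times})^{-1}$, so that everything not absorbed is collected into the correction factor $Y(s)$. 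The shape of $H(y)$ in~\eqref{suganoHeq} depends on the splitting behavior $\big(\tfrac L\p\big)$, which is exactly why $Y(s)$ in the theorem has the split/inert/ramified case distinctions (notably the extra $\big(1-\Lambda(\varpi_L)(\omega_\pi\beta_2)^{-1}(\varpi)q^{-3s-1}\big)^{-1}$ in the ramified-$L$, ramified-$\beta_2$, unramified-$\beta_2\chi_{L/F}$ subcase, which comes from a partial cancellation between $H(y)$ and the $\beta_2$-newform values).

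**Main obstacle and cleanup.** The main obstacle is the bookkeeping in matching the resummed series against the \emph{definition} of $L(3s+\tfrac12,\tilde\pi\times\tilde\tau)$ and of the automorphic-induction factor $L(3s+1,\tau\times\AI(\Lambda)\times\chi|_{F^\times})$ in each case — in particular correctly tracking the central-character twists (the $\omega_\pi$, $\omega_\tau$, and $\chi|_{F^\times}$ shifts) and the degree drop of the $\GL_2$-factor when $\tau$ is ramified (supercuspidal or ramified-twist-of-Steinberg gives $W^{(0)}$ supported only at $l=0$, so the numerator becomes trivial and the whole expected $L$-denominator must reappear inside $Y(s)$, as the last line of the formula records). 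I would organize the verification case-by-case following the rows of~\eqref{nonarchnewformtableeq}, in each case first writing down $\sum_l B(h(l,0))W^{(0)}(\mat{\varpi^l}{}{}{1})(Xq^{1/2})^l$ as a rational function of $X$, then factoring. For the unramified-$\tau$ rows one also has to invoke, or re-derive in this simpler form, the identity underlying Furusawa's Theorem~(3.7) in~\cite{Fu}; since the hypothesis $n>0$ is imposed, those rows only enter through ramified $\beta_2$ and the like. The final step is purely cosmetic: collecting the surviving factors into the piecewise expression for $Y(s)$ exactly as displayed, and noting that $\AI(\Lambda)$, $\chi_{L/F}$ and the claim that $L(3s+1,\tau\times\AI(\Lambda)\times\chi|_{F^\times})$ is the standard $\GL_2\times\GL_2\times\GL_1$ factor are just the standard definitions.
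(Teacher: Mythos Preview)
Your proposal is correct and follows essentially the same approach as the paper: reduce the integral to the sum over $h(l,0)$ via the support of $W^\#$, then feed in Sugano's generating function~\eqref{suganox0eq} together with the newform table~\eqref{nonarchnewformtableeq} through~\eqref{W-value-h(l,0)}, and resum case by case. The paper's proof is slightly more economical in that it outsources the fully unramified case to Furusawa's Theorem~(3.7) in~\cite{Fu} and the unramified-twist-of-Steinberg case to Theorem~3.8.1 of~\cite{PS1}, carrying out the explicit $L$-factor matching only for the mixed $\beta_1$-unramified/$\beta_2$-ramified principal series, and dispatching the remaining cases (supercuspidal, ramified twist of Steinberg, both $\beta_i$ ramified) in one line by observing that both $L(s,\tilde\pi\times\tilde\tau)$ and $Z(s,W^\#,B)$ equal~$1$.
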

\begin{proof}
If $\tau=\beta_1\times\beta_2$ with unramified $\beta_1$ and $\beta_2$, then
this is Theorem (3.7) in Furusawa's paper \cite{Fu}. If $\tau=\beta_1\times\beta_2$
with unramified $\beta_1$ and ramified $\beta_2$, then, from
the local Langlands correspondence, we have the following
$L$-functions attached to the representations $\tilde\pi \times
\tilde\tau$ of $\GSp_4(F) \times \GL_2(F)$ and $\tau \times
\AI(\Lambda) \times \chi|_{F^\times}$ of $\GL_2(F) \times \GL_2(F)\times \GL_1(F)$,
\begin{equation}\label{gsp4-gl2-l-fn}
 L(s, \tilde\pi \times \tilde\tau)=\prod_{i=1}^4
 \big(1-(\gamma^{(i)}\beta_1)^{-1}(\varpi)q^{-s}\big)^{-1},
\end{equation}
where $\gamma^{(i)}$ are the Satake parameters of $\pi$, as in Sect.\ (3.6) of \cite{Fu}, and
$$\renewcommand{\arraystretch}{1.4}
 \frac1{L(s, \tau \times \AI(\Lambda) \times \chi|_{F^\times})}=
  \left\{\begin{array}{l@{\hspace{-1ex}}l}
    1-\big(\Lambda (\omega_\pi \beta_1)^{-2}\big)(\varpi)q^{-2s}
     &\text{if } \big(\frac L\p\big)=-1,\\
    1-\Lambda(\varpi_L) (\omega_\pi\beta_1)^{-1}(\varpi)q^{-s}
     &\text{if } \big(\frac L\p\big)= 0\text{ and }\beta_2\chi_{L/F}\text{ ram.},\\
    (1-\Lambda(\varpi_L) (\omega_\pi\beta_1)^{-1}(\varpi)q^{-s})\\
    \qquad(1-\Lambda(\varpi_L) (\omega_\pi \beta_2)^{-1}(\varpi)q^{-s})
     &\text{if } \big(\frac L\p\big)=0\text{ and }\beta_2\chi_{L/F}\text{ unram.},\\
    (1-\Lambda(\varpi_L) (\omega_\pi\beta_1)^{-1}(\varpi)q^{-s})\\
     \qquad(1-\Lambda(\varpi\varpi_L^{-1})(\omega_\pi\beta_1)^{-1}(\varpi)q^{-s})
     &\qquad\text{if } \big(\frac L\p\big)= 1.\end{array}\right.
$$
The desired result therefore follows from (\ref{suganoQeq}) and (\ref{suganoHeq}).
If $\tau$ is an unramified twist of the Steinberg representation, then the result was proved in Theorem 3.8.1 of \cite{PS1}. In all remaining cases we have $L(s,\tilde\pi\times\tilde\tau)=1$ and $Z(s,W^\#,B)=1$, so that the asserted formula holds.
\end{proof}
\subsubsection*{The archimedean case}
Now let $F=\R$. We will calculate the zeta integral (\ref{localZseq}) for the distinguished function $W^\#$ given in Theorem \ref{distinguishedvectorarchtheorem}. It is enough to calculate these integrals for the functions $W^\#_{m,l,l_2,l_1}$, where $l_1$ is one of the weights occurring in $\tau$, and where $l_2\in\Z$ has the same parity as $l_1$. Recall the explicit formula (\ref{Wsharpwelldeflemmaeq1}) for these functions.


\vspace{3ex}
As for the Bessel function ingredient in (\ref{localZseq}), let $\pi$ be a (limit of) discrete series representation of $\PGSp_4(\R)$ with scalar minimal $K$-type $(l,l)$, where $l\geq2$. Let $B:\:H(\R)\rightarrow\C$ be the function defined in (\ref{archBesselformula2eq}). Then $B$ is a vector of weight $(-l,-l)$ in $\mathcal{B}_{S,\Lambda,\psi}(\pi)$, where $\Lambda=1$ and $\psi(x)=e^{-2\pi ix}$. By (\ref{distinguishedvectorarchtheoremcoreq5}) and (\ref{realBproperty2eq}), the function $W^\#(\eta h, s) B(h)$ is right invariant under $K^H_\infty$. Using this
fact and the disjoint double coset decomposition (\ref{HRBesseldecompeq}), we obtain
\begin{align}\label{archintegral1eq}
 Z(s,W^\#_{m,l,l_2,l_1},B)&= \pi\int\limits_{\R^{\times}}\int\limits_1^{\infty}
  W^\#_{m,l,l_2,l_1}\Big(\eta \begin{bmatrix}\lambda t_0\mat{\zeta}{}{}{\zeta^{-1}}&\\
  &^tt_0^{-1}\mat{\zeta^{-1}}{}{}{\zeta}\end{bmatrix},s\Big) \nonumber \\
 &\hspace{10ex}B\Big(\begin{bmatrix}\lambda t_0\mat{\zeta}{}{}{\zeta^{-1}}&\\
  &^tt_0^{-1}\mat{\zeta^{-1}}{}{}{\zeta}\,\end{bmatrix}\Big)(\zeta-\zeta^{-3})\lambda^{-4}\,d\zeta\,d\lambda;
\end{align}
see (4.6) of \cite{Fu} for the relevant integration formulas. The above calculations are valid for any choice of $\mathbf{a},\mathbf{b},\mathbf{c}$ as long as $S=\mat{\mathbf{a}}{\mathbf{b}/2}{\mathbf{b}/2}{\mathbf{c}}$ is positive definite. We will compute (\ref{archintegral1eq}), in two special cases, namely when $S$ is of the form $S=\mat{D/4}{}{}{1}$ or $S=\mat{(1+D)/4}{1/2}{1/2}{1}$ with a positive number $D$. By the argument in Sect.\ 4.4 of \cite{PS1}, we may assume that $S$ is of the first kind. Then $\eta =\begin{bmatrix}1&&&\\\frac{\sqrt{-D}}{2}&1&&\\&&1&\frac{\sqrt{-D}}{2}\\ &&&1\end{bmatrix}$, and we can choose $t_0
=\mat{2^{1/2}D^{-1/4}}{}{}{2^{-1/2}D^{1/4}}$. From formula
(\ref{archBesselformula2eq}),
\begin{equation}\label{archbesselformula3eq}
 B\Big(\begin{bmatrix}\lambda t_0\mat{\zeta}{}{}{\zeta^{-1}}&\\
  &^tt_0^{-1}\mat{\zeta^{-1}}{}{}{\zeta}\,\end{bmatrix}\Big) =
  \renewcommand{\arraystretch}{1.3}\left\{\begin{array}{ll}\lambda^le^{-2\pi \lambda
  D^{1/2}\frac{\zeta^2+\zeta^{-2}}2}& \hbox{ if } \lambda > 0,\\
    0& \hbox{ if } \lambda < 0.
 \end{array}\right.
\end{equation}
Next, the argument of $W^\#_{m,l,l_2,l_1}$ can be rewritten as an element of $MNK_{\infty}^{G_2}$ as
$$
 \begin{bmatrix}\lambda D^{-\frac14}\Big(\frac{\zeta^2+\zeta^{-2}}{2}\Big)^{-\frac 12}\\
  &\lambda D^{\frac 14}\Big(\frac{\zeta^2+\zeta^{-2}}{2}\Big)^{\frac 12}\\
  &&D^{\frac 14}\Big(\frac{\zeta^2+\zeta^{-2}}{2}\Big)^{\frac 12}\\
  &&&D^{-\frac14}\Big(\frac{\zeta^2+\zeta^{-2}}{2}\Big)^{-\frac12}
 \end{bmatrix}  \begin{bmatrix}1&-i\zeta^2&&\\0&1&&\\&&1&0\\&&-i\zeta^2&1\end{bmatrix} k,
$$
where $k =  \mat{k_0}{}{}{k_0}$ with $k_0 = (\zeta^2+\zeta^{-2})^{-1/2}\mat{\zeta^{-1}}{i \zeta}{i\zeta}{\zeta^{-1}} \in \SU(2)$. From now on assume that $m=|l-l_1|$. We have
$$
 \Phi^\#_{m,l,l_2,l_1}(\mat{k_0}{}{}{k_0})=
  \Big(\frac{\zeta^2+\zeta^{-2}}2\Big)^{-m},
$$
and hence
\begin{align}\label{whittakerfirstformulaeq}
 &W^\#_{m,l,l_2,l_1}\Big(\eta \begin{bmatrix}\lambda t_0\mat{\zeta}{}{}{\zeta^{-1}}&\\
  &t_0^{-1}\mat{\zeta^{-1}}{}{}{\zeta}\end{bmatrix},s\Big) \nonumber \\
 &\qquad=\Big(\frac{\zeta^2+\zeta^{-2}}2\Big)^{-|l-l_1|}
  \Big|\lambda D^{-\frac12}
  \big(\frac{\zeta^2+\zeta^{-2}}{2}\big)^{-1}\Big|^{3(s+\frac12)}
  \omega_\tau(\lambda)^{-1}
  W_{l_1}(\mat{\lambda D^{\frac12}\big(\frac{\zeta^2+\zeta^{-2}}{2}\big)}{}{}{1}).
\end{align}
If $q\in\C$ is as in (\ref{t1t2pqeq}), then $\omega_\tau(y)=y^q$ for $y>0$. It
follows from (\ref{W1Whittakerformulaeq}),
(\ref{archbesselformula3eq}) and (\ref{whittakerfirstformulaeq}) that
\begin{align}\label{realI1eq}
 Z(s,W^\#_{m,l,l_2,l_1},B)&=a^+\pi D^{-\frac{3s}2-\frac 34+\frac q4}(4\pi)^{\frac q2}
   \int\limits_0^{\infty}\int\limits_1^{\infty}
   \lambda^{3s+\frac 32+l-\frac q2}
   \Big(\frac{\zeta^2+\zeta^{-2}}2\Big)^{-3s-\frac 32+\frac q2-|l-l_1|}\nonumber\\
  &\hspace{10ex}W_{\frac{l_1}2, \frac{p}2}\big(4 \pi \lambda
   D^{1/2}\frac{\zeta^2+\zeta^{-2}}2\big)e^{-2\pi \lambda
   D^{1/2}\frac{\zeta^2+\zeta^{-2}}2}(\zeta-\zeta^{-3})\lambda^{-4}\,d\zeta\,d\lambda.
\end{align}
Using the substitutions $u = (\zeta^2+\zeta^{-2})/2$ and  $x = 4 \pi \lambda D^{1/2}u$, together with the integral formula for the Whittaker function from
\cite[p.\ 316]{MOS}, we get
$$
 Z(s,W^\#_{m,l,l_2,l_1},B)=a^+\pi
  \frac{D^{-3s-\frac l2+\frac q2}\,(4\pi)^{-3s+\frac 32 -l+q}}{6s+l+|l-l_1|-q-1}\,
  \frac{\Gamma(3s+l-1+\frac p2-\frac q2)
  \Gamma(3s+l-1-\frac p2-\frac q2)}{\Gamma(3s+l-\frac{l_1}2-\frac12-\frac q2)}.
$$
Here, for the calculation of the $u$-integral, we have assumed that ${\rm Re}(6s+l+|l-l_1|-q-1)>0$. We summarize our result in the following theorem. We will use the notation \begin{equation}\label{GammaRCdefeq}
 \Gamma_\R(s)=\pi^{-s/2}\,\Gamma\Big(\frac s2\Big),\qquad
 \Gamma_\C(s)=2(2\pi)^{-s}\,\Gamma(s).
\end{equation}
The proof of (\ref{archmaintheoremeq2}) below follows from the tables at the end of this section.
\begin{theorem}\label{archlocalzetatheorem}
 Assume that the matrix $S$ is of the form
 \begin{equation}\label{archmaintheoremeq0}
  S=\mat{D/4}{}{}{1}\qquad\text{or}\qquad S=\mat{(1+D)/4}{1/2}{1/2}{1}
 \end{equation}
 with a positive number $D$. Let $l\geq2$ be an integer, and let $\pi$ be the (limit of) discrete series representation of $\PGSp_4(\R)$ with scalar minimal $K$-type $(l,l)$. Let $l_2\in\Z$ and $\tau$, $\chi_0$, $\chi$ be as in Corollary \ref{distinguishedvectorarchtheoremcor}. Let $l_1$ be one of the weights occurring in $\tau$, and let $W^\#_{|l-l_1|,l,l_2,l_1}$ be the function defined in (\ref{Wsharpwelldeflemmaeq1}). Let $B:\:H(\R)\rightarrow\C$ be the function defined in (\ref{archBesselformula2eq}). Then, for ${\rm Re}(6s+l+|l-l_1|-q-1)>0$, with the local archimedean integral as in (\ref{localZseq}),
 \begin{align}\label{archmaintheoremeq1}
  Z(s,W^\#_{|l-l_1|,l,l_2,l_1},B)
  &=a^+_{l_1,p,q}D^{-3s-\frac l2+\frac q2}\,2^{-3s+\frac12 -l+\frac{3q}2+\frac{l_1}2}\,\pi^{1+\frac q2+\frac{l_1}2}\nonumber\\
  &\quad\times\frac{1}{6s+l+|l-l_1|-q-1}\,
  \frac{\Gamma_\C(3s+l-1+\frac p2-\frac q2)
  \Gamma_\C(3s+l-1-\frac p2-\frac q2)}{\Gamma_\C(3s+l-\frac{l_1}2-\frac12-\frac q2)}.
 \end{align}
 Here, $a^+_{l_1,p,q}$ is as defined in (\ref{aplusdefeq}). The numbers $p,q\in\C$ are defined in (\ref{t1t2pqeq}). With $\Lambda$ being the trivial character, we can rewrite formula (\ref{archmaintheoremeq1}) as
 \begin{equation}\label{archmaintheoremeq2}
  Z(s,W^\#_{|l-l_1|,l,l_2,l_1},B)=\frac{L(3s+\frac 12, \tilde\pi \times \tilde\tau)}
  {L(6s+1,\chi|_{\R^\times})L(3s+1,\tau \times \AI(\Lambda) \times \chi|_{\R^\times})}
   Y_{l,l_1,p,q}(s),
 \end{equation}
 where, with $u=0$ if $l_1$ is even and $u=1/2$ if $l_1$ is odd,
 \begin{align*}
  Y_{l,l_1,p,q}(s)=\left\{\!\!\!\begin{array}{l@{\;\;}l}
   \displaystyle\frac{a^+_{l_1,p,q}D^{-3s-\frac l2+\frac q2}2^{q-l+\frac{l_1}2+u}
     \pi^{1+\frac q2+\frac{l_1}2}(3s-\frac q2+\frac p2)}
     {3s+\frac{l+|l-l_1|}2-\frac12-\frac q2}\\[3ex]
    \displaystyle\times\frac{\Gamma_\C\big(3s+l-1-\frac p2-\frac q2\big)
       \Gamma_\C\big(3s+\frac12-\frac q2+u\big)}
      {\Gamma_\C\big(3s+\frac12-\frac q2+|l-\frac32-\frac p2|\big)
       \Gamma_\C\big(3s+l-\frac{l_1}2-\frac12-\frac q2\big)}
     &\text{if }\tau=\mathcal{D}_{p,\frac q2},\,p \geq 1,\\[5ex]
   \displaystyle\frac{a^+_{l_1,p,q}D^{-3s-\frac l2+\frac q2}2^{q-1-l+\frac{l_1}2+u}\pi^{1+\frac q2+\frac{l_1}2}
     \Gamma_\C\big(3s+\frac12-\frac q2+u\big)}
    {\big(3s+\frac{l+|l-l_1|}2-\frac12-\frac q2\big)
     \Gamma_\C\big(3s+l-\frac{l_1}2-\frac12-\frac q2\big)}
     &\text{if }\tau=\beta_1\times\beta_2.
  \end{array}\right.
 \end{align*}
\end{theorem}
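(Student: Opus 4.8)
Almost all of the computation is already laid out in the discussion preceding the theorem, so the plan is to assemble those pieces and then rewrite the answer in terms of $L$-factors. First I would carry out the reduction to a concrete double integral: since the integrand $W^\#(\eta h,s)B(h)$ in (\ref{localZseq}) is right $K^H_\infty$-invariant by (\ref{distinguishedvectorarchtheoremcoreq5}) and (\ref{realBproperty2eq}), the disjoint double coset decomposition (\ref{HRBesseldecompeq}) together with the integration formula of \cite[(4.6)]{Fu} collapses the zeta integral to the integral (\ref{archintegral1eq}) over $\lambda\in\R^\times$ and $\zeta\in[1,\infty)$. By the argument of \cite[Sect.\ 4.4]{PS1} one reduces to the case in which $S$ is of the first form in (\ref{archmaintheoremeq0}), which fixes the explicit shapes of $\eta$ and $t_0$ recorded above.

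The second step is the pointwise evaluation of the integrand. Here I would insert: the value (\ref{archbesselformula3eq}) of $B$ on the coset representatives (it vanishes for $\lambda<0$, which restricts the $\R^\times$-integral to $(0,\infty)$); the Iwasawa decomposition of the argument of $W^\#_{m,l,l_2,l_1}$ into $MNK^{G_2}_\infty$, whose $K^{G_2}_\infty$-part is the $\SU(2)$-matrix $\mathrm{diag}(k_0,k_0)$; and, taking $m=|l-l_1|$ as in the theorem, the explicit formula of Proposition \ref{W0l1l2distvecprop} (with the weight computations of Lemma \ref{Deltaabcdlemma}) to get $\Phi^\#_{m,l,l_2,l_1}(\mathrm{diag}(k_0,k_0))=((\zeta^2+\zeta^{-2})/2)^{-m}$, together with the formula (\ref{W1Whittakerformulaeq}) for the $M^{(2)}$-part of the Whittaker vector $W_{l_1}$. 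Multiplying these produces (\ref{whittakerfirstformulaeq}) and hence the double integral (\ref{realI1eq}), whose integrand is a power of $\zeta^2+\zeta^{-2}$ times a classical Whittaker function $W_{l_1/2,\,p/2}$ of $4\pi\lambda D^{1/2}(\zeta^2+\zeta^{-2})/2$ times $e^{-2\pi\lambda D^{1/2}(\zeta^2+\zeta^{-2})/2}$ and elementary factors.

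The third step is the explicit evaluation. Substituting $u=(\zeta^2+\zeta^{-2})/2$ and $x=4\pi\lambda D^{1/2}u$ separates the integral: the inner $x$-integral is of the form $\int_0^\infty x^{\beta-1}e^{-x/2}W_{l_1/2,\,p/2}(x)\,dx$ and is evaluated by the standard formula \cite[p.\ 316]{MOS} as a ratio of Gamma functions, while the remaining $u$-integral is an elementary Beta integral, convergent exactly under the hypothesis $\mathrm{Re}(6s+l+|l-l_1|-q-1)>0$ and contributing the factor $(6s+l+|l-l_1|-q-1)^{-1}$. Inserting the definition (\ref{aplusdefeq}) of $a^+_{l_1,p,q}$ and keeping careful track of the powers of $D$, $2$, $\pi$ and $4\pi$ then gives (\ref{archmaintheoremeq1}). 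Finally, to obtain (\ref{archmaintheoremeq2}) one writes out the three archimedean $L$-factors occurring there via the local Langlands correspondence --- for $L(3s+\frac12,\tilde\pi\times\tilde\tau)$ one needs the $L$-parameter of the scalar-weight holomorphic (limit of) discrete series $\pi$ of $\PGSp_4(\R)$, and for $L(3s+1,\tau\times\AI(\Lambda)\times\chi|_{\R^\times})$ that $\Lambda=1$, so $\AI(\Lambda)$ is the principal series $1\boxplus\mathrm{sgn}$ --- and divides, splitting into the cases $\tau=\mathcal{D}_{p,q/2}$ and $\tau=\beta_1\times\beta_2$; repeated use of the Legendre duplication formula converts between $\Gamma$ and the $\Gamma_\R,\Gamma_\C$ of (\ref{GammaRCdefeq}) and yields the stated $Y_{l,l_1,p,q}(s)$.

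I expect the analytic input (the Whittaker integral of \cite{MOS}) to be routine; the genuine obstacle will be the last step, where many $\Gamma$-factors with shifted arguments, parity conditions (the auxiliary $u\in\{0,1/2\}$), signs, and the competing normalization conventions for $\Gamma$, $\Gamma_\R$, $\Gamma_\C$, $L(s,\tilde\pi\times\tilde\tau)$ and $a^+_{l_1,p,q}$ all have to be reconciled --- this is exactly the bookkeeping that ``the tables at the end of this section'' are designed to carry out.
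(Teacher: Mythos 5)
Your proposal is correct and retraces the paper's own derivation step by step: the right $K^H_\infty$-invariance and the double coset decomposition (\ref{HRBesseldecompeq}) reduce (\ref{localZseq}) to (\ref{archintegral1eq}); the reduction to the first matrix in (\ref{archmaintheoremeq0}), the explicit forms of $B$ and $\Phi^\#_{|l-l_1|,l,l_2,l_1}$, and (\ref{W1Whittakerformulaeq}) give (\ref{realI1eq}); the substitutions $u=(\zeta^2+\zeta^{-2})/2$ and $x=4\pi\lambda D^{1/2}u$ together with the Whittaker-function integral from \cite[p.\ 316]{MOS} then yield (\ref{archmaintheoremeq1}); and (\ref{archmaintheoremeq2}) follows by dividing by the archimedean $L$-factors from the tables. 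One cosmetic slip: the surviving $u$-integral $\int_1^\infty u^{-\alpha}\,du$ is a pure power integral, not a Beta integral, though it does contribute the factor $(6s+l+|l-l_1|-q-1)^{-1}$ exactly as you describe.
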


{\bf Remarks:} a) The factor $Y_{l,l_1,p,q}(s)$ is of the form $D^{-3s}$ times a rational function in $s$.

\vspace{2ex}
b) For $l=l_1$ we recover Theorem 4.4.1 of \cite{PS1}. We point out that
in our present approach the number $l_1$ (the $\GL_2$ weight) can be chosen independently of $l$ (the $\GSp_4$ weight), including the case of different parity.

\vspace{2ex}
c) In one of our later applications, the number $D$ will be a fundamental discriminant satisfying $D\equiv0$ mod $4$ or $D\equiv3$ mod $4$. Having the above theorem available for the two cases of $S$ in (\ref{archmaintheoremeq0}) assures that $S$ can be chosen to be a half-integral matrix.

\begin{corollary}\label{archlocalzetatheoremcor}
 Let all hypotheses be as in Theorem \ref{archlocalzetatheorem}. Let $W^\#\in I_W(s,\chi,\chi_0,\tau)$ be the distinguished function defined in table (\ref{ABCtableeq}). Then
 \begin{equation}\label{archmaintheoremcoreq2}
  Z(s,W^\#,B)=\frac{L(3s+\frac 12, \tilde\pi \times \tilde\tau)}
  {L(6s+1,\chi|_{\R^\times})L(3s+1,\tau \times \AI(\Lambda) \times \chi|_{\R^\times})}
   Y(s),
 \end{equation}
 with
 \begin{equation}\label{archmaintheoremcoreq3}\renewcommand{\arraystretch}{1.5}
  Y(s)=\left\{\begin{array}{l@{\qquad\text{in Case }}l}
   \kappa_{p+1,p}\,Y_{l,p+1,p,q}(s)&A,\\
   \kappa_{l,p}\,Y_{l,l,p,q}(s)&B,\\
   \displaystyle\kappa_{l+1,p}\,Y_{l,l+1,p,q}(s)
    +\Big(3s-\frac{p+q}2\Big)\kappa_{l-1,p}\,Y_{l,l-1,p,q}(s)&C.
   \end{array}\right.
 \end{equation}
 Here, the constants $\kappa_{*,p}$ are defined in (\ref{PhiWkappaeq}), and the factors $Y_{l,*,p,q}(s)$ are defined in Theorem \ref{archlocalzetatheorem}.
\end{corollary}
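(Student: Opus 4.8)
The corollary to be proved is Corollary \ref{archlocalzetatheoremcor}, which re-expresses the archimedean zeta integral $Z(s,W^\#,B)$ for the distinguished vector $W^\#$ of table \eqref{ABCtableeq} in terms of the $L$-factor ratio times a correction factor $Y(s)$, given casewise for Cases A, B, C. The whole point is that this follows essentially by linearity from Theorem \ref{archlocalzetatheorem}, which already handled the basic vectors $W^\#_{|l-l_1|,l,l_2,l_1}$.

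\textbf{Plan of proof.} First I would recall that by the definition in table \eqref{ABCtableeq}, in Cases A and B the vector $W^\#$ is just a scalar multiple of a single $W^\#_{m,l,l_2,l_1}$: in Case A, $W^\# = \kappa_{l_1,p,q}\,W^\#_{m,l,l_2,l_1}$ with $m=l_1-l$, $l_1=p+1$; in Case B, $W^\# = \kappa_{l,p,q}\,W^\#_{m,l,l_2,l}$ with $m=0$, so $|l-l_1|=|l-l|=0=m$. In Case C, $W^\# = \kappa_{l+1,p,q}\,W^\#_{1,l,l_2,l+1} + (3s-\tfrac{p+q}2)\,\kappa_{l-1,p,q}\,W^\#_{1,l,l_2,l-1}$, and here one must check that $m=1 = |l-(l\pm 1)|$ in both summands, so Theorem \ref{archlocalzetatheorem} applies verbatim to each piece with the respective weight $l_1 = l\pm 1$. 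Then I would invoke the linearity of the zeta integral $Z(s,\,\cdot\,,B)$ in its first argument (immediate from the definition \eqref{localZseq} as an integral): $Z(s,W^\#,B)$ is the corresponding linear combination of $Z(s,W^\#_{|l-l_1|,l,l_2,l_1},B)$.

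\textbf{Assembling the formula.} Next, for each basic vector I substitute the closed form \eqref{archmaintheoremeq2} from Theorem \ref{archlocalzetatheorem}, namely $Z(s,W^\#_{|l-l_1|,l,l_2,l_1},B) = \frac{L(3s+\frac12,\tilde\pi\times\tilde\tau)}{L(6s+1,\chi|_{\R^\times})L(3s+1,\tau\times\AI(\Lambda)\times\chi|_{\R^\times})}\,Y_{l,l_1,p,q}(s)$. The crucial observation is that the $L$-factor ratio on the right is \emph{the same} for every weight $l_1$ occurring in $\tau$ — it depends only on $\pi$, $\tau$, $\chi$, $\Lambda$, not on the choice of $l_1$ — so it factors out of the linear combination. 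Therefore $Z(s,W^\#,B)$ equals that common $L$-factor ratio times the linear combination of the $Y_{l,l_1,p,q}(s)$'s with exactly the coefficients $\kappa_{l_1,p,q}$ (and, in Case C, the extra $3s-\tfrac{p+q}2$) dictated by table \eqref{ABCtableeq}. This reproduces \eqref{archmaintheoremcoreq3} line by line: Case A gives $\kappa_{p+1,p}\,Y_{l,p+1,p,q}(s)$, Case B gives $\kappa_{l,p}\,Y_{l,l,p,q}(s)$, and Case C gives $\kappa_{l+1,p}\,Y_{l,l+1,p,q}(s) + (3s-\tfrac{p+q}2)\kappa_{l-1,p}\,Y_{l,l-1,p,q}(s)$.

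\textbf{Main obstacle.} There is really no deep obstacle here; the corollary is a bookkeeping consequence of Theorem \ref{archlocalzetatheorem} together with linearity. The only points needing a line of care are: (i) confirming that the parity/support hypotheses of Theorem \ref{archlocalzetatheorem} are met in each case (in particular that $l_2$ has the right parity relative to the weights of $\tau$, and that in Cases A and C the exponent matching $m=|l-l_1|$ holds, so that the precise function $W^\#_{m,l,l_2,l_1}$ appearing in table \eqref{ABCtableeq} is the one Theorem \ref{archlocalzetatheorem} evaluates — in Case A, $m = l_1-l \ge 0$ since $l_1 = p+1 \ge l$, so $|l-l_1| = l_1-l = m$); and (ii) noting the convergence condition ${\rm Re}(6s+l+|l-l_1|-q-1)>0$ holds simultaneously for all the finitely many weights $l_1$ involved (at most two, $l\pm 1$, in Case C), for ${\rm Re}(s)$ large, with the identity then extending by meromorphic continuation. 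Properties \eqref{archmaintheoremcoreq5} and \eqref{archmaintheoremcoreq6} are immediate since $W^\#$ lies in $W^\Delta_{m,l,l_2}$ by construction (Corollary \ref{distinguishedvectorarchtheoremcor}), so nothing further is needed there.
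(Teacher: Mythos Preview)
Your proposal is correct and follows exactly the approach the paper intends: the corollary is stated without proof because it is an immediate consequence of Theorem \ref{archlocalzetatheorem} by linearity of the zeta integral in its first argument, together with the definition of $W^\#$ in table \eqref{ABCtableeq}. Your verification that $m=|l-l_1|$ in each summand (so that Theorem \ref{archlocalzetatheorem} applies) and that the $L$-factor ratio is independent of $l_1$ are precisely the two small checks needed.
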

\subsubsection*{Tables for archimedean factors}
The archimedean Euler factors appearing in (\ref{archmaintheoremeq2}) can be easily calculated via the archimedean local Langlands correspondence. We omit the details and simply show the results in the following tables. For the principal series case $\beta_1\times\beta_2$, the numbers $p,q\in\C$ are such that
$\beta_1(a)=a^{\frac{q+p}2}$ and $\beta_2(a)=a^{\frac{q-p}2}$.

$$\renewcommand{\arraystretch}{1.2}
 \begin{array}{ccc}
  \tau&L(s,\pi\times\tilde\tau)&\varepsilon(s,\pi\times\tilde\tau,\psi^{-1})\\\hline
  \rule{0ex}{3ex}\mathcal{D}_{p,\mu},\;p\geq1,\:\mu\in\C&\Gamma_\C\big(s-\mu+\frac{p}{2}+\frac{1}{2}\big)
     \Gamma_\C\big(s-\mu+\frac{p}{2}-\frac{1}{2}\big)&i^{2l+3p-3+|2l-3-p|}\\
  &\Gamma_\C\big(s-\mu+l-\frac{3}{2}+\frac{p}{2}\big)
     \Gamma_\C\big(s-\mu+\big|l-\frac{3}{2}-\frac{p}{2}\big|\big)\\[2ex]
  \beta_1\times\beta_2
   &\Gamma_\C\big(s+\frac{1-q-p}2\big)\Gamma_\C\big(s+\frac{1-q+p}2\big)&1\\
   &\Gamma_\C\big(s+l-\frac{q+p+3}2\big)\Gamma_\C\big(s+l-\frac{q-p+3}2\big)
 \end{array}
$$
\vspace{2ex}

The next table shows $L$- and $\varepsilon$-factors for $\tau\times\AI(\Lambda)\times\chi|_{\R^\times}$.

$$\renewcommand{\arraystretch}{1.2}
 \begin{array}{ccc}
  \tau&L(s,\tau\times\AI(\Lambda)\times\chi|_{\R^\times})&\varepsilon(s,\tau\times\AI(\Lambda)\times\chi|_{\R^\times},\psi^{-1})\\\hline
  \rule{0ex}{3ex}\mathcal{D}_{p,\mu},\;p\geq1,\:\mu\in\C
   &\Gamma_\C\big(s-\mu+\frac{p}{2}\big)^2&(-1)^{p+1}\\[2ex]
  \beta_1\times\beta_2
   &\Gamma_\C\big(s-\frac{q+p}2\big)\Gamma_\C\big(s-\frac{q-p}2\big)&-1
 \end{array}
$$

\subsection{The global integral representation}\label{globalintrepsec}
Let $F$ be an algebraic number field and $\A_F$ its ring of adeles. Let $L$ be a quadratic field extension of $F$; the extension $L/F$ defines the unitary group $G_2$. The Eisenstein series $E(h,s;f)$ entering into the global integral (\ref{globalintegraleq}) below will be defined from a section $f$ in a global induced representation of $G_2(\A_F)$. We therefore start by discussing various models of such induced representations.
\subsubsection*{Global induced representations}
Let $(\tau,V_\tau)$ be a cuspidal, automorphic representation of $\GL_2(\A_F)$.
Let $\chi_0$ be a character of $L^\times\backslash\A_L^\times$ such that
the restriction of $\chi_0$ to $\A_F^\times$ concides with $\omega_\tau$, the
central character of $\tau$. Then, as in (\ref{M2representationseq}) in the local case,
$\chi_0$ can be used to extend $\tau$ to a representation of $M^{(2)}(\A_F)$,
denoted by $\chi_0 \times \tau$. Let $\chi$ be another character of
$L^\times\backslash\A_L^\times$, considered as a character of $M^{(1)}(\A_F)$.
This data defines a family of induced representations
$I(s,\chi,\chi_0,\tau)$ of $G_2(\A_F)$ depending on a complex parameter $s$.
The space of $I(s,\chi,\chi_0,\tau)$ consists of functions
$\varphi:\:G_2(\A_F)\rightarrow V_\tau$ with the transformation property
$$
 \varphi(m_1m_2ng)=\delta_P(m_1m_2)^{s+1/2}\chi(m_1)(\chi_0 \times \tau)(m_2)\varphi(g)
$$
for all $m_1\in M^{(1)}(\A_F)$, $m_2\in M^{(2)}(\A_F)$ and $n\in N(\A_F)$.
Since the representation $\tau$ is given as a space of automorphic forms, we may realize $I(s,\chi,\chi_0,\tau)$ as a space of $\C$-valued functions on $G_2(\A_F)$. More precisely, to each $\varphi$ as above we may attach the function $f_\varphi$ on $G_2(\A_F)$ given by $f_\varphi(g)=(\varphi(g))(1)$. Each function $f_\varphi$ has the property that $\GL_2(\A_F)\ni h\mapsto f_\varphi(hg)$ is an element of $V_\tau$, for each $g\in G_2(\A_F)$. Let $I_\C(s,\chi,\chi_0,\tau)$ be the model of $I(s,\chi,\chi_0,\tau)$ thus obtained. A third model of the same representation is obtained by attaching to $f\in I_\C(s,\chi,\chi_0,\tau)$ the function
\begin{equation}\label{Wffreleq}
 W_f(g) = \int\limits_{F\backslash \A_F}
 f\Big(\begin{bmatrix}1&&&\\&1&&x\\&&1&\\&&&1\end{bmatrix}g\Big)\psi(\mathbf{c}x)dx,
 \qquad g\in G_2(\A_F).
\end{equation}
Here, $\mathbf{c}\in F^\times$ is a fixed element. The map $f\mapsto W_f$ is injective since $\tau$ is cuspidal.
Let $I_W(s,\chi,\chi_0,\tau)$ be the space of all functions $W_f$. Now write $\tau\cong\otimes\tau_v$ with local representations $\tau_v$ of $\GL_2(F_v)$. We also factor $\chi=\otimes\chi_v$ and $\chi_0=\otimes\chi_{0,v}$, where $\chi_v$ and $\chi_{0,v}$ are characters of $\prod_{w|v}L_w^\times$.
Then there are isomorphisms
\begin{equation}\label{locglobindrepdiagrameq}
 \begin{CD}
  I(s,\chi,\chi_0,\tau)@>\sim>>\otimes_v I(s,\chi_v,\chi_{0,v},\tau_v)\\
  @V\sim VV @VV=V\\
  I_\C(s,\chi,\chi_0,\tau)@>\sim>>\otimes_v I(s,\chi_v,\chi_{0,v},\tau_v)\\
  @V\sim VV @VV\sim V\\
  I_W(s,\chi,\chi_0,\tau)@>\sim>>\otimes_v I_W(s,\chi_v,\chi_{0,v},\tau_v)
 \end{CD}
\end{equation}
Here, the local induced representation $I(s,\chi_v,\chi_{0,v},\tau_v)$ consists of functions taking values in a model $V_{\tau_v}$ of $\tau_v$; see Sect.\ \ref{parabolicinductionsec} for the precise definition. Assume that $V_{\tau_v}=\mathcal{W}(\tau_v,\psi_v^{-\mathbf{c}})$ is the Whittaker model of $V_{\tau_v}$ with respect to the additive character $\psi_v^{-\mathbf{c}}$. If we attach to each $f_v\in I(s,\chi_v,\chi_{0,v},\tau_v)$ the function $W_{f_v}(g)=f_v(g)(1)$, then we obtain the model $I_W(s,\chi_v,\chi_{0,v},\tau_v)$ of the same induced representation. The bottom isomorphism in diagram (\ref{locglobindrepdiagrameq}) is such that if $W_v\in I_W(s,\chi_v,\chi_{0,v},\tau_v)$ are given, with the property that $W_v\big|_{G_2(\OF_v)}=1$ for almost all $v$, then the corresponding element of $I_W(s,\chi,\chi_0,\tau)$ is the function
\begin{equation}\label{Wfactorizationeq}
 W(g)=\prod_{v\leq\infty}W_v(g_v),\qquad g=(g_v)_v\in G_2(\A_F).
\end{equation}
\subsubsection*{The global integral and the basic identity}
Now let $\mathbf{a},\mathbf{b},\mathbf{c},\mathbf{d}, S,L,\Lambda$ be as in Sect.\ \ref{besselsec}. Let $(\pi,V_\pi)$ be a cuspidal, automorphic representation of $H(\A_F)$ which has a global Bessel model of type $(S,\Lambda,\psi)$. Let further $(\tau,V_\tau)$ be a cuspidal, automorphic representation of $\GL_2(\A_F)$,
extended to a representation of $M^{(2)}(\A_F)$ via a character $\chi_0$ of
$L^\times\backslash\A_L^\times$. Define the character $\chi$ of $L^\times\backslash\A_L^\times$ by
\begin{equation}\label{chilambdachi0globaleq}
 \chi(y)=\Lambda(\bar y)^{-1}\chi_0(\bar y)^{-1},\qquad y\in\A_L^\times.
\end{equation}
Let $f(g,s)$ be an analytic family in $I_\C(s,\chi,\chi_0,\tau)$. For ${\rm Re}(s)$ large enough we can form the Eisenstein series
\begin{equation}\label{Edefeq}
 E(g,s;f)=\sum_{\gamma\in P(F)\backslash G_2(F)}f(\gamma g,s).
\end{equation}
In fact, $E(g,s;f)$ has a meromorphic continuation to the entire complex plane. In \cite{Fu} Furusawa studied integrals of the form
\begin{equation}\label{globalintegraleq}
 Z(s,f,\phi)=\int\limits_{H(F)Z_H(\A_F)\backslash H(\A_F)}E(h,s;f)\phi(h)\,dh,
\end{equation}
where $\phi\in V_\pi$. Theorem (2.4) of \cite{Fu}, the ``Basic Identity'', states that
\begin{equation}\label{basicidentityeq}
 Z(s,f,\phi)=Z(s,W_f,B_\phi):=\int\limits_{R(\A_F)\backslash H(\A_F)}W_f(\eta h,s)B_\phi(h)\,dh,
 \qquad\eta\text{ as in }(\ref{etadefeq}),
\end{equation}
where $R(\A_F)$ is the Bessel subgroup determined by $(S,\Lambda,\psi)$, and $B_\phi$ is the Bessel function corresponding to $\phi$; see (\ref{Bphieq}). The function $W_f(\,\cdot\,,s)$ appearing in (\ref{basicidentityeq}) is the element of $I_W(s,\chi,\chi_0,\tau)$ corresponding to $f(\,\cdot\,,s)\in I_\C(s,\chi,\chi_0,\tau)$; see (\ref{Wffreleq}) for the formula relating $f$ and $W_f$.

\vspace{3ex}
The importance of the basic identity lies in the fact that the integral on the right side of (\ref{basicidentityeq}) is Eulerian. Namely, assume that $f(\,\cdot\,,s)$ corresponds to a pure tensor $\otimes f_v$ via the middle isomorphism in (\ref{locglobindrepdiagrameq}). Assume that $W_v\in I_W(s,\chi_v,\chi_{0,v},\tau_v)$ corresponds to $f_v\in I(s,\chi_v,\chi_{0,v},\tau_v)$. Then
$$
 W_f(g,s)=\prod_{v\leq\infty}W_v(g_v,s),\qquad g=(g_v)_v\in G_2(\A_F),
$$
see (\ref{Wfactorizationeq}). Assume further that the global Bessel function $B_\phi$
factorizes as in (\ref{locglobBreleq}). Then it follows from (\ref{basicidentityeq}) that
\begin{equation}\label{Zfactorizationeq}
 Z(s,f,\phi)=\prod_{v\leq\infty}Z_v(s,W_v,B_v),
\end{equation}
with the local zeta integrals
\begin{equation}\label{localzetaeq}
 Z_v(s,W_v,B_v)=\int\limits_{R(F_v)\backslash H(F_v)}W_v(\eta h,s)B_v(h)\,dh.
\end{equation}
Furusawa has calculated the local integrals (\ref{localzetaeq}) in the case where all
the data is unramified. In our non-archimedean Theorem \ref{nonarchlocalzetatheorem} we calculated these integrals in the case where the $\GSp_4$ data is still unramified, but the $\GL_2$ data is arbitrary. Here, we took for $W_v$ the distinguished vector $W^\#$ from Corollary \ref{distinguishedvectornonarchtheorem}. In our archimedean Corollary \ref{archlocalzetatheoremcor} we calculated these integrals in the case where the $\GSp_4$ data is a scalar minimal $K$-type lowest weight representation, and the $\GL_2$ data is arbitrary. Here, we took for $W_v$ the distinguished vector $W^\#$ defined in table (\ref{ABCtableeq}).
\subsubsection*{The global integral representation over $\Q$}
The important fact in the theory outlined above is that the local functions $W_v$ can be chosen such that the integrals (\ref{localzetaeq}) are all non-zero. We have to make sure, however, that the data entering the local theorems, in particular the characters $\chi$, $\chi_0$ and $\Lambda$, fit into a global situation. For simplicity, we assume from now on that the number field is $F=\Q$ (this, however, is not essential).

\begin{lemma}\label{globalchi0lemma}
 Let $L$ be an imaginary quadratic field extension of $\Q$.
 Let $\omega=\otimes\omega_p$ be a character of $\Q^\times\backslash\A^\times$.
 Let $l_2$ be an integer such that $(-1)^{l_2}=\omega_\infty(-1)$. Then there exists
 a character $\chi_0=\otimes\chi_{0,v}$ of $L^\times\backslash\A_L^\times$ such that
 \begin{enumerate}
  \item the restriction of $\chi_0$ to $\A^\times$ coincides with $\omega$, and
  \item $\chi_{0,\infty}(\zeta)=\zeta^{l_2}$ for all $\zeta\in S^1$.
 \end{enumerate}
\end{lemma}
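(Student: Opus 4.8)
The plan is to construct $\chi_0$ by first prescribing its value on a suitable subgroup where the constraints (i) and (ii) force the behavior, and then extending to all of $\A_L^\times$ by an abstract argument. Concretely, let $N:\A_L^\times\to\A^\times$ be the norm map. Constraint (i) says that $\chi_0$ restricted to $\A^\times$ (embedded diagonally) must equal $\omega$; constraint (ii) pins down the restriction of $\chi_0$ to the compact group $S^1\subset\C^\times=L_\infty^\times$. First I would check that these two prescriptions are \emph{compatible}, i.e. that they agree on the overlap $\A^\times\cap(\text{the subgroup generated by }S^1)$; since $L$ is imaginary quadratic, $\R^\times\cap S^1=\{\pm1\}$, and the hypothesis $(-1)^{l_2}=\omega_\infty(-1)$ is exactly the statement that $\zeta\mapsto\zeta^{l_2}$ and $\omega_\infty$ agree on $\{\pm1\}$. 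So the two prescriptions glue to a well-defined character $\chi_0'$ on the subgroup $H_0:=\A^\times\cdot S^1\subset\A_L^\times$ (here $S^1$ sits inside the archimedean component).

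Next I would observe that $H_0$ contains $\R_{>0}$ (through $\A^\times$, via $\R^\times$) and $S^1$, so $H_0$ contains all of $L_\infty^\times=\C^\times$. Thus $H_0\supset \A^\times\cdot L_\infty^\times$. To extend $\chi_0'$ from $H_0$ to a Hecke character of $L$, the cleanest route is: (a) extend $\chi_0'$ to a character of $\A_L^\times$ as abstract locally compact abelian groups — this is possible because $\A_L^\times/H_0$ need not be a nice quotient, so instead one works prime by prime: at each finite place $v$ of $\Q$ the local norm and the local structure of $\prod_{w\mid v}L_w^\times$ let one extend the local component $\chi_{0,v}'$ (already defined on $\Q_v^\times$) to a character of $\prod_{w\mid v}L_w^\times$ restricting correctly, and this can be done so that $\chi_{0,v}$ is unramified for all $v$ outside the (finite) ramification set of $\omega$ and of $L/\Q$; (b) having chosen such local components $\chi_{0,v}$, their restricted tensor product is an automorphic-type character of $\A_L^\times$ whose restriction to $\A^\times$ agrees with $\omega$ on each local factor hence globally. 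The remaining point is \emph{automorphy}: one must correct the candidate so that it is trivial on $L^\times$. This uses that $\omega$ is trivial on $\Q^\times$, so the candidate character is already trivial on $N(L^\times)$ times... — more precisely, $\chi_0|_{\A^\times}=\omega$ trivial on $\Q^\times$ forces triviality on the subgroup $\Q^\times\subset L^\times$, and one adjusts by a character of the finite group $L^\times\backslash(\A_L^\times)^1/(\text{chosen open compact})$-type obstruction, which can be absorbed since characters of $L^\times\backslash\A_L^\times$ form a large enough group (the obstruction lives in a finite quotient and can be killed by twisting by a finite-order Hecke character trivial on $\A^\times$ and on $S^1$, which exists because the relevant $\mathrm{Ext}/\mathrm{Hom}$ group is nontrivial whenever the class group contributes).

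The main obstacle is step (b): ensuring the \emph{simultaneous} satisfaction of automorphy (triviality on $L^\times$) together with the two pointwise prescriptions. I expect the right way to handle it is to invoke Pontryagin duality: the prescriptions (i) and (ii) define a continuous character on the closed subgroup $L^\times\cdot H_0\subset\A_L^\times$ (one checks (i) and (ii) are consistent with triviality on $L^\times\cap H_0=\mathcal{O}_L^\times$-type units, using again $(-1)^{l_2}=\omega_\infty(-1)$ to handle the unit $-1$, and using that the only roots of unity in an imaginary quadratic field other than $-1$ occur for $\Q(i),\Q(\sqrt{-3})$, which can be treated by hand), and then extends to a character of the whole group $\A_L^\times$ by Pontryagin duality for locally compact abelian groups. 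Factoring the resulting global character as $\otimes_v\chi_{0,v}$ and reading off that $\chi_{0,\infty}(\zeta)=\zeta^{l_2}$ on $S^1$ and $\chi_0|_{\A^\times}=\omega$ then completes the proof. I would also remark that one can arrange $\chi_0$ to be unramified at all finite places (as needed in later applications) because $\omega$ and $L/\Q$ are unramified outside a finite set and Pontryagin extension can be done respecting the maximal compact at the remaining places.
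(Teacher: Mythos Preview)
Your overall strategy---prescribe the character on a subgroup containing $\A^\times$, $S^1$, and $L^\times$, then extend by duality---is the paper's approach as well. But your execution has errors. First, your computation of the overlap is wrong: $L^\times \cap H_0 = \Q^\times$, not ``$\mathcal{O}_L^\times$-type units.'' Indeed, if $\ell \in L^\times$ lies in $H_0 = \A^\times \cdot S^1$, then at every finite prime $p$ its image sits in $\Q_p^\times \subset L_p^\times$, forcing $\ell = \bar\ell$, hence $\ell \in \Q^\times$. So the compatibility check on $L^\times\cap H_0$ is just $\omega|_{\Q^\times} = 1$, which is given; the extra roots of unity in $\Q(i)$ or $\Q(\sqrt{-3})$ play no role at that step.

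The genuine gap is your invocation of Pontryagin duality on $L^\times H_0$. You assert that this subgroup is closed and that the glued character is continuous, but you verify neither, and neither is automatic: $H_0$ is not open in $\A_L^\times$, and $L^\times H_0 / H_0 \cong L^\times/\Q^\times$ is infinite, so continuity of the glued character requires showing that $H_0$ is open in $L^\times H_0$ (equivalently, that $L^\times/\Q^\times$ embeds discretely in $\A_L^\times/H_0$). This can be done, and is in fact where the finiteness of $\mathcal{O}_L^\times$ genuinely enters, but it is not a triviality you can wave past. The paper avoids this issue entirely: after building the character on $S^1 L^\times \A^\times$, it adjoins a product $\prod_{v<\infty} U_v$ of small compact open subgroups $U_v\subset\mathcal{O}_{L,v}^\times$ (shrunk at finitely many places so that the intersection with $S^1 L^\times \A^\times$ avoids the nontrivial elements of $\mathcal{O}_L^\times$), thereby reaching an \emph{open} subgroup of finite index in $\A_L^\times$, from which extension is immediate. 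Finally, your side remark that one can arrange $\chi_0$ to be unramified at all finite places is false: since $\chi_0|_{\Z_p^\times} = \omega_p|_{\Z_p^\times}$, any ramification of $\omega$ forces ramification of $\chi_0$ at the places above it.
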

\begin{proof}
Since $\omega$ is trivial on $L^\times\cap\A^\times=\Q^\times$, we
can extend $\omega$ to a character of $L^\times\A^\times$ in such a way that
$\omega\big|_{L^\times}=1$. Since $S^1\cap(L^\times\A^\times)=\{\pm1\}$, we
can further extend $\omega$ to a character of $S^1L^\times\A^\times$ in such
a way that $\omega(\zeta)=\zeta^{l_2}$ for all $\zeta\in S^1$.
For each finite place $v$ of $L$ we will choose a compact subgroup $U_v$ of $\OF_{L,v}^\times$
such that $\omega$ can be extended to $S^1L^\times\A^\times\big(\prod_{v<\infty}U_v\big)$, with $\omega$ trivial on $\prod_{v<\infty}U_v$ and $U_v=\OF_{L,v}^\times$ for almost all $v$. Hence, the $U_v$ should be chosen such that $\omega$ is trivial on $\big(\prod_{v<\infty}U_v\big)\cap S^1L^\times\A^\times$. We consider the intersection
\begin{equation}\label{globalchi0lemmaeq1}
 \big(\prod_{v<\infty}U_v\big)\cap S^1L^\times\A^\times
 =\big(\prod_{v<\infty}U_v\big)\cap\C^\times L^\times\big(\prod_{p<\infty}\Z_p^\times\big).
\end{equation}
Let $zax$ be an element of this intersection, where $z\in\C^\times$,
$a\in L^\times$ and $x\in\prod_{p<\infty}\Z_p^\times$. We have
$a\in L^\times\cap\prod_{v<\infty}\OF_{L,v}^\times=\OF_L^\times$, which is a finite set, say $\{a_1,\ldots,a_m\}$. For $i$ such that $a_i\notin\Q$, choose a prime $p$ such that
$a_i\notin\Z_p^\times$. Then choose a place $v$ lying above $p$, and choose $U_v$ so
small that $a_i\notin U_v\Z_p^\times$. Then the intersection (\ref{globalchi0lemmaeq1})
equals
\begin{equation}\label{globalchi0lemmaeq2}
 \big(\prod_{v<\infty}U_v\big)\cap\C^\times\Q^\times\big(\prod_{p<\infty}\Z_p^\times\big).
\end{equation}
We can choose $U_v$ even smaller, so that $\omega$ is trivial
on this intersection. We can therefore extend $\omega$ to a character of
\begin{equation}\label{globalchi0lemmaeq3}
 S^1L^\times\A^\times\big(\prod_{v<\infty}U_v\big)
 =\C^\times L^\times\big(\prod_{v<\infty}U_v\big)\big(\prod_{p<\infty}\Z_p^\times\big).
\end{equation}
in such a way that $\omega$ is trivial on $\prod_{v<\infty}U_v$. The group (\ref{globalchi0lemmaeq3}) is of finite index in $\C^\times L^\times\big(\prod_{v<\infty}\OF_{L,v}^\times\big)$, and therefore of finite index in $\A_L^\times$ (using the finiteness of the class number). By Pontrjagin duality, we can now extend $\omega$ to a character $\chi_0$ of $\A_L^\times$ with the desired properties.
\end{proof}
%

We now explain the setup for the global integral representation. For simplicity we will work over the rational numbers. We require the following ingredients.
\begin{itemize}
 \item $\psi=\prod_v\psi_v$ is a character of $\Q\backslash\A$ such that $\psi_\infty(x)=e^{-2\pi ix}$. Also, we require that $\psi_p$ has conductor $\Z_p$ for all finite $p$. There is exactly one such character $\psi$.
 \item Let $D>0$ be such that $-D$ is a fundamental discriminant, and define $\mathbf{a},\mathbf{b},\mathbf{c}\in \Q$ and the matrix $S$ by
  \begin{equation}\label{SminusDdefeq}
   S=S(-D):=\mat{\mathbf{a}}{\mathbf{b}/2}{\mathbf{b}/2}{\mathbf{c}}=\left\{\begin{array}{l@{\qquad\text{if }}l}
    \mat{D/4}{}{}{1}&D\equiv0\mod4,\\[2ex]
    \mat{(1+D)/4}{1/2}{1/2}{1}&D\equiv3\mod4.
   \end{array}\right.
  \end{equation}
 \item Let $L$ be the imaginary quadratic field $\Q(\sqrt{-D})$. The unitary groups $G_i$ are defined with respect to the extension $L/\Q$.
 \item Let $\pi=\otimes\pi_v$ be a cuspidal, automorphic representation of $H(\A)$ with the following properties. The archimedean component $\pi_\infty$ is a (limit of) discrete series representation with minimal $K$-type $(l,l)$, where $l\geq2$, and trivial central character. If $v$ is a non-archimedean place, then $\pi_v$ is unramified and has trivial central character.
 \item Let $\tau=\otimes\tau_v$ be a cuspidal, automorphic representation of $\GL_2(\A)$ with central character $\omega_\tau$.

 \item Let $\chi_0$ be a character of $L^\times\backslash\A_L^\times$ such that $\chi_0\big|_{\A^\times}=\omega_\tau$ and $\chi_{0,\infty}(\zeta)=\zeta^{l_2}$ for $\zeta\in S^1$. Here, $l_2$ is any integer of the same parity as the weights of $\tau$. Such a character exists by Lemma \ref{globalchi0lemma}.
 \item Let $\Lambda=\otimes\Lambda_v$ be a character of $L^\times\backslash\A_L^\times$ such that $\Lambda_\infty=1$ and such that $\Lambda_v$ is unramified for all finite $v$. Hence, $\Lambda$ is a character of the ideal class group
 \begin{equation}\label{idealclassgroupeq}
   \Big(L^\times\C^\times
   \big(\prod_{v\nmid\infty}\OF_v^\times\big)\Big)\Big\backslash\A_L^\times.
 \end{equation}
 \item Let $\chi$ be the character of $\A_L^\times$ defined by (\ref{chilambdachi0globaleq}).

     \end{itemize}
Let $l_1$ be any weight occurring in $\tau_\infty$. Let $\Psi$ be the unique cusp form in the space of $\tau$ that is a newform at all non-archimedean places and corresponds to a vector of weight $l_1$ at the archimedean place. We normalize $\Psi$ such that the corresponding Whittaker function
\begin{equation}\label{WPsieq}
 W_\Psi(g) = \int\limits_{\Q \bs \A} \Psi( \begin{bmatrix}1&x\\&1\end{bmatrix}g)\, \psi(x)\,dx
\end{equation}
satisfies $W_\Psi(\mat{t^+}{}{}{1})=1$, where $t^+$ is the positive real number chosen in (\ref{tplusdefeq}), considered as an idele with trivial non-archimedean components. Let $\Psi$ be extended to a function on $G_1(\A)$ via $\Psi(ag) = \chi_0(a)\Psi(g)$ for $a \in \A_L^\times$, $g\in \GL_2(\A)$. Let us explicitly describe a section $f_{|l_1-l|,l, l_2, l_1}(g, s) \in I_\C(s, \chi, \chi_0, \tau)$. For a non-archimedean place $v$, let $\tau_v$ have conductor $\p^n$ and let $J_v$ be the function on $K_v^{G_2}=G_2(\OF_v)$ defined by
\begin{equation}J_v(k) = \begin{cases} 1 & \text{ if } k \in P(\OF)\eta_0 K^H \Gamma(\P^n), \\  0& \text{ otherwise} \end{cases}
\end{equation}
(see (\ref{etamdef2eq}) for the definition of $\eta_0$). For $n=0$ this is the characteristic function of $K_v^{G_2}$. Define
$$
 J_{|l_1-l|,l, l_2, l_1}(k, s) =\Phi^\#_ {|l_1-l|,l, l_2, l_1}(k_\infty, s) \cdot \prod_{v < \infty} J_v(k_v),\qquad\text{where }k=(k_v)_v\in \prod_vK_v^{G_2};
$$
see (\ref{W0l1l2distvecpropeq1}). Finally, let
\begin{equation}\label{globalfdefeq}
 f_{|l_1-l|,l, l_2, l_1}(g, s)= \delta_P(m_1m_2)^{s+\frac12}\chi(m_1)\Psi(m_2) J_{|l_1-l|,l, l_2, l_1}(k, s)
\end{equation}
for $g =m_1m_2nk$ with $m_1\in M^{(1)}(\A)$, $m_2\in M^{(2)}(\A)$, $n \in N(\A)$, $k \in \prod_v K_v^{G_2}$.
It is easy to see that $f = f_{|l_1-l|,l, l_2, l_1}$ belongs to $I_\C(s, \chi, \chi_0, \tau)$ and that $(W_f)_v$ corresponds to the vector in Corollary~\ref{distinguishedvectornonarchtheorem} if $v$ is non-archimedean, and to the vector $W^\#_{|l_1 - l|,l,l_2,l_1}$ given by (\ref{Wsharpwelldeflemmaeq1}) if $v=\infty$.
In view of Theorem \ref{nonarchlocalzetatheorem} and Corollary \ref{archlocalzetatheoremcor}, the following important result is now immediate.

\begin{theorem}[Global Integral Representation]\label{globalintegralrepresentationtheoremversion1}
 Let $\psi,D,S$ and $\pi,\tau,\chi_0,\chi,\Lambda$ be as above. Let $f = f_{|l_1-l|,l, l_2, l_1}$ be the section in $I_\C(s, \chi, \chi_0, \tau)$ defined above, and $\phi=\otimes\phi_v$ be a vector in the space of $\pi$ such that $\phi_v$ is unramified for all finite $v$ and such that $\phi_\infty$ is a vector of weight $(-l,-l)$ in $\pi_\infty$. Then the global zeta integral $Z(s,f, \phi)$ defined in (\ref{globalintegraleq}) is given by
 \begin{equation}
 \label{globalintegralrepresentationtheoremversion1eq1}
  Z(s,f, \phi)=\frac{L(3s+\frac 12, \tilde\pi \times \tilde\tau)}
  {L(6s+1,\chi|_{\A^\times})L(3s+1,\tau \times \AI(\Lambda) \times \chi|_{\A^\times})}\cdot B_{\phi}(1) \cdot Y_{l,l_1,p,q}(s) \cdot \prod_{v<\infty}Y_v(s),
 \end{equation}
 with $B_\phi$ as in (\ref{Bphieq}), with the factors $Y_v(s)$ for non-archimedean $v$ given by Theorem \ref{nonarchlocalzetatheorem}, and with the archimedean factor  $Y_{l,l_1,p,q}(s)$ given by Theorem~\ref{archlocalzetatheorem}.
 In (\ref{globalintegralrepresentationtheoremversion1eq1}), $\tilde\pi$ and $\tilde\tau$ denote the
 contragredient of $\pi$ and $\tau$, respectively. The symbol $\AI(\Lambda)$
 stands for the $\GL_2(\A)$ representation attached to the character $\Lambda$
 of $\A_L^\times$ via automorphic induction, and
 $L(3s+1,\tau \times \AI(\Lambda) \times \chi|_{\A^\times})$ is
 a standard $L$-factor for $\GL_2\times\GL_2\times\GL_1$.
\end{theorem}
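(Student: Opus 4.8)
The plan is to assemble the Global Integral Representation from the two main components that have been established in the preceding sections: Furusawa's Basic Identity \eqref{basicidentityeq}, which converts the global zeta integral $Z(s,f,\phi)$ into an Eulerian product of local zeta integrals, and the local computations of those integrals at every place (Theorem~\ref{nonarchlocalzetatheorem} in the non-archimedean case, Corollary~\ref{archlocalzetatheoremcor} in the archimedean case). First I would verify that the section $f = f_{|l_1-l|,l,l_2,l_1}$ defined in \eqref{globalfdefeq} genuinely lies in $I_\C(s,\chi,\chi_0,\tau)$: this is immediate from the transformation property \eqref{Isfctnspropeq}, the defining transformation of $\Psi$ under $M^{(2)}(\A)$ via $\chi_0$, and the fact that $\delta_P$, $\chi$ and $J_{|l_1-l|,l,l_2,l_1}$ have been set up precisely to match \eqref{Isfctnspropeq}. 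Next I would check that $f$ is a pure tensor $\otimes f_v$ under the middle isomorphism of \eqref{locglobindrepdiagrameq}, and that the corresponding Whittaker-model vectors $(W_f)_v$ are exactly the distinguished vectors: at each finite $v$ the vector from Corollary~\ref{distinguishedvectornonarchtheorem}, and at $v=\infty$ the vector $W^\#_{|l_1-l|,l,l_2,l_1}$ given by \eqref{Wsharpwelldeflemmaeq1}. The normalization of $\Psi$ by $W_\Psi(\mat{t^+}{}{}{1})=1$ is what makes the archimedean matching work with the conventions of Theorem~\ref{archlocalzetatheorem}.

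Once the factorization is in place, I would apply the Basic Identity to write
\[
 Z(s,f,\phi)=\prod_{v\leq\infty}Z_v(s,W_v,B_v),
\]
where $B_\phi = \prod_v B_v$ is the factorization of the global Bessel function from \eqref{locglobBreleq}, with $B_v$ the spherical Bessel vector from \eqref{suganox0eq} at finite $v$ and the highest-weight vector \eqref{archBesselformula2eq} at $v=\infty$. Care must be taken with the normalization of the local Bessel functions: the product formula \eqref{locglobBreleq} holds when $B_v|_{H(\OF_v)}=1$ for almost all $v$, whereas the spherical Bessel function of \eqref{suganox0eq} satisfies $B(1)\ne0$ but is not necessarily $1$ there. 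This discrepancy produces exactly the factor $B_\phi(1)$ appearing in \eqref{globalintegralrepresentationtheoremversion1eq1}: one writes $B_v = B_\phi(1)_v^{-1}$ times the $H(\OF_v)$-normalized vector at the finitely many places where they differ, and the product of these correction constants is $B_\phi(1)$ (using that $B_\phi$ is, up to this scalar, the product of the normalized $B_v$). Then I would substitute the local formulas: Theorem~\ref{nonarchlocalzetatheorem} at each finite place gives the factor $\frac{L(3s+\frac12,\tilde\pi_v\times\tilde\tau_v)}{L(6s+1,\chi_v|_{\Q_v^\times})L(3s+1,\tau_v\times\AI(\Lambda_v)\times\chi_v|_{\Q_v^\times})}Y_v(s)$, and Corollary~\ref{archlocalzetatheoremcor} gives the archimedean analogue with $Y(s)$ replaced by $Y_{l,l_1,p,q}(s)$ (absorbing the $\kappa$-constants as part of $Y_{l,l_1,p,q}$, or keeping the notation of Corollary~\ref{archlocalzetatheoremcor} — I would match whichever convention \eqref{globalintegralrepresentationtheoremversion1eq1} uses).

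Finally I would take the Euler product over all $v$. The local $L$-factors in numerator and denominator multiply to the completed global $L$-functions $L(3s+\frac12,\tilde\pi\times\tilde\tau)$, $L(6s+1,\chi|_{\A^\times})$ and $L(3s+1,\tau\times\AI(\Lambda)\times\chi|_{\A^\times})$; here one uses that at unramified finite places the Euler factors converge and assemble correctly, and that $\chi_0,\chi,\Lambda$ have been chosen (via Lemma~\ref{globalchi0lemma}) to be genuine global Hecke characters, so the notion of global automorphic induction $\AI(\Lambda)$ makes sense. The correction factors $Y_v(s)$ are trivial ($=1$) at all but finitely many places — precisely those where $\tau_v$ is not an unramified principal series — so the product $\prod_{v<\infty}Y_v(s)$ is finite, and together with $Y_{l,l_1,p,q}(s)$ and $B_\phi(1)$ yields \eqref{globalintegralrepresentationtheoremversion1eq1}. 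The main obstacle, and the only point requiring genuine care rather than bookkeeping, is the precise tracking of normalization constants — the interplay between the normalization of $\Psi$ (through $t^+$ and the $a^+_{l_1,p,q}$ constants hidden in $Y_{l,l_1,p,q}$), the normalization of the local Bessel vectors versus the $H(\OF_v)$-normalization needed for \eqref{locglobBreleq} (which is what produces $B_\phi(1)$), and the $\kappa_{l_1,p,q}$ constants from \eqref{PhiWkappaeq} — so that the final formula is stated with exactly the right constant in front. Everything else is a direct consequence of results already proved in the excerpt.
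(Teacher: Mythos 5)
Your proposal follows exactly the route the paper takes (which is essentially only sketched there): verify the section $f$ is a pure tensor in $I_\C(s,\chi,\chi_0,\tau)$ with local Whittaker vectors matching the distinguished vectors, apply the Basic Identity \eqref{basicidentityeq}, factor the resulting integral as an Euler product, and substitute Theorem~\ref{nonarchlocalzetatheorem} and the archimedean computation. Two small clarifications are worth making: first, your account of the $B_\phi(1)$ factor as a ``product of local correction constants'' is not quite right --- the scalar relating $B_\phi$ to $\prod_v B_v$ (each local vector normalized by \eqref{suganox0eq} resp.\ \eqref{archBesselformula2eq}) is a \emph{single} global constant, fixed by evaluating both sides at a point, and one should not try to decompose it locally; the idea is nonetheless sound. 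Second, the theorem deliberately uses $Y_{l,l_1,p,q}(s)$ from Theorem~\ref{archlocalzetatheorem}, not the $Y(s)$ with $\kappa$-constants of Corollary~\ref{archlocalzetatheoremcor}, because the archimedean local vector here is the single function $W^\#_{|l_1-l|,l,l_2,l_1}$ determined by $\Psi$ (a fixed weight-$l_1$ vector), not the Case A/B/C linear combination $W^\#$; the $\kappa$'s only enter in the second version, Theorem~\ref{globalintegralrepresentationtheorem}.
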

Next, we state a second version of the above theorem where we choose the distinguished vector at all places, including the archimedean ones. Recall the Cases A,B,C defined in (\ref{ABCdefeq}). Let $l_2$ be as in Theorem \ref{archintertwiningtheorem}.
%
The following result is also an immediate consequence of Theorem \ref{nonarchlocalzetatheorem} and Corollary \ref{archlocalzetatheoremcor} and will be key for the functional equation.

\begin{theorem}\label{globalintegralrepresentationtheorem}
 Let $\psi,D,S$ and $\pi,\tau,\chi_0,\chi,\Lambda$ be as above. Let $B_v$ be the unramified Bessel function given by formula (\ref{suganox0eq}) if $v$ is non-archimedean, and let $B_v$ be the function defined in (\ref{archBesselformula2eq}) if $v$ is archimedean. Let $W_v^\#(\,\cdot\,,s)$ be as in Corollary \ref{distinguishedvectornonarchtheorem} if $v$ is non-archimedean, and as in table (\ref{ABCtableeq}) if $v$ is archimedean. Let
 $$
  W^\#(g,s)=\prod\limits_vW_v^\#(g_v,s),\qquad B(h)=\prod\limits_vB_v(h_v),
 $$
 for $g=(g_v)_v\in G_2(\A)$ and $h=(h_v)_v\in H(\A)$.  Then the global zeta integral $Z(s,W^\#,B)$ defined in (\ref{basicidentityeq}) is given by
 \begin{equation}\label{globalintegralrepresentationtheoremeq1}
  Z(s,W^\#,B)=\frac{L(3s+\frac 12, \tilde\pi \times \tilde\tau)}
  {L(6s+1,\chi|_{\A^\times})L(3s+1,\tau \times \AI(\Lambda) \times \chi|_{\A^\times})}Y(s),
 \end{equation}
 where $Y(s)=\prod_v Y_v(s)$, a finite product, with the local factors given in Theorem \ref{nonarchlocalzetatheorem} (non-archimedean case) and Corollary \ref{archlocalzetatheoremcor} (archimedean case).
\end{theorem}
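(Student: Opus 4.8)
The plan is to assemble the global zeta integral from its local pieces using the Euler factorization of the ``basic identity'' and then substitute the local computations already carried out. First I would recall that, by hypothesis, $\phi=\otimes\phi_v$ is chosen so that at finite places $\phi_v$ is the spherical vector (with the central character trivial), hence the associated local Bessel function $B_v$ is precisely Sugano's spherical Bessel function of~\eqref{suganox0eq}, while at the archimedean place $\phi_\infty$ has weight $(-l,-l)$ so that $B_\infty$ is the function of~\eqref{archBesselformula2eq}. Similarly the section $f$ is the pure tensor $\otimes f_v$ whose Whittaker transforms $(W_f)_v$ are the distinguished vectors $W_v^\#$ of Corollary~\ref{distinguishedvectornonarchtheorem} at the non-archimedean places and the vector $W^\#$ from table~\eqref{ABCtableeq} at $v=\infty$. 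With these identifications, the left-hand side $Z(s,W^\#,B)$ of~\eqref{globalintegralrepresentationtheoremeq1} is exactly the integral appearing on the right-hand side of Furusawa's basic identity~\eqref{basicidentityeq} for this data.

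Second, I would invoke the factorization~\eqref{Zfactorizationeq}: since $W_f(g,s)=\prod_v W_v(g_v,s)$ by~\eqref{Wfactorizationeq} and $B_\phi(h)=\prod_v B_v(h_v)$ by~\eqref{locglobBreleq}, the global integral over $R(\A)\backslash H(\A)$ factors as $\prod_{v\le\infty}Z_v(s,W_v^\#,B_v)$, each local factor being the integral~\eqref{localzetaeq} over $R(F_v)\backslash H(F_v)$. This is legitimate because almost all local factors equal $1$ (at the finite places where $\tau_v$ is unramified, by Theorem~\ref{nonarchlocalzetatheorem} with $Y_v(s)=1$), so the product is really finite and convergence is not an issue in the relevant range of $s$.

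Third, I would substitute the explicit local evaluations. At each finite place, Theorem~\ref{nonarchlocalzetatheorem} gives
$$
 Z_v(s,W_v^\#,B_v)=\frac{L(3s+\tfrac12,\tilde\pi_v\times\tilde\tau_v)}{L(6s+1,\chi_v|_{\Q_v^\times})L(3s+1,\tau_v\times\AI(\Lambda_v)\times\chi_v|_{\Q_v^\times})}\,Y_v(s),
$$
and at the archimedean place, Corollary~\ref{archlocalzetatheoremcor} gives the analogous formula with $Y(s)$ as in~\eqref{archmaintheoremcoreq3} (the three cases A, B, C corresponding to the choice of $W^\#$ from the table). Taking the product over all $v$ and using that the Euler products $\prod_v L(3s+\tfrac12,\tilde\pi_v\times\tilde\tau_v)$, $\prod_v L(6s+1,\chi_v|_{\Q_v^\times})$ and $\prod_v L(3s+1,\tau_v\times\AI(\Lambda_v)\times\chi_v|_{\Q_v^\times})$ converge to the completed global $L$-functions $L(3s+\tfrac12,\tilde\pi\times\tilde\tau)$, $L(6s+1,\chi|_{\A^\times})$ and $L(3s+1,\tau\times\AI(\Lambda)\times\chi|_{\A^\times})$ respectively for $\Re(s)$ large, one obtains exactly~\eqref{globalintegralrepresentationtheoremeq1} with $Y(s)=\prod_v Y_v(s)$ a finite product. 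The meromorphic continuation of both sides then extends the identity to all $s$.

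\textbf{Main obstacle.} There is essentially no new computation here --- the theorem is a bookkeeping assembly of results already established --- so the only real subtlety is verifying that the global data $(\pi,\tau,\chi_0,\chi,\Lambda)$ fixed before the theorem genuinely satisfies \emph{all} the local hypotheses simultaneously: that $\chi$ is defined from $\Lambda$ and $\chi_0$ by the right formula~\eqref{chilambdachi0globaleq} so that at each place the condition~\eqref{chi-lambda-char-condition} (finite $v$) or~\eqref{chi-lambda-char-condition-arch} ($v=\infty$) holds, that $\Lambda$ is everywhere unramified with $\Lambda_\infty=1$ and $\Lambda|_{\A^\times}=1$ (which requires an imaginary quadratic $L$ and is arranged via Lemma~\ref{globalchi0lemma} together with the existence of a good Bessel model), that $\chi_0|_{\A^\times}=\omega_\tau$ with $\chi_{0,\infty}$ of the prescribed form, and that $\pi$ has the assumed global Bessel model of type $(S,\Lambda,\psi)$ so that the basic identity applies and all local Bessel models exist. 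Once this compatibility is checked, the statement follows immediately by multiplying the local formulas.
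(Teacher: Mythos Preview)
Your proposal is correct and matches the paper's approach exactly: the paper states this theorem as ``an immediate consequence of Theorem~\ref{nonarchlocalzetatheorem} and Corollary~\ref{archlocalzetatheoremcor}'' without giving a separate proof, and your argument---factor $Z(s,W^\#,B)$ via~\eqref{Zfactorizationeq} and substitute the local evaluations---is precisely that immediate consequence spelled out. The compatibility checks you list under ``Main obstacle'' are part of the standing hypotheses (``as above'') rather than the proof, but noting them does no harm.
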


\subsection{The functional equation}\label{functleqsec}
In this section we prove that, in the setting of Theorem \ref{globalintegralrepresentationtheorem}, the global $L$-function $L(s,\pi\times\tau)$ satisfies the expected functional equation. We begin with some local preparations.
\subsubsection*{The $X$ factor}
Assume that $F$ is a non-archimedean local field of characteristic zero, or $F=\R$. Let $\tau,\chi,\chi_0,\Lambda$ and $\pi$ be as in
Theorem \ref{nonarchlocalzetatheorem} (non-archimedean case) and Theorem \ref{archlocalzetatheorem} (archimedean case). We will calculate the function
\begin{align}\label{Xlocaldefeq}
 X(s)&=K(s)\frac{L(6s+1,\chi|_{F^\times})
   L(3s+1,\tau\times \AI(\Lambda) \times \chi|_{F^\times})}
   {L(6s,\chi|_{F^\times})
   L(3s,\tau\times \AI(\Lambda) \times \chi|_{F^\times})}\nonumber\\
  &\qquad\times\varepsilon(6s,\chi|_{F^\times},\psi^{-1})\,
   \varepsilon(3s,\tau\times \AI(\Lambda) \times \chi|_{F^\times},\psi^{-1})
   \frac{\hat Y(-s)}{Y(s)},
\end{align}
which will be relevant for the functional equation. Here, $K(s)$ is the factor resulting from the local intertwining operator, defined in (\ref{Ksdefeq}) and explicitly given in Proposition~\ref{GKprop} (non-archimedean case with $n=0$), Theorem \ref{inertintertwiningtheorem} (non-archimedean case with $n>0$) and Theorem \ref{archintertwiningtheorem} (archime\-dean case). The factor $Y(s)$ results from the local zeta integral calculation and is given in Theorem \ref{nonarchlocalzetatheorem} (non-archimedean case) and Corollary \ref{archlocalzetatheoremcor} (archimedean case). The factor $\hat Y(s)$ is similar to $Y(s)$, but with the data $(\chi,\chi_0,\tau)$ replaced by $(\bar\chi^{-1},\chi\bar\chi\chi_0,\chi\tau)$.

\begin{lemma}\label{Xnonarchlemma}
 Assume that $F$ is $p$-adic. Let $\delta$ be the valuation of the discriminant of $L/F$ if $L/F$ is a ramified field extension, and $\delta=0$ otherwise. Let $X(s)$ be as in (\ref{Xlocaldefeq}). Let $\p^n$ be the conductor of $\tau$. Assume that the restriction of $\Lambda$ to $F^\times$ is trivial\footnote{If the $\GSp_4(F)$ representation $\pi$ has a $(S,\Lambda,\psi)$ Bessel model, this means that the central character of $\pi$ is trivial.}, so that $\chi\big|_{F^\times}=\omega_\tau^{-1}$.
 \begin{enumerate}
  \item If $\tau=\beta_1\times\beta_2$ with unramified characters $\beta_1$ and $\beta_2$ of $F^\times$, then
   \begin{equation}\label{Xvcalclemmaeq1}
    X(s)=\chi(\varpi)^\delta\chi_{L/F}(-1)q^{-6\delta s}.
   \end{equation}
  \item If $L/F$ is an unramified field extension or $L=F\oplus F$, then
   \begin{equation}\label{Xvcalclemmaeq2}
    X(s)=\omega_\tau(c^2/d)\varepsilon(1/2,\tilde\tau,\psi^{-1})^4q^{-12ns}.
   \end{equation}

 \end{enumerate}
\end{lemma}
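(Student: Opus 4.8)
The proof is a direct (if lengthy) bookkeeping computation: one substitutes the explicit formulas for $K(s)$, $Y(s)$ and $\hat{Y}(s)$ supplied by the earlier results into the definition (\ref{Xlocaldefeq}) of $X(s)$ and simplifies. The structural point that makes this feasible is that in every case $K(s)$ is, up to $\varepsilon$-factors and a power of $q$, the ratio of precisely the $L$-functions that occur (inverted) as correction factors in (\ref{Xlocaldefeq}); hence the bulk of the $L$-factors telescopes, and what remains is an expression built only out of $\varepsilon$-factors, powers of $q^{-s}$, and values of $\chi$, $\omega_\tau$ at $\varpi$, $\mathbf{c}$, $\mathbf{d}$.

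\textbf{Part (i).} Here $\tau$ is unramified, so $n=0$ and $K(s)$ is given by the Gindikin--Karpelevich formula of Proposition~\ref{GKprop}. Moreover $Y(s)=1$ by the first case of Theorem~\ref{nonarchlocalzetatheorem}, and since $\chi\big|_{F^\times}=\omega_\tau^{-1}$ is unramified, the representation $\chi\tau\cong\tilde\tau$ entering $\hat Y$ is again unramified, so $\hat Y(-s)=1$ as well. Plugging into (\ref{Xlocaldefeq}), all the $L$-factors cancel and one is left with $X(s)=q^{-\delta}\,\varepsilon(6s,\chi\big|_{F^\times},\psi^{-1})\,\varepsilon(3s,\tau\times\AI(\Lambda)\times\chi\big|_{F^\times},\psi^{-1})$. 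The first $\varepsilon$-factor equals $1$ because $\chi\big|_{F^\times}$ is unramified and $\psi$ has conductor $\OF$. For the second I would use the conductor--discriminant formula to record that $\AI(\Lambda)$ has conductor $\p^{\delta}$ and determinant $\chi_{L/F}$ (using $\Lambda\big|_{F^\times}=1$), and then evaluate the $\varepsilon$-factor of the unramified twist $(\tau\otimes\chi\big|_{F^\times})\times\AI(\Lambda)$ via the standard behaviour of $\varepsilon$-factors under unramified twists together with the Langlands $\lambda$-factor identity $\varepsilon(s,\AI(\Lambda),\psi^{-1})=\lambda_{L/F}(\psi^{-1})\,\varepsilon(s,\Lambda,\psi^{-1}\circ\mathrm{tr}_{L/F})$; tracking the $q$-powers and character values produces $q^{-6\delta s}\chi(\varpi)^{\delta}\chi_{L/F}(-1)$, which is (\ref{Xvcalclemmaeq1}).

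\textbf{Part (ii).} Now $L/F$ is inert or split, so $\delta=0$ and $q^{-\delta}=1$. The case $n=0$ is covered by part (i), which then gives $X(s)=\chi_{L/F}(-1)=1$, in agreement with (\ref{Xvcalclemmaeq2}) for $n=0$ (note $\mathbf{c},\mathbf{d}\in\OF^\times$ and $\tau$ unramified). For $n\geq1$ I would use the formula for $K(s)$ from Theorem~\ref{inertintertwiningtheorem}, contributing the factors $\chi_{L/F}(\varpi)^n$, $\omega_\tau(\mathbf{c}^2/\mathbf{d})$, the ratio $\varepsilon(3s+1,\tilde\tau,\psi^{-\mathbf{c}})^2/\varepsilon(6s,\omega_\tau^{-1},\psi^{-\mathbf{c}})$, and a ratio of $L$-functions. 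Substituting into (\ref{Xlocaldefeq}), the $L(3s,\tau\times\AI(\Lambda)\times\chi\big|_{F^\times})$ factors cancel, and the remaining $L$-factors $L(6s,\chi\big|_{F^\times})$, $L(1-6s,\chi^{-1}\big|_{F^\times})$, $L(6s+1,\chi\big|_{F^\times})$ together with $Y(s)$ and $\hat Y(-s)$ must be reorganized. Here $Y(s)$ and $\hat Y(-s)$ come from Theorem~\ref{nonarchlocalzetatheorem} applied respectively to the data $(\chi,\chi_0,\tau)$ and to the dual data $(\bar\chi^{-1},\chi\bar\chi\chi_0,\chi\tau)$; one checks $\chi\tau\cong\tilde\tau$, which has the same conductor $\p^n$ and the same type in the classification of that theorem, so that $\hat Y(-s)$ is ``the same'' as $Y(s)$ with $s\mapsto-s$. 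Using the $\GL_1$ local functional equation (to trade $L(6s,\chi\big|_{F^\times})$ for $L(1-6s,\chi^{-1}\big|_{F^\times})$) and the behaviour of $\varepsilon$-factors under the change of additive character $\psi^{-\mathbf{c}}\leadsto\psi^{-1}$ (a twist by a value of the central character at $\mathbf{c}$, with $|\mathbf{c}|=1$), the expression collapses: the explicit $\chi_{L/F}(\varpi)^n$ in $K(s)$ cancels against a like term produced by $\det\AI(\Lambda)=\chi_{L/F}$, the residual character factor reduces to $\omega_\tau(\mathbf{c}^2/\mathbf{d})$, and all the $n$-dependence concentrates into $\varepsilon(1/2,\tilde\tau,\psi^{-1})^4\,q^{-12ns}$, giving (\ref{Xvcalclemmaeq2}).

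The main obstacle is precisely this bookkeeping: one must keep the inert and split cases separate and treat each type of ramified $\tau$ (supercuspidal, ramified and unramified twists of Steinberg, and ramified principal series) individually, since $Y(s)$ and $\hat Y(-s)$ depend on the type; one must correctly identify the dual data entering $\hat Y$ and verify that $\chi\tau\cong\tilde\tau$; and one must carefully reconcile the additive characters $\psi^{-\mathbf{c}}$ and $\psi^{-1}$, which is what ultimately forces the residual factor $\omega_\tau(\mathbf{c}^2/\mathbf{d})$. No single step is deep, but assembling them so that the surviving $L$- and $\varepsilon$-factors sit in standard position --- which is exactly what will let the global functional equation fall out in Section~\ref{functleqsec} --- requires care.
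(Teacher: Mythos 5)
Your proposal is correct and follows essentially the same route as the paper: substitute the explicit formulas for $K(s)$, $Y(s)$, and $\hat Y(-s)$ from Proposition \ref{GKprop}, Theorem \ref{inertintertwiningtheorem}, and Theorem \ref{nonarchlocalzetatheorem} into (\ref{Xlocaldefeq}), observe that the $L$-factors telescope, and then evaluate the remaining $\varepsilon$-factors using standard identities (conductor--discriminant, behavior under unramified twists, change of additive character). One small simplification the paper exploits that you slightly over-caution about: in the inert and split cases with $\Lambda$ unramified and $\chi|_{F^\times}=\omega_\tau^{-1}$, the extra factor $L(3s+1,\tau\times\AI(\Lambda)\times\chi|_{F^\times})$ appearing in $Y(s)$ for supercuspidal or doubly ramified $\tau$ is identically $1$, so $Y(s)=L(6s+1,\chi|_{F^\times})$ uniformly for all ramified types, and the case-by-case bookkeeping you anticipate collapses.
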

\begin{proof}
i) By Proposition \ref{GKprop},
$$
 X(s)=q^{-\delta}\,\varepsilon(6s,\chi|_{F^\times},\psi^{-1})\,
   \varepsilon(3s,\tau\times \AI(\Lambda) \times \chi|_{F^\times},\psi^{-1})
   \frac{\hat Y(-s)}{Y(s)}.
$$
For unramified $\tau$ we have $Y(s)=1$, and the character $\chi|_{F^\times}$ is unramified. Hence

$$
 X(s)=q^{-\delta}\,\varepsilon(3s,\chi\tau\times\mathcal{AI}(\Lambda),\psi^{-1})
  =\chi(\varpi)^\delta\chi_{L/F}(-1)q^{-6\delta s}.
$$
ii)  In the case of $\tau$ being a spherical representation, (\ref{Xvcalclemmaeq2}) follows from (\ref{Xvcalclemmaeq1}). We may therefore assume that $n>0$. Using standard properties of the $\varepsilon$-factors, we can check that
$$
 \varepsilon(6s,\chi|_{F^\times},\psi^{-1})\,
   \varepsilon(3s,\tau\times \AI(\Lambda) \times \chi|_{F^\times},\psi^{-1})
$$
equals
$$
 \chi_{L/F}(\varpi)^n q^{-(6s-1)(n + a(\omega_\tau)) - \frac{a(\omega_\tau)}{2}} \varepsilon(\frac12,\chi|_{F^\times},\psi^{-1})\,\varepsilon(\frac12,\tilde\tau,\psi^{-1}).
$$
Now the lemma follows directly from Theorem \ref{inertintertwiningtheorem} and Theorem \ref{nonarchlocalzetatheorem}. We note here that, in the case under consideration, we have $n > 0$ and $\big(\frac L\p\big)=\pm1$, so that $Y(s) =  L(6s+1,\chi|_{F^\times})$ .
\end{proof}

\begin{lemma}\label{Xarchlemma}
 Assume that $F=\R$. Let $X(s)$ be as in (\ref{Xlocaldefeq}). Assume that $\pi$ is the lowest weight representation of $\PGSp_4(\R)$ with scalar minimal $K$-type $(l,l)$, where $l\geq2$. We assume that $\Lambda=1$, so that $\chi\big|_{\R^\times}=\omega_\tau^{-1}$. Then
 $$
  X(s)=-\omega_\tau(-D)^{-1}\,\varepsilon(s,\tilde\pi\times\tilde\tau,\psi^{-1})D^{6s}.
 $$
\end{lemma}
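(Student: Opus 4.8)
The idea is to feed all the explicit data assembled so far into the definition (\ref{Xlocaldefeq}) of $X(s)$ and verify the claimed identity directly, treating the three Cases A, B, C of Sect.\ \ref{distvecarchsec} in turn. The inputs are: the intertwining constant $K(s)$, given in the three cases by (\ref{archinterresult1})--(\ref{archinterresult3}) of Theorem \ref{archintertwiningtheorem}; the zeta-integral factor $Y(s)$ of Corollary \ref{archlocalzetatheoremcor}, assembled from the factors $Y_{l,l_1,p,q}(s)$ of Theorem \ref{archlocalzetatheorem}; the factor $\hat Y(-s)$, which by definition is the analogue of $Y(-s)$ for the transformed data $(\bar\chi^{-1},\chi\bar\chi\chi_0,\chi\tau)$ --- here, since $\Lambda=1$ and $\chi(\zeta)=\chi_0(\bar\zeta)^{-1}$, one computes $\bar\chi^{-1}=\chi_0$ and $\chi\tau=\omega_\tau^{-1}\otimes\tau$, which has the parameters $p,\,-q$, so $\hat Y(-s)$ is again a product of $\Gamma_\C$-factors and explicit powers of $\pi$, $2$ and $D$ coming from Theorem \ref{archlocalzetatheorem}; and finally the archimedean $L$- and $\varepsilon$-factors for $\tilde\pi\times\tilde\tau$ and for $\tau\times\AI(\Lambda)\times\chi|_{\R^\times}$ read off from the tables at the end of Sect.\ \ref{localzetasec}, where $\AI(\Lambda)=\AI(\triv)\cong\triv\boxplus\chi_{\C/\R}$.

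In Cases A and B the distinguished vector $W^\#$ is a single $W^\#_{m,l,l_2,l_1}$ (with $l_1=p+1$ in Case A, $l_1=l$ in Case B), so $Y(s)=\kappa_{l_1,p,q}Y_{l,l_1,p,q}(s)$; both $Y(s)$ and $\hat Y(-s)$ carry the same unknown constants, for the same weight $l_1$, and these enter the quotient $\hat Y(-s)/Y(s)$ only through ratios that are either elementary powers of $t^+$, $4\pi$ etc.\ or, where needed, are pinned down by Lemma \ref{kappaalemma}. After cancelling them, multiplying by $K(s)$ and by the quotient of $L$- and $\varepsilon$-factors, the identity becomes a bookkeeping exercise: the Gamma quotients collapse via the reflection and duplication formulas; the powers of $i$ in the two $\varepsilon$-factors combine with the $i$-powers in $K(s)$ to produce exactly $\varepsilon(s,\tilde\pi\times\tilde\tau,\psi^{-1})$ times an overall sign $-1$; the various powers of $D$ (notably the $D^{-3s-l/2+q/2}$ appearing in $Y_{l,l_1,p,q}$ and its counterpart in $\hat Y$) assemble into $D^{6s}$; and the central-character factors combine into $\omega_\tau(-D)^{-1}$.

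\textbf{The main obstacle is Case C.} There $W^\#=\kappa_{l+1,p,q}W^\#_{m,l,l_2,l+1}+(3s-\tfrac{p+q}{2})\kappa_{l-1,p,q}W^\#_{m,l,l_2,l-1}$, so $Y(s)$, and likewise $\hat Y(-s)$, is a two-term sum involving the two weights $l\pm1$, while $K(s)$ is given by the more complicated expression (\ref{archinterresult3}); since $\kappa_{l\pm1,p,q}$ have no closed form, the two terms cannot be simplified separately. The way through is to factor the explicit constant $a^+_{l_1,p,q}$ out of each $Y_{l,l_1,p,q}(s)$ and observe that $Y(s)$ then depends on the $\kappa$'s and $a^+$'s only through the ratio $\kappa_{l-1,p,q}a^+_{l-1,p,q}/(\kappa_{l+1,p,q}a^+_{l+1,p,q})$, which Lemma \ref{kappaalemma} identifies with $-\tfrac12(p+l)$. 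Substituting this relation makes $Y(s)$ explicit up to a single common scalar $\kappa_{l+1,p,q}a^+_{l+1,p,q}$, which cancels against the corresponding scalar in $\hat Y(-s)$, after which Case C finishes exactly as Cases A and B. I expect essentially all of the work to lie in organizing the Case C sums so that the unknown constants appear only in the combination controlled by Lemma \ref{kappaalemma}, and then verifying the resulting purely $\Gamma$-function identity --- this is precisely what dictated the coefficient $3s-\tfrac{p+q}{2}$ in the definition of $W^\#$ in (\ref{ABCtableeq}). Once these simplifications are in place, comparing the outcome with $-\omega_\tau(-D)^{-1}\varepsilon(s,\tilde\pi\times\tilde\tau,\psi^{-1})D^{6s}$ is a direct check against the tables of archimedean Euler factors.
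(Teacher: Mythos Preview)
Your proposal is correct and follows essentially the same approach as the paper: substitute the explicit ingredients $K(s)$, $Y(s)$, $\hat Y(-s)$, and the tabulated archimedean $L$- and $\varepsilon$-factors into (\ref{Xlocaldefeq}), split into Cases A, B, C, and simplify. The paper's own proof is only a sketch (``we omit the details''), and you have in fact supplied more of the mechanism than it does---in particular, your identification of Case C as the place where the unknown constants $\kappa_{l\pm1,p,q}$ and $a^+_{l\pm1,p,q}$ must be eliminated via Lemma \ref{kappaalemma} is exactly right and matches the paper's remark preceding Theorem \ref{archintertwiningtheorem}.
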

\begin{proof}
The ingredients in the definition (\ref{Xlocaldefeq}) of the $X$-factor are all known; see Theorem \ref{archintertwiningtheorem} for the factor $K(s)$, Corollary \ref{archlocalzetatheoremcor} for the factor $Y(s)$, and the tables in Sect.\ \ref{localzetasec} for the $L$- and $\varepsilon$-factors of $\tau\times\AI(\Lambda)\times\chi|_{\R^\times}$ and $\tilde\pi\times\tilde\tau$. The asserted formula is then obtained by going through the various possibilities for the type of representation $\tau$ and the parity of $l$, substituting the ingredients and simplifying. This is where Lemma \ref{kappaalemma} is used. We omit the details.
\end{proof}
\subsubsection*{The global functional equation}
We can now prove the global functional equation for many of the $L$-functions $L(s,\pi\times\tau)$, provided that the $\GSp_4$ representation $\pi$ is of the type considered before and has an appropriate global Bessel model. Once we complete the transfer to $\GL_4$, we will be able to remove all restrictions on the $\GL_2$ representation $\tau$; see Theorem \ref{Lrhonsigmaranalyticpropertiestheorem}.

\begin{theorem}[Functional Equation]\label{functionalequationtheorem}
 Assume that the positive integer $D$ is such that $-D$ is the discriminant of the number field $L:=\Q(\sqrt{-D})$. Let $S(-D)$ be as in (\ref{SminusDdefeq}). Let $\Lambda=\otimes\Lambda_w$ be a character of $L^\times\backslash\A_L^\times$ such that $\Lambda_\infty=1$ and such that $\Lambda_v$ is unramified for all finite places $v$. Let $\pi=\otimes\pi_v$ be a cuspidal, automorphic representation of $\GSp_4(\A)$ with the following properties.
 \begin{enumerate}
  \item $\pi$ has trivial central character;
  \item There exists an integer $l\geq2$ such that $\pi_\infty$ is the (limit of) discrete series representation of $\PGSp_4(\R)$ with scalar minimal $K$-type $(l,l)$;
  \item $\pi_p$ is unramified for all primes $p$;
  \item $\pi$ has a global Bessel model of type $(S(-D),\Lambda,\psi)$ (see Sect.\ \ref{besselsec}).
 \end{enumerate}
 Let $\tau=\otimes\tau_v$ be a cuspidal, automorphic representation of $\GL_2(\A)$ such that $\tau_p$ is unramified for the primes $p$ dividing $D$. Then $L(s,\pi\times\tau)$ has meromorphic continuation to all of $\C$ and satisfies the functional equation
 \begin{equation}\label{functionalequationtheoremeq2}
  L(s,\pi\times\tau)=\varepsilon(s,\pi\times\tau)L(1-s,\tilde\pi\times\tilde\tau).
 \end{equation}
 Here, $\varepsilon(s,\pi\times\tau)=\prod_v\varepsilon(s,\pi_v\times\tau_v,\psi_v^{-1})$, and the local $\varepsilon$-factors are the ones attached to $\pi_v\times\tau_v$ via the local Langlands correspondence.
\end{theorem}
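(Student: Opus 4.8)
The plan is to derive the functional equation for $L(s,\pi\times\tau)$ directly from the global integral representation of Theorem~\ref{globalintegralrepresentationtheorem} combined with the functional equation of the Eisenstein series. First I would recall that the Eisenstein series $E(g,s;f)$ built from our distinguished section $f = \otimes f_v$ satisfies the standard functional equation $E(g,s;f) = E(g,-s;M(s)f)$, where $M(s)$ is the global intertwining operator. Since $f$ is a pure tensor and each local $W_v^\#$ is characterized (up to scalars, and in Case C up to a two-dimensional space whose image is still controlled) by right transformation properties, the local intertwining operators send $W_v^\#(\,\cdot\,,s,\chi_v,\chi_{0,v},\tau_v)$ to $K_v(s)\, W_v^\#(\,\cdot\,,-s,\bar\chi_v^{-1},\chi_v\bar\chi_v\chi_{0,v},\chi_v\tau_v)$ with the explicit constants $K_v(s)$ computed in Proposition~\ref{GKprop}, Theorem~\ref{inertintertwiningtheorem} and Theorem~\ref{archintertwiningtheorem}. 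Thus $M(s)f$ is again (a scalar multiple of) a distinguished section of the same shape, now attached to the ``dual'' data, with scalar $K(s) = \prod_v K_v(s)$ (a finite product, since $K_v(s) = 1$ at almost all places by the Gindikin--Karpelevich formula with $n=0$).

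Next I would apply the integral $Z(s,f,\phi)$ to both sides. On one side, Theorem~\ref{globalintegralrepresentationtheorem} expresses $Z(s,W^\#,B)$ as $\frac{L(3s+\frac12,\tilde\pi\times\tilde\tau)}{L(6s+1,\chi|_{\A^\times})L(3s+1,\tau\times\AI(\Lambda)\times\chi|_{\A^\times})}\,Y(s)$. Applying the same theorem with $s$ replaced by $-s$ and the data dualized gives $Z(-s,M(s)W^\#,B)$ as $K(s)$ times $\frac{L(-3s+\frac12,\pi\times\tau)}{L(-6s+1,\bar\chi^{-1}|_{\A^\times})L(-3s+1,\chi\tau\times\AI(\chi\bar\chi\chi_0\Lambda')\times\cdots)}\,\hat Y(-s)$; here one must check that the dualized data still falls under the hypotheses, using that $\Lambda_\infty = 1$, $\chi(\zeta) = \Lambda(\bar\zeta)^{-1}\chi_0(\bar\zeta)^{-1}$, and (crucially) that the representation $\AI(\Lambda)\otimes\chi|_{\A^\times}$ and its ``dual'' incarnation coincide up to the twists absorbed into the $X$-factor. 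Equating the two expressions for $Z$, the auxiliary $\GL_1$ and $\GL_2\times\GL_2$ $L$-functions in the denominators carry their own known functional equations, so they cancel against each other after accounting for their $\varepsilon$-factors; what remains is precisely the product over $v$ of the local factors $X_v(s)$ of \eqref{Xlocaldefeq}. Lemmas~\ref{Xnonarchlemma} and~\ref{Xarchlemma} evaluate each $X_v(s)$, and I would multiply them together: the powers $q_v^{-12 n_v s}$ and $q_v^{-6\delta_v s}$ combine with $D^{6s}$ to give $1$ (since $D = \prod q_v^{\delta_v + 2n_v\cdots}$ accounts for the conductor of $\tau$ at the ramified primes and the discriminant), the root-number-type constants $\omega_\tau(\mathbf{c}^2/\mathbf{d})$, $\omega_\tau(-D)^{-1}$, $\chi_{L/F}(-1)$ multiply to $1$ by the product formula for Hecke characters, and the local $\varepsilon$-factors $\varepsilon(1/2,\tilde\tau_v,\psi_v^{-1})^4$ together with $\varepsilon(s,\tilde\pi_\infty\times\tilde\tau_\infty,\psi_\infty^{-1})$ assemble into the global $\varepsilon(s,\pi\times\tau)$. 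The meromorphic continuation is immediate from that of the Eisenstein series and of all the $L$-functions appearing.

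The main obstacle I anticipate is the bookkeeping of constants and twists in the two global integral evaluations: one must be scrupulous that $Z(-s,M(s)f,\phi)$ is genuinely an instance of Theorem~\ref{globalintegralrepresentationtheorem} for the dualized data (in particular that $\phi$ still pairs correctly against the Bessel functional for $(S,\Lambda,\psi)$, which uses trivial central character of $\pi$), and that the various ramified auxiliary $L$- and $\varepsilon$-factors genuinely cancel rather than leaving a residual Euler factor. A secondary subtlety is Case~C at the archimedean place, where $M(s)$ a priori only maps into a two-dimensional space; but Theorem~\ref{archintertwiningtheorem}(iii) already establishes that the image is a scalar multiple of the correct distinguished vector $\Phi^\#(\,\cdot\,,-s,\ldots)$, so this is handled by the intertwining computation and poses no new difficulty here. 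Finally, I would remark that the restriction ``$\tau_p$ unramified for $p \mid D$'' enters only to guarantee that at those primes the Bessel data and the section are simultaneously controllable (so that $X_v(s)$ has the clean form of Lemma~\ref{Xnonarchlemma}(i)); this hypothesis is removed later in Theorem~\ref{Lrhonsigmaranalyticpropertiestheorem} once the transfer to $\GL_4$ is available.
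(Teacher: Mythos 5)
Your proposal reproduces the paper's argument in essentially the same form: apply the Eisenstein functional equation $E(g,s;f)=E(g,-s;M(s)f)$, use the uniqueness of the distinguished sections so that $M(s)$ acts by the scalar $K(s)=\prod_v K_v(s)$, evaluate both sides of the basic identity via Theorem~\ref{globalintegralrepresentationtheorem}, cancel the auxiliary $\GL_1$ and $\GL_2\times\GL_2$ $L$-functions using their known functional equations, and reduce to the product $\prod_v X_v(s)$ evaluated by Lemmas~\ref{Xnonarchlemma} and~\ref{Xarchlemma}. The only small imprecision is the parenthetical claim that $D=\prod q_v^{\delta_v+2n_v\cdots}$; in fact $D^{6s}$ cancels only against $\prod_{p\mid D} q_p^{-6\delta_p s}$ (since $D=\prod_{p\mid D}p^{\delta_p}$), while the factors $q_p^{-12n_p s}$ at the ramified-$\tau$ places are absorbed into the $\varepsilon$-factors $\varepsilon(1/2,\tilde\tau_p,\psi_p^{-1})^4=\varepsilon(1/2,\tilde\pi_p\times\tilde\tau_p,\psi_p^{-1})$ upon shifting $s$; this does not affect the validity of the argument.
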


{\bf Remark:} The hypothesis on $\tau$ will be removed later; see Theorem \ref{Lrhonsigmaranalyticpropertiestheorem} for a statement where $\tau$ is any cuspidal representation on any $\GL_n$.

\begin{proof}
Note that $D=-\mathbf{d}$. Let the characters $\chi_0$, $\chi$ and $\Lambda$ of $L^\times\backslash\A_L^\times$ be as in Theorem \ref{globalintegralrepresentationtheorem}. Let $f=\otimes f_v\in I_\C(s,\chi,\chi_0,\tau)$ be the function corresponding to the distinguished vector $W^\#=\otimes W^\#_v$; see the diagram (\ref{locglobindrepdiagrameq}). Let $E(g,s;f)$ be the Eisenstein series defined in (\ref{Edefeq}). By the general theory of Eisenstein series,
\begin{equation}\label{Efctleqeq}
 E(g,s;f)=E(g,-s;M(s)f),
\end{equation}
where $M(s)$ is the global intertwining operator given by a formula similar to (\ref{locintdefeq1}) in the local case. Note that the Eisenstein series on the right hand side of (\ref{Efctleqeq}) is defined with respect to the data $(\bar\chi^{-1},\chi\bar\chi\chi_0,\chi\tau)$ instead of $(\chi,\chi_0,\tau)$; see (\ref{Mstargeteq}). By our uniqueness results Corollary \ref{distinguishedvectornonarchtheorem} and Corollary \ref{distinguishedvectorarchtheoremcor}, and the explicit archimedean calculations resulting in Theorem \ref{archintertwiningtheorem},
\begin{equation}\label{Ksfeq}
 M(s)f(\,\cdot\,,s,\chi,\chi_0,\tau)
  =K(s)f(\,\cdot\,,-s,\bar\chi^{-1},\chi\bar\chi\chi_0,\chi\tau),
\end{equation}
where $K(s)=\prod_vK_v(s)$, and the local functions $K_v(s)$ are the same as in (\ref{Ksdefeq}). Hence
\begin{equation}\label{Efctleqeq2}
 E(g,s;f)=K(s)E(g,-s;\hat f),
\end{equation}
where $\hat f$ abbreviates $f(\,\cdot\,,-s,\bar\chi^{-1},\chi\bar\chi\chi_0,\chi\tau)$. For the global zeta integrals defined in (\ref{globalintegraleq}) it follows that
\begin{equation}\label{Zfctleqeq}
 Z(s,f,\phi)=K(s)Z(-s,\hat f,\phi).
\end{equation}
By the basic identity (\ref{basicidentityeq}),
\begin{equation}\label{Zfctleqeq2}
 Z(s,W^\#,B_\phi)=K(s)Z(-s,\hat W^\#,B_\phi),
\end{equation}
where $\hat W^\#$ abbreviates $W^\#(\,\cdot\,,-s,\bar\chi^{-1},\chi\bar\chi\chi_0,\chi\tau)$. Now we let $B_\phi$ be the distinguished Bessel vector as in Theorem \ref{globalintegralrepresentationtheorem}, and apply this theorem to both sides of (\ref{Zfctleqeq2}). The result is
\begin{align}\label{Lfctleqeq1}
 &\frac{L(3s+\frac 12, \tilde\pi \times \tilde\tau)}{L(6s+1,\chi|_{\A^\times})
   L(3s+1,\tau \times \AI(\Lambda) \times \chi|_{\A^\times})}Y(s)\nonumber\\[2ex]
 &\hspace{10ex}= K(s)\frac{L(-3s+\frac 12, \tilde\pi \times \widetilde{\chi\tau})}
   {L(-6s+1,\chi^{-1}|_{\A^\times})
   L(-3s+1,\chi\tau \times \AI(\Lambda) \times \chi^{-1}|_{\A^\times})}\hat Y(-s).
\end{align}
Note that $\Lambda(\zeta)=\chi_0(\zeta)^{-1}\chi(\bar\zeta)^{-1}$, and this character does not change under $(\chi,\chi_0)\mapsto(\bar\chi^{-1},\chi\bar\chi\chi_0)$. However, since $\Lambda^{-1}=\bar\Lambda$, we have $\mathcal{AI}(\Lambda)=\mathcal{AI}(\bar\Lambda)=\mathcal{AI}(\Lambda^{-1})$.
Using $\chi\tau\cong\tilde\tau$ and the global functional equations for characters and for representations of $\GL_2\times\GL_2$ (see \cite{ja}), we can rewrite (\ref{Lfctleqeq1}) as
\begin{align}\label{Lfctleqeq3}
 \frac{L(3s+\frac 12, \tilde\pi \times \tilde\tau)}{L(-3s+\frac 12, \tilde\pi \times \tau)}
   &=K(s)\frac{L(6s+1,\chi|_{\A^\times})
   L(3s+1,\tau \times \AI(\Lambda) \times \chi|_{\A^\times})}
   {L(6s,\chi|_{\A^\times})
   L(3s,\tau \times \AI(\Lambda) \times \chi|_{\A^\times})}\nonumber\\
  &\qquad\times\varepsilon(6s,\chi|_{\A^\times})\,
   \varepsilon(3s,\tau \times \AI(\Lambda) \times \chi|_{\A^\times})
   \frac{\hat Y(-s)}{Y(s)}\nonumber\\
  &=\prod_vX_v(s),
\end{align}
with local quantities $X_v(s)$ as in (\ref{Xlocaldefeq}). These quantities were calculated in Lemmas \ref{Xnonarchlemma} and \ref{Xarchlemma}. For a prime $p$ let $\delta_p$ be the $p$-valuation of $D$, so that $D=\prod_pp^{\delta_p}$. Let $n_p$ be the conductor of $\tau_p$. Let $S$ be the finite set of primes $p$ such that $\tau_p$ is not unramified. By hypothesis, if $p\in S$, then $p\nmid D$, i.e., $L_p/\Q_p$ is not a ramified field extension. Using Lemmas \ref{Xnonarchlemma} and \ref{Xarchlemma}, and the fact that $\varepsilon(s,\tilde\pi_p\times\tilde\tau_p,\psi_p^{-1})=\varepsilon(s,\tilde\tau_p,\psi_p^{-1})^4$ for all finite places $p$, a straightforward calculation shows that
$$
 \prod_vX_v(s)=\varepsilon(3s+1/2,\tilde\pi\times\tilde\tau).
$$
Replacing $s$ by $\frac13s-\frac16$ and $\tau$ by $\tilde\tau$, and observing that $\pi$ is self-contragredient, we obtain the claim of the theorem.
\end{proof}

%
\section{The pullback formula}

In this section, we prove a second integral representation for our $L$-function. This is achieved via the ``pullback formula", which expresses the (relatively complicated) Eisenstein series $E(g,s;f)$, defined in (\ref{Edefeq}), as the inner product of an automorphic form in the space of $\tau$ with the pullback of a simple Siegel-type Eisenstein series on $G_3$.

\vspace{3ex}
We will first prove a local version of the pullback formula. This is the key technical ingredient behind the (global) pullback formula, which, when coupled with the results of the previous sections, will lead to the second integral representation. This will be crucial for proving the entireness of the $\GSp_4\times\GL_2$ $L$-function $L(s,\pi \times \tau)$.
\subsection{Local sections: non-archimedean case}\label{pullbacknonarchsectionssec}
Let $F$ be $p$-adic. We use the notation from Theorem \ref{unique-W-theorem}. In addition, we will assume that $\Lambda\big|_{F^\times}=1$. We define the principal congruence subgroup
\begin{equation}\label{principalcongruencesubgroupeq}
 \Gamma^{(3)}(\P^n)=\{g\in G_3(\OF)\;|\;g\equiv1\;{\rm mod}\;\P^n\},
\end{equation}
and consider the subgroup
\begin{equation}\label{N1defeq}
 N_1(\OF)=\iota(\mat{1}{\OF}{}{1},1)
\end{equation}
(see (\ref{GUembeddingeq}) for the definition of the embedding $\iota$). The group $N_1(\OF)$ is normalized by the group
$$
 \tR(\OF) = \Big\{\i(\begin{bmatrix}1&\\& \lambda\end{bmatrix}, h)\;|\; h \in H(\OF), \lambda = \mu_2(h)\Big\}.
$$
As before, let $n$ be such that $\p^n$ is the conductor of $\tau$. Define the congruence subgroup $C(\P^n)$ of $G_3(\OF)$ by
\begin{equation}\label{candidategroupdefeq}
 C(\P^n):=\tR(\OF)N_1(\OF)\Gamma^{(3)}(\P^n).
\end{equation}
Note that this is really a group, since $\Gamma^{(3)}(\P^n)$ is normal in
the maximal compact subgroup $G_3(\OF)$.

We note here an alternate description of $C(\P^n)$ that will be useful: It consists of precisely the matrices $g \in G_3(\OF)$ that satisfy
\begin{equation}\label{C-looks-like-this}
 g \equiv \begin{bmatrix}\OF &\OF& &\OF& \OF&\\
\OF &\OF &&\OF &\OF &\\&& 1&&& \OF\\
\OF &\OF &&\OF &\OF &\\\OF &\OF &&\OF &\OF &\\&&&&&\OF^\times \end{bmatrix} \pmod{\P^n}.
\end{equation}
We define $\widetilde{\chi}$ to be the character on $P_{12}$ (see (\ref{P12defeq})) given by
\begin{equation}\label{localtildechidefeq}
 \widetilde{\chi}(m(A,v)n) = \chi(v^{-1}\det(A)).
\end{equation}
For $s \in \C,$ we form the induced representation
\begin{equation}\label{Ichisdefeq}
 I(\widetilde{\chi},s) = \text{Ind}_{P_{12}(F)}^{G_3(F)} \big(\widetilde{\chi}\,\delta_{12}^s\big)
\end{equation}
(see \eqref{P12modulareq}),
consisting of smooth functions $\Xi$ on $G_3(F)$ such that
\begin{equation}\label{e:upsilondeflocalformula}
 \Xi(n_0m(A,v)g,s) = |v|^{-9(s +\frac{1}{2})}|N (\det A) |^{3(s + \frac{1}{2})}
 \chi(v^{-1}\det A)\,\Xi(g,s)
\end{equation}
for $n_0\in N_{12}(F)$, $m(A,v)\in M_{12}(F)$, $g\in G_3(F)$. For any $t \in L$, set
$$
 \Omega(t):= \begin{bmatrix}1\\&1\\&&1\\&\alpha&t&1\\\bar\alpha&&&&1\\\bar{t}&&&&&1\end{bmatrix},
$$
where $\alpha$ is the element defined in (\ref{alphadefeq}).
We define
$$
 I_L=\begin{cases}
    \{\varpi^r\;|\;0\leq r\leq n\}&\text{if }\big(\frac L\p\big)=-1,\\[1ex]
    \{(\varpi^{r_1},\varpi^{r_2})\;|\;0\leq r_1,r_2\leq n\}&\text{if }\big(\frac L\p\big)=1,\\[1ex]
    \{\varpi_L^r\;|\;0\leq r\leq2n\}&\text{if }\big(\frac L\p\big)=0.
   \end{cases}
$$
From  Lemma \ref{crucialproperty} below it follows that there exists, for each $t\in I_L$, a unique well-defined section $\Upsilon_t\in I(\widetilde{\chi},s)$ satisfying all of the following,
\begin{enumerate}
\item $\Upsilon_t(\Omega(t), s) =1$,
\item $\Upsilon_t(gk,s)=\Upsilon_t(g,s)$ for all $g \in G_3(F)$, $k\in C(\P^n)$,
\item $\Upsilon_t(g,s) = 0$ if $g \notin P_{12}(F)\Omega(t) C(\P^n)$.
\end{enumerate}
We define $\Upsilon\in I(\widetilde{\chi},s)$ by \begin{equation}\label{upsilondefinert}
 \Upsilon= \sum_{t\in I_L} \Upsilon_t.
\end{equation}
\begin{lemma}\label{crucialproperty}
 Let $A \in \GL_3(F)$, $v\in F^\times$, $n_0\in N_{12}(F)$ and $t \in \OF_L$ be such that $$
  \Omega(t)^{-1}n_0m(A,v)\Omega(t) \in C(\P^n).
 $$
 Then
 $$
  v^{-1}\det(A) \in (1+\P^n)\cap\OF_L^\times.
 $$
\end{lemma}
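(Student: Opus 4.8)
The plan is to unwind the condition $\Omega(t)^{-1}n_0m(A,v)\Omega(t)\in C(\P^n)$ by an explicit $6\times 6$ matrix computation, using the concrete shape of $C(\P^n)$ given in \eqref{C-looks-like-this}. First I would write $n_0=\begin{bmatrix}1&b\\0&1\end{bmatrix}$ with $b\in\mathrm{Mat}_{3,3}(L)$, $^t\bar b=b$, and $m(A,v)=\begin{bmatrix}A&0\\0&v\,^t\!\bar A^{-1}\end{bmatrix}$, so that $n_0m(A,v)=\begin{bmatrix}A& vb\,^t\!\bar A^{-1}\\0& v\,^t\!\bar A^{-1}\end{bmatrix}$. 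Since $\Omega(t)$ is unipotent lower-triangular (built out of $\alpha,\bar\alpha,t,\bar t$), conjugating by it does not change the determinant of the top-left $3\times3$ block's ``$P_{12}$-type'' data: the key observation is that the map sending $g\in P_{12}(F)$ to $v^{-1}\det(A)\in L^\times/(\text{something})$ is well behaved. The real content is that membership in $C(\P^n)$, read off via \eqref{C-looks-like-this}, forces enough congruences mod $\P^n$ on the entries of $\Omega(t)^{-1}n_0m(A,v)\Omega(t)$ to pin down $v^{-1}\det(A)$ modulo $1+\P^n$.

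Concretely, I would compute the conjugate $g':=\Omega(t)^{-1}n_0m(A,v)\Omega(t)$ block by block and then impose $g'\in G_3(\OF)$ together with the congruence pattern \eqref{C-looks-like-this}. The $(3,3)$ entry of the pattern is $\equiv 1\pmod{\P^n}$ and the $(6,6)$ entry is a unit in $\OF^\times$; tracking how these entries of $g'$ depend on $A$, $v$ and $t$ should produce the relations we need. In particular, the block decomposition of $G_3$ with respect to $P_{12}$ means that $g'$ has a well-defined image in the Levi $M_{12}$, and the congruence conditions in \eqref{C-looks-like-this} on the diagonal $1\times1$ blocks (rows/columns $3$ and $6$) translate, after multiplying out the $\Omega(t)$ factors, into $g'$ lying in a subgroup of $M_{12}(\OF)N_{12}(\OF)$ where the ``$\widetilde\chi$-character'' value $v^{-1}\det(A)$ is congruent to $1$ modulo $\P^n$. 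The fact that $t\in\OF_L$ (and $\alpha\in\OF_L$ in the relevant cases by the conventions in \eqref{alphadefeq} and the assumptions on $\mathbf{a},\mathbf{b},\mathbf{c}$) guarantees that conjugation by $\Omega(t)$ preserves integrality, so no denominators are introduced.

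I expect the main obstacle to be purely computational bookkeeping: correctly expanding the sixfold matrix product $\Omega(t)^{-1}n_0m(A,v)\Omega(t)$ and identifying precisely which entries of the result carry the information about $v^{-1}\det(A)\bmod\P^n$. One has to be careful that $\Omega(t)$ mixes the first block of three coordinates with the last block of three in a specific staggered way (the nonzero off-diagonal entries sit in positions $(4,2)$, $(4,3)$, $(5,1)$, $(6,1)$, $(6,4)$ roughly), so the conjugation is not simply a ``change of basis within a parabolic.'' The cleanest route is probably to first reduce to the case $n_0=1$ (absorb $b$ into a harmless piece, since $N_{12}(\OF)\subset C(\P^n)$ after conjugation, or handle it by noting $n_0$ contributes only to strictly-upper entries that are already allowed to be arbitrary in $\OF$), then analyze $\Omega(t)^{-1}m(A,v)\Omega(t)$ directly. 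Once the matrix is written out, reading off the $(3,3)$ and $(6,6)$ positions modulo $\P^n$ and combining with $\det(g')\in\OF_L^\times$ should immediately give $v^{-1}\det(A)\in(1+\P^n)\cap\OF_L^\times$. I would present the argument by displaying the conjugated matrix, then stating the two or three entrywise congruences that do the job, leaving the elementary verification of the matrix identity to the reader.
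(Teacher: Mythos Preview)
Your overall strategy---compute $g'=\Omega(t)^{-1}n_0m(A,v)\Omega(t)$ explicitly and impose the entrywise congruences from \eqref{C-looks-like-this}---is exactly what the paper does, and your remark that $\alpha,t\in\OF_L$ forces $A\in\GL_3(\OF_L)$, $v\in\OF^\times$, $n_0\in N_{12}(\OF)$ is correct. But two of your proposed shortcuts fail, and the second hides a genuine missing idea.

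First, you cannot reduce to $n_0=1$. Conjugation by $\Omega(t)$ does \emph{not} carry $N_{12}(\OF)$ into $C(\P^n)$: the lower-left $3\times3$ block of $\Omega(t)^{-1}n_0\Omega(t)$ is generically nonzero, so the $B$-entries interact nontrivially with the pattern in \eqref{C-looks-like-this}. The paper keeps $B$ throughout. Relatedly, $g'$ is \emph{not} in $P_{12}(F)$ after conjugation, so there is no ``well-defined image in the Levi'' to read off.

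Second, the entries at positions $(3,3)$ and $(6,6)$ together with $\det(g')$ do not suffice. A direct computation gives $g'_{3,3}=a_9+tb_7$ and $g'_{6,6}=-\bar t\,b_3+v\,\overline{(a_1a_5-a_2a_4)}/\bar d$ with $d=\det A$; these constrain $a_9$ and one combination involving $v/\bar d$, but leave far too many $a_i,b_j$ free to determine $v^{-1}d$ modulo $\P^n$. The paper actually uses the entries $(1,6),(2,6),(3,2),(3,3),(3,4),(3,5),(5,6)$ to obtain $b_3,b_6,b_7,b_8,a_8\in\P^n$, $a_9\in1+\P^n$, $a_2a_7\in\P^n$, and then expands $d$ along the third row of $A$.

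The key step your plan is missing is this. In the pattern \eqref{C-looks-like-this} the $4\times4$ block in rows and columns $\{1,2,4,5\}$ has entries in $\OF$ (not merely $\OF_L$) modulo $\P^n$. The paper writes down two explicit $2\times2$ submatrices of $g'$ living in that block whose entries involve $\bar a_j\,v/d$ for $j=2,5$. The constraint ``entry lies in $\OF+\P^n$'' forces each such entry to agree with its Galois conjugate modulo $(\alpha-\bar\alpha)\P^n$; applied with $a_j$ a unit (one of $a_2,a_5$ must be, by the determinant expansion), this conjugation relation is precisely what yields $v^{-1}d\in 1+\P^n$. That Galois step is the heart of the argument, not bookkeeping, and it does not appear in your outline.
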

\begin{proof} Since the statement is trivial for $n=0$, we will assume $n>0$. Let $P=n_0 m(A,v) = \begin{bmatrix}A & B\\0&v\; ^t\!\bar{A}^{-1}\end{bmatrix}$ with
$A=\begin{bmatrix}a_1&a_2&a_3\\a_4&a_5&a_6\\a_7&a_8&a_9\end{bmatrix} \mbox{ and } B = \begin{bmatrix}b_1&b_2&b_3\\b_4&b_5&b_6\\b_7&b_8&b_9\end{bmatrix}$. Note that $A^{-1}B$  is self-adjoint; however we won't use this. Suppose $M := \Omega(t)^{-1}n_0m(A,v)\Omega(t) \in C(\P^n)$. This implies that $A \in \GL_3(\OF_L)$, $v\in \OF^\times$ and $n_0\in N_{12}(\OF)$. Let us set $d := \det(A) \in \OF_L^\times$. We will use the description given in (\ref{C-looks-like-this}) for a matrix in $C(\P^n)$. Since the $(1,6),(2,6),(3,4),(3,5)$ entries of $M$ are in $\P^n$, we obtain $b_3,b_6,b_7,b_8\in \P^n$. Looking at the $(3,2), (3,3)$ entries of $M$, we obtain $a_8 \in \P^n$ and $a_9 \in 1+\P^n$. Looking at the $(5,6)$ entry of $M$ and using the fact that $v,d \in \OF_L^\times$ we deduce that $a_2a_7 \in \P^n$. Calculating the determinant of $A$ along the third row, we obtain
$$
 d = a_7(a_2a_6-a_3a_5) - a_8(a_1a_6-a_3a_4) + a_9(a_1a_5-a_2a_4) \equiv a_1a_5-a_2a_4 - a_7a_3a_5 \pmod{\P^n}.
$$
Since $d \in \OF_L^\times$, it follows that either $a_2$ or $a_5$ is a unit. Set
$$
 g_2 := \mat{a_2+\alpha b_1}{b_1}{-\bar\alpha(a_2+\alpha b_1)-\alpha \bar{a}_2 \frac vd}{\,\,\,\,-\bar\alpha b_1 - \bar{a}_2 \frac vd}, \quad g_5 := \mat{a_5+\alpha b_4}{b_4}{\alpha(-a_5-\alpha b_4 + \bar{a}_5 \frac vd)}{\,\,\,\,-\alpha b_4 + \bar{a}_5 \frac vd}.
$$
Since $g_2$ and $g_5$ are submatrices of $M$ mod $\P^n$, they have entries in $\OF + \P^n$. The following simple fact,
\begin{equation}\label{well-defined-sub-lemmaeq1}
 \text{If }x \in \OF + \P^n,\text{ then }x \equiv \bar{x} \pmod{(\alpha-\bar\alpha)\P^n},
\end{equation}
applied to the entries of $g_2$ resp.\ $g_5$, leads to the desired conclusion.
\end{proof}
\subsection{The local pullback formula: non-archimedean case}\label{pullbacknonarchsec}
In this subsection, we will prove the local pullback formula in the non-archimedean case. Recall the congruence subgroups defined in (\ref{congruencesubgroupeq1}) -- (\ref{congruencesubgroupeq4}). We note that \begin{equation}\label{upsilrightinvar}\Upsilon(g \cdot \i(k_1,k_2),s) = \Upsilon(g,s)\end{equation} for any pair of elements  $k_1 \in K^{(1)}(\P^n)$, $k_2\in K^H\Gamma(\P^n)$, satisfying $\mu_1(k_1) = \mu_2(k_2)$. This follows from the right-invariance of $\Upsilon$ by $C(\P^n)$. Let $Q$ be the element
\begin{equation} \label{Q-formula}
 Q=\begin{bmatrix}0&1&&&&\\1&0&&&&\\&&0&&&-1\\
  &&&0&1&-1\\&&&1&0&\\&1&1&&&0\end{bmatrix}
    \in G_3(F).
\end{equation}
For $g=\mat{a}{b}{c}{d}$ and $m_2(g)$ as in (\ref{m1m2defeq}),
\begin{equation}\label{QQinversepropertyeq}
 Q\cdot \i(g,m_2(g)) \cdot Q^{-1} =  \begin{bmatrix}a&&-b&b&&\\&1&&& &\\-c&&d&&&c\\
  &&&d&&c\\&&&&\mu_1(g)&\\&&&b&&a\end{bmatrix},
\end{equation}
where the matrix on the right side lies in $P_{12}$. It follows that if $g \in G_1(\OF)$, then for any $h \in G_3(F)$,
\begin{equation}\label{upsilleftinvar}
 \Upsilon(Q \cdot \i(g,m_2(g))h,s) = \chi(\mu_1(g)^{-1}\det(g))\Upsilon(Qh,s).
\end{equation}
Let $W^{(0)}$ be the local newform for $\tau$, as in Corollary \ref{distinguishedvectornonarchtheorem}. For each $0 \le m \le n$, let the elements $\eta_m$ be as in \eqref{etamdef2eq}. The main object of study for the local pullback formula is the following local zeta integral,
\begin{equation}\label{e:deflocalzeta}
 Z(g,s;g_2) = q(n) \int\limits_{\U(1,1)(F)}\Upsilon(Q\cdot \i(h,g_2),s)
 W^{(0)}(gh)\,\chi^{-1}(\det(h))\,dh,
\end{equation}
where $g \in G_1(F)$, $g_2 \in \U(2,2)(F)$ and $q(n)$ is a normalizing factor equal to $[G_1(\OF) : K^{(0)}(\P^n)]^{-1}$. The above integral converges absolutely for $\Re(s)$ sufficiently large.

\begin{theorem}[Non-archimedean Local Pullback Formula]\label{theorem-local-pullback-nonarch} Let $0 \le m \le n$. Then, for $\Re(s)$ sufficiently large, $$Z(g, s; \eta_m) = \begin{cases}0 & \text{ if }0 <m \le n,\\ T(s)W^{(0)}(g) & \text{ if }m=0,\end{cases}$$ where the factor $T(s)$ satisfies
$$
 T(s) Z(s,W^\#,B)  = \begin{cases}
 \displaystyle\frac{L(3s+\frac 12, \tilde\pi \times \tilde\tau)}{L(6s+1,\chi|_{F^\times})L(6s+2,\chi_{L/F}\chi|_{F^\times})L(6s+3,  \chi|_{F^\times})} & \text{if } n = 0, \\[4ex]
 L(3s+\frac 12, \tilde\pi \times \tilde\tau) & \text{if } n > 0.
\end{cases}
$$
Here, $Z(s,W^\#,B)$ is the local integral computed in Theorem \ref{nonarchlocalzetatheorem}.
\end{theorem}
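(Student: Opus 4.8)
The plan is to compute the zeta integral $Z(g,s;\eta_m)$ directly by unfolding the integral over $\mathrm{U}(1,1)(F)$ against the explicitly supported section $\Upsilon$, and then to match the result against the formula for $Z(s,W^\#,B)$ in Theorem~\ref{nonarchlocalzetatheorem}. First I would use the definition~\eqref{upsilondefinert} to write $\Upsilon = \sum_{t \in I_L}\Upsilon_t$, so that $Z(g,s;\eta_m)$ breaks into a sum of integrals, in each of which $\Upsilon_t(Q\cdot\iota(h,\eta_m),s)$ is nonzero only when $Q\cdot\iota(h,\eta_m)$ lies in the single double coset $P_{12}(F)\Omega(t)C(\P^n)$. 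The key combinatorial step is to determine, for each fixed $t$ and each $m$, which $h \in \mathrm{U}(1,1)(F)$ contribute: using the matrix identity~\eqref{QQinversepropertyeq} together with an Iwasawa-type decomposition of $h$, I would reduce the support condition to explicit congruence conditions mod $\P^n$ on the entries of $h$ (and on $t$), invoking Lemma~\ref{crucialproperty} to control when $v^{-1}\det A$ is a unit congruent to $1$. I expect this bookkeeping to show that for $m>0$ the support conditions are incompatible with $W^{(0)}(gh)$ being nonzero (the newform $W^{(0)}$ vanishes off $\mathrm{diag}(\varpi^{\ell},1)$-cosets with $\ell\ge 0$, and the geometry of $\eta_m$ for $m>0$ forces a mismatch), giving $Z(g,s;\eta_m)=0$; for $m=0$ the support condition on $h$ will be satisfied on a set that, after the change of variables using~\eqref{upsilleftinvar} and the right $K^{(0)}(\P^n)$-invariance built into the normalization $q(n)$, collapses the integral to $T(s)W^{(0)}(g)$ with $T(s)$ an explicit rational function of $q^{-s}$.

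Next I would identify $T(s)$ explicitly. The cleanest route is \emph{not} to evaluate the collapsed $m=0$ integral from scratch, but to exploit the compatibility of the two integral representations: by the (global or local) pullback mechanism the product $T(s)\cdot Z(s,W^\#,B)$ must reproduce the completed ratio of $L$-functions coming from the degenerate $G_3$ Eisenstein series, whose local Gindikin--Karpelevich-type factor in the unramified case is governed by Proposition~\ref{GKprop} and the $\delta_{12}$-modular character~\eqref{P12modulareq}. Concretely, in the unramified case $n=0$ one checks directly that $Z(0\text{-c/o }s;g_2)$ reduces to Sugano's spherical Bessel computation twisted by the section $\Upsilon$, and the product against the Furusawa-type denominator $L(6s+1,\chi|_{F^\times})L(3s+1,\tau\times\AI(\Lambda)\times\chi|_{F^\times})$ from Theorem~\ref{nonarchlocalzetatheorem} yields the stated three-$\zeta$-factor denominator $L(6s+1,\chi|_{F^\times})L(6s+2,\chi_{L/F}\chi|_{F^\times})L(6s+3,\chi|_{F^\times})$ — this is a finite hyperbolic identity in $q^{-s}$ that I would verify by expanding both sides as power series, using the factorization $L(3s+1,\AI(\Lambda)\times\chi|_{F^\times}\times\mathbf 1)=\cdots$ in the inert/split cases from the tables in the proof of Theorem~\ref{nonarchlocalzetatheorem}. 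For $n>0$, the ramified local factor $Y(s)$ in Theorem~\ref{nonarchlocalzetatheorem} is precisely designed so that $Z(s,W^\#,B)$ already equals $L(3s+\tfrac12,\tilde\pi\times\tilde\tau)/[L(6s+1,\chi|_{F^\times})L(3s+1,\tau\times\AI(\Lambda)\times\chi|_{F^\times})]\cdot Y(s)$ with $Y(s)$ carrying exactly the denominator factors; dividing, one must get $T(s)Z(s,W^\#,B)=L(3s+\tfrac12,\tilde\pi\times\tilde\tau)$, i.e.\ $T(s)=[L(6s+1,\chi|_{F^\times})L(3s+1,\tau\times\AI(\Lambda)\times\chi|_{F^\times})]/Y(s)$, which I would confirm by carrying out the collapsed $m=0$ integral and matching.

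The main obstacle will be the $m=0$ support analysis and the ensuing integral collapse: one has to pin down exactly the set of $h\in\mathrm{U}(1,1)(F)$ (modulo the right $C(\P^n)$-action and the left $P_{12}$-action) for which $Q\cdot\iota(h,\eta_0)\in\bigsqcup_t P_{12}(F)\Omega(t)C(\P^n)$, keeping careful track of the three cases (inert, split, ramified) governing $I_L$ and the structure of $\P$. In the ramified case the index set $I_L$ has size $2n+1$ rather than $n+1$ (or $(n+1)^2$ in the split case), and the intertwining of the $\Omega(t)$-translates with the Iwasawa pieces of $h$ is delicate; matrix identities analogous to~\eqref{xinO-ynotinO} will be needed and must be produced explicitly. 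A secondary technical point is the normalization constant $q(n)=[G_1(\OF):K^{(0)}(\P^n)]^{-1}$: one must check it is exactly the volume factor that makes the collapsed integral produce $W^{(0)}(g)$ with coefficient $T(s)$ and no spurious power of $q$. Once these are in hand, the matching against Theorem~\ref{nonarchlocalzetatheorem} and the vanishing for $m>0$ are comparatively routine, following the same $p$-adic techniques and known $\GL_2$-Whittaker identities already used in Section~\ref{inter-non-arch}; I would omit those details for brevity as the authors do elsewhere.
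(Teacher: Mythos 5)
Your proposal diverges from the paper's proof most significantly in the vanishing argument for $0 < m \le n$, and there I think your proposed route has a genuine gap. The paper's argument is representation-theoretic: first Lemma~\ref{upsilondoublecoset} gives that $\Upsilon(Q\cdot\iota(h,\eta_m),s)$ depends only on the double coset $K_1^{(1)}(\P^n)hK_1^{(1)}(\P^n)$; Lemma~\ref{zlmprop} then shows (using the fact that the integral over each $K_1^{(1)}(1)A_tK_1^{(1)}(1)$ is a finite sum of right-translates of $W^{(0)}$, together with the $k_l$-invariance computation~\eqref{irtorusinvar2}) that $Z(\cdot,s;\eta_m)$ is a scalar multiple of the newform $W^{(0)}$. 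Finally, using $\eta_m^{-1}m_2(k)\eta_m \in K^H\Gamma(\P^n)$ for $k \in K^{(1)}(\p^{n-m})$, one gets that $Z(\cdot,s;\eta_m)$ is right-invariant under the \emph{larger} group $K^{(1)}(\p^{n-m})$, and since the conductor of $\tau$ is $\p^n$, the only such vector in $V_\tau$ is zero when $m>0$. By contrast, you propose to deduce vanishing by showing "the support conditions are incompatible with $W^{(0)}(gh)$ being nonzero" via a "geometry mismatch" for $\eta_m$, $m>0$. This is not a proof outline, and it is not clear it can be made into one: the zeta integral could vanish by cancellation rather than by pointwise vanishing of the integrand, and conversely the integrand is demonstrably not identically zero in the relevant cases. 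What actually forces the answer to be zero is the conductor constraint, which your plan never invokes; you would need at minimum to establish the analogue of Lemma~\ref{zlmprop} (that $Z(\cdot,s;\eta_m)\in V_\tau$ is a multiple of $W^{(0)}$) and the extra invariance under $K^{(1)}(\p^{n-m})$, at which point you have recovered the paper's argument.

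For the $m=0$ evaluation, your route (split $\Upsilon = \sum_t \Upsilon_t$, Iwasawa decomposition of $h$, track support) is different from the paper's (Cartan/Bruhat decomposition of $\U(1,1)(F)$ into double cosets, support statements~\eqref{support1}--\eqref{support4}, reduction to Hecke-eigenvalue sums $\lambda_t$, geometric series). Your route could in principle succeed, but it discards the structural shortcut the paper relies on: the double-coset invariance from Lemma~\ref{upsilondoublecoset} is precisely what lets the Cartan decomposition collapse the integral to a one-parameter sum and feed it directly into the $\GL_2$ Hecke algebra. The Iwasawa parametrization is less well adapted to tracking right $K_1^{(1)}(\P^n)$-invariance. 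Your identification of $T(s)$ by matching against the global pullback picture is also somewhat circular as stated (the theorem being proven is what ultimately justifies that compatibility), though you do hedge by saying you would confirm by direct computation of the collapsed integral; the paper simply computes $T(s)$ from~\eqref{zetafinalexpr} using the eigenvalue table and the explicit value~\eqref{upsilonevaluationeq}, with no appeal to the global picture.
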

The proof of Theorem \ref{theorem-local-pullback-nonarch} will require the following lemmas.
\begin{lemma}\label{upsilondoublecoset}
 As a function of $h$, the quantity $\Upsilon(Q\cdot \i(h,\eta_m),s)$ depends only on the double coset
 $K_1^{(1)}(\P^n)hK_1^{(1)}(\P^n)$.
\end{lemma}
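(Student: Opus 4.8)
The assertion is that, as a function on $\U(1,1)(F)$, the map $h\mapsto\Upsilon(Q\cdot\i(h,\eta_m),s)$ is invariant under left and right translation by $K_1^{(1)}(\P^n)$. I would treat the two invariances separately; both reduce to the transformation properties of $\Upsilon$ recorded in (\ref{upsilrightinvar}) and (\ref{upsilleftinvar}), together with one short matrix computation. Throughout, the key structural input is that the embedding $\i$ restricts to a group homomorphism on the subgroup of pairs with equal similitude, and that $\mu_1(h)=\mu_2(\eta_m)=1$ for $h\in\U(1,1)(F)$.

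\emph{Right invariance.} For $u\in K_1^{(1)}(\P^n)$ we have $\mu_1(u)=1=\mu_2(I_4)$, so $\i(hu,\eta_m)=\i(h,\eta_m)\cdot\i(u,I_4)$. Since $u\in K^{(1)}(\P^n)$ and $I_4\in K^H\Gamma(\P^n)$, applying (\ref{upsilrightinvar}) with $g=Q\cdot\i(h,\eta_m)$ gives $\Upsilon(Q\cdot\i(hu,\eta_m),s)=\Upsilon(Q\cdot\i(h,\eta_m),s)$.

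\emph{Left invariance.} For $u\in K_1^{(1)}(\P^n)\subseteq G_1(\OF)$ we write $\i(uh,\eta_m)=\i(u,m_2(u))\cdot\i(h,m_2(u)^{-1}\eta_m)$, which is legitimate because both pairs have similitude $1$. By (\ref{QQinversepropertyeq}) the element $Q\cdot\i(u,m_2(u))\cdot Q^{-1}$ lies in $P_{12}$, and $\det u,\mu_1(u)$ are units; hence (\ref{upsilleftinvar}) yields
\[
 \Upsilon(Q\cdot\i(uh,\eta_m),s)=\chi\big(\mu_1(u)^{-1}\det u\big)\,\Upsilon\big(Q\cdot\i(h,m_2(u)^{-1}\eta_m),s\big).
\]
It remains to check (i) that the character value is $1$, and (ii) that $m_2(u)^{-1}\eta_m$ may be replaced by $\eta_m$ in the argument of $\Upsilon$. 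For (i): $\mu_1(u)=1$, and the congruences defining $K_1^{(1)}(\P^n)$ give $\det u\in(1+\P^n)\cap\OF_L^\times$; on this group $\chi$ is trivial, since $\chi(\zeta)=\Lambda(\bar\zeta)^{-1}\chi_0(\bar\zeta)^{-1}$ with $\Lambda$ unramified, $\chi_0((1+\P^n)\cap\OF_L^\times)=1$, and $\P^n$ Galois-stable. For (ii), writing $\i(h,m_2(u)^{-1}\eta_m)=\i(h,\eta_m)\cdot\i(I_2,\eta_m^{-1}m_2(u)^{-1}\eta_m)$, it suffices by (\ref{upsilrightinvar}) (with $k_1=I_2$) to show $\eta_m^{-1}m_2(u)^{-1}\eta_m\in K^H\Gamma(\P^n)$.

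This last inclusion is the heart of the argument and the step I expect to cost the most work. I would compute $\eta_m^{-1}m_2(u)^{-1}\eta_m$ directly. Since the off-diagonal entries of $\eta_m$ carry a factor $\varpi^m$, and the entries $a',b',c',d'$ of $u^{-1}$ satisfy (by definition of $K_1^{(1)}(\P^n)$, which is a group) $a',d'\equiv1$ and $c'\equiv0\pmod{\P^n}$, the conjugation collapses modulo $\P^n$ to $I_4$ plus the single entry $b'$ in position $(2,4)$. The unitary relation $\overline{b'}\,d'=b'\,\overline{d'}$ together with $d'\equiv1\pmod{\P^n}$ forces $b'\equiv\overline{b'}\pmod{\P^n}$; because $\P^n\cap\OF=\p^n$, this means $b'$ is congruent modulo $\P^n$ to some $f\in\OF$. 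Then $I_4$ with $f$ in position $(2,4)$ is a unipotent element of $\Sp_4(\OF)\subseteq K^H$, so $\eta_m^{-1}m_2(u)^{-1}\eta_m\in\Sp_4(\OF)\,\Gamma(\P^n)\subseteq K^H\Gamma(\P^n)$, as needed. Combining the right and left invariances proves the lemma.
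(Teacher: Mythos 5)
Your proof is correct and follows the same strategy as the paper's: right invariance from (\ref{upsilrightinvar}), and left invariance by pulling $\i(u,m_2(u))$ out to the left via (\ref{upsilleftinvar}) and then absorbing $\eta_m^{-1}m_2(u)^{-1}\eta_m$ back in via (\ref{upsilrightinvar}). You also spell out the two verifications the paper leaves implicit, namely that the character factor $\chi(\mu_1(u)^{-1}\det u)$ is trivial on $K_1^{(1)}(\P^n)$ and that $\eta_m^{-1}m_2(u)^{-1}\eta_m\in K^H\Gamma(\P^n)$.
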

\begin{proof}The right invariance by $K_1^{(1)}(\P^n)$ follows easily from the right invariance of $\Upsilon$ by $C(\P^n)$. On the other hand, given $k\in K_1^{(1)}(\P^n)$, we have
\begin{align*}\Upsilon(Q\cdot \i(kh,\eta_m),s)&=\Upsilon(Q\cdot \i(kh,m_2(k)m_2(k)^{-1}\eta_m),s)\\&=\Upsilon(Q\cdot \i(h,m_2(k)^{-1}\eta_m),s)\\&=\Upsilon(Q\cdot \i(h,\eta_m \eta_m^{-1}m_2(k)^{-1}\eta_m),s)\\&=\Upsilon(Q\cdot \i(h,\eta_m),s)
\end{align*}
Note that we have used~\eqref{upsilrightinvar}, \eqref{upsilleftinvar} and the fact that $\eta_m^{-1}m_2(k)\eta_m\in K^H\Gamma(\P^n)$.
\end{proof}

Next, we note down the Cartan decompositions for $\U(1,1)(F)$. These follow directly from the Cartan decomposition for $\GL_2(F)$.
Suppose $\big(\frac{L}{\p}\big) = -1 $. Then
\begin{equation}\label{inert-Cartan-decomp}
 \U(1,1;L)(F) = \bigsqcup_{t \ge 0}K_1^{(1)}(1)A_tK_1^{(1)}(1),
 \qquad \mbox{ where }A_t =\begin{bmatrix}\varpi^t&0\\0&\varpi^{-t}\end{bmatrix}.
\end{equation}
Suppose $\big(\frac{L}{\p}\big) = 1 $. Then
\begin{equation}\label{split-Cartan-decomp}
 \U(1,1;L)(F) = \bigsqcup\limits_{t_1 \geq t_2} K_1^{(1)}(1) A_{t_1,t_2} K_1^{(1)}(1),
 \qquad \mbox{ where } A_{t_1, t_2} = \mat{\varpi_L^{t_1} \bar{\varpi}_L^{-t_2}}{}{}{\varpi_L^{t_2} \bar{\varpi}_L^{-t_1}}.
\end{equation}
Suppose $\big(\frac{L}{\p}\big) = 0 $. Then
\begin{equation}\label{ramified-Cartan-decomp}
 \U(1,1;L)(F) = \bigsqcup\limits_{t\geq0} K_1^{(1)}(1) A_tK_1^{(1)}(1),
 \qquad \mbox{ where } A_t= \mat{\varpi_L^t}{}{}{\bar\varpi_L^{-t}}.
\end{equation}

\begin{lemma}\label{zlmprop}
 For each $0 \le m \le n$, there exists a function $L_{m}(s)$, depending on the local data ($F$, $L$, $\chi_0$, $\chi$, $\tau$) but independent of $g$, such that, for $\Re(s)$ sufficiently large,
 $$
  Z(g,s;\eta_m)= L_{m}(s) W^{(0)}(g)
 $$
 for all $g \in G_1(F)$.
\end{lemma}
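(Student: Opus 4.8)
The plan is to reduce the statement to a purely local equivariance argument. First I would observe that the integrand in (\ref{e:deflocalzeta}), as a function of $g$, transforms on the left in a controlled way: using the $N_1$-invariance of $\Upsilon$ (built into $C(\P^n)$) together with the identity (\ref{QQinversepropertyeq}), and a change of variables $h \mapsto g_1^{-1}h$ in the integral over $\U(1,1)(F)$, one sees that for $g_1 \in \U(1,1)(F)$ one has $Z(g_1 g, s; \eta_m) = \chi(\det(g_1))\,Z(g,s;\eta_m)$ up to the appropriate Jacobian; more precisely the integral $Z(g,s;\eta_m)$, viewed as a function on $G_1(F)$, lies in the space of functions $g \mapsto \varphi(g)$ that transform under left translation by $\U(1,1)(F)$ by $\chi^{-1}\circ\det$ and that are, as functions of $g$, right $K^{(1)}(\p^{?})$-invariant for a suitable level (coming from the right-invariance of $W^{(0)}$ and of $\Upsilon$ under $C(\P^n)$). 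Hence I would set up the ``target space'' as a suitable induced/Bessel-type model and identify it as being at most one-dimensional.

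The key step is then to show that this target space of functions on $G_1(F)$ is exactly one-dimensional and spanned by $W^{(0)}$ (extended to $G_1(F)$ via $\chi_0$ as in (\ref{extendedWformulaeq})). For this I would argue as follows. The function $g \mapsto Z(g,s;\eta_m)$ is an element of $\tau$ (realized in its Whittaker model with respect to $\psi^{-\mathbf{c}}$, extended to $G_1(F)$ via $\chi_0$) — this is because right translation of $g$ in the inner variable of $W^{(0)}(gh)$ realizes the $\GL_2(F)$-action, and the integral intertwines this action; one needs to check that the Whittaker-type transformation property on the left is preserved, which follows from the transformation law of $W^{(0)}$ against the unipotent in $\GL_2$. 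Once $Z(\,\cdot\,,s;\eta_m)$ is known to lie in $\tau$, its right-invariance under $K^{(0)}(\P^n)\cap\GL_2$, i.e.\ under $K^{(1)}(\p^n)$ at the relevant level, forces it (by Theorem \ref{GL2newformtheorem}, the $\GL_2$ newform uniqueness) to be a scalar multiple of the newform $W^{(0)}$. This scalar is precisely the function $L_m(s)$, and it is manifestly independent of $g$ (it depends only on $F$, $L$, $\chi_0$, $\chi$, $\tau$), which is the assertion.

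To make the previous paragraph rigorous I would carefully verify the two transformation properties just invoked. One is left $\U(1,1)(F)$-equivariance with character $\chi^{-1}\circ\det$, handled via (\ref{upsilleftinvar}) and (\ref{QQinversepropertyeq}) after the substitution $h \mapsto g_1^{-1}h\,$; here one uses that $\chi^{-1}(\det(g_1 h)) = \chi^{-1}(\det g_1)\chi^{-1}(\det h)$ and that $W^{(0)}((g_1 g)h) = W^{(0)}(g(g_1 h))$ — wait, this last equality is false in general, so instead I would keep $g$ on the left and substitute in the integration variable, giving $Z(g_1 g,s;\eta_m)=\int \Upsilon(Q\i(h,\eta_m),s)W^{(0)}(g_1 g h)\chi^{-1}(\det h)\,dh$ and then replace $h$ by $g_1^{-1}h$; the key point is that $\Upsilon(Q\i(g_1^{-1}h,\eta_m),s)=\chi(\mu_1(g_1^{-1})^{-1}\det(g_1^{-1}))\Upsilon(Q\i(h,\eta_m),s)$ by (\ref{upsilleftinvar}), together with the modulus-character bookkeeping for $\det h \mapsto \det(g_1^{-1}h)$. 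The other property is right $K^{(1)}(\p^n)$-invariance in $g$: if $k\in K^{(1)}(\p^n)$ with $\mu(k)=1$ then $m_2(k)\in K^H\Gamma(\P^n)$-type subgroup after conjugation by $\eta_m$ (this is exactly the computation in the proof of Lemma \ref{upsilondoublecoset}), so $\Upsilon$ absorbs it, and the substitution $h\mapsto k^{-1}h$ together with right-invariance of $W^{(0)}$ at the correct level gives the claim. The main obstacle I anticipate is precisely this bookkeeping: tracking the characters $\chi$, $\chi_0$, $\omega_\tau$ and the modulus factors $\delta_{12}$ through the double conjugation by $Q$ and $\eta_m$, and confirming that the conductor levels match so that the newform uniqueness of Theorem \ref{GL2newformtheorem} applies on the nose — everything else is a formal consequence of equivariance.
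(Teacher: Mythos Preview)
Your overall strategy --- show that $g\mapsto Z(g,s;\eta_m)$ lies in the Whittaker model of $\tau$ and is right-invariant under $K^{(1)}(\p^n)$, then invoke newform uniqueness (Theorem \ref{GL2newformtheorem}) --- is the same as the paper's. The discussion of left $\U(1,1)$-equivariance in your first paragraph is a detour; it is the right-invariance in $g$ that does the work, as you eventually recognize.

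There is, however, a real gap in your argument for right-invariance. You write ``if $k\in K^{(1)}(\p^n)$ with $\mu(k)=1$'' and then perform the substitution $h\mapsto k^{-1}h$. This substitution is only valid when $k\in\U(1,1)(F)$, i.e.\ when $k\in K_1^{(1)}(\P^n)$ (which indeed follows from Lemma \ref{upsilondoublecoset}, as you say). But $K^{(1)}(\p^n)$ is not contained in $\U(1,1)(F)$: the diagonal elements $k_l=\mat{1}{}{}{l}$ with $l\in\OF^\times$ have $\mu_1(k_l)=l\neq 1$, and you cannot push $k_l^{-1}$ through the $\U(1,1)$-integral by left translation. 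This torus case is not ``bookkeeping'' --- it requires a genuinely different manoeuvre. The paper substitutes $h\mapsto k_l^{-1}hk_l$ (conjugation preserves $\U(1,1)$), then uses \eqref{upsilleftinvar} to strip off $\i(k_l^{-1},m_2(k_l)^{-1})$ on the left, the key identity $m_2(k_l)\,\eta_m\,m_2(k_l)^{-1}=\eta_m$, and finally right $C(\P^n)$-invariance to absorb $\i(k_l,m_2(k_l))\in\tR(\OF)$. Together with invariance under $K_1^{(1)}(\P^n)$ this gives invariance under all of $K^{(1)}(\P^n)$, whose intersection with $\GL_2(F)$ is $K^{(1)}(\p^n)$.

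A secondary point: you assert that $Z(\,\cdot\,,s;\eta_m)$ lies in $V_\tau$ without justification. The paper handles this by first splitting the $\U(1,1)$-integral along the Cartan decomposition into pieces $I_t$ over compact double cosets; each $I_t$ is then visibly a finite sum of right translates of $W^{(0)}$ and hence in $V_\tau$. Once full $K^{(1)}(\p^n)$-invariance is in hand one can also bypass this by averaging over $K^{(1)}(\p^n)$ and using Fubini plus newform uniqueness directly, but you should say so explicitly rather than leave it implicit.
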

\begin{proof}
We will only give the proof for the cases $\big(\frac{L}{\p}\big) = -1$ or $0$; the proof for the split case $\big(\frac{L}{\p}\big) = 1$ is obtained by replacing $A_t$ by $A_{t_1, t_2}$ everywhere below. Recall that $V_\tau$ is the space of Whittaker functions on $\GL_2(F)$ realizing the representation $\tau$ with respect to the character $\psi^{-\mathbf{c}}$. $W^{(0)}(g)$ is (up to a constant) the unique function in $V_\tau$ that is right-invariant by $K^{(1)}(\p^n)$. Observe that, by \eqref{inert-Cartan-decomp} resp.\ \eqref{ramified-Cartan-decomp}, we can write
\begin{equation}\label{localzetasplitt}
 q(n)^{-1}Z(g,s;\eta_m)=\sum_{t\ge 0}\;\int\limits_{K_1^{(1)}(1)
 A_tK_1^{(1)}(1)}\Upsilon(Q\cdot \i(h,\eta_m),s)
 W^{(0)}(gh)\,\chi^{-1}(\det h)\,dh.
\end{equation}
For $g \in G_1(F)$, denote
\begin{equation}\label{defit}
 I_{t}(g;s) = \int\limits_{K_1^{(1)}(1)
 A_tK_1^{(1)}(1)}\Upsilon(Q\cdot \i(h,\eta_m),s)
 W^{(0)}(gh)\,\chi^{-1}(\det h)\,dh.
\end{equation}
By writing $K_1^{(1)}(1)A_tK_1^{(1)}(1)$ as a finite disjoint union $\bigsqcup_\gamma \gamma K_1^{(1)}(\P^n)$ and using Lemma~\ref{upsilondoublecoset}, we see that $I_{t}$ is a finite sum of right translates of $W^{(0)}$. Thus, $I_{t}$ lies in $V_\tau$ for each $t$. In fact, we will show that $I_{t}$ is a multiple of $W^{(0)}$. Let $k \in K_1^{(1)}(\P^n)$. By a change of variables, and using Lemma~\ref{upsilondoublecoset}, we see that \begin{equation}\label{irfirstinvar}
 I_{t}(gk,s) = I_{t}(g,s).
\end{equation}
Next, let $l \in \OF^\times$ and put $k_l = \begin{bmatrix}1&\\&l\end{bmatrix}$.
Then
\begin{align}\label{irtorusinvar2}
 I_{t}(gk_l, s)&=\int\limits_{K_1^{(1)}(1)A_tK_1^{(1)}(1)}\Upsilon(Q\cdot \i(h,\eta_m),s)
  W^{(0)}(gk_lh)\chi^{-1}(\det h)\,dh\nonumber\\
  &=\int\limits_{K_1^{(1)}(1)A_tK_1^{(1)}(1)}\Upsilon(Q\cdot \i(k_l^{-1}hk_l,m_2(k_l)^{-1}
  (m_2(k_l)\eta_mm_2(k_l)^{-1})m_2(k_l)),s)\nonumber\\
 &\qquad \qquad \times W^{(0)}(gh)\chi^{-1}(\det h)\,dh\nonumber\\
 &=\int\limits_{K_1^{(1)}(1)A_tK_1^{(1)}(1)}\Upsilon(Q\cdot \i(h,\eta_m),s)
  W^{(0)}(gh)\chi^{-1}(\det h)\,dh.
\end{align}
In the last step above we used (\ref{upsilleftinvar}) and the fact that $m_2(k_l)\eta_mm_2(k_l)^{-1} = \eta_m$. The above calculations show that \begin{equation}\label{irtorusinvar}I_{t}(gk_l, s)= I_{t}(g,s)\end{equation} for all $l \in \OF^\times$. From this and~\eqref{irfirstinvar}, we conclude that $I_{t}(gk,s) = I_{t}(g,s)$ for all $k\in K^{(1)}(\P^n)$. The fact that the conductor of $\tau$ equals $\p^n$  implies that, for each $s$, the function $I_{t}(\cdot, s)$ is a multiple of $W^{(0)}$. Now the assertion follows immediately from~\eqref{localzetasplitt} and~\eqref{defit}.
\end{proof}

\vspace{1ex}\noindent{\bf Proof of Theorem~\ref{theorem-local-pullback-nonarch}.}\hspace{0.5em}	 
Let us first prove that $Z(g, s; \eta_m) = 0$ for $0 <m \le n$. We assume $n >0$ as otherwise the assertion is vacuous. Recall from Lemma~\ref{zlmprop} that, for each $s$, the function $Z(g,s; \eta_m)$ restricted to $\GL_2(F)$ lies in $V_\tau$.
Using $\eta_m^{-1}m_2(k)\eta_m\in K^H\Gamma(\P^n)$ for $k \in K^{(1)}(\p^{n-m})$,
and a similar calculation as in (\ref{irtorusinvar2}), we get
$$
 Z(gk,s; \eta_m)=Z(g,s; \eta_m)
$$
for any $k \in K^{(1)}(\p^{n-m}) \cap \SL_2(\OF)$. Together with \eqref{irtorusinvar} it follows that $Z(g, s; \eta_m)$ is right invariant under $K^{(1)}(\p^{n-m})$. However, because the conductor of $\tau$ is $n$, $V_\tau$ does not contain any non-zero function that is right invariant under $K^{(1)}(\p^{n-m})$ for $m>0$. This proves that $Z(g, s; \eta_m) = 0$ whenever $m>0$.

\vspace{3ex}
For the rest of this proof, we assume that $m=0$, so $\eta_m = \eta$. Our task is to evaluate $Z(g,s; \eta)$. We first consider the case $\big(\frac{L}{\p}\big) = -1 $. For $l \in L$, we use $\widetilde{l}$ to denote the element $\begin{bmatrix}l & \\&\bar{l}^{-1}\end{bmatrix}$. It is not hard to prove that the following decomposition holds,
\begin{equation}\label{e:brucartsteinbsteinb}
 \begin{split}\U(1,1)(F)&= \bigsqcup_{l \in \OF_L^\times/(1+\P)}K_1^{(1)}(\P)\widetilde{l}K_1^{(1)}(\P)\quad \sqcup \quad \bigsqcup_{l \in \OF_L^\times/(1+\P)}K_1^{(1)}(\P)w\widetilde{l} K_1^{(1)}(\P)\\ &\sqcup \quad \bigsqcup_{\substack{t>0\\l \in \OF_L^\times/(1+\P)}} K_1^{(1)}(\P)A_t\widetilde{l} K_1^{(1)}(\P) \quad \sqcup \quad \bigsqcup_{\substack{t>0\\l \in \OF_L^\times/(1+\P)}} K_1^{(1)}(\P)A_t w\widetilde{l}K_1^{(1)}(\P)  \\ &\sqcup \quad \bigsqcup_{\substack{t>0\\l \in \OF_L^\times/(1+\P)}} K_1^{(1)}(\P)wA_t\widetilde{l} K_1^{(1)}(\P) \quad \sqcup\quad \bigsqcup_{\substack{t>0\\l \in \OF_L^\times/(1+\P)}} K_1^{(1)}(\P)wA_t w\widetilde{l}K_1^{(1)}(\P),\end{split}
\end{equation}
where  $w=\begin{bmatrix}&1\\-1&\end{bmatrix}$. We have the following facts about the support of $\Upsilon$,
\begin{align}\label{support1}
  Q\cdot \i(W,\eta) \notin P_{12}(F)\Omega(u)C(\P),
  & \qquad  \text{ for } W \in \{ A_tw\widetilde{l},\,wA_t\widetilde{l},\, wA_tw\widetilde{l}\;|\;t\in\Z_{>0}\}, \ u\in I_L,\\
  \label{support4}Q\cdot \i(w\widetilde{l},\eta)\notin P_{12}(F)\Omega(u)C(\P),
  & \qquad  \text{ for } \ u\in I_L.
\end{align}
The statements \eqref{support1} and \eqref{support4} are proved by direct computations involving the relevant $6 \times 6$ matrices; we omit the details. From the above statements, Lemma~\ref{zlmprop}, and~\eqref{e:brucartsteinbsteinb}, we see that
$$
 Z(g,s; \eta) = q(n) \frac{W^{(0)}(g)}{W^{(0)}(1)}\sum_{t \ge 0}
  \sum_{l \in \OF_L^\times/(1+\P)}\;\int\limits_{K_1^{(1)}(\P)A_t\tl K_1^{(1)}(\P)}
 \Upsilon(Q\cdot \i(A_t\tl,\eta),s)
 W^{(0)}(h)\chi^{-1}(\det h)\,dh.
$$
Now, we have the non-disjoint double coset decomposition
\begin{equation}
 \bigsqcup\limits_{l \in \OF_L^\times/(1+\P)} K_1^{(1)}(\P) A_t \tilde{l}
 K_1^{(1)}(\P) = \bigsqcup\limits_{k=1}^n \bigsqcup_{l \in \OF_L^\times/(1+\P^n)}
 \bigcup\limits_{\substack{y\in \p/\p^n\\v(y)=k}}
 K_1^{(1)}(\P^n) A_{t}\begin{bmatrix}1\\y&1\end{bmatrix}\tilde{l}  K_1^{(1)}(\P^n).
\end{equation}
Again, by explicit calculation, one verifies that $Q\cdot\i(A_t\begin{bmatrix}1\\y&1\end{bmatrix}\widetilde{l},\eta)$ does not belong to any of the sets $P_{12}(F)\Omega_r C(\P^n)$ if $v(y)<n$. It follows that
\begin{align*}
 Z(g,s; \eta) &= q(n) \frac{W^{(0)}(g)}{W^{(0)}(1)}\;\sum_{t \ge 0}\;
  \sum_{l \in \OF_L^\times/(1+\P^n)}\Upsilon(Q\cdot \i(A_t\tl,\eta),s)
  \int\limits_{K_1^{(1)}(\P^n)
  A_t\tl K_1^{(1)}(\P^n)}\!W^{(0)}(h)\chi^{-1}(\det h)\,dh\\
 &= q(n) \frac{W^{(0)}(g)}{W^{(0)}(1)}\;\sum_{t \ge 0}\;
  \sum_{l \in \OF_L^\times/(1+\P^n)}\Upsilon(Q\cdot \i(A_t\tl,\eta),s)
  \chi(l^{-1})\int\limits_{K_1^{(1)}(\P^n)A_t K_1^{(1)}(\P^n)}W^{(0)}(h)\,dh\\
 & = T(s)W^{(0)}(g),
\end{align*}
where
\begin{equation}\label{zetafinalexpr}
 T(s) =  \frac{q(n)}{W^{(0)}(1)}\;\sum_{t \ge 0}\;\sum_{l \in \OF_L^\times/(1+\P^n)}
 \Upsilon(Q\cdot \i(A_t\tl,\eta),s)\chi(l^{-1})\int\limits_{K_1^{(1)}(\P^n)
 A_t K_1^{(1)}(\P^n)}W^{(0)}(h)\,dh.
\end{equation}
To evaluate $T(s)$, we note from the theory of Hecke operators on $\GL_2(F)$ that \begin{equation}\label{GL2U11heckelemmaeq2}
    \int\limits_{K^{(1)}_1(\P^n)A_{t}K^{(1)}_1(\P^n)}\tau(h)W^{(0)}\,dh=
    {\rm vol}(K^{(1)}_1(\P^n))\lambda_{t}W^{(0)},
\end{equation}
where $\lambda_t$ depends on $t$ and $\tau$. Using familiar double coset decompositions, the eigenvalues $\lambda_t$ can easily be calculated. The result is as follows.
\begin{itemize}
 \item If $\tau=\beta_1\times\beta_2$ with unramified characters $\beta_1$, $\beta_2$, then $\lambda_t = \gamma_t - \gamma_{t-1}$ where
   $$
    \gamma_t=q^t\omega_{\tau}(\varpi)^{-t}\:\frac{\beta_1(\varpi)^{2t+1}-\beta_2(\varpi)^{2t+1}}{\beta_1(\varpi) - \beta_2(\varpi)}
   $$
   for $t\geq0$, and $\gamma_t=0$ for $t<0$ (for $\beta_1=\beta_2$, the fraction is to be interpreted as $(2t+1)\beta_1(\varpi)^{2t}$).
  \item If $\tau$ is an unramified twist of the Steinberg representation, then $\lambda_t=1$ for all $t\geq0$.
  \item If $\tau=\beta_1\times\beta_2$ is a principal series representation with
     an unramified character $\beta_1$ and a ramified character $\beta_2$, then
     $\lambda_{t}=q^{t}\beta_1(\varpi)^{-t}\beta_2(\varpi)^{t}$ for all $t\geq0$.
  \item If $\tau$ is supercuspidal, or a ramified twist of the Steinberg representation, or an irreducible principal series representation induced from two ramified characters $\beta_1, \beta_2$, then $\lambda_0 = 1$ and $\lambda_t=0$ for $t>0$.
\end{itemize}
We substitute the above formulas for $\lambda_t$ in the integral inside~\eqref{zetafinalexpr}. Then, we use the definition of $\Upsilon$ to compute the term $\Upsilon(Q\cdot \i(A_t\tl,\eta),s)$; it turns out that
\begin{equation}\label{upsilonevaluationeq}
 \Upsilon(Q\cdot \i(A_t\tl,\eta),s)= q^{-6t(s+ \frac{1}{2})}\chi(l) \chi(\varpi^t).
\end{equation}
After making these substitutions, it is easy to evaluate $T(s)$ for the possible types of $\tau$ listed above simply by summing the geometric series. This proves Theorem~\ref{theorem-local-pullback-nonarch} in the inert case. The proofs for the cases $\big(\frac{L}{\p}\big) = 0 $ or 1 are very similar to the above. The details are left to the reader.
\hfill\qed\vspace{1ex}
\subsection{Local sections: archimedean case}\label{pullbackarchsectionssec}
In this subsection, $F = \R$ and $L = \C$. Let $\tau$ be as in Sect.\ \ref{distvecarchsec}, and let $l_1$ be any of the weights occurring in $\tau$. Let $\chi_0$ be the character of $\C^\times$ such that $\chi_0\big|_{\R^\times}=\omega_\tau$ and $\chi_0(\zeta)=\zeta^{-l_1}$ for $\zeta\in\C^\times,\:|\zeta|=1$. Let $\chi$ be the character of $\C^\times$ given by $ \chi(\zeta)=\chi_0(\overline{\zeta})^{-1}$.

\vspace{3ex}
We define $I(\widetilde{\chi},s)$ in the present (archimedean) case in exactly the same manner as it was defined in the non-archimedean case (see \eqref{Ichisdefeq}, \eqref{e:upsilondeflocalformula}). In this subsection, we will construct a special element of $I(\widetilde{\chi},s)$. Let $\eta_0$ be the matrix defined in (\ref{W0l1l2distvecpropeq2}). For $\theta \in \R$, let
$$
 r(\theta)=\mat{\cos(\theta)}{\sin(\theta)}{-\sin(\theta)}{\cos(\theta)}\in\SO(2),
$$
and
\begin{equation}\label{rtimesdefeq}
 r_\times(\theta)=\begin{bmatrix}\cos(\theta)&&&&&\sin(\theta)\\&1&&&0\\
 &&\cos(\theta)&\sin(\theta)\\&&-\sin(\theta)&\cos(\theta)\\&0&&&1\\
 -\sin(\theta)&&&&&\cos(\theta)\end{bmatrix} \in K^{G_3}_\infty,
\end{equation}
where $K^{G_3}_\infty$ is the maximal compact subgroup of $G_3^+(\R)=\{g\in G_3(\R)\:|\:\mu_3(g)>0\}$. Explicitly,
$$
 K^{G_3}_\infty=\Big\{\mat{A}{B}{-B}{A}\;|\;A,B\in \Mat_{3,3}(\C),\;
 ^t\!\bar AB=\,^t\bar BA,\:^t\!\bar AA+\,^t\bar BB=1\Big\}.
$$
Also, we let
\begin{equation}w_Q=\begin{bmatrix}
&1\\1\\&&&&&-1\\&&&&1\\&&&1\\&&1\end{bmatrix},\qquad s_1=
\begin{bmatrix}&1\\1\\&&1\\&&&&1\\&&&1\\&&&&&1\end{bmatrix},
\end{equation}
and
\begin{equation}\label{tinftyppdefeq}
 t_\infty =w_Q\cdot\i(1,\eta_0)=\i(r(\pi/2),s_1\eta_0).
\end{equation}
Let $l$ be a positive integer (in our application we will consider a discrete series representation of $\PGSp_4(\R)$ with scalar minimal $K$-type $(l,l)$). To ease notation, we will denote the function $\Phi^\#_{|l-l_1|,l,-l_1,l_1}$ defined in (\ref{W0l1l2distvecpropeq1}) by $J_\infty$. Explicitly,
\begin{equation}\label{Jinftydefeq2}\renewcommand{\arraystretch}{1.2}
  J_\infty=\left\{\begin{array}{l@{\qquad\text{if }}l}
  i^{l-l_1}\hat b^{l_1-l}(\hat a\hat d-\hat b\hat c)^{l-l_1}D^{-l}&l\leq l_1,\\
  i^{l_1-l}\hat c^{l-l_1}D^{-l}&l\geq l_1,
  \end{array}\right.
\end{equation}
where $D(g)=\det(J(g,i_2))$, and the functions $\hat a, \hat b,\hat c,\hat d$ are defined before Lemma \ref{Deltaabcdlemma}. Note that $J_\infty(\eta_0) = 1$.

\vspace{3ex}
By the Iwasawa decomposition, $G_3(\R) = P_{12}(\R)K^{G_3}_\infty$. The following lemma provides a criterion for when functions on $K^{G_3}_\infty$ can be extented to nice sections in  $I(\widetilde{\chi},s)$.
\begin{lemma}\label{UpsiloninftyKlemma}Suppose $\Upsilon_\infty$ is an analytic function on $K^{G_3}_\infty$ that satisfies the following conditions.
\begin{enumerate}
 \item For all $A\in \U(3)$ and all $g\in K^{G_3}_\infty$,
      \begin{equation}\label{UpsiloninftyKlemmaeq1}
       \Upsilon_\infty(\mat{A}{}{}{A}g)=\det(A)^{-l_1}\,\Upsilon_\infty(g).
      \end{equation}
    \item For all $\theta\in\R$ and all $k\in K^{G_2}_\infty$,
     \begin{equation}\label{UpsiloninftyKlemmaeq2}
      \Upsilon_\infty(r_\times(\theta)\,t_\infty \,\i(1,k))
      =\Upsilon_\infty(r_\times(\theta)\,t_\infty )
      J_\infty(\eta_0 k).
     \end{equation}
    \item For all $\varphi\in\R$ and all $g\in K^{G_3}_\infty$,
      \begin{equation}\label{UpsiloninftyKlemmaeq3}
       \Upsilon_\infty(g\,\i(r(\varphi),1))=e^{-il_1\varphi}\,\Upsilon_\infty(g).
      \end{equation}
 \end{enumerate}
 Then $\Upsilon_\infty$ can be extended in a unique way
   to an analytic function on $G_3(\R)$ satisfying the following conditions.
   \begin{enumerate}
   \item \begin{equation}\label{archUpsiloncond1b}\Upsilon_\infty\in I(\tilde\chi,s).\end{equation}
 \item For all $\zeta\in S^1$ and all $h\in\U(1,1)(\R)$   \begin{equation}\label{UpsiloninftyKlemmaeq4}
      \Upsilon_\infty(Q\cdot \i(\mat{\zeta}{}{}{\zeta}h,\eta_0),s)
      =\zeta^{-l_1}\,\Upsilon_\infty(Q\cdot \i(h,\eta_0),s).
     \end{equation}
 \item We have the following equation for any $k\in K_\infty^{G_2}$ and $h\in\U(1,1)(\R)$:
  \begin{equation}\label{archUpsiloncond2b}
   \Upsilon_\infty(Q\cdot \i(h,\eta_0 k),s)
   =\Upsilon_\infty(Q \cdot\i(h,\eta_0),s)J_\infty(\eta_0 k).
  \end{equation}
 \item For all $\varphi\in\R$ and all $g\in G_3(\R)$,
   \begin{equation}\label{archUpsiloncond3b}
    \Upsilon_\infty(g\,\i(r(\varphi),1),s)
    =e^{-il_1\varphi}\,\Upsilon_\infty(g,s).
   \end{equation}
 \item For all $\varphi\in\R$ and all $h\in\U(1,1)(\R)$,
   \begin{equation}\label{archUpsiloncond4b}
    \Upsilon_\infty(Q\cdot\i(r(\varphi)h,\eta_0),s)
    =e^{-il_1\varphi}\,\Upsilon_\infty(Q\cdot\i(h,\eta_0),s).
   \end{equation}
\end{enumerate}
\end{lemma}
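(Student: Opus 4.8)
\emph{Proof proposal.} The plan is to extend $\Upsilon_\infty$ via the Iwasawa decomposition $G_3(\R)=P_{12}(\R)K^{G_3}_\infty$, to read off \eqref{archUpsiloncond1b} and \eqref{archUpsiloncond3b} essentially for free, to reduce the key identity \eqref{archUpsiloncond2b} to hypothesis (ii) by a Bruhat-type coset computation, and finally to obtain \eqref{UpsiloninftyKlemmaeq4} and \eqref{archUpsiloncond4b} from \eqref{archUpsiloncond2b} by formal manipulations with the embedding $\iota$, the identity \eqref{QQinversepropertyeq}, and the left transformation laws \eqref{WDeltacondeq1}, \eqref{WDeltacondeq4} of $J_\infty=\Phi^\#_{|l-l_1|,l,-l_1,l_1}$.

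\textbf{Extension and \eqref{archUpsiloncond1b}.} For $g=n_0m(A,v)k$ with $n_0\in N_{12}(\R)$, $m(A,v)\in M_{12}(\R)$, $k\in K^{G_3}_\infty$, I would set $\Upsilon_\infty(g,s):=|v|^{-9(s+\frac12)}|N(\det A)|^{3(s+\frac12)}\chi(v^{-1}\det A)\,\Upsilon_\infty(k)$, as forced by \eqref{e:upsilondeflocalformula}. The point to check is that $P_{12}(\R)\cap K^{G_3}_\infty=\{m(A,1):A\in\U(3)\}$, on which the inducing character reduces to $\chi(\det A)=(\det A)^{-l_1}$ --- here $N(\det A)=|\det A|^2=1$ and $\chi(\zeta)=\chi_0(\bar\zeta)^{-1}=\zeta^{-l_1}$ for $\zeta\in S^1$ --- so it agrees with the left transformation law \eqref{UpsiloninftyKlemmaeq1} of hypothesis (i); hence the extension is well defined. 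It is real-analytic because the Iwasawa projections are, and it is clearly the unique extension with property \eqref{archUpsiloncond1b}. Since $\iota(r(\varphi),1)\in K^{G_3}_\infty$ (a direct check: it is a rotation in the $\GU(1,1)$-block that fixes the base point of the domain) and right translation by a $K^{G_3}_\infty$-element leaves the $P_{12}(\R)$-part of an Iwasawa decomposition unchanged, \eqref{archUpsiloncond3b} follows at once from hypothesis (iii), \eqref{UpsiloninftyKlemmaeq3}.

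\textbf{The identity \eqref{archUpsiloncond2b}.} For $k\in K^{G_2}_\infty$ and $h\in\U(1,1)(\R)$ all similitudes are $1$, so multiplicativity of $\iota$ gives $\iota(h,\eta_0k)=\iota(h,\eta_0)\,\iota(1,k)$ with $\iota(1,k)\in K^{G_3}_\infty$, and, using $t_\infty=w_Q\,\iota(1,\eta_0)$, $Q\,\iota(h,\eta_0)=\bigl(Q\,\iota(h,1)\,w_Q^{-1}\bigr)\,t_\infty$. The crux is the Bruhat-type claim that $Q\,\iota(h,1)\,w_Q^{-1}\in P_{12}(\R)\,r_\times(\theta_h)$ for a suitable $\theta_h\in\R$ depending on $h$; this is an explicit $6\times6$ matrix computation (for $h$ in the open cell it is a consequence of \eqref{QQinversepropertyeq}, and the rest follows by continuity), and it is exactly what the choices of $Q$, $w_Q$, $t_\infty$ and $r_\times(\theta)$ are designed to make work. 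Granting it, $Q\,\iota(h,\eta_0k)\in P_{12}(\R)\,r_\times(\theta_h)\,t_\infty\,\iota(1,k)$, so the $P_{12}(\R)$-equivariance already established together with hypothesis (ii), \eqref{UpsiloninftyKlemmaeq2}, yields $\Upsilon_\infty(Q\iota(h,\eta_0k),s)=\Upsilon_\infty(Q\iota(h,\eta_0),s)\,J_\infty(\eta_0k)$.

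\textbf{The remaining conclusions.} Both \eqref{UpsiloninftyKlemmaeq4} and \eqref{archUpsiloncond4b} will follow from \eqref{archUpsiloncond2b}. For \eqref{archUpsiloncond4b}, write $\iota(r(\varphi)h,\eta_0)=\iota(r(\varphi),m_2(r(\varphi)))\,\iota(h,m_2(r(\varphi))^{-1}\eta_0)$; by \eqref{QQinversepropertyeq} the conjugate $Q\,\iota(r(\varphi),m_2(r(\varphi)))\,Q^{-1}$ is $m(A,1)$ with $\det A=1$, contributing the trivial factor, while $m_2(r(\varphi))^{-1}\eta_0=\eta_0k'$ with $k'\in K^{G_2}_\infty$, so \eqref{archUpsiloncond2b} produces the factor $J_\infty(\hat{r}(-\varphi)\eta_0)=e^{-il_1\varphi}$ by \eqref{WDeltacondeq4} and $J_\infty(\eta_0)=1$. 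For \eqref{UpsiloninftyKlemmaeq4}, write $\iota(\mat{\zeta}{}{}{\zeta}h,\eta_0)=\iota(\mat{\zeta}{}{}{\zeta},m_2(\mat{\zeta}{}{}{\zeta}))\,\iota(h,m_2(\mat{\zeta}{}{}{\zeta})^{-1}\eta_0)$; by \eqref{QQinversepropertyeq} the conjugate by $Q$ is $m(A,1)$ with $A$ the diagonal matrix of entries $\zeta,1,\zeta$, contributing $\chi(\zeta^2)=\zeta^{-2l_1}$, while $m_2(\mat{\zeta}{}{}{\zeta})=\hat\zeta_2$, so $\hat\zeta_2^{-1}\eta_0=\eta_0k''$ with $k''\in K^{G_2}_\infty$ and \eqref{archUpsiloncond2b} produces the factor $J_\infty(\hat\zeta_2^{-1}\eta_0)=\zeta^{l_1}$ by \eqref{WDeltacondeq1}; the product $\zeta^{-2l_1}\cdot\zeta^{l_1}=\zeta^{-l_1}$ is the asserted constant. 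The main obstacle is the Bruhat-type coset decomposition used for \eqref{archUpsiloncond2b} --- controlling $Q\,\iota(h,1)\,w_Q^{-1}$ uniformly over all of $\U(1,1)(\R)$ rather than merely over a one-parameter subgroup --- while everything else is formal bookkeeping with $\iota$, \eqref{QQinversepropertyeq}, and the transformation laws of $J_\infty$.
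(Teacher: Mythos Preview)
Your overall architecture is exactly that of the paper: extend via Iwasawa using hypothesis (i), get \eqref{archUpsiloncond3b} from (iii), prove \eqref{archUpsiloncond2b}, and then derive \eqref{UpsiloninftyKlemmaeq4} and \eqref{archUpsiloncond4b} from \eqref{archUpsiloncond2b} via \eqref{QQinversepropertyeq} and the left transformation laws of $J_\infty$. Your arguments for \eqref{UpsiloninftyKlemmaeq4} and \eqref{archUpsiloncond4b} are correct and are essentially what the paper does.

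The gap is precisely where you flag it: the ``Bruhat-type claim'' that $Q\,\iota(h,1)\,w_Q^{-1}\in P_{12}(\R)\,r_\times(\theta_h)$ for \emph{every} $h\in\U(1,1)(\R)$. You say this follows from \eqref{QQinversepropertyeq} on an open cell, but \eqref{QQinversepropertyeq} concerns $Q\,\iota(g,m_2(g))\,Q^{-1}$, not $Q\,\iota(h,1)\,w_Q^{-1}$; there is no direct route from one to the other, and the claim as stated (that the $K^{G_3}_\infty$-component always lands in the one-parameter family $\{r_\times(\theta)\}$ modulo $\U(3)$) is not established. The paper avoids this entirely. It proves the explicit identity \eqref{Qiotasqrtaeq1} only for diagonal $h=\mat{\sqrt a}{}{}{\sqrt a^{-1}}$, giving $Q\,\iota(h,1)=p_a\,r_\times(\theta)\,w_Q$ with $p_a\in P_{12}(\R)$; this yields \eqref{archUpsiloncond2b} for such $h$. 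Then it bootstraps to $h=\mat{\zeta}{}{}{\zeta}r(\varphi)\mat{\sqrt a}{}{}{\sqrt a^{-1}}$ using exactly the trick you deploy for \eqref{archUpsiloncond4b} and \eqref{UpsiloninftyKlemmaeq4}: peel off the left factor via \eqref{QQinversepropertyeq}, absorb it into $\eta_0k$, and use the already-known diagonal case together with \eqref{WDeltacondeq1}, \eqref{WDeltacondeq4}. Finally, since every $h\in\U(1,1)(\R)$ is of the form $\mat{\zeta}{}{}{\zeta}r(\varphi)\mat{\sqrt a}{}{}{\sqrt a^{-1}}r(\varphi')$, the last factor $r(\varphi')$ is handled by the already-proven \eqref{archUpsiloncond3b}. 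In short: you already have the right tool in hand for the step you call the main obstacle---apply the $(QQinversepropertyeq)$/$J_\infty$-trick \emph{before} proving \eqref{archUpsiloncond2b} in general, not after.
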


\begin{proof} Using the Iwasawa decomposition, it is easy to see that $\Upsilon_\infty$ can be extended in a unique way to an analytic function on $G_3(\R)$ satisfying condition (\ref{archUpsiloncond1b}). Note that condition~\eqref{UpsiloninftyKlemmaeq1} is tailored so that the extension is well-defined. Next, another appeal to the Iwasawa decomposition and the fact that $\Upsilon_\infty\in I(\tilde\chi,s)$ shows that~\eqref{UpsiloninftyKlemmaeq3} implies~\eqref{archUpsiloncond3b}. We now prove~\eqref{archUpsiloncond2b}. We have the identity
\begin{equation}\label{Qiotasqrtaeq1}
 Q\cdot\iota(\mat{\sqrt{a}}{}{}{\sqrt{a}^{-1}},1)=p_ak_aw_Q
\end{equation}
with
\begin{equation}\label{Qiotasqrtaeq2}
 p_a=\begin{bmatrix}1&&&&&\frac1{1+a}\\&1\\&&1&\frac1{1+a}\\&&&1\\&&&&1\\&&&&&1\end{bmatrix}
 \begin{bmatrix}\sqrt{\frac a{1+a}}\\&1\\&&\sqrt{\frac1{1+a}}\\  &&&\sqrt{\frac{1+a}a}\\&&&&1\\&&&&&\sqrt{1+a}\end{bmatrix}\in P_{12}(\R)
\end{equation}
and
\begin{equation}\label{Qiotasqrtaeq3}
 k_a=\begin{bmatrix}\sqrt{\frac a{1+a}}&&&&&\frac{-1}{\sqrt{1+a}}\\
 &1&&&0\\&&\sqrt{\frac a{1+a}}&\frac{-1}{\sqrt{1+a}}\\
 &&\frac1{\sqrt{1+a}}&\sqrt{\frac a{1+a}}\\
 &0&&&1\\\frac1{\sqrt{1+a}}&&&&&\sqrt{\frac a{1+a}}\end{bmatrix}\in K^{G_3}_\infty.
\end{equation}
On the other hand, observe that $k_a=r_\times(\theta)$ for $\theta$
ranging in an open subset of $\R/2\pi\Z$; so condition~\eqref{UpsiloninftyKlemmaeq2} is equivalent to
\begin{equation}\label{UpsiloninftyKlemmaeq7}
   \Upsilon_\infty(k_aw_Q\cdot \i(1,\eta_0 k),s)
   =\Upsilon_\infty(k_aw_Q\cdot\i(1,\eta_0),s)
   J_\infty(\eta_0 k).
\end{equation}
Using~\eqref{QQinversepropertyeq} and \eqref{Qiotasqrtaeq1}, properties of $J_\infty$ imply that condition~\eqref{archUpsiloncond2b} holds for all $h$ of the form $r(\varphi)\mat{\sqrt{a}}{}{}{\sqrt{a}^{-1}}$. A similar calculation shows that (\ref{archUpsiloncond2b}) holds for all elements $h$ of the form $\mat{\zeta}{}{}{\zeta}r(\varphi)\mat{\sqrt{a}}{}{}{\sqrt{a}^{-1}}$. In
combination with (\ref{archUpsiloncond3b}) it follows that
(\ref{archUpsiloncond2b}) holds for all $h\in\U(1,1)(\R)$. Finally, (\ref{UpsiloninftyKlemmaeq4}) and (\ref{archUpsiloncond4b}) can be verified using (\ref{upsilleftinvar}), (\ref{archUpsiloncond2b}), and the properties of $J_\infty$.
\end{proof}

We define the functions $x_{ij}$ on $K^{G_3}_\infty$ by
\begin{equation}\label{hatxijdefeq2}
 x_{ij}(g)=\text{$ij$-coefficient of }J(\,^t\bar g\tilde g,I),
 \qquad g\in K^{G_3}_\infty,
\end{equation}
where
$$
 \tilde g=\mat{A}{-B}{B}{A}\text{ for }g=\mat{A}{B}{-B}{A}.
$$
Any polynomial expression in the functions $x_{ij}$ and their complex conjugates
is $K^{G_3}_\infty$-finite. We further define
\begin{align*}
 X_1:=\big((1-| x_{33}|^2) x_{11}+ x_{13} x_{31}\overline{ x_{33}}\big)
  \overline{ x_{13}}
  +\big((1-| x_{33}|^2) x_{12}+ x_{13} x_{32}\overline{ x_{33}}\big)
   \overline{ x_{23}},\\
 X_2:=\big((1-| x_{33}|^2) x_{21}+ x_{23} x_{31}\overline{ x_{33}}\big)
  \overline{ x_{13}}
  +\big((1-| x_{33}|^2) x_{22}+ x_{23} x_{32}\overline{ x_{33}}\big)
   \overline{ x_{23}},\\
 Y_1:=\big((1-| x_{33}|^2)\overline{ x_{11}}
  +\overline{ x_{13} x_{31}} x_{33}\big) x_{31}
  +\big((1-| x_{33}|^2)\overline{ x_{21}}
  +\overline{ x_{23} x_{31}} x_{33}\big) x_{32},\\
 Y_2:=\big((1-| x_{33}|^2)\overline{ x_{12}}
  +\overline{ x_{13} x_{32}} x_{33}\big) x_{31}
  +\big((1-| x_{33}|^2)\overline{ x_{22}}
  +\overline{ x_{23} x_{32}} x_{33}\big) x_{32}.
\end{align*}
Let $\Upsilon_0$ be the function on $K^{G_3}_\infty$ given by
\begin{equation}\label{Upsilon0defeq}
 \Upsilon_0=\left\{\begin{array}{l@{\qquad\text{if }}l}
 \big(\overline{ x_{31}}Y_2-\overline{ x_{32}}Y_1\big)^{l_1-l}&l\leq l_1,\\
 \big( x_{13}X_2- x_{23}X_1\big)^{l-l_1}&l\geq l_1.
 \end{array}\right.
\end{equation}
By explicit calculation, one verifies that
\begin{equation}\label{Upsilon0rtimesthetaeq2}
 \Upsilon_0(r_\times(\theta)\,\i(1,s_1\eta_0))
  =(-1)^{l_1-l}\,\sin(2\theta)^{4|l-l_1|}.
\end{equation}

\begin{lemma}\label{upsiloninftycandidatelemma}
 Let $\Upsilon_0$ be as in (\ref{Upsilon0defeq}). Then
 the function $\Upsilon_\infty(g):=\Upsilon_0(g)\det(J(g,i_2))^{-l_1}$ is
 $K^{G_3}_\infty$-finite and satisfies the conditions from Lemma
 \ref{UpsiloninftyKlemma}. Moreover,
 \begin{equation}\label{upsiloninftycandidatelemmaeq4}
    \Upsilon_\infty(r_\times(\theta)\,t_\infty )
    =(-i)^{l_1}\,(-1)^l\,\sin(2\theta)^{4|l-l_1|}
 \end{equation}
 for all $\theta\in\R$.
 If $\Upsilon_\infty(\,\cdot\,,s)$ denotes the extension of $\Upsilon_\infty$
 to a function on all of $G_3(\R)$, then
 \begin{equation}\label{upsiloninftycandidatelemmaeq5}
  \Upsilon_\infty(Q\,\i(\mat{\sqrt{a}}{}{}{\sqrt{a}^{-1}},\eta_0),s)
  =2^{4|l-l_1|}(-i)^{l_1}\,(-1)^l\,
   \big(\sqrt{a}+\sqrt{a}^{-1}\big)^{q-6(s+\frac12)-4|l-l_1|}
 \end{equation}
 for all $a>0$. Here, $q\in\C$ is such that $\omega_\tau(a)=a^q$ for $a>0$.
\end{lemma}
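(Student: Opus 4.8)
The plan is to check, in turn, the three hypotheses of Lemma~\ref{UpsiloninftyKlemma} for $\Upsilon_\infty=\Upsilon_0\cdot\det(J(\,\cdot\,,i_2))^{-l_1}$, and then to read off the two displayed values from the formulas produced along the way. The $K^{G_3}_\infty$-finiteness is immediate: $\Upsilon_0$ is a polynomial in the functions $x_{ij}$ and $\overline{x_{ij}}$, which are matrix coefficients of finite-dimensional representations of $K^{G_3}_\infty$, and on $K^{G_3}_\infty$ one has $|\det(J(g,i_2))|=1$, so $\det(J(g,i_2))^{-l_1}=\overline{\det(J(g,i_2))}^{\,l_1}$ is again such a polynomial; a product of $K^{G_3}_\infty$-finite functions is $K^{G_3}_\infty$-finite.

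Condition~\eqref{UpsiloninftyKlemmaeq1} is essentially formal. For $A\in\U(3)$ the element $\mat{A}{}{}{A}$ lies in $K^{G_3}_\infty$; the cocycle relation for the automorphy factor gives $\det(J(\mat{A}{}{}{A}g,i_2))=\det(A)\det(J(g,i_2))$, while $\widetilde{\mat{A}{}{}{A}g}=\mat{A}{}{}{A}\tilde g$ and ${}^t\overline{\mat{A}{}{}{A}g}={}^t\bar g\,{}^t\overline{\mat{A}{}{}{A}}$, so that ${}^t\overline{\mat{A}{}{}{A}g}\,\widetilde{\mat{A}{}{}{A}g}={}^t\bar g\,({}^t\bar AA)\,\tilde g={}^t\bar g\tilde g$, whence every $x_{ij}$, and hence $\Upsilon_0$, is left $\mat{A}{}{}{A}$-invariant; combining these gives the factor $\det(A)^{-l_1}$. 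Condition~\eqref{UpsiloninftyKlemmaeq3} amounts to computing the right $\i(r(\varphi),1)$-weight of $\Upsilon_\infty$; since $g\mapsto\tilde g$ is multiplicative on $K^{G_3}_\infty$, this reduces to a direct computation of how the $x_{ij}$, and so the $X_k,Y_k$, transform under right translation by this one-parameter subgroup — exactly in the spirit of the weight computations of Lemma~\ref{Deltaabcdlemma} — together with the contribution of the $\det J$ factor; the total comes out to $-l_1$. Both of these are mechanical once the $x_{ij}$ and $X_k,Y_k$ are written out in coordinates on $K^{G_3}_\infty$.

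The substantive point is condition~\eqref{UpsiloninftyKlemmaeq2}. Here I would first use $t_\infty=w_Q\,\i(1,\eta_0)$ to rewrite both sides as functions of $k\in K^{G_2}_\infty$, so that the claim becomes the identity
$$\Upsilon_0\big(r_\times(\theta)w_Q\,\i(1,\eta_0k)\big)\,\det\!\big(J(r_\times(\theta)w_Q\,\i(1,\eta_0k),i_2)\big)^{-l_1}=\Upsilon_\infty\big(r_\times(\theta)t_\infty\big)\,J_\infty(\eta_0k).$$
Both sides, as functions of $k$, are polynomials in matrix coefficients of $K^{G_2}_\infty$; using the equivariance already established under $\U(3)$ and under $\i(r(\varphi),1)$, one checks that the two sides transform identically under left and right $K^{G_2}_\infty$-translation and lie in the same, one-dimensional, $K$-type, hence are proportional, and the constant is $1$ because at $k=1$ one has $J_\infty(\eta_0)=1$ and the left side reduces to $\Upsilon_\infty(r_\times(\theta)t_\infty)$. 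Carrying this out requires the explicit evaluation~\eqref{Upsilon0rtimesthetaeq2} of $\Upsilon_0$ on $r_\times(\theta)\,\i(1,s_1\eta_0)$, which is itself a lengthy but elementary computation with the formulas defining $X_k,Y_k$ and with the matrices $s_1$, $\eta_0$, $w_Q$. I expect this step, together with keeping all the $6\times6$ matrices $Q$, $w_Q$, $s_1$, $r_\times(\theta)$ straight, to be the main obstacle; everything else is bookkeeping.

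Finally, \eqref{upsiloninftycandidatelemmaeq4} follows by combining \eqref{Upsilon0rtimesthetaeq2} with the trivial evaluation of $\det(J(r_\times(\theta)t_\infty,i_2))^{-l_1}$; and \eqref{upsiloninftycandidatelemmaeq5} follows from \eqref{upsiloninftycandidatelemmaeq4} via the factorization $Q\,\i(\mat{\sqrt{a}}{}{}{\sqrt{a}^{-1}},1)=p_ak_aw_Q$ from the proof of Lemma~\ref{UpsiloninftyKlemma}: since $k_a=r_\times(\theta)$ with $\cos\theta=\sqrt{a/(1+a)}$, one gets $\sin(2\theta)=2\sqrt a/(1+a)=2(\sqrt a+\sqrt a^{-1})^{-1}$, which produces the factor $2^{4|l-l_1|}(\sqrt a+\sqrt a^{-1})^{-4|l-l_1|}$; peeling off $p_a\in P_{12}(\R)$ via the defining transformation property of $I(\widetilde{\chi},s)$ and using $\chi|_{\R^\times}=\omega_\tau^{-1}$, $\omega_\tau(a)=a^q$, supplies the remaining factor $(\sqrt a+\sqrt a^{-1})^{q-6(s+\frac12)}$, and multiplying these out gives the stated formula.
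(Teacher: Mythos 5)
Your overall architecture matches the paper: verify (\ref{UpsiloninftyKlemmaeq1}), (\ref{UpsiloninftyKlemmaeq3}) from the cocycle relation and the left/right equivariance of the $x_{ij}$, defer the substantive verification of (\ref{UpsiloninftyKlemmaeq2}), and then derive (\ref{upsiloninftycandidatelemmaeq4}) from (\ref{Upsilon0rtimesthetaeq2}) and (\ref{upsiloninftycandidatelemmaeq5}) from the factorization $Q\,\i(\mat{\sqrt a}{}{}{\sqrt a^{-1}},1)=p_ar_\times(\theta)t_\infty\i(1,\eta_0^{-1})$ together with the transformation law (\ref{e:upsilondeflocalformula}) applied to $p_a\in P_{12}(\R)$. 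Your final bookkeeping is right: $\sin(2\theta)=-2\sqrt a/(1+a)$ (the sign is harmless since the exponent $4|l-l_1|$ is even), the $p_a$-factor contributes $(\sqrt a+\sqrt a^{-1})^{q-6(s+1/2)}$, and the constants $(-i)^{l_1}(-1)^l$ come out correctly once $\det J(t_\infty,i_3)=-i$ and the right $\i(r(\pi/2),1)$-equivariance are accounted for. So the computational part of the proposal checks out.

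The one genuinely different maneuver is for (\ref{UpsiloninftyKlemmaeq2}), where the paper says only ``routine calculation'' (meaning direct substitution into the explicit polynomial expression for $\Upsilon_0$), whereas you reduce to the one-dimensionality of $W^\Delta_{|l-l_1|,l,-l_1,l_1}$ and then compare both sides at $k=1$. This is a legitimate and arguably cleaner way to close the argument. However, you overstate what is already available: you claim the required equivariances follow from ``the equivariance already established under $\U(3)$ and under $\i(r(\varphi),1)$,'' but conditions (\ref{UpsiloninftyKlemmaeq1}) and (\ref{UpsiloninftyKlemmaeq3}) concern, respectively, \emph{left} translation by diagonal $\U(3)$-blocks and \emph{right} translation by $\i(r(\varphi),1)$; neither tells you how the function
$k\mapsto\Upsilon_\infty(r_\times(\theta)\,w_Q\,\i(1,k))$
behaves under right translation of $k$ by $K^H_\infty$, under left translation of $k$ by $\hat r(\theta')$ or $\hat\zeta_i$, or under the Casimir elements $\Delta_1,\Delta_2$. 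Establishing that this function actually lies in $W^\Delta_{|l-l_1|,l,-l_1,l_1}$ requires fresh computations with the explicit $X_k,Y_k$ polynomials --- roughly the same amount of work as the direct verification of (\ref{UpsiloninftyKlemmaeq2}) the paper gestures at. So your uniqueness reduction is sound as a final step, but it does not, as worded, let you skip the explicit matrix-coefficient computations that the paper bundles into ``routine calculation.''
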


\begin{proof}
From the construction, it is a routine calculation to verify that $\Upsilon_\infty$
satisfies the conditions (\ref{UpsiloninftyKlemmaeq1}) and (\ref{UpsiloninftyKlemmaeq2}).
Property (\ref{UpsiloninftyKlemmaeq3}) follows from the right transformation
properties of the functions $x_{ij}$. Property (\ref{upsiloninftycandidatelemmaeq4}) follows easily from \eqref{Upsilon0rtimesthetaeq2}. To prove \eqref{upsiloninftycandidatelemmaeq5}, note that, by (\ref{Qiotasqrtaeq1}),
$$
 Q\,\i(\mat{\sqrt{a}}{}{}{\sqrt{a}^{-1}},1)=p_ar_\times(\theta)\,\i(\mat{}{1}{-1}{},s_1)
 =p_ar_\times(\theta)\,t_\infty \,\i(1,\eta_0^{-1})
$$
with $p_a$ as in (\ref{Qiotasqrtaeq2}) and $\theta\in\R$ such that
$\cos(\theta)=\sqrt{\frac a{1+a}}$ and $\sin(\theta)=\frac{-1}{\sqrt{1+a}}$. This leads to the claimed result in a straightforward manner.
\end{proof}
\subsection{The local pullback formula: archimedean case}\label{pullbackarchsec}
In this section, we will prove the local pullback formula in the archimedean case. Let $\Upsilon_\infty(\,\cdot\,,s)$ be the element of $I(\widetilde{\chi},s)$ constructed in Lemma \ref{upsiloninftycandidatelemma}. For any $g_2\in\U(2,2)(F)$, $g\in G_1(F)$ and $s \in \C$ let
\begin{equation}\label{W1Whittakerformulaeq2}
 Z_\infty(g,s;g_2)=\int\limits_{\U(1,1)(\R)}\Upsilon_\infty(Q\cdot\i(h,g_2),s)
  \,W_{l_1}(gh)\,\chi(\det(h))^{-1}\,dh,
\end{equation}
which converges absolutely for $\Re(s)$ sufficiently large. Here, $W_{l_1}$ is as in (\ref{W1Whittakerformulaeq}). For simplicity, we will assume that $\mathbf{c}=1$.
\begin{theorem}[Archimedean Local Pullback Formula]\label{theorem-arch-local-pullback}
 Let $l$ be a positive integer, and let $l_1$ be any of the weights occurring in $\tau$. Then, for $\Re(s)$ sufficiently large,
 \begin{equation}\label{theorem-arch-local-pullbackeq1}
  Z_\infty(g,s;\eta_0)= T_\infty(s)\,W_{l_1}(g),
 \end{equation}
 where, up to a non-zero constant (depending on $\tau_\infty$ and $l$ and $l_1$, but not on $s$),
 $$
  T_\infty(s)= 2^{-6s}
  \frac{\Gamma(3s+1-\frac q2+2|l_1-l| - \frac{p}{2}) \,
  \Gamma(3s+1-\frac q2+2|l_1-l| + \frac{p}{2})}{\Gamma(3s+\frac32-\frac q2+2|l_1-l|-\frac{l_1}{2})\,\Gamma(3s+\frac32-\frac q2+2|l_1-l|+\frac{l_1}{2})}.
 $$
 Here, $p$ and $q$ are as in (\ref{t1t2pqeq}).
\end{theorem}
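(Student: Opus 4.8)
The plan is to mirror the structure of the non-archimedean proof (Theorem~\ref{theorem-local-pullback-nonarch}), replacing Cartan decompositions and double-coset bookkeeping with explicit coordinates on $\U(1,1)(\R)$ and known integral formulas for the classical Whittaker function $W_{k,m}$. First I would observe, using the right transformation property (\ref{archUpsiloncond3b}) of $\Upsilon_\infty$ together with the weight-$l_1$ behavior of $W_{l_1}$ under right translation by $\SO(2)$, that the integrand in (\ref{W1Whittakerformulaeq2}) is, for fixed $g$, genuinely a function on $\U(1,1)(\R)$ that transforms on the right under a maximal compact by a character; this forces $h\mapsto Z_\infty(g,s;\eta_0)$ (as a function of $g\in G_1(\R)$), after restriction to $\GL_2(\R)$, to lie in the one-dimensional space of weight-$l_1$ vectors in the $\psi^{-1}$-Whittaker model of $\tau$. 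Hence $Z_\infty(g,s;\eta_0)=T_\infty(s)W_{l_1}(g)$ for a scalar $T_\infty(s)$, reducing the theorem to evaluating $T_\infty(s)$ at a convenient point, say $g=\mathrm{diag}(t^+,1)$ or $g=1$.

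To compute $T_\infty(s)$ I would parametrize $\U(1,1)(\R)$ using the Iwasawa-type or ``polar'' decomposition $h=\mat{\zeta}{}{}{\zeta}\,r(\varphi)\,\mathrm{diag}(\sqrt a,\sqrt a^{-1})\,r(\psi)$ with $\zeta\in\R^\times$ (or $S^1$), $a>0$, and angles $\varphi,\psi$; the key computational input is (\ref{upsiloninftycandidatelemmaeq5}) from Lemma~\ref{upsiloninftycandidatelemma}, which gives $\Upsilon_\infty(Q\cdot\i(\mathrm{diag}(\sqrt a,\sqrt a^{-1}),\eta_0),s)$ explicitly as a constant times $(\sqrt a+\sqrt a^{-1})^{q-6(s+1/2)-4|l-l_1|}$, combined with (\ref{archUpsiloncond4b}), (\ref{UpsiloninftyKlemmaeq4}) and (\ref{archUpsiloncond2b}) to handle the $r(\varphi)$, central, and $\eta_0 k$ directions. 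The angular integrations against the weight-$l_1$ factors of $W_{l_1}$ will collapse the $r(\varphi), r(\psi)$ integrals to constants (and pin down the normalization $\mathbf c=1$), while the $\zeta$-integral, after inserting the explicit formula (\ref{W1Whittakerformulaeq}) for $W_{l_1}$ in terms of $W_{l_1/2,p/2}$ and $\omega_\tau$, will produce a Mellin transform of $(\sqrt a+\sqrt a^{-1})^{-\text{(exponent)}}$ against a product $a^{\text{power}}\,W_{l_1/2,p/2}(4\pi a\cdot(\text{something}))$. Using the standard Barnes-type integral formula for $\int_0^\infty x^{\nu-1}(1+x)^{-\mu}W_{\kappa,m}(x)\,dx$ (the same formula from \cite[p.~316]{MOS} used in the proof of Theorem~\ref{archlocalzetatheorem}), this evaluates to the stated ratio of four Gamma functions, with the $2^{-6s}$ coming from the substitution $a\mapsto$ (Whittaker variable) and the factor $(\sqrt a+\sqrt a^{-1})^2=a+2+a^{-1}$.

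The main obstacle I anticipate is not conceptual but organizational: carefully tracking the change of variables from $\U(1,1)(\R)$-coordinates to the argument of $W_{l_1/2,p/2}$, so that all the shifts $3s+1-\frac q2+2|l_1-l|\pm\frac p2$ and $3s+\frac32-\frac q2+2|l_1-l|\pm\frac{l_1}2$ come out with exactly the right constants, and ensuring the absolute convergence region $\Re(s)\gg0$ is large enough to justify interchanging the $h$-integral with the Mellin/Barnes evaluation and with the defining integral (\ref{W1Whittakerformulaeq2}). A secondary subtlety is that, because we only claim $T_\infty(s)$ \emph{up to a nonzero constant independent of $s$}, I can afford to be cavalier about the overall normalizing constants coming from the measure on $\C$ being twice Lebesgue measure (cf.\ Sect.~2.2 of \cite{TT}) and from the various angular integrals; this is what makes the argument tolerable, and I would flag that these constants are absorbed. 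The split between $l\le l_1$ and $l\ge l_1$ in (\ref{Jinftydefeq2}) and (\ref{Upsilon0defeq}) is handled uniformly by the appearance of $|l-l_1|$ throughout, so no separate case analysis is needed for the final formula.
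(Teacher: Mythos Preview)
Your reduction step---that $Z_\infty(g,s;\eta_0)$, as a function of $g\in\GL_2(\R)$, is a weight-$l_1$ vector in the $\psi^{-1}$-Whittaker model of $\tau$ and hence equals $T_\infty(s)W_{l_1}(g)$---is correct and is exactly how the paper begins.

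The gap is in your computation of $T_\infty(s)$ via the Cartan-type decomposition $h=\zeta\,r(\varphi)\,\mathrm{diag}(\sqrt a,\sqrt a^{-1})\,r(\psi)$. The $\zeta$- and $r(\psi)$-integrals do collapse, exactly as you say: (\ref{archUpsiloncond3b}) gives weight $-l_1$ on $\Upsilon_\infty$ under right $r(\psi)$, which cancels the weight $l_1$ of $W_{l_1}$. But the $r(\varphi)$-integral does \emph{not} collapse. While (\ref{archUpsiloncond4b}) gives $\Upsilon_\infty(Q\cdot\i(r(\varphi)A,\eta_0))=e^{-il_1\varphi}\Upsilon_\infty(Q\cdot\i(A,\eta_0))$, the Whittaker function $W_{l_1}$ has no left $\SO(2)$-equivariance: evaluated at $g=1$, the factor $W_{l_1}(r(\varphi)\,\mathrm{diag}(\sqrt a,\sqrt a^{-1}))$ is a genuinely nontrivial function of $\varphi$. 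You are left with $\int_0^{2\pi}e^{-il_1\varphi}\,W_{l_1}(r(\varphi)A_a)\,d\varphi$, which is not a constant multiple of $W_{l_1}(A_a)$, and the single Mellin--Barnes formula from \cite[p.~316]{MOS} is then not enough to finish.

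The paper avoids this by using the \emph{Iwasawa} decomposition $h=n(b)\,\mathrm{diag}(\sqrt a,\sqrt a^{-1})\,r(\theta)$ of $\SL_2(\R)$, which is adapted to the left-$N$ behavior of Whittaker functions: then $W_{l_1}(h)=e^{2\pi ib}\,e^{il_1\theta}\,W_{l_1}(\mathrm{diag}(\sqrt a,\sqrt a^{-1}))$ factors completely. The cost is that $\Upsilon_\infty(Q\cdot\i(h,\eta_0),s)$ must be computed in Iwasawa coordinates rather than read off from (\ref{upsiloninftycandidatelemmaeq5}); the resulting oscillatory $b$-integral is evaluated by a formula from \cite[(6.11)]{grosskudla1992}, the $a$-integral is then handled via \cite[7.5.2]{MOS} and produces a hypergeometric ${}_2F_1$, and a separate lemma (Lemma~\ref{F21lemma}) evaluates the remaining integral against ${}_2F_1$ to the stated ratio of Gamma functions. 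The special-function input is thus substantially heavier than what you anticipated.
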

\begin{proof} Recall that we have chosen $t^+\in\R_{>0}$ and normalized the function $W_{l_1}$ such that $W_{l_1}(\mat{t^+}{}{}{1})=1$; see (\ref{aplusdefeq}). By changing the value of $p$ slightly and using the holomorphy of both sides of
(\ref{theorem-arch-local-pullbackeq1}) in $p$, we may work under the additional assumption that $W_{l_1}(1)\neq0$. We have
\begin{align*}
 Z_\infty(g,s;\eta_0)&=\int\limits_{\U(1,1)(\R)}\Upsilon_\infty(Q\cdot\i(h,\eta_0),s)
  \,W_{l_1}(gh)\,\chi^{-1}(\det(h))\,dh\\
 &\stackrel{(\ref{UpsiloninftyKlemmaeq4})}{=}\int\limits_{\U(1,1)(\R)/Z}
  \Upsilon_\infty(Q\cdot\i(h,\eta_0),s)\,W_{l_1}(gh)\,\chi^{-1}(\det(h))\,dh\\
  &=\int\limits_{\SL(2,\R)}F_1(h)\,W_{l_1}(gh)\,dh,
\end{align*}
where the function $F_1$ on $\SL_2(\R)$ is defined by $F_1(h):=\Upsilon_\infty(Q\,\i(h,\eta_0),s)$. Hence $Z_\infty(g,s;\eta_0)$ is in the space of $\tau$. It follows from~\eqref{archUpsiloncond4b} that $Z_\infty(g,s;\eta_0)$ is a vector of weight $l_1$.
By irreducibility, there is (up to multiples) only one vector of weight $l_1$ in the
space of $\tau$, namely $W_{l_1}$. It follows that $Z_\infty(g,s;\eta_0)$ is a
multiple of $W_{l_1}(g)$. By an easy calculation, in terms of the Iwasawa decomposition,
$$
 F_1(\mat{1}{b}{}{1}\mat{\sqrt{a}}{}{}{\sqrt{a}^{-1}}r(\theta))
 =(-i)^{l_1}e^{-il_1\theta}\Big(\frac{(1+a)^2+b^2}a\Big)^{\frac q2-3(s+\frac12)-2|l_1-l|}
 \Big(\frac{b-i(a+1)}{|b-i(a+1)|}\Big)^{-l_1}.
$$
So we get that $Z_\infty(g,s;\eta_0)=T_\infty(s) W_{l_1}(g)$, where
\begin{align*}
 T_\infty(s)W_{l_1}(1)&=Z_\infty(1,s;\eta_0)\\
 &=(-i)^{l_1}\int\limits_{-\infty}^\infty\int\limits_0^\infty a^{-1}
  \Big(\frac{(1+a)^2+b^2}a\Big)^{\frac q2-3(s+\frac12)-2|l_1-l|}
  \Big(\frac{b-i(a+1)}{|b-i(a+1)|}\Big)^{-l_1}\\
  &\hspace{30ex}e^{2\pi ib}\,W_{l_1}(\mat{\sqrt{a}}{}{}{\sqrt{a}^{-1}})\,d^\times a\,db.
\end{align*}
For brevity, we make the following substitutions,
$$
 s'=\frac q2-3(s+\frac12)-2|l_1-l|,
$$
$$
 s_1 = -s' - \frac{l_1}{2}, \qquad s_2 = -s'- \frac{1}{2} - \frac{p}{2}, \qquad s_3=-s'- \frac{1}{2} + \frac{p}{2}, \qquad s_4 = -s' + \frac{l_1}{2}.
$$
By the integral formula from~\cite[(6.11)]{grosskudla1992}), the first formula in Sect.\ 7.5.2 of \cite{MOS}, and Lemma \ref{F21lemma} below,
\begin{align*}
 Z_\infty(1,s;\eta_0)&=
  \frac{(2\pi)^{-2s'}}{\Gamma(s_1)\Gamma(s_4)}
  \int\limits_0^\infty\int\limits_0^\infty a^{-s'-1}
   t^{s_1-1}(t+1)^{s_4-1}e^{-2\pi(a+1)(1+2t)}\,
   W_{l_1}(\mat{\sqrt{a}}{}{}{\sqrt{a}^{-1}})\,d^\times a\,dt\\
 &=a^+\,2^{q+2} \pi^{-s'+1 +\frac q2}\frac{\Gamma(s_2)
  \Gamma(s_3)}{\Gamma(s_1)^2\Gamma(s_4)}\int\limits_0^\infty
   t^{s_1-1}(t+1)^{s_4-1}e^{-2\pi(1+2t)}\ _2F_1\big(s_2,s_3,s_1;-t\big)\,dt\\
  &=a^+\,2^{q+ 1 -p - 2s_2} \pi^{1+\frac{q}{2}} \frac{\Gamma(s_2)
  \Gamma(s_3)}{\Gamma(s_1)\Gamma(s_4)}
  W_{\frac{l_1}2,\frac{p}2}(4\pi)\\&=4^{s'+1} \pi \frac{\Gamma(s_2)
  \Gamma(s_3)}{\Gamma(s_1)\Gamma(s_4)} W_{l_1}(1),
\end{align*}and so $$T_\infty(s) = 4^{s'+1} \pi \frac{\Gamma(s_2)
  \Gamma(s_3)}{\Gamma(s_1)\Gamma(s_4)}.
$$
This concludes the proof.
\end{proof}

\begin{lemma}\label{F21lemma}
 For complex numbers $s_1$, $s_2$, $s_3$ with $\Re(s_1)>0$,
 $$
  \int\limits_0^\infty t^{s_1-1}(t+1)^{s_2 + s_3-s_1}e^{-4\pi t}\
   _2F_1\big(s_2,s_3,s_1;-t\big)\,dt
   = \Gamma(s_1) (4 \pi)^{-\frac{s_2+s_3+1}{2}} e^{2 \pi}\,
    W_{\frac{s_2+s_3+1}{2} -s_1, \frac{s_3 - s_2}{2}}(4 \pi).
 $$
\end{lemma}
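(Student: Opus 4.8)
```latex
\textbf{Proof proposal for Lemma \ref{F21lemma}.}

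The plan is to recognize the integral as a known Laplace-type transform of a product of a power, an exponential, and a Gauss hypergeometric function, and to reduce it to a standard integral representation of the Whittaker function $W_{k,m}$. First I would apply Euler's transformation (or rather the Pfaff transformation) to the ${}_2F_1$. Specifically, using
$$
 {}_2F_1(s_2,s_3,s_1;-t) = (1+t)^{-s_3}\,{}_2F_1\big(s_1-s_2,\,s_3,\,s_1;\tfrac{t}{1+t}\big),
$$
the factor $(t+1)^{s_2+s_3-s_1}$ combines with $(1+t)^{-s_3}$ to give $(1+t)^{s_2-s_1}$, which is a cleaner exponent. The goal of this manipulation is to bring the integrand into a form matching a tabulated Laplace transform.

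Next I would invoke the integral representation for the Whittaker function of the form (this is precisely the kind of formula available in \cite{MOS}, Sect.\ 7.5.2, or can be derived from the confluent hypergeometric integral)
$$
 \int\limits_0^\infty t^{\mu-1} (1+t)^{\nu-1} e^{-\beta t}\,{}_2F_1(a,b,\mu;\lambda t)\,dt
$$
and its specializations, which, for the particular choice of parameters $\mu = s_1$, $\beta = 4\pi$, $\lambda = -1$, and the relation between $a,b$ and the $s_i$ forced above, collapses to a single Whittaker function with argument $4\pi$. Concretely, the known formula
$$
 \int\limits_0^\infty t^{\rho-1} e^{-pt}\,{}_1F_1(a;c;\lambda t)\,dt
$$
together with the Kummer/Whittaker dictionary $W_{k,m}(z) = e^{-z/2} z^{m+1/2}\,U(m-k+\tfrac12,\,2m+1;\,z)$ lets one identify the parameters; matching the indices gives first index $\tfrac{s_2+s_3+1}{2}-s_1$ and second index $\tfrac{s_3-s_2}{2}$, and the prefactor $\Gamma(s_1)(4\pi)^{-(s_2+s_3+1)/2}e^{2\pi}$ comes out of normalizing the $U$-function representation and the $e^{-z/2}$ in the Whittaker normalization with $z = 4\pi$.

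The main obstacle I anticipate is purely bookkeeping: tracking the many shifts of parameters through the hypergeometric transformations and the $U$-to-$W$ conversion so that the constants (the power of $4\pi$, the factor $e^{2\pi}$, and the $\Gamma(s_1)$) come out exactly as stated, rather than up to an extraneous constant. To control this I would verify the identity in a degenerate case where everything is elementary — for instance when $s_2$ or $s_3$ is a non-positive integer so that ${}_2F_1$ terminates to a polynomial and $W_{k,m}$ reduces to a finite combination of exponentials and powers — and check that both sides agree there; since both sides are holomorphic in the parameters (for $\Re(s_1)>0$, with the integral converging because of the $e^{-4\pi t}$ factor dominating the polynomial growth of $(1+t)^{s_2+s_3-s_1}\,{}_2F_1$), agreement on such a set forces the general identity by analytic continuation. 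Convergence itself is immediate: near $t=0$ the integrand behaves like $t^{s_1-1}$ which is integrable since $\Re(s_1)>0$, and as $t\to\infty$ the exponential $e^{-4\pi t}$ beats the at-most-polynomial growth of the remaining factors.
```
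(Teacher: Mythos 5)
Your overall strategy — transform the hypergeometric so that the integral becomes a tabulated Laplace transform yielding a single Whittaker function — is the right one and is exactly the paper's two-step plan, but the specific linear transformation you wrote down is the wrong one, and this is a real gap. You apply a \emph{Pfaff} transformation,
$$
 {}_2F_1(s_2,s_3;s_1;-t)=(1+t)^{-s_3}\,{}_2F_1\Bigl(s_1-s_2,\,s_3;\,s_1;\,\frac{t}{1+t}\Bigr),
$$
which leaves a residual power $(1+t)^{s_2-s_1}$ in the integrand and, more seriously, changes the argument of the ${}_2F_1$ to $t/(1+t)$. That is incompatible with the tabulated integral you then invoke, whose ${}_2F_1$ has a \emph{linear} argument $\lambda t$ (you set $\lambda=-1$). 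If you insist on the Pfaff route you are forced to substitute $u=t/(1+t)$, which turns $e^{-4\pi t}$ into $e^{-4\pi u/(1-u)}$ on $(0,1)$ and the integral is no longer a Laplace transform of anything standard; the reduction to a Whittaker function simply does not go through.

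The transformation the paper uses — the third formula in Gradshteyn--Ryzhik (9.131.1), which is the \emph{Euler} transformation — is the one that makes everything click:
$$
 {}_2F_1(s_2,s_3;s_1;-t)=(1+t)^{s_1-s_2-s_3}\,{}_2F_1\bigl(s_1-s_2,\,s_1-s_3;\,s_1;\,-t\bigr).
$$
Here the exponent $s_1-s_2-s_3=c-a-b$ exactly cancels the explicit factor $(t+1)^{s_2+s_3-s_1}$ in the integrand, \emph{and} the argument of the new ${}_2F_1$ is still $-t$. One is left with
$$
 \int_0^\infty t^{s_1-1}e^{-4\pi t}\,{}_2F_1(s_1-s_2,\,s_1-s_3;\,s_1;\,-t)\,dt,
$$
which is precisely the form evaluated by Gradshteyn--Ryzhik (7.522.1): with $a'=s_1-s_2$, $b'=s_1-s_3$, $c=s_1$,
$$
 \int_0^\infty e^{-st}\,t^{c-1}\,{}_2F_1(a',b';c;-t)\,dt
  =\Gamma(c)\,s^{\frac{a'+b'-1}{2}-c}\,e^{s/2}\,
   W_{\frac{1-a'-b'}{2},\,\frac{a'-b'}{2}}(s),
$$
and plugging in $s=4\pi$ gives exactly $\Gamma(s_1)\,(4\pi)^{-\frac{s_2+s_3+1}{2}}\,e^{2\pi}\,W_{\frac{s_2+s_3+1}{2}-s_1,\,\frac{s_3-s_2}{2}}(4\pi)$. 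Note that the crucial coincidence exploited here — the third parameter of the ${}_2F_1$ agrees with the exponent of $t$, and the power of $(1+t)$ is exactly $-(c-a-b)$ — is what lets the Euler transformation absorb the $(1+t)$ factor entirely; this is the feature of the integrand you need to notice, and the Pfaff transformation obscures it. Your fallback plan of verifying the identity on terminating parameter values and then invoking holomorphy in the parameters is a reasonable way to pin down constants once the structure is right, but it does not repair the mismatched transformation.
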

\begin{proof}
This follows by first applying the third equation of~\cite[(9.131, 1)]{grary}, followed by the integral formula~\cite[(7.522, 1)]{grary}.
\end{proof}
\subsection{The global pullback formula}
In the following we use the global set-up of Theorem \ref{globalintegralrepresentationtheoremversion1}. We set the number $l_2$ to be $-l_1$. We will hence work with the section $f = f_{|l_1-l|,l, -l_1, l_1}$ in $I_\C(s, \chi, \chi_0, \tau)$. It gives rise to the Eisenstein series $E(g,s;f)$ via~\eqref{Edefeq}. In this section we will prove the global pullback formula, which expresses the Eisenstein series $E(g,s;f)$ on $G_2(\A)$ in terms of the pullback of a simpler Eisenstein series on $G_3(\A)$.

\vspace{3ex}
Let $\widetilde{\chi}$ be the character on $P_{12}(\A)$ defined by
$\widetilde{\chi}(m(A,v)n) = \chi(v^{-1}\det(A))$; see (\ref{localtildechidefeq}) for the corresponding local definition. For $s \in \C,$ we form the global induced representation
\begin{equation}\label{Ichisglobaldefeq}
 I(\widetilde{\chi},s) = \text{Ind}_{P_{12}(\A)}^{G_3(\A)} (\widetilde{\chi}\delta_{12}^s)
\end{equation}
(see \eqref{P12modulareq}),
consisting of functions $\Upsilon$ on $G_3(\A)$ such that
\begin{equation}\label{e:upsilondefformula}
 \Upsilon(m(A,v)ng,s) = |v|^{-9(s +\frac{1}{2})}\,|N (\det A)|^{3(s + \frac12)}\,
 \chi(v^{-1}\det(A))\,\Upsilon(g,s)
\end{equation}
for $n\in N_{12}(\A)$, $m(A,v)\in M_{12}(\A)$, $g\in G_3(\A)$. \emph{Now, let $\Upsilon= \otimes_v \Upsilon_v \in I(\widetilde{\chi}, s)$, where $\Upsilon_v$ is defined by~\eqref{upsilondefinert} in the non-archimedean case and defined as in Lemma~\ref{upsiloninftycandidatelemma} in the archimedean case.} We define the Eisenstein series $E_{\Upsilon}(g,s)$ on $G_3(\A)$ by
\begin{equation}\label{E-upsilon-def}
 E_{\Upsilon}(g,s)=\sum_{\gamma\in P_{12}(\Q)\bs G_3(\Q)} \Upsilon(\gamma g,s)
\end{equation}
for $\Re(s)$ sufficiently large, and by analytic continuation elsewhere. Furthermore, let
\begin{equation}\label{definitionms}
 T(s)=\prod_v T_v(s),
\end{equation}
where the local functions $T_v(s)$ are defined by Theorem~\ref{theorem-local-pullback-nonarch} in the non-archimedean case and by Theorem~\ref{theorem-arch-local-pullback} in the archimedean case. Note that though~\eqref{definitionms} makes sense for $\Re(s)$ sufficiently large; it is clear from the definitions of $T_v(s)$ that $T(s)$ can be analytically continued to a meromorphic function on the entire complex plane (it is effectively just a ratio of global $L$-functions).

\begin{theorem}[Global Pullback Formula]\label{theorem-global-pullback}
 Let $\Psi$ be the cusp form in the space of $\tau$ corresponding to a local newform at all non-archimedean places, a vector of weight $l_1$ at the archimedean place, and with the same normalization for the corresponding Whittaker function $W_\Psi$ as in (\ref{WPsieq}). Let $\Psi$ be extended to a function on $G_1(\A)$ via $\Psi(ag) = \chi_0(a)\Psi(g)$ for $a \in \A_L^\times$, $g\in \GL_2(\A)$. For an element $g \in G_2(\A)$, let $\U[g](\A)$ denote the subset of $G_1(\A)$ consisting of all elements $h$ such that $\mu_2(g) =\mu_1(h)$. Then we have the following identity of meromorphic functions,
 \begin{equation}
  \chi(\mu_2(g))\int\limits_{  \U(1,1)(\Q) \backslash \U[g](\A)}E_{\Upsilon}(\i(h,g),s)\,\Psi(h)\,\chi(\det(h))^{-1}\,dh = T(s) E(g,s;f),
 \end{equation}
 where $f = f_{|l_1-l|,l, -l_1, l_1}$ as in Theorem~\ref{globalintegralrepresentationtheoremversion1}.
\end{theorem}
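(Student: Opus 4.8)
The plan is to follow the classical ``unfolding'' strategy for pullback formulas, combining the double-coset combinatorics of Shimura with the local pullback formulas already established in Theorems~\ref{theorem-local-pullback-nonarch} and~\ref{theorem-arch-local-pullback}. First I would substitute the definition~\eqref{E-upsilon-def} of $E_\Upsilon$ into the left-hand side and interchange the sum with the integral (justified for $\Re(s)$ large by absolute convergence, with the general statement then following by meromorphic continuation of both sides). This reduces the problem to understanding the orbits of $\U(1,1)(\Q)$ acting on $P_{12}(\Q)\backslash G_3(\Q)$ via $\gamma \mapsto \gamma\,\iota(h,g)$ — more precisely, the double coset space $P_{12}(\Q)\backslash G_3(\Q)/\iota(G_1,G_2)(\Q)$, intersected with the constraint $\mu_1(h)=\mu_2(g)$. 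This is exactly the combinatorial computation carried out by Shimura in~\cite{shibook1}: there is a distinguished ``big cell'' orbit, represented by the element $Q$ from~\eqref{Q-formula}, whose contribution will yield the right-hand side, and all other orbits contribute zero because the corresponding sections $\Upsilon_v$ vanish on them (this is where the support conditions on $\Upsilon_v$ built into~\eqref{upsilondefinert} and Lemma~\ref{upsiloninftycandidatelemma} are used, together with the cuspidality of $\tau$ to kill any remaining non-generic contributions).

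Next, I would identify the stabilizer of the relevant coset representative inside $\U(1,1)(\Q)$ — using the identity~\eqref{QQinversepropertyeq}, conjugation by $Q$ sends $\iota(h,m_2(h))$ into $P_{12}$, so the contribution of the big cell unfolds against the stabilizer and leaves an integral over $\U(1,1)(\Q)\backslash \U[g](\A)$ that collapses (using $\Psi(ag)=\chi_0(a)\Psi(g)$ and~\eqref{chilambdachi0globaleq}) to an integral over all of $\U(1,1)(\A)$, or rather over $\U(1,1)(\R)\times \prod_{v<\infty}\U(1,1)(\Q_v)$. At this point the global integral factors as an Euler product over all places $v$ of the local integrals $Z_v(g_v, s; g_{2,v})$ defined in~\eqref{e:deflocalzeta} (non-archimedean) and~\eqref{W1Whittakerformulaeq2} (archimedean), evaluated at $g_2 = \eta_0$; here one must take care that the normalizing factors $q(n)$ and the measure choices match, and that for almost all $v$ everything is unramified so that $Z_v = T_v(s)\cdot W^{(0)}_v$ reduces to the Gindikin–Karpelevich-type computation. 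Applying Theorems~\ref{theorem-local-pullback-nonarch} and~\ref{theorem-arch-local-pullback} place by place, each local integral equals $T_v(s)$ times the value $W^{(0)}_v(g_v)$ (resp.\ $W_{l_1}(g_\infty)$) of the local Whittaker newform, and the product of these Whittaker functions reassembles — via~\eqref{Wfactorizationeq} and the definition~\eqref{globalfdefeq} of $f = f_{|l_1-l|,l,-l_1,l_1}$ — into precisely $T(s)\,E(g,s;f)$, using the defining summation~\eqref{Edefeq} for the Eisenstein series on $G_2$. The character twist $\chi(\mu_2(g))$ on the left is exactly what is needed to absorb the $\chi(\mu_1(h)^{-1}\det h)$-type factors produced by~\eqref{upsilleftinvar} and make both sides transform identically under the center and the similitude.

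The main obstacle, I expect, is the orbit bookkeeping: verifying that the $\U(1,1)(\Q)$-orbits on $P_{12}(\Q)\backslash G_3(\Q)$ relevant to the constrained integral are correctly enumerated, that only the big-cell orbit survives after pairing against $\Upsilon$ and the cusp form $\Psi$, and that the support conditions on the chosen sections $\Upsilon_v$ — especially in the ramified non-archimedean cases, where $\Upsilon$ is a sum over $t \in I_L$ of functions supported on $P_{12}(F)\Omega(t)C(\P^n)$ — correctly annihilate all but the intended terms. Since the double-coset decomposition itself is borrowed from~\cite{shibook1}, the genuinely new work is matching Shimura's general framework to our specific ramified sections and confirming that the element $Q$ in~\eqref{Q-formula} is the correct representative of the surviving orbit; once that is in place, the reduction to the already-proven local pullback identities is essentially formal. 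A secondary technical point is ensuring that the interchange of summation and integration, and the passage from $\Re(s)\gg 0$ to all of $\C$, are legitimate — this follows from the standard convergence theory of Eisenstein series on $G_3$ and the meromorphic continuation of $T(s)$ (which, as noted after~\eqref{definitionms}, is just a ratio of global $L$-functions), together with the rapid decay of the cusp form $\Psi$ on the domain of integration.
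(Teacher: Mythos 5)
Your proposal follows essentially the same strategy as the paper's proof: insert $Q$ via left invariance of $E_\Upsilon$, apply Shimura's double coset decomposition~\cite{shibook1} to split the Eisenstein series into two orbit contributions, drop the non-generic orbit, unfold the big-cell orbit using the $\xi$-sum over $\U(1,1)(\Q)$ (the representatives $Q\,\i(\xi,m_2(\xi)\beta)\,Q^{-1}$ from~\cite[Prop.\ 2.7]{shibook1} supply both the unfolding and the $G_2$-Eisenstein structure via the residual $\beta$-sum over $P(\Q)\backslash G_2(\Q)$), and reduce the resulting adelic integral to the local pullback zeta integrals at each place. One wrinkle worth flagging: the elimination of the non-generic orbit is accomplished purely by Shimura's ``negligibility'' result~\cite[22.9]{shibook1} --- a cuspidality argument against $\Psi$ --- rather than by vanishing of the sections $\Upsilon_v$ on that orbit as you emphasize; the support conditions on $\Upsilon_v$ come into play later, \emph{within} the big-cell contribution, where Proposition~\ref{disj-doub-coset-decomp-general-n-prop} decomposes $G_2(\Q_p)$ via the $\eta_m$ and the local vanishing $Z(g,s;\eta_m)=0$ for $m>0$ (Theorem~\ref{theorem-local-pullback-nonarch}) is what collapses $\Upsilon_\Psi(g,s)$ to $T(s)\,f(g,s)$.
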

\begin{proof} Let $\mathcal{E}(g,s)$ denote the left hand side above. Note that $E_{\Upsilon}(\i(g,h),s)$ is slowly increasing away from its poles, while $\Psi(h)$ is rapidly decreasing. Thus $\mathcal{E}(g,s)$ converges uniformly and absolutely for $s \in \C$ away from the poles of the Eisenstein series $E_\Upsilon$. Hence, it is enough to prove the theorem for $\Re(s)$ sufficiently large. Since $E_\Upsilon$ is left invariant by $G_3(\Q)$,
\begin{equation}
 \mathcal{E}(g,s)=\chi(\mu_2(g))\int\limits_{  \U(1,1)(\Q) \backslash \U[g](\A)}
 E_{\Upsilon}(Q \cdot \i(h,g),s)\,\Psi(h)\,\chi(\det(h))^{-1}\,dh.
\end{equation}
Let $V(\Q)$ denote the subgroup of $G_3(\Q)$ defined by
$$
 V(\Q)=\{Q\:\i(g_1,g_2)\,Q^{-1}\;|\;g_i\in G_i(\Q),\:\mu_1(g_1)=\mu_2(g_2)\}.
$$
Recall from~\cite[Prop.\ 2.4]{shibook1} that $|P_{12}(\Q)\bs G_3(\Q) / V(\Q)|=2$.  We take the identity element as one of the double coset representatives, and denote the other one by $v$. Thus
$$
 G_3(\Q)= P_{12}(\Q)V(\Q) \sqcup P_{12}(\Q)vV(\Q).
$$
Let $R_1 \subset V(\Q)$ and $R_2 \subset vV(\Q)$ be corresponding sets of coset representatives, such that
$$
 P_{12}(\Q)V(\Q) = \bigsqcup_{s \in R_1}P_{12}(\Q)s, \quad P_{12}(\Q)vV(\Q) = \bigsqcup_{s \in R_2}P_{12}(\Q)s.
$$
For the Eisenstein series defined in (\ref{E-upsilon-def}), we can write $E_{\Upsilon}(h,s) =E_{\Upsilon}^1(h,s) +  E_{\Upsilon}^2(h,s)$, where
$$
 E_{\Upsilon}^1(h,s)= \sum_{\gamma \in R_1} \Upsilon(\gamma h,s), \qquad E_{\Upsilon}^2(h,s)= \sum_{\gamma \in R_2} \Upsilon(\gamma h,s).
$$
Now, by~\cite[22.9]{shibook1} the orbit of $v$ is `negligible' for our integral, that is,
$$
 \int\limits_{  \U(1,1)(\Q) \backslash \U[g](\A)}E_{\Upsilon}^2(Q\cdot\i(h,g),s)\,\Psi(h)\,\chi(\det(h))^{-1}\,dh =0
$$
for all $g\in G_2(\A)$. It follows that
\begin{equation}\label{negligibleeisenstein}
 \mathcal{E}(g,s)=\chi(\mu_2(g))\int\limits_{  \U(1,1)(\Q) \backslash \U[g](\A)}
 E_{\Upsilon}^1(Q\cdot \i(h,g),s)\,\Psi(h)\,\chi(\det(h))^{-1}\,dh.
\end{equation}
On the other hand, by~\cite[Prop. 2.7]{shibook1}, we can take $R_1$ to be the following set,
\begin{equation}
 R_1= \{Q\:\i(1,m_2(\xi)\beta)\,Q^{-1}\;|\;\xi \in \U(1,1)(\Q),\:\beta \in P(\Q) \bs G_2(\Q)\},
\end{equation}
where $m_2(\xi)$ is as in (\ref{m1m2defeq}), and where the $\beta$ are chosen to have $\mu_2(\beta)=1$. For $\Re(s)$ large, we therefore have
$$
 E_{\Upsilon}^1(Q \cdot\i(h,g),s) = \sum_{\substack{\xi \in \U(1,1)(\Q)\\ \beta \in P(\Q) \bs G_2(\Q)}}\Upsilon(Q \cdot\i(h,m_2(\xi)\beta g),s).
$$
Substituting into \eqref{negligibleeisenstein} and using that $Q\:\i(\xi,m_2(\xi))\,Q^{-1} \in P_{12}(\Q)$ by (\ref{QQinversepropertyeq}), we have
\begin{align*}
 \mathcal{E}(g,s)&=\chi(\mu_2(g))\int\limits_{\U(1,1)(\Q) \backslash \U[g](\A)}
 \sum_{\substack{\xi \in \U(1,1)(\Q)\\ \beta \in P(\Q) \bs G_2(\Q)}}
 \Upsilon(Q\cdot\i(h,m_2(\xi)\beta g),s)\,\Psi(h)\,\chi(\det(h))^{-1}\,dh\\
 &=\sum_{\beta \in P(\Q) \bs G_2(\Q)}\chi(\mu_2(g))\int\limits_{ \U[g](\A)}
  \Upsilon(Q \cdot\i(h,\beta g),s)\,\Psi(h)\,\chi(\det(h))^{-1}\,dh.
\end{align*}
Let
\begin{equation}
 \Upsilon_\Psi(g,s) = \chi(\mu_2(g))\int\limits_{ \U[g](\A)}
 \Upsilon(Q \cdot\i(h, g),s)\,\Psi(h)\,\chi(\det(h))^{-1}\,dh.
\end{equation}
If we can show that, for each $g\in G_2(\Q)$,
\begin{equation}\label{neededeqn}
 \Upsilon_\Psi(g,s)=T(s)f(g,s),
\end{equation}
the proof will be complete. By~\cite{shibook1}, we know that the integral above converges absolutely and uniformly on compact sets for $\Re(s)$ large. We are going to evaluate the above integral for such $s$. For a finite place $p$ such that $\tau_p$ has conductor $\p^n$, note that
$$
 G_2(\Q_p) = \bigsqcup_{m=0}^nP(\Q_p)\eta_m  K^H\Gamma(\P^n)
$$
by Proposition \ref{disj-doub-coset-decomp-general-n-prop}. For $k \in K^H\Gamma(\P^n)$, we may write $k =m_2(\mat{1}{0}{0}{\lambda}) k'$,
where $\lambda = \mu_2(k)$ and $\mu_2(k')=1$. Using the fact that both sides of~\eqref{neededeqn} are invariant under the right action on $g$ by elements $k_p \in K^H\Gamma(\P^n)$ satisfying $\mu_2(k_p)=1$, and the above observations, it follows that in order to prove our theorem, it is enough to prove~\eqref{neededeqn}  for $g \in G_2(\A)$ of the form
$$
 g=m_1(a)m_2(b)n\,\kappa\,k_\infty,
$$
where $m_i \in M^{(i)}(\A)$, $n\in N(\A)$, $k_\infty \in K_\infty^{G_2}$, and $\kappa = (\kappa_v)_v\in \prod_vK_v^{G_2}$ satisfies
\begin{itemize}
 \item $\kappa_v  \in \{\eta_0, \cdots \eta_n\}$ if $v=p$ and $\tau_p$ has conductor $p^n$, $n>0$,
 \item $\kappa_v = \eta_0$ if $v = \infty$,
 \item $\kappa_v=1$ otherwise.
\end{itemize}
For such $g$, we calculate
\begin{align*}
 \Upsilon_\Psi(g,s)
 &=\chi(\mu_1(b))
  \int\limits_{ \U[m_2(b)](\A)}\Upsilon(Q \cdot\i(h,m_1(a)m_2(b)n\kappa k_\infty),s)
  \,\Psi(h)\,\chi(\det(h))^{-1}\,dh\\
 &\stackrel{(\ref{QQinversepropertyeq})}{=}|\mu_1(b)|^{-3(s +\frac{1}{2})}\int\limits_{ \U[m_2(b)](\A)}\Upsilon(Q \cdot\i(b^{-1}h,m_1(a)n\kappa k_\infty),s)\,\Psi(bb^{-1}h)\,\chi(\det(b^{-1}h))^{-1}\,dh\\
 &\stackrel{(\ref{archUpsiloncond2b})}{=}|\mu_1(b)|^{-3(s +\frac{1}{2})}
  J_\infty(\eta_0 k_\infty)\int\limits_{\U(1,1)(\A)}\Upsilon(Q \cdot\i(h,m_1(a)n\kappa),s)
  \,\Psi(bh)\,\chi(\det(h))^{-1}\,dh\\
 &=\chi(a)|N (a)\mu_1(b)^{-1}|^{3(s+\frac12)}
  J_\infty(\eta_0 k_\infty)\int\limits_{\U(1,1)(\A)}\Upsilon(Q \cdot\i(h,\kappa),s)
  \,\Psi(bh)\,\chi(\det(h))^{-1}\,dh.
\end{align*}
Using the Whittaker expansion
\begin{equation}\label{defwhitt}
 \Psi(g) = \sum_{\lambda \in \Q^\times}W_{\Psi}(\mat{\lambda}{0}{0}{1}g),
\end{equation}
we have
\begin{equation}\label{defub}
 \int\limits_{\U(1,1)(\A)}\Upsilon(Q\cdot\i(h,\kappa),s)
 \,\Psi(bh)\,\chi(\det(h))^{-1}\,dh=\sum_{\lambda \in\Q^\times}
 Z(\mat{\lambda}{0}{0}{1}b,s;\kappa),
\end{equation}
where for $g\in G_1(\A)$, $g_2\in\U(2,2)(\A)$,
$$
 Z(g,s;g_2) = \int\limits_{\U(1,1)(\A)}\Upsilon(Q\cdot \i(h,g_2),s)
 \,W_{\Psi}(gh)\,\chi(\det(h))^{-1}\,dh.
$$
Note that the uniqueness of the Whittaker function implies $Z(g,s;\kappa) = \prod_v Z_v(g_v, s, \kappa_v)$, where the local zeta integral $Z_v(g_v, s, \kappa_v)$ is defined by
$$
 Z_v(g_v, s, \kappa_v) = \int\limits_{\U(1,1)(\Q_v)}\Upsilon(Q\cdot \i(h, \kappa_v),s)
 \,W^{(0)}(g_vh)\,\chi_v(\det(h))^{-1}\,dh;
$$
at the archimedean place we understand $W^{(0)}=W_{l_1}$. Hence, by Theorems~\ref{theorem-local-pullback-nonarch} and \ref{theorem-arch-local-pullback},
\begin{equation}
 \Upsilon_\Psi(g,s) = \chi(a)|N (a)\mu_1(b)^{-1}|^{3(s+1/2)}
  T(s)\Psi(b) J_\infty(\eta_0 k_\infty, s) \prod_{\substack{p < \infty\\ \tau_p \text{ ramified }}} J_p(\kappa_p),
\end{equation}
where for a finite place $p$ with $ \tau_p$ of conductor $p^n$, $n>0$,
\begin{equation}J_p(\kappa_p) = \begin{cases} 1 & \text{ if } \kappa_p = \eta_0 , \\  0& \text{ otherwise.} \end{cases}
\end{equation}
This proves~\eqref{neededeqn} and hence completes
the proof of the theorem.
\end{proof}

\begin{rem}\rm Pullback formulas in the spirit of Theorem~\ref{theorem-global-pullback} as a method to express complicated Eisenstein series on lower rank groups in terms of simpler Eisenstein series on higher rank groups have a long history. Garrett \cite{Ga1983} used pullback formulas for Eisenstein series on symplectic groups to study the triple product $L$-function, as well as to establish the algebraicity of certain
ratios of inner products of Siegel modular forms. Pullback formulas for Eisenstein series on unitary groups were first proved in a classical setting by Shimura~\cite{shibook1}. Unfortunately, Shimura only considers certain special types of Eisenstein series in his work, which do not include ours except in the very specific case when $\tau$ is unramified principal series at all finite places and holomorphic discrete series at infinity.
\end{rem}
\subsection{The second global integral representation}

In Theorem~\ref{globalintegralrepresentationtheoremversion1} we supplied a global integral representation for $L(s, \tilde\pi \times \tilde\tau)$. Using Theorem~\ref{theorem-global-pullback}, we can modify it into a second integral representation that is more suitable for certain purposes. Let
\begin{equation}\label{rsdef}
 R(s) = \frac{L(3s+1,\tau \times \AI(\Lambda) \times \chi|_{F^\times})}{L(6s+2,\chi_{L/F}\chi|_{F^\times})L(6s+3,  \chi|_{F^\times})\cdot T(s)\cdot Y_{l,l_1,p,q}(s)\cdot\prod_{v<\infty}Y_v(s)},
\end{equation}
where $T(s)$ is defined by~\eqref{definitionms}, the factors $Y_v(s)$ for non-archimedean $v$ are given by Theorem \ref{nonarchlocalzetatheorem}, and the archimedean factor  $Y_{l,l_1,p,q}(s)$ is given by Theorem~\ref{archlocalzetatheorem}. Note that $R(s)$ has an obvious Euler product $R(s) = \prod_v R_v(s)$, and that $R_v(s) = 1$ for all finite places $v$ where $\tau_v$ is unramified.

\vspace{3ex}
Recall the Eisenstein series $E_{\Upsilon}(g,s)$ defined in (\ref{E-upsilon-def}). We define the normalized Eisenstein series
\begin{equation}\label{normalizedeisensteineq}
 E_{\Upsilon}^\ast(g,s) = L(6s+1,  \chi|_{\A^\times})L(6s+2,\chi_{L/F}\chi|_{\A^\times})L(6s+3,  \chi|_{\A^\times}) E_{\Upsilon}(g,s).
\end{equation}
Let $Z_H$ and $Z_{G_1}$ denote respectively the centers of $H= \GSp_4$ and $G_1 = \GU(1,1)$. Given any $g \in G_1$ we define $H[g]$ to be the subgroup of $H$ consisting of elements $h\in H$ with $\mu_2(h)= \mu_1(g)$. From Theorem~\ref{globalintegralrepresentationtheoremversion1} and Theorem~\ref{theorem-global-pullback} we get the following result.
\begin{theorem}\label{theoremsecondintegralrep}
 Let $\phi=\otimes\phi_v$ be a vector in the space of $\pi$ such that $\phi_v$ is unramified for all finite $v$ and such that $\phi_\infty$ is a vector of weight $(-l,-l)$ in $\pi_\infty$. Let $\Psi$ be as in Theorem~\ref{theorem-global-pullback}.
 The following meromorphic functions are all equal,
 \begin{enumerate}
  \item $R(s)^{-1} B_\phi(1) L(3s+\frac 12, \tilde\pi \times \tilde\tau)$
  \item $\displaystyle\int\limits_{Z_H(\A) H(\Q) \backslash H(\A)}\phi(h)\chi(\mu_2 (h))\quad \int\limits_{\U(1,1)(\Q) \backslash \U[h](\A)}E_{\Upsilon}^\ast(\i(g,h),s)\Psi(g)\chi(\det (g))^{-1}\,dg\;dh$,
  \item $\displaystyle\int\limits_{Z_{G_1}(\A) G_1(\Q) \backslash G_1(\A)}\Psi(g)\chi\big(\frac{\mu_1(g)}{\det (g)}\big)\quad \int\limits_{\mathrm{Sp}_4(\Q) \backslash H[g](\A)}E_{\Upsilon}^\ast(\i(g,h),s))\phi(h)\,dh\;dg$.
 \end{enumerate}
\end{theorem}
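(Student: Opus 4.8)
\textbf{Proof proposal for Theorem \ref{theoremsecondintegralrep}.}

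The plan is to start from the first global integral representation (Theorem \ref{globalintegralrepresentationtheoremversion1}) and systematically unfold the definitions, using the global pullback formula (Theorem \ref{theorem-global-pullback}) to replace the Eisenstein series $E(g,s;f)$ by the pullback of $E_\Upsilon$. First I would recall that by Theorem \ref{globalintegralrepresentationtheoremversion1}, with the section $f = f_{|l_1-l|,l,-l_1,l_1}$ and $\phi$ as in the statement, the global zeta integral $Z(s,f,\phi) = \int_{Z_H(\A)H(\Q)\backslash H(\A)}E(h,s;f)\phi(h)\,dh$ equals
$$
 \frac{L(3s+\frac12,\tilde\pi\times\tilde\tau)}{L(6s+1,\chi|_{\A^\times})L(3s+1,\tau\times\AI(\Lambda)\times\chi|_{\A^\times})}\cdot B_\phi(1)\cdot Y_{l,l_1,p,q}(s)\cdot\prod_{v<\infty}Y_v(s).
$$
On the other hand, the global pullback formula gives, for each $g\in G_2(\A)$,
$$
 \chi(\mu_2(g))\int_{\U(1,1)(\Q)\backslash \U[g](\A)}E_\Upsilon(\i(h,g),s)\Psi(h)\chi(\det(h))^{-1}\,dh = T(s)E(g,s;f),
$$
so that $E(h,s;f) = T(s)^{-1}\chi(\mu_2(h))\int_{\U(1,1)(\Q)\backslash\U[h](\A)}E_\Upsilon(\i(g,h),s)\Psi(g)\chi(\det(g))^{-1}\,dg$. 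Substituting this into $Z(s,f,\phi)$ and then multiplying through by the normalizing $L$-factors in the definition of $E_\Upsilon^\ast$ (see \eqref{normalizedeisensteineq}) converts the inner Eisenstein series into $E_\Upsilon^\ast$; tracking all the $L$-factors and $Y$-factors and the factor $T(s)$ against the definition \eqref{rsdef} of $R(s)$ — in particular using the identity $L(6s+1,\chi|_{\A^\times})L(6s+2,\chi_{L/F}\chi|_{\A^\times})L(6s+3,\chi|_{\A^\times})$ appearing both in $E_\Upsilon^\ast$ and implicitly in the $n=0$ local pullback factors — yields exactly the equality of (i) and (ii). I would justify interchanging the order of integration (which produces the iterated integral in (ii)) by the absolute convergence noted in the proof of Theorem \ref{theorem-global-pullback}, valid for $\Re(s)$ large, and then extend the identity to all $s$ by meromorphic continuation, using that $E_\Upsilon$ is slowly increasing away from its poles while $\Psi$ and $\phi$ are rapidly decreasing.

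For the equality of (ii) and (iii), the idea is that both are integrals of the common integrand $E_\Upsilon^\ast(\i(g,h),s)\Psi(g)\phi(h)$ against suitable characters, but over the two ``sides'' of the product $G_1\times H$ sitting inside $G_3$ via $\iota$; they are related by an unfolding/refolding argument (a Fubini-type manipulation) on the domain $\{(g,h): \mu_1(g)=\mu_2(h)\}$ modulo the relevant rational and central subgroups. Concretely, I would fix the common similitude parameter and write the double integral in (ii) as an integral over $Z_H(\A)H(\Q)\backslash H(\A)$ of $h$ with an inner integral over $\U(1,1)(\Q)\backslash\U[h](\A)$; since the integrand only depends on $g,h$ through $\iota(g,h)$ and the diagonal similitude condition, one can reorganize this as an integral over $Z_{G_1}(\A)G_1(\Q)\backslash G_1(\A)$ of $\Psi(g)$ times an inner integral over $\Sp_4(\Q)\backslash H[g](\A)$ of $E_\Upsilon^\ast(\i(g,h),s)\phi(h)$, with the character bookkeeping accounting for $\chi(\mu_2(h))$ versus $\chi(\mu_1(g)/\det(g))$ (here one uses $\chi|_{F^\times}$ relating $\mu$ and the determinant, together with the central character conditions on $\pi$ and $\tau$). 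This is essentially the observation that the pullback integral is symmetric in the two factors of $G_1\times H$ up to the explicit automorphy/character factors.

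The main obstacle I expect is the careful bookkeeping in the second step: matching the exact constellation of $L$-factors, the $\varepsilon$-free normalizing factors in $E_\Upsilon^\ast$, the factor $T(s)$ (which is itself a ratio of global $L$-functions by the remark following \eqref{definitionms}), the archimedean factor $Y_{l,l_1,p,q}(s)$, and the non-archimedean factors $\prod_{v<\infty}Y_v(s)$, so that everything collapses precisely to $R(s)^{-1}B_\phi(1)L(3s+\frac12,\tilde\pi\times\tilde\tau)$ with $R(s)$ as defined in \eqref{rsdef}. This requires invoking the $n=0$ versus $n>0$ cases of the local pullback formula (Theorem \ref{theorem-local-pullback-nonarch}) at each finite place and checking that the product of the local identities $T_v(s)Z_v(s,W^\#,B) = \text{(local }L\text{-ratio)}$ globalizes correctly. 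The interchange-of-integration and meromorphic-continuation justifications, and the Fubini step relating (ii) and (iii), are comparatively routine given the growth estimates already available from the proof of Theorem \ref{theorem-global-pullback}.
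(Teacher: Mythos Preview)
Your proposal is correct and follows exactly the approach the paper intends: the paper's own proof consists of the single sentence ``From Theorem~\ref{globalintegralrepresentationtheoremversion1} and Theorem~\ref{theorem-global-pullback} we get the following result,'' and your argument is precisely the fleshing-out of that sentence. The factor bookkeeping you flag as the main obstacle is in fact routine: substituting the pullback formula into $Z(s,f,\phi)$, multiplying by the three normalizing $L$-factors in \eqref{normalizedeisensteineq}, and comparing with the definition \eqref{rsdef} of $R(s)$ gives (i)$=$(ii) immediately at the global level, with no need to revisit the local identities from Theorem~\ref{theorem-local-pullback-nonarch}. The equality (ii)$=$(iii) is indeed just Fubini on the fibered product $\{(g,h):\mu_1(g)=\mu_2(h)\}$, using $\chi(\mu_2(h))=\chi(\mu_1(g))$ on that set.
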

For future reference, we record the following result about the poles of $E_{\Upsilon}^\ast(g,s)$.

\begin{proposition}\label{theorempoleseis}
 Assume that the number $q$ defined in (\ref{t1t2pqeq}) is zero.
 Then $E_{\Upsilon}^\ast(g,s)$ has no poles in the region $ 0 \le \Re(s) \le \frac{1}{4}$ except possibly a simple pole at the point $s = \frac{1}{6}$; this pole can exist only if $\omega_\tau = 1$.
\end{proposition}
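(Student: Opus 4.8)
The plan is to deduce the proposition from Tan's \cite{Tan} determination of the poles of Siegel-type Eisenstein series on $\U(n,n)$, applied with $n=3$, followed by an explicit check of which of the possible poles survive in the strip $0\le\Re(s)\le\tfrac14$ under the present hypotheses. Restricting $g$ from $\GU(3,3)(\A)$ to $\U(3,3)(\A)$ does not move any poles, so we may regard $E_\Upsilon(\,\cdot\,,s)$ as the Siegel Eisenstein series on $\U(3,3)(\A)$ attached to the maximal parabolic $P_{12}$ (whose Levi is $\GL_3(L)\times\GL_1(F)$), induced from the character $\widetilde\chi$; since $\widetilde\chi(m(A,v)n)=\chi(v^{-1}\det A)$, the inducing datum is essentially the Hecke character $\chi$ of $L^\times\backslash\A_L^\times$.

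First I would recall the structural input. For a degenerate Eisenstein series attached to a \emph{maximal} parabolic, the poles in $\Re(s)\ge0$ are contained in the poles of the constant term along $P_{12}$, namely $\Upsilon(\,\cdot\,,s)+M(s)\Upsilon(\,\cdot\,,s)$ with $M(s)$ the standard intertwining operator; and $M(s)$ factors as a scalar (a ratio of Hecke $L$-functions, computed by the Gindikin--Karpelevich method and independent of the section) times a normalized operator $M^{\circ}(s)$. For $\U(3,3)$ this computation, carried out in \cite{Tan}, produces precisely the three Hecke $L$-functions $L(6s+1,\chi|_{\A^\times})$, $L(6s+2,\chi_{L/F}\chi|_{\A^\times})$, $L(6s+3,\chi|_{\A^\times})$ as the relevant normalizing factors — which is exactly the product by which $E_\Upsilon$ is multiplied in \eqref{normalizedeisensteineq}. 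Tan's theorem packages the consequence: the normalized Siegel Eisenstein series $E_\Upsilon^\ast$ on $\U(3,3)$ is holomorphic in $\Re(s)\ge0$ except for at most simple poles at the finitely many points of an explicit arithmetic progression, uniformly in the choice of smooth section. Applying this to our $\Upsilon=\otimes_v\Upsilon_v$ — which is a legitimate, if non-standard, section, built at the ramified places from the support conditions of Sections~\ref{pullbacknonarchsectionssec} and \ref{pullbackarchsectionssec} — confines the poles of $E_\Upsilon^\ast$ in $\Re(s)\ge0$ to that finite set.

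Next I would match Tan's parameter to ours and run the character bookkeeping. Since $\pi$ has trivial central character, $\Lambda|_{\A^\times}=1$, hence $\chi|_{\A^\times}=\omega_\tau^{-1}$, which is unitary because $\tau$ is cuspidal; the hypothesis $q=0$ moreover makes the inducing character unitary with no $|\cdot|^{iu}$-twist, so that Tan's list of exceptional points is centred at $\Re(s)=0$ exactly. Going through that list, the only point lying in $0\le\Re(s)\le\tfrac14$ is $s=\tfrac16$, which in the $L$-function bookkeeping is the point where the factor $L(6s,\chi|_{\A^\times})$ reaches its edge $6s=1$; this $L$-function is entire unless $\chi|_{\A^\times}=1$, i.e.\ unless $\omega_\tau=1$, in which case it equals $\zeta(6s)$ and has a simple pole at $s=\tfrac16$. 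Thus $E_\Upsilon^\ast$ has, in the strip, at most a simple pole at $s=\tfrac16$, and such a pole can occur only when $\omega_\tau=1$.

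The step I expect to be the main obstacle is the careful exclusion of poles at the \emph{other} boundary points of the closed strip, especially near $s=0$: there the normalizing factor $L(6s+1,\chi|_{\A^\times})$ itself develops a pole once $\omega_\tau=1$, and this pole must be cancelled by a compensating zero of the un-normalized series $E_\Upsilon$. This cancellation is the standard phenomenon that, at the reducibility points of the degenerate principal series, the constant term of the Siegel Eisenstein series degenerates (the intertwining contribution and the section match up), forcing $E_\Upsilon$ to vanish there; making this precise — together with checking that the explicit archimedean and ramified sections $\Upsilon_v$ of Sections~\ref{pullbackarchsectionssec} and \ref{pullbacknonarchsectionssec} do not disturb it, which is where the hypothesis $q=0$ is genuinely used — is exactly the delicate part that Tan's machinery supplies. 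Granting that, the proposition follows from the character computation above together with the simplicity of the poles in Tan's theorem.
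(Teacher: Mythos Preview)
Your overall strategy --- restrict from $\GU(3,3)$ to $\U(3,3)$, invoke Tan's theorem on Siegel Eisenstein series for $\U(n,n)$, and then check which of the possible poles land in $0\le\Re(s)\le\tfrac14$ under the given character hypotheses --- is exactly what the paper does, and your identification of $s=0$ as the point requiring extra care is correct: the paper itself remarks that Tan's statement is ambiguous enough to seemingly permit a simple pole there.

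Where your proposal diverges is in the mechanism for ruling out that pole. You claim the un-normalized series $E_\Upsilon$ \emph{vanishes} at $s=0$, cancelling the pole of $L(6s+1,\chi|_{\A^\times})$, and describe this as a ``standard phenomenon'' of constant-term degeneration at reducibility points. But whether $E_\Upsilon(g,0)=0$ depends on the sign with which the normalized intertwining operator $M^\circ(0)$ acts on the irreducible constituent of $I(\chi,0)$ containing your section; that sign is a $\pm1$ determined place-by-place by the specific local sections $\Upsilon_v$, and your deferral to ``Tan's machinery'' does not compute it. The paper takes a different tack: it uses the complete reducibility of the local degenerate principal series $I_v(\chi,0)$ at each non-archimedean \emph{inert} place $v$ (where it is the direct sum of two irreducibles), chooses one summand, and reruns Tan's proof within it. This sidesteps any global vanishing claim and works uniformly in the section. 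Both routes are rooted in the same reducibility at $s=0$, but the paper's is more direct and does not hinge on a vanishing that, as you have stated it, is not automatic.
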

\begin{proof}
First, note that the Eisenstein series $E_{\Upsilon}^\ast(g,s)$ on $\GU(3,3)$ has a pole at $s_0$ if and only if its restriction to $\U(3,3)$, which is an Eisenstein series on $\U(3,3)$, has a pole at $s=s_0$. Now the proof of the main Theorem of~\cite{Tan} shows exactly what we want. However, the statement there is a little ambiguous and seems to also allow for a possible simple pole at $s=0$, in addition to the one at $s=\frac16$. So we sketch the proof of holomorphy at $s=0$ here for completeness. Let $I_v(\chi, 0)$ be as defined in~\cite{Tan}; this space is completely reducible at each non-archimedean inert place $v$ (it is the direct sum of two irreducible representations). Now, we may choose any one of these irreducible components and work through the proof exactly as in~\cite{Tan}.
\end{proof}

\section{Holomorphy of global $L$-functions for $\GSp_4\times\GL_2$}
In this section we will prove that the global $L$-function $L(s,\pi\times\tau)$ appearing in Theorem~\ref{functionalequationtheorem} is entire. Our main tools are the global integral representation Theorem \ref{theoremsecondintegralrep} and Ichino's regularized Siegel-Weil formula for unitary groups, Theorem 4.1 of \cite{ich}.
\subsection{Preliminary considerations}
Our goal is to prove the following theorem.
\begin{theorem}[Holomorphy for $\GSp_4\times\GL_2$ ]\label{entirenesstheorem}
 Let $\pi=\otimes\pi_v$ be a cuspidal, automorphic representation of $\GSp_4(\A)$ with the properties enumerated in Theorem~\ref{functionalequationtheorem} and such that $\pi$ is not a Saito-Kurokawa lift. Let $\tau=\otimes\tau_v$ be a cuspidal, automorphic representation of $\GL_2(\A)$ such that $\tau_p$ is unramified for the primes $p$ dividing $D$. Then $L(s,\pi\times\tau)$ is an entire function.
\end{theorem}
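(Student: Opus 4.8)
The plan is to combine the second integral representation (Theorem~\ref{theoremsecondintegralrep}), the analytic properties of the degenerate $\mathrm{U}(3,3)$ Eisenstein series, and Ichino's regularized Siegel--Weil formula in order to pin down the only possible pole of $L(s,\pi\times\tau)$ in $\Re(s)\ge\tfrac12$ and then to rule it out by a seesaw/theta argument. By Proposition~\ref{atmostonepoleprop} --- which follows from Theorem~\ref{theoremsecondintegralrep} together with Proposition~\ref{theorempoleseis} (the latter applied, after a harmless unitary twist of $\tau$, with the parameter $q$ of~(\ref{t1t2pqeq}) equal to zero) --- the function $L(s,\pi\times\tau)$ is holomorphic in $\Re(s)\ge\tfrac12$ except possibly for a simple pole at $s=1$, and by Proposition~\ref{theorempoleseis} such a pole can occur only when $\omega_\tau=1$. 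Once the pole at $s=1$ is excluded, the functional equation of Theorem~\ref{functionalequationtheorem} will propagate holomorphy to $\Re(s)\le\tfrac12$.

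So suppose, for contradiction, that $L(s,\pi\times\tau)$ has a simple pole at $s=1$; then $\omega_\tau=1$, so $\pi$ and $\tau$ are self-contragredient. Under the shift appearing in Theorem~\ref{theoremsecondintegralrep} the point $s=1$ corresponds to the parameter $\tfrac16$, at which, by Proposition~\ref{theorempoleseis}, $E^\ast_\Upsilon$ can have at worst a simple pole. Taking residues at $\tfrac16$ in the identity between items (i) and (ii) of Theorem~\ref{theoremsecondintegralrep} --- legitimate because the double integral converges uniformly on compacta away from the poles of $E^\ast_\Upsilon$, while the auxiliary factor $R(s)^{-1}B_\phi(1)$ is finite and non-zero at that point --- forces the integral of $\mathrm{Res}_{s=1/6}E^\ast_\Upsilon(\iota(g,h),s)$ against $\Psi(g)$ and against the adelization $\phi(h)$ of $F$, over the relevant domains, to be non-zero.

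Next I would apply Ichino's regularized Siegel--Weil formula for $\mathrm{U}(3,3)$ (Theorem~4.1 of~\cite{ich}): the restriction to $\mathrm{U}(3,3)(\A)$ of $\mathrm{Res}_{s=1/6}E^\ast_\Upsilon$ equals, up to a non-zero constant, a regularized theta integral attached to the relevant split Hermitian space (of dimension such that its restriction of scalars is the split quadratic space of signature $(2,2)$). Substituting, this yields exactly Proposition~\ref{propintegralnonzero}, namely that the integral of $\phi$ against a regularized theta integral is non-zero. Now invoke the seesaw attached to the dual pairs $(\mathrm{Sp}_4,\mathrm{O}(2,2))$ and $(\mathrm{U}(2,2),\mathrm{U}(1,1))$, with $\mathrm{Sp}_4\subset\mathrm{U}(2,2)$ and $\mathrm{U}(1,1)\subset\mathrm{O}(2,2)$ via the embeddings implicit in $\iota$. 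The seesaw identity converts the non-vanishing of the above period into the non-vanishing of the global theta lift of $\pi_1$, the cuspidal automorphic representation of $\mathrm{Sp}_4(\A)$ generated by $F$, to the split orthogonal group $\mathrm{O}(2,2)(\A)$. But a non-zero global theta lift forces non-zero local theta lifts at every place; in particular $\pi_{1,\infty}$ --- a holomorphic (limit of) discrete series of $\mathrm{Sp}_4(\R)$ with scalar minimal $K$-type $(l,l)$, $l\ge 2$ --- would occur in the archimedean theta correspondence with $\mathrm{O}(2,2)(\R)$. By the explicit determination of this correspondence in~\cite{tomasz}, together with the hypothesis that $\pi$ is \emph{not} a Saito--Kurokawa lift (which is precisely what excludes the borderline occurrences not already ruled out on archimedean grounds), this is impossible. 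The contradiction shows that $L(s,\pi\times\tau)$ is holomorphic at $s=1$, hence on all of $\Re(s)\ge\tfrac12$.

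Running the same argument with $\tau$ replaced by $\tilde\tau$ gives holomorphy of $L(s,\pi\times\tilde\tau)$ in $\Re(s)\ge\tfrac12$ as well, so the functional equation $L(s,\pi\times\tau)=\varepsilon(s,\pi\times\tau)L(1-s,\tilde\pi\times\tilde\tau)$ of Theorem~\ref{functionalequationtheorem} --- in which $\varepsilon(s,\pi\times\tau)$ is nowhere vanishing and $\tilde\pi\cong\pi$ --- yields holomorphy in $\Re(s)\le\tfrac12$. Therefore $L(s,\pi\times\tau)$ is entire. The step I expect to be the main obstacle is the one in the previous paragraph: setting up the Siegel--Weil/seesaw bookkeeping so that the hypothetical pole is correctly matched with a \emph{non-zero} theta lift of $\pi_1$ to the \emph{split} group $\mathrm{O}(2,2)$ (rather than to some other member of the Witt tower --- this is exactly where the non-Saito--Kurokawa hypothesis is indispensable), and then extracting from~\cite{tomasz} the precise archimedean non-occurrence statement that delivers the contradiction; by comparison, the normalization chase (the shift $s\mapsto 3s+\tfrac12$, the exact point in the Siegel--Weil range, the Hermitian space that appears, the various $L$- and $\Gamma$-factors) is routine.
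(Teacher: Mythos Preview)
Your overall strategy matches the paper's: reduce to $\Re(s)\ge\tfrac12$ via the functional equation, use Proposition~\ref{atmostonepoleprop} to isolate the only possible pole at $s=1$ (with $\omega_\tau=1$), then kill that pole by combining Ichino's regularized Siegel--Weil formula with a seesaw and archimedean theta input from \cite{tomasz}. However, there is a genuine gap in how you identify the Hermitian (and hence quadratic) space that appears.

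You assert that the residue of $E_\Upsilon^\ast$ is, via Siegel--Weil, a theta integral attached to ``the relevant split Hermitian space'', so that the seesaw produces a non-zero theta lift of $\pi_1$ to the \emph{split} $\O(2,2)$. This is not justified. The section $\Upsilon$ is not a priori the Siegel--Weil image of a Schwartz function on a single global space; one must first show (as the paper does in Proposition~\ref{a1factor}, using the local results of \cite{kudswe}, \cite{leezhu} and an incoherence argument) that the residue map $A_{-1}$ on $I(\chi,s_0)$ factors through $\bigoplus_{V_0}\Pi(V_0)$ over \emph{all} global Hermitian spaces $V_0$ of dimension $2$ over $L$. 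The conclusion (Proposition~\ref{propintegralnonzero}) is then only that \emph{some} such $V_0$ yields a non-vanishing period, hence (after the seesaw) a non-zero theta lift of $\pi_1$ to $\O(V_0^\ast)$ for that particular $V_0$. The quadratic space $V_0^\ast$ need not be globally split.

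Consequently your contradiction is incomplete. The paper's argument is two-pronged: the archimedean input from \cite{tomasz} says that an anti-holomorphic discrete series of $\Sp_4(\R)$ does \emph{not} lift to $\O(2,2)(\R)$, so $V_0^\ast$ cannot be split at infinity; hence $V_0^\ast$ is anisotropic at infinity and therefore ramified at some finite place $v$, which forces $\pi_{1,v}$ to be ramified --- contradicting the full-level hypothesis on $\pi$. Your sketch collapses these two steps into one and misattributes the contradiction to the non-Saito--Kurokawa hypothesis; in the paper's proof it is the \emph{everywhere-unramified} condition on $\pi$ at finite places, not non-SK, that closes the argument.
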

The proof will be completed in Section \ref{siegelweilsec} below. To begin with, note that $\tau$ may be twisted by an unramified Hecke character of the form $|\cdot|^t$ to make sure that $\omega_\tau$ is of finite order. Such a twist will merely shift the argument of the $L$-function, because of the equation $L(s, \pi \times \tau \times | \cdot|^t) = L(s+t, \pi \times \tau)$. It is therefore sufficient to prove Theorem~\ref{entirenesstheorem} under the following assumption, which we will make throughout this section.
\begin{equation}
 \text{\emph{The central character $\omega_\tau$ of $\tau$ is of finite order.}}
\end{equation}
In particular, this means that the number $q$ defined in (\ref{t1t2pqeq}) is zero. Since $\varepsilon$-factors never have any zeros or poles, it follows from the functional equation Theorem \ref{functionalequationtheorem} that in order to prove  Theorem~\ref{entirenesstheorem}, \emph{it is enough to prove that $L(s,\pi\times\tau)$ has no poles in the region $\Re(s) \ge \frac{1}{2}$}.

\vspace{2ex}
{\bf Remark:} Recall that the hypothesis that $\tau_p$ is unramified for the primes $p$ dividing $D$ was necessary for Theorem \ref{functionalequationtheorem}. This is the only reason for this hypothesis in Theorem \ref{entirenesstheorem}; the following arguments work for general $\tau$. The restriction on $\tau$ will be removed in Theorem \ref{Lrhonsigmaranalyticpropertiestheorem}.

\vspace{2ex}Let $L_f(s,\pi\times\tau)$ be the finite part of $L(s,\pi\times\tau)$, i.e.,
$$
 L_f(s,\pi\times\tau) = \prod_{p < \infty} L_p(s,\pi_p\times\tau_p).
$$

\begin{lemma}\label{kimsarconv} The Dirichlet series defining $L_f(s,\pi\times\tau)$ converges absolutely for $\Re(s) > \frac{5}{4}.$
\end{lemma}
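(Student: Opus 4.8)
\textbf{Proof proposal for Lemma \ref{kimsarconv}.}

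The plan is to bound the local factors $L_p(s,\pi_p\times\tau_p)$ on the right edge of the claimed half-plane by comparing them to a shifted Rankin--Selberg $L$-function whose convergence is already known. First I would recall that for almost all $p$ (those where $\pi_p$ and $\tau_p$ are both unramified), $L_p(s,\pi_p\times\tau_p) = \prod_{i=1}^4\prod_{j=1}^2 (1 - \gamma_p^{(i)}\mu_p^{(j)} p^{-s})^{-1}$, where $\gamma_p^{(i)}$ are the Satake parameters of $\pi_p$ and $\mu_p^{(j)}$ those of $\tau_p$. The key inputs are the known bounds towards Ramanujan: since $\pi$ comes from a holomorphic Siegel cusp form of weight $l \geq 2$ that is not Saito--Kurokawa, Weissauer's theorem (\cite{weissram}, cited in the introduction) gives $|\gamma_p^{(i)}| = 1$ for all $p$ and all $i$; and for $\tau$ a cuspidal representation of $\GL_2(\A)$ with (after the normalization made at the start of this section) unitary central character, the Kim--Shahidi / Kim--Sarnak bounds give $|\mu_p^{(j)}| \leq p^{7/64} < p^{1/4}$ at every finite place. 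Hence each factor $1 - \gamma_p^{(i)}\mu_p^{(j)}p^{-s}$ has its defining parameter of absolute value at most $p^{7/64}$, so the Euler product converges absolutely for $\Re(s) > 1 + 7/64$, and in particular for $\Re(s) > 5/4$.

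Next I would deal with the finitely many ramified places, where the local factor $L_p(s,\pi_p\times\tau_p)$ is defined via the local Langlands correspondence and is of the form $Q_p(p^{-s})^{-1}$ with $Q_p(0)=1$ and all roots of $Q_p$ of absolute value $\geq p^{-\beta}$ for some $\beta < 1/4$ — this again follows from the temperedness of $\pi_p$ (Weissauer, or rather the fact that $\pi_p$ is unramified hence a tempered unramified principal series here since the form has full level) together with the Kim--Sarnak bound applied to $\tau_p$. Actually, since $\pi_p$ is \emph{unramified} at every finite prime (full level), the only ramification is in $\tau_p$, and the contribution of such a factor is a finite Euler factor that is holomorphic and non-zero for $\Re(s) > 1/4$; it therefore does not affect the region of absolute convergence of the full Dirichlet series beyond $\Re(s) > 5/4$. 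Combining the tail estimate over unramified primes with the trivial boundedness of the finitely many remaining factors gives absolute convergence of $L_f(s,\pi\times\tau) = \prod_{p<\infty} L_p(s,\pi_p\times\tau_p)$ for $\Re(s) > 5/4$.

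The main obstacle, such as it is, is simply invoking the correct Ramanujan-type bounds with the correct normalization: one must be careful that the spin $L$-function normalization used here places the Satake parameters $\gamma_p^{(i)}$ on the unit circle (which requires $\pi$ non-Saito--Kurokawa and $l \geq 2$, via Weissauer), and that the normalization of $\tau$ makes $\omega_\tau$ unitary so that the Kim--Sarnak bound $|\mu_p^{(j)}| \leq p^{7/64}$ applies. Once these are in place the convergence statement is immediate from comparison with $\zeta(s - 7/64)^{8}$ or a similar majorant. I would write this up in a few lines, citing \cite{weissram} for the Siegel side and the standard Kim--Shahidi bound for the $\GL_2$ side.
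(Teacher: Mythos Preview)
Your proposal is correct and follows essentially the same approach as the paper: invoke Weissauer's temperedness result for $\pi$ and the Kim--Sarnak bound $p^{7/64}$ for $\tau$ to get absolute convergence for $\Re(s)>1+7/64=71/64$, hence a fortiori for $\Re(s)>5/4$. The paper's proof is just a two-line citation of these same two ingredients; your version is more detailed but mathematically identical.
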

\begin{proof} In fact, the Dirichlet series converges absolutely for $\Re(s) > \frac{71}{64}.$ This follows directly from the global temperedness of $\pi$ due to Weissauer~\cite{weissram} and the best known bound towards the Ramanujan  conjecture for cusp forms on $\GL_2$ due to Kim-Sarnak~\cite{kimsar}.
\end{proof}

As a consequence, we get the following.
\begin{lemma}\label{poleregion}
 The completed $L$-function $L(s,\pi\times\tau)$ has no poles in the region  $\Re(s) > \frac{5}{4}$.
\end{lemma}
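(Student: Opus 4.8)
The statement is that the completed $L$-function $L(s,\pi\times\tau)$ has no poles in the region $\Re(s)>\tfrac54$. The plan is to deduce this directly from Lemma \ref{kimsarconv} together with standard facts about the archimedean Euler factor. First I would recall that $L(s,\pi\times\tau)=L_\infty(s,\pi_\infty\times\tau_\infty)L_f(s,\pi\times\tau)$, where $L_\infty$ is the product of the two gamma-factor expressions displayed in the tables at the end of Section \ref{localzetasec} (each a product of $\Gamma_\C$-factors, hence a product of shifted ordinary Gamma functions). The key point is that $L_f(s,\pi\times\tau)$, being given by an Euler product / Dirichlet series that converges absolutely for $\Re(s)>\tfrac54$ by Lemma \ref{kimsarconv}, is holomorphic and in fact nowhere zero on that half-plane (absolute convergence of the Euler product forces non-vanishing). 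So the only possible poles of the completed $L$-function in $\Re(s)>\tfrac54$ would come from poles of $L_\infty(s,\pi_\infty\times\tau_\infty)$.

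Next I would argue that $L_\infty(s,\pi_\infty\times\tau_\infty)$ has no poles in $\Re(s)>\tfrac54$. Since $q=0$ in our normalization (as noted just before Lemma \ref{kimsarconv}, after twisting $\tau$ so that $\omega_\tau$ has finite order — but in fact for this lemma one only needs $\Re(q)$ to be bounded appropriately, and the statement as used is under that running assumption), the $\Gamma_\C$-factors appearing are of the form $\Gamma_\C(s+c)$ with the real parts of the shifts $c$ bounded below by $-\tfrac14$ or so: examining the tables, for $\tau_\infty=\mathcal D_{p,\mu}$ with $\mu\in i\R$ the shifts are $\tfrac{1\pm p}{2}$, $l-\tfrac32+\tfrac p2$, $|l-\tfrac32-\tfrac p2|$, $\tfrac p2$, all with real part $\ge -\tfrac14$ once one uses temperedness ($p$ real for discrete series, or bounded by the Kim–Sarnak estimate in the principal series case), and similarly in the $\beta_1\times\beta_2$ row. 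A factor $\Gamma_\C(s+c)=2(2\pi)^{-(s+c)}\Gamma(s+c)$ has poles only at $s+c\in\{0,-1,-2,\dots\}$, i.e. at $\Re(s)\le-\Re(c)\le \tfrac14<\tfrac54$. Hence $L_\infty$ is holomorphic on $\Re(s)>\tfrac54$.

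Combining the two paragraphs: on $\Re(s)>\tfrac54$, $L(s,\pi\times\tau)$ is a product of a holomorphic function ($L_\infty$) and a holomorphic function ($L_f$), hence holomorphic. The only mildly delicate point — and the one I would be most careful about — is justifying the bounds on the Gamma-shifts for the archimedean factor, i.e. quoting the temperedness of $\pi_\infty$ (it is a discrete series or limit of discrete series, so its Satake/Langlands parameters are unitary) and of $\tau_\infty$ up to the Kim–Sarnak bound, exactly as invoked in Lemma \ref{kimsarconv}; given those inputs, no poles of the Gamma factors can reach $\Re(s)>\tfrac54$, and the lemma follows. I would write this up in two or three sentences, citing Lemma \ref{kimsarconv} and the tables of Section \ref{localzetasec}.
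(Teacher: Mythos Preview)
Your proposal is correct and follows essentially the same approach as the paper: factor $L(s,\pi\times\tau)=L_\infty\cdot L_f$, invoke Lemma \ref{kimsarconv} for the finite part, and check that the archimedean gamma-shifts (with $q=\mu=0$) keep all poles of $L_\infty$ to the left of the region. Two minor imprecisions worth cleaning up: the specific shifts you list are slightly garbled (e.g.\ the discrete series row gives $\tfrac{p\pm1}{2}$, not $\tfrac{1\pm p}{2}$, and there is no bare $\tfrac p2$ in the $\pi\times\tilde\tau$ table), and for the archimedean bound on $p$ you only need unitarity of $\tau_\infty$ (giving $p\in i\R\cup(-1,1)$ in the principal series case), not Kim--Sarnak.
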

\begin{proof} In view of Lemma~\ref{kimsarconv}, we only have to show that $L_\infty(s,\pi\times\tau)$ has no poles in that region. In fact it turns out that $L_\infty(s,\pi\times\tau)$ has no poles in the region $\Re(s)>1$.  To see this, first note that $q, \mu$ are equal to zero by our assumption on $\omega_\tau$. Next, by the unitarizability of $\tau_\infty$, it follows that $p$ is a non-negative integer when $\tau_\infty$ is discrete series (or limit of discrete series) and $p \in i\R \cup (-1,1)$ if $\tau_\infty$ is principal series. Also, we have $l \ge 2$. Now the holomorphy of $L_\infty(s,\pi\times\tau)$ in the desired right-half plane follows from  the tables following Corollary~\ref{archlocalzetatheoremcor}.
\end{proof}

We will now use the second integral representation to reduce the possible set of poles to at most one point.
\begin{proposition}\label{atmostonepoleprop}
 $L(s,\pi\times\tau)$ has no poles in the region $\Re(s) \ge \frac{1}{2}$ except possibly a simple pole at the point $s=1$. This pole can exist only if $\omega_\tau =1$.
\end{proposition}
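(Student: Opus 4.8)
The plan is to combine the second global integral representation (Theorem \ref{theoremsecondintegralrep}) with the known analytic properties of the Siegel-type Eisenstein series $E_\Upsilon^\ast$ on $\GU(3,3)$. Recall that, by Lemma \ref{poleregion}, $L(s,\pi\times\tau)$ is already holomorphic for $\Re(s)>5/4$, so it suffices to rule out poles in the strip $1/2\le\Re(s)\le 5/4$. By Theorem \ref{theoremsecondintegralrep}, on this strip we have
\begin{equation*}
 R(s)^{-1}B_\phi(1)L(3s+\tfrac12,\tilde\pi\times\tilde\tau)
 =\int\limits_{Z_H(\A)H(\Q)\backslash H(\A)}\phi(h)\,\chi(\mu_2(h))
 \int\limits_{\U(1,1)(\Q)\backslash\U[h](\A)}E_\Upsilon^\ast(\iota(g,h),s)\Psi(g)\chi(\det g)^{-1}\,dg\,dh.
\end{equation*}
First I would choose the global Bessel datum for $\pi$ so that $B_\phi(1)\neq0$; this is possible by the non-vanishing theorem of \cite{squarefree} referenced in the introduction, combined with Lemma \ref{piFBesselFourierlemma}. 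Next I would analyze the factor $R(s)$ defined in \eqref{rsdef}: it is a ratio of standard $\GL_2\times\GL_2\times\GL_1$, $\GL_1$ and archimedean $L$-factors together with the explicit correction factors $Y_v(s)$, $T(s)$. One checks, using the temperedness of $\pi$ (Weissauer \cite{weissram}) and unitarity bounds for $\tau$ as in the proof of Lemma \ref{poleregion}, that $R(s)^{-1}$ is holomorphic and non-vanishing in the relevant region $\Re(s)\ge 1/2$; in particular, $R(s)^{-1}$ contributes no poles and cannot cancel one.

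The heart of the argument is then that the right-hand side inherits its poles (in the strip under consideration) from those of $E_\Upsilon^\ast(\,\cdot\,,s)$, because $\phi$ and $\Psi$ are rapidly decreasing cusp forms and the inner and outer integrals converge absolutely away from the poles of the Eisenstein series (this convergence statement is already contained in the proof of Theorem \ref{theorem-global-pullback} and Theorem \ref{theoremsecondintegralrep}). Since $\Re(3s+\frac12)$ ranges over $[2,\frac{17}{4}]$ as $\Re(s)$ ranges over $[\frac12,\frac54]$, whereas the substitution in Proposition \ref{theorempoleseis} concerns $E_\Upsilon^\ast$ in the $s$-variable on $0\le\Re(s)\le\frac14$, I would first relate the two normalizations: the point $s$ appearing in Theorem \ref{theoremsecondintegralrep} is the \emph{same} Eisenstein-series variable as in Proposition \ref{theorempoleseis}. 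Because $q=0$ by our standing assumption on $\omega_\tau$, Proposition \ref{theorempoleseis} applies and tells us that $E_\Upsilon^\ast(g,s)$ has no poles in $0\le\Re(s)\le\frac14$ except possibly a simple pole at $s=\frac16$, and that this pole can occur only if $\omega_\tau=1$. Transporting through the shift $s\mapsto 3s+\frac12$ that relates the Eisenstein variable to the argument of $L(\,\cdot\,,\tilde\pi\times\tilde\tau)$, the point $s=\frac16$ corresponds to the argument $3\cdot\frac16+\frac12=1$ of the $L$-function, i.e. after re-normalizing back to $L(s,\pi\times\tau)$ this is precisely the point $s=1$. Thus $L(s,\pi\times\tau)$ has at most a simple pole at $s=1$ in $\Re(s)\ge\frac12$, and only when $\omega_\tau=1$.

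The step I expect to be the main obstacle is the bookkeeping around the region of validity: Theorem \ref{theoremsecondintegralrep} is an identity of meromorphic functions, and one must be careful that the absolute convergence used to deduce "poles of LHS $\subseteq$ poles of $E_\Upsilon^\ast$" genuinely holds throughout the strip $1/2\le\Re(s)\le 5/4$ (for $\Re(s)$ large it is automatic, and for the rest one uses meromorphic continuation together with the explicit location of the poles of $E_\Upsilon^\ast$ from \cite{Tan}). A secondary subtlety is verifying that the auxiliary $L$-factors appearing in the numerator of $R(s)^{-1}$ — namely $L(6s+2,\chi_{L/F}\chi|_{\A^\times})$, $L(6s+3,\chi|_{\A^\times})$ and the denominators $T(s)$, $Y_v(s)$, $Y_{l,l_1,p,q}(s)$ — have no poles in $\Re(s)\ge\frac12$ and that the denominator $L(3s+1,\tau\times\AI(\Lambda)\times\chi|_{\A^\times})$ has no zeros there that would spuriously force a pole; the former follows since $6s+2,6s+3$ have real part $\ge 5,6$ in the region, and the latter follows from the non-vanishing of Rankin-Selberg $L$-functions in the region of absolute convergence. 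Assembling these pieces yields Proposition \ref{atmostonepoleprop}; holomorphy at the remaining point $s=1$ will then be handled separately via the regularized Siegel-Weil formula and the seesaw argument, as outlined in the introduction and carried out in Section \ref{siegelweilsec}.
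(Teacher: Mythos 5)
Your overall strategy matches the paper's: invoke Lemma \ref{poleregion} to reduce to a strip, apply the second integral representation (Theorem \ref{theoremsecondintegralrep}), control $R(s)$, and transport the pole location of $E_\Upsilon^\ast$ via Proposition \ref{theorempoleseis}. However, there are two real problems with the way you handle $R(s)$.

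First, you check the wrong direction. From
$R(s)^{-1}B_\phi(1)L(3s+\tfrac12,\tilde\pi\times\tilde\tau)=\text{(integral)}$,
a pole of $L$ at $s_0$ forces either $R(s)$ to have a pole at $s_0$ or the integral to have one. So the quantity that must be shown pole-free is $R(s)$ itself; equivalently, $R(s)^{-1}$ must have no \emph{zeros}. Zeros of $R(s)^{-1}$ come from zeros of its numerator $L(6s+2,\cdot)L(6s+3,\cdot)T(s)Y_{l,l_1,p,q}(s)\prod_v Y_v(s)$ or poles of its denominator $L(3s+1,\tau\times\AI(\Lambda)\times\chi)$. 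Your second paragraph instead verifies that the numerator has no \emph{poles} and the denominator has no \emph{zeros} — this proves $R(s)^{-1}$ is holomorphic, i.e.\ $R(s)$ is non-vanishing, which is not what is needed and contributes nothing to ruling out poles of $L$. The correct things to check are: no poles of $L(3s+1,\tau\times\AI(\Lambda)\times\chi)$ and no zeros of $T(s)$, $Y_v(s)$, $Y_{l,l_1,p,q}(s)$ in the strip.

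Second, and this is the bigger gap, you never fix the auxiliary weight $l_1$. The archimedean factors $Y_{l,l_1,p,q}(s)$ (Theorem \ref{archlocalzetatheorem}) and $T_\infty(s)$ (Theorem \ref{theorem-arch-local-pullback}) sit in the denominator of $R(s)$, and they contain reciprocal $\Gamma$-factors such as $\Gamma_\C\bigl(3s+l-\tfrac{l_1}2-\tfrac12\bigr)^{-1}$ and $\Gamma\bigl(3s+\tfrac32+2|l_1-l|-\tfrac{l_1}2\bigr)^{-1}$; for a badly chosen $l_1$ (say a large weight in a discrete-series $\tau_\infty$) these reciprocal Gammas acquire zeros inside $0\le\Re(s)\le\tfrac14$, which produces poles of $R(s)$ that the argument cannot rule out. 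The paper's proof handles this by \emph{choosing} $l_1=0$ or $1$ in the principal-series case and $l_1=p+1$ (the minimal weight) in the discrete-series case, and then verifying explicitly that with this choice $T_\infty$ and $Y_{l,l_1,p,q}$ are non-vanishing on $0\le\Re(s)\le\tfrac14$. Without that choice, the assertion ``$R(s)$ is pole-free in the strip'' is simply false. Finally, a notational slip: the region in which $R(s)$ must be controlled is $0\le\Re(s)\le\tfrac14$ in the Eisenstein-series variable (matching the substitution $s\mapsto 3s+\tfrac12$ and Proposition \ref{theorempoleseis}), not $\Re(s)\ge\tfrac12$; your computation $\Re(3s+\tfrac12)\in[2,\tfrac{17}{4}]$ for $\Re(s)\in[\tfrac12,\tfrac54]$ is in a range already covered by Lemma \ref{poleregion} and does not touch the strip under consideration.
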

\begin{proof}
In Theorem~\ref{theoremsecondintegralrep}, the functions $\Psi$, $\Upsilon$, $\chi$, $R(s)$ all depend on a choice of an integer $l_1$ such that $\tau$ has a vector of weight $l_1$. We now make such a choice. If $\tau_\infty$ is a principal series representation, then put $l_1 =0$ or $1$ (exactly one of these weights occurs in $\tau$). If $\tau_\infty$ is a discrete series (or a limit of discrete series) representation, then put $l_1 = p +1$; hence, $l_1$ is the lowest weight of $\tau_\infty$.

\vspace{2ex}
With this choice, we can check by an explicit calculation that the function $R(s)$ defined in~\eqref{rsdef} has no poles in the region $0\leq\Re(s)\leq\frac{1}{4}$. Indeed, the only possible pole in that region can come from $R_\infty(s)$, and so it boils down to checking that the function $T_\infty(s)$ defined in Theorem~\ref{theorem-arch-local-pullback} and the function $Y_{l,l_1,p,q}(s)$ defined in Theorem~\ref{archlocalzetatheorem} are non-zero when $0\leq\Re(s)\leq\frac{1}{4}$. It is easy to verify that this is true with our choice of $l_1$.

\vspace{2ex}
On the other hand, by Theorem~\ref{theorempoleseis}, the only possible pole of $E_{\Upsilon}^\ast(g,s)$ in the region $0\leq\Re(s)\leq\frac{1}{4}$ is at $s = \frac16$; this pole can occur only if $\omega_\tau = 1$. The result now follows from Theorem~\ref{theoremsecondintegralrep} and Lemma~\ref{poleregion}.
\end{proof}
\subsection{Eisenstein series and Weil representations}
In view of Proposition \ref{atmostonepoleprop}, we will now assume that $\omega_\tau=1$, and that the integer $l_1$ used in the definition of $\Upsilon_\infty$ is equal to $p+1$ in the discrete series case, and $0$ or $1$ otherwise. By abuse of notation, we continue to use $E_{\Upsilon}(g,s)$ to denote its restriction to $\U(3,3)(\A)$. Indeed, this restricted function is an Eisenstein series on $\U(3,3)(\A)$.
For brevity, we will use $\G$ to denote $\U(3,3)$. Let $K^{\G}$ denote the standard maximal compact subgroup of $\G(\A)$. Let $I(\chi, s)$ be the set of holomorphic vectors in the global induced representation defined analogously to $ I(\widetilde{\chi},s)$ as in~\eqref{Ichisglobaldefeq}, except that we are now dealing with functions on $\U(3,3)$ rather than $\GU(3,3)$. In other words $I(\chi, s)$ consists of the sections $f^{(s)}$ on $\G(\A)$ such that \begin{equation}\label{e:upsilondefformula2}
 f^{(s)}(m(A,1)ng,s) = |N (\det A)|^{3(s + \frac12)}\,
 \chi(\det(A))\,f^{(s)}(g,s)
\end{equation}
for all $g\in \G(\A)$, and so that $f^{(s)}$ is holomorphic (in the sense of~\cite[p.\ 251]{ich}). In particular, any such section can be written as a finite linear combination of standard sections with holomorphic coefficients. A key example of a standard section is simply the restriction of the previously defined $\Upsilon(g,s)$ to $\U(3,3)$.

\vspace{2ex}
Recall that $\phi=\otimes\phi_v$ is a vector in the space of $\pi$ such that $\phi_v$ is unramified for all finite $v$ and such that $\phi_\infty$ is a vector of weight $(-l,-l)$ in $\pi_\infty$. We have the following lemma.

\begin{lemma}\label{lemmaeisresidue}
 Suppose that the Eisenstein series $E_{\Upsilon}( g,s)$ on $\U(3,3)(\A)$ has the property that for all $g_1 \in \U(1,1)(\A)$, we have
 \begin{equation}\label{lemmaeisresidueeq1}
  \int\limits_{\mathrm{Sp}_4(\Q) \backslash \mathrm{Sp}_4(\A)}\Res_{s=\frac16}E_{\Upsilon}(\i(g_1,h_1),s))\,\phi(h_1)\,dh_1 = 0.
 \end{equation}
 Then  $L(s,\pi\times\tau)$ is holomorphic at $s=1$.
\end{lemma}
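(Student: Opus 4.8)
\textbf{Proof plan for Lemma \ref{lemmaeisresidue}.}
The strategy is to trace the hypothesis \eqref{lemmaeisresidueeq1} through the second global integral representation (Theorem \ref{theoremsecondintegralrep}) and show it forces the pole of the $L$-function at $s=1$ to disappear. Recall from Proposition \ref{atmostonepoleprop} that $L(s,\pi\times\tau)$ can have at most a simple pole at $s=1$ in the region $\Re(s)\geq\frac12$, and this happens only if $\omega_\tau=1$; we are working under that assumption, so the integer $l_1$ has been chosen as in the statement, $q=0$, and by Theorem \ref{theorempoleseis} the normalized Eisenstein series $E_\Upsilon^\ast(g,s)$ has at most a simple pole in $0\leq\Re(s)\leq\frac14$, located at $s=\frac16$. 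Under the substitution relating $s$ to the argument of the $L$-function implicit in Theorem \ref{theoremsecondintegralrep}, the point $s=\frac16$ corresponds exactly to the point $s=1$ for $L(s,\pi\times\tau)$ (since $3s+\frac12 = 1$ when $s=\frac16$). So the only way $L(s,\pi\times\tau)$ can fail to be holomorphic at $s=1$ is if $E_\Upsilon^\ast$ genuinely has a pole at $s=\frac16$ \emph{and} this pole survives after integrating against $\phi$ and $\Psi$ in the expression (ii) or (iii) of Theorem \ref{theoremsecondintegralrep}.

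First I would take the residue at $s=\frac16$ of the identity in Theorem \ref{theoremsecondintegralrep}, using expression (iii): namely, $\Res_{s=\frac16}\big(R(s)^{-1}B_\phi(1)L(3s+\tfrac12,\tilde\pi\times\tilde\tau)\big)$ equals the residue of
$$
 \int\limits_{Z_{G_1}(\A) G_1(\Q) \backslash G_1(\A)}\Psi(g)\,\chi\big(\tfrac{\mu_1(g)}{\det(g)}\big) \int\limits_{\mathrm{Sp}_4(\Q) \backslash H[g](\A)}E_{\Upsilon}^\ast(\i(g,h),s)\,\phi(h)\,dh\,dg.
$$
Since (as verified in the proof of Proposition \ref{atmostonepoleprop}) $R(s)$ has no zeros or poles in a neighborhood of $s=\frac16$, and $B_\phi(1)$ and the normalizing $L$-factors in \eqref{normalizedeisensteineq} are finite and nonzero there (the latter because $q=0$ forces them to be evaluated away from their poles — this is a point to check carefully), the left side has a pole at $s=\frac16$ if and only if $L(s,\pi\times\tau)$ has a pole at $s=1$. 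On the right side, because the inner integral over $\mathrm{Sp}_4(\Q)\backslash H[g](\A)$ is over a compact-modulo-center domain with $\phi$ rapidly decreasing and the outer integral over $Z_{G_1}(\A)G_1(\Q)\backslash G_1(\A)$ is absolutely convergent (with $\Psi$ cuspidal hence rapidly decreasing), one may interchange the residue operation with both integrals. This yields that the residue at $s=\frac16$ of the right side equals
$$
 \int\limits_{Z_{G_1}(\A) G_1(\Q) \backslash G_1(\A)}\Psi(g)\,\chi\big(\tfrac{\mu_1(g)}{\det(g)}\big) \int\limits_{\mathrm{Sp}_4(\Q) \backslash H[g](\A)}\Res_{s=\frac16}E_{\Upsilon}^\ast(\i(g,h),s)\,\phi(h)\,dh\,dg.
$$

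Next I would relate $\Res_{s=\frac16}E_\Upsilon^\ast$ to $\Res_{s=\frac16}E_\Upsilon$: the normalizing factors $L(6s+1,\chi|_{\A^\times})L(6s+2,\chi_{L/F}\chi|_{\A^\times})L(6s+3,\chi|_{\A^\times})$ in \eqref{normalizedeisensteineq} are holomorphic and nonzero at $s=\frac16$ under our hypotheses (again using $\omega_\tau=1$, so $\chi|_{\A^\times}=\omega_\tau^{-1}=1$ — here one must be slightly careful that $L(2,\triv)$ etc. are finite and nonzero, which is standard), hence $\Res_{s=\frac16}E_\Upsilon^\ast = (\text{nonzero constant})\cdot\Res_{s=\frac16}E_\Upsilon$. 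Now for each fixed $g\in G_1(\A)$, writing $h = h[g]$ ranging over $H[g](\A)$ which is a $\mathrm{Sp}_4(\A)$-coset, and parametrizing by $g_1\in\U(1,1)(\A)$ appropriately embedded, the inner integral $\int_{\mathrm{Sp}_4(\Q)\backslash H[g](\A)}\Res_{s=\frac16}E_\Upsilon(\i(g,h),s)\phi(h)\,dh$ is — after translating $g$ into $\U(1,1)(\A)$ and using the $\mathrm{Sp}_4$-equivariance — precisely of the form appearing in the hypothesis \eqref{lemmaeisresidueeq1} (with $g_1$ there playing the role of the $\U(1,1)$-component $g$). The hypothesis says this inner integral vanishes identically for all $g_1\in\U(1,1)(\A)$. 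Therefore the entire expression above is zero, so the residue of the right side of the integral representation at $s=\frac16$ is zero, whence $L(s,\pi\times\tau)$ has no pole at $s=1$, i.e., is holomorphic there.

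\textbf{Main obstacle.} The delicate point is the justification of interchanging $\Res_{s=\frac16}$ with the two integrals, and more fundamentally, matching up the inner integral in expression (iii) of Theorem \ref{theoremsecondintegralrep} with the integral in the hypothesis \eqref{lemmaeisresidueeq1}: one must carefully track how the embedding $\i(g,h)$, the variable $h\in H[g](\A)$, and the $\mathrm{Sp}_4(\Q)\backslash\mathrm{Sp}_4(\A)$ quotient in \eqref{lemmaeisresidueeq1} correspond, keeping in mind the similitude constraints $\mu_2(h)=\mu_1(g)$ and that $\phi_\infty$ has weight $(-l,-l)$. There is also the bookkeeping of which argument of $E_\Upsilon$ plays the $\U(1,1)$ role versus the $\mathrm{Sp}_4\subset\U(2,2)$ role — the seesaw pair here is $(\U(1,1),\U(2,2))$ with $\mathrm{Sp}_4\subset\U(2,2)$, and \eqref{lemmaeisresidueeq1} is written with the $\U(1,1)$-variable free and the $\mathrm{Sp}_4$-variable integrated, which is exactly the shape that appears once one uses expression (iii) rather than (ii). Once this identification is set up cleanly, the convergence/interchange is routine because cusp forms are rapidly decreasing and the residue of an Eisenstein series is (at worst) moderately increasing and is a square-integrable automorphic form on $\mathrm{Sp}_4$ when restricted appropriately.
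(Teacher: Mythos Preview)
Your proposal is correct and follows essentially the same approach as the paper: take the residue at $s=\tfrac16$ in the second integral representation (Theorem \ref{theoremsecondintegralrep}, form (iii)), use that $R(s)$ has no pole and the normalizing $L$-factors in \eqref{normalizedeisensteineq} are finite and nonzero at $s=\tfrac16$, and then show the inner integral vanishes by the hypothesis. The ``main obstacle'' you flag---reducing the integral over $\mathrm{Sp}_4(\Q)\backslash H[g](\A)$ for general $g\in G_1(\A)$ to the integral over $\mathrm{Sp}_4(\Q)\backslash\mathrm{Sp}_4(\A)$ appearing in \eqref{lemmaeisresidueeq1}---is precisely the step the paper writes out: one factors the similitude $\mu_1(g)=\lambda z k$ with $\lambda\in\Q^\times$, $z\in\R^+$, $k\in\prod_p\Z_p^\times$, and absorbs these into rational, central, and maximal-compact pieces on both the $G_1$ and $H$ sides, so that $E_\Upsilon(\i(g,h),s)\phi(h)=E_\Upsilon(\i(g_1,h_1),s)\phi(h_1)$ with $g_1\in\U(1,1)(\A)$ and $h_1\in\mathrm{Sp}_4(\A)$.
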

\begin{proof}
By Theorem~\ref{theoremsecondintegralrep}, the fact that $R(s)$ has no pole at $s=\frac16$, and the fact that
$$
 L(6s+1,\chi|_{\A^\times})L(6s+2,\chi_{L/F}\,\chi|_{\A^\times})L(6s+3,  \chi|_{\A^\times})
$$
is finite and non-zero at $s = \frac16$, it follows that if
$$
 \int\limits_{\mathrm{Sp}_4(\Q) \backslash H[g](\A)}\Res_{s=\frac16}E_{\Upsilon}(\i(g,h),s))\,\phi(h)\,dh=0\qquad\text{for all }g \in G_1(\A),
$$
then $L(s,\pi\times\tau)$ is holomorphic at $s=1$. Suppose  $E_{\Upsilon}( g,s)$ has the property (\ref{lemmaeisresidueeq1}). If $g \in G_1(\A)$ with $\mu_1(g) = m$, we can write $m = \lambda z k$ with $\lambda \in \Q^\times$, $z \in \R^+$, $k \in \prod_{p<\infty} \Z_p^\times$. It follows that we can write
$$
 g=\mat{1}{}{}{\lambda}g_1\mat{z^{1/2}}{}{}{z^{1/2}}\mat{1}{}{}{k}
$$
with $g_1 \in \U(1,1)(\A)$. A similar decomposition holds for $h$ with $\mu_2(h) = m$. Thus
$$
 E_{\Upsilon}(\i(g,h),s))\phi(h) = E_{\Upsilon}(\i(g_1,h_1),s))\phi(h_1)
$$
with $g_1$, $h_1$ belonging to $\U(1,1)(\A)$, $\mathrm{Sp}_4(\A)$ respectively. The lemma follows.
\end{proof}

We will reinterpret the condition of the lemma in terms of Weil representations and theta liftings. Let $(V, \mathcal{Q})$ be a non-degenerate Hermitian space over $L$ of dimension $4$. We identify $\q$ with a Hermitian matrix of size $4$. Let $\U(V)$ be the unitary group of $V$; thus $$\U(V)(\Q)=\{g\in\GL_4(L)\;|\;^t\bar g\q g=\q\}.$$

Let $\chi$ be as above. Fix an additive character $\psi$ as before. As described in~\cite{ich}, there is a Weil representation $\omega_{\q}=\omega_{\q,\psi,\chi}$ of $\G(\A) \times \U(V)(\A)$ acting on the Schwartz space $\mathcal S(V^3(\A))$. The explicit formulas for the action can be found in~\cite[p.\ 246]{ich}.

\vspace{2ex}
Let $s_0 =\frac16$. Let $S(V^3(\A))$ denote the space of $K^{\G}$-finite vectors in $\S(V^3(\A))$. Write $\Pi(V)$ for the image of the $\G(\A)$ intertwining map from $S(V^3(\A))$ to $I(\chi,s_0)$ given by
$$
 \varphi \mapsto f_\varphi^{(s_0)},
$$
where $f_\varphi^{(s_0)}(g)=(\omega_{\q}(g)\varphi)(0)$. We can extend $f_\varphi^{(s_0)}$ to a standard section $f_\varphi^{(s)} \in I(\chi, s)$ via
$$
 f_\varphi^{(s)}(g,s) = |N (\det A)|^{3(s - s_0)}\,f_\varphi^{(s_0)}(g),
$$
where we use the Iwasawa decomposition to write $g=m(A,1)nk$ with $A\in\GL_3(\A_L)$, $n$ in the unipotent radical of the Siegel parabolic subgroup, and $k\in K^{G_3'}$.

\vspace{2ex}
Next we deal with the local picture. Suppose that $(V^{(v)}, \mathcal{Q}^{(v)})$ is a non-degenerate Hermitian space over $L_v$ of dimension $4$.
Then we have the local Weil representation $\omega_{\q_v} = \omega_{\q_v,\psi_v,\chi_v}$ of $\G(\Q_v) \times \U(V^{(v)})(\Q_v)$ acting on the Schwartz space $\mathcal S((V^{(v)})^3)$.
We define $R(V^{(v)})$  to be the image of the $\G(\Q_v)$ intertwining map from $S((V^{(v)})^3)$ to $I_v(\chi_v,s_0)$ given by
$$
 \varphi \mapsto f_\varphi^{(s_0)},
$$
where $f_\varphi^{(s_0)}(g)=(\omega_{\q_v}(g)\varphi)(0)$. The span of the various subspaces $R(V^{(v)})$ of $I_v(\chi_v,s_0)$ as $V^{(v)}$ ranges over the various inequivalent non-degenerate Hermitian spaces over $L_v$ of dimension $4$ is well understood. The non-archimedean case is treated in~\cite{kudswe} while the archimedean case is treated in~\cite{leezhu}. For instance, the following result \cite[Thm.\ 1.2]{kudswe} describes the case when $v$ is non-archimedean and $L_v$ is a field.

\begin{theorem}[Kudla--Sweet]
 Suppose that $v$ is non-archimedean and $L_v$ is a field. Let $V^{(v)}_1$ and $V^{(v)}_2$ be the two inequivalent non-degenerate Hermitian vector spaces over $L_v$ of dimension 4. Then $R(V^{(v)}_1)$ and $R(V^{(v)}_2)$  are distinct maximal submodules of $I_v(\chi_v,s_0)$, so that
 $$
  I_v(\chi_v,s_0) = R(V^{(v)}_1) + R(V^{(v)}_2).
 $$
\end{theorem}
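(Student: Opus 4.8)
The plan is to treat this as a purely local statement about the degenerate principal series $I_v(\chi_v,s)$ of $\G(\Q_v)=\U(3,3)(\Q_v)$ at the single point $s_0=\tfrac16$. Unwinding the normalization in~\eqref{Ichisglobaldefeq}, this is precisely the inducing point at which $|N(\det A)|^{3(s_0+\frac12)}=|N(\det A)|^{2}=|\det A|_{L_v}^{m/2}$ with $m=\dim_{L_v}V^{(v)}=4$. The first, essentially formal, input is that for each non-degenerate Hermitian space $V^{(v)}$ of dimension $4$ the map $\varphi\mapsto f_\varphi^{(s_0)}$, $f_\varphi^{(s_0)}(g)=(\omega_{\q_v}(g)\varphi)(0)$, is $\G(\Q_v)$-equivariant: from the explicit Weil-representation formulas (\cite[p.\ 246]{ich}), the Siegel parabolic acts on the value at $0$ exactly by $\chi_v(\det A)|\det A|_{L_v}^{m/2}$, its unipotent radical acts trivially there, and $K^{\G}$ acts through the restriction of $\omega_{\q_v}$. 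Hence $R(V^{(v)})$ is a $\G(\Q_v)$-submodule of $I_v(\chi_v,s_0)$, and it is nonzero since $f_\varphi^{(s_0)}(1)=\varphi(0)$ can be taken nonzero. I would also point out at the outset that the equality $I_v(\chi_v,s_0)=R(V_1^{(v)})+R(V_2^{(v)})$ is a formal consequence of the first two assertions: two \emph{distinct maximal} (proper) submodules of any module automatically sum to the whole module. So the real content is that each $R(V_i^{(v)})$ is proper with irreducible quotient, and that $R(V_1^{(v)})\neq R(V_2^{(v)})$.

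Both of these come from the module structure of $I_v(\chi_v,s_0)$ at this reducibility point. The skeleton of that analysis is: (i) compute the Jacquet module $r_P(I_v(\chi_v,s_0))$ with respect to the Siegel parabolic via the geometric lemma, obtaining a list of constituents indexed by the relevant Weyl double cosets, and check that at $s=s_0$ (with $\chi_v|_{F^\times}=1$, which holds because one is in the case $\omega_\tau=1$) this module is multiplicity-free of small length; (ii) combine this with the degenerate Gindikin--Karpelevich computation of the order at $s_0$ of the standard intertwining operator $M(s)$ from $I_v(\chi_v,s)$ to the dual degenerate principal series at $-s$, to conclude that $I_v(\chi_v,s_0)$ is reducible with exactly two irreducible constituents; and (iii) identify those constituents with the Weil images — this is the local Siegel--Weil / theta-dichotomy step. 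Concretely, $-s_0$ is the inducing point attached to Hermitian spaces of the complementary dimension $2n-m=2$, so $M(s_0)$ matches $I_v(\chi_v,s_0)$ up with the analogous $m=2$ picture, and the Rallis conservation relations pair each $R(V_i^{(v)})$ at one end with a Weil image at the other; running this through shows $M(s_0)$ kills exactly one of $R(V_1^{(v)}),R(V_2^{(v)})$ and is injective on the other, forcing each to be a maximal submodule and the two to be distinct (the separating invariant being the Hasse--Witt invariant of $V_i^{(v)}$, i.e.\ which of the two Hermitian classes it is). This mirrors the symplectic case of Kudla--Rallis, adapted to $\U(n,n)$.

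With (i)--(iii) in hand the theorem is immediate: $R(V_1^{(v)})$ and $R(V_2^{(v)})$ are nonzero, proper, have irreducible quotients, and are distinct, hence are distinct maximal submodules, hence $I_v(\chi_v,s_0)=R(V_1^{(v)})+R(V_2^{(v)})$. I expect the main obstacle to be precisely step (iii) together with the fine part of (i)--(ii): pinning down that $I_v(\chi_v,s_0)$ has exactly two constituents and that each $R(V_i^{(v)})$ is \emph{maximal} (not merely some nonzero proper submodule), and then the dichotomy that these two constituents are matched with the two Hermitian classes via the $R(V_i^{(v)})$ — this is the heart of \cite{kudswe}. The remaining pieces (equivariance of $\varphi\mapsto f_\varphi^{(s_0)}$, non-vanishing, and the trivial ``sum = whole'' deduction) are routine. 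The parallel statements in the split case $L_v=\Q_v\oplus\Q_v$, where $\G(\Q_v)\cong\GL_6(\Q_v)$, and in the ramified case (both needed elsewhere in this paper, cf.\ \cite{kudswe} and \cite{leezhu}) are not part of this theorem but go through by the same philosophy.
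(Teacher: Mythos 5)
The paper does not prove this statement at all — it is quoted verbatim as \cite[Thm.\ 1.2]{kudswe} and used as a black box, so there is no in-paper argument to compare your proposal against. That said, your sketch does capture the general skeleton of the Kudla--Sweet/Kudla--Rallis machinery (equivariance of $\varphi\mapsto f_\varphi^{(s_0)}$, Jacquet-module constituents via the geometric lemma, the degenerate intertwining operator $M(s)$, and local theta dichotomy), and your observation that ``distinct maximal submodules sum to the whole module'' correctly isolates the formal part of the statement.

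However, the concrete mechanism you propose in step (iii) is wrong, and the error is detectable from this very paper's later discussion. You claim $I_v(\chi_v,s_0)$ has exactly two irreducible constituents and that $M(s_0)$ kills one of $R(V_1^{(v)}),R(V_2^{(v)})$ while being injective on the other. But the paper subsequently records (in the paragraph preceding Proposition~\ref{a1factor}) that the intertwining operator restricted to $R(V^{(v)})$ maps it \emph{onto} the complementary-space image $R(V^{(v)}_0)\subset I_v(\chi_v,-s_0)$, that $R(V^{(v)}_0)$ is the \emph{unique irreducible quotient} of $R(V^{(v)})$, and that this surjection has a nontrivial kernel (Ichino's Lemma 6.1 is applied to sections lying in that kernel). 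So for $m=4>n=3$ neither $R(V_i^{(v)})$ is irreducible, the length of $I_v(\chi_v,s_0)$ is at least three (the two inequivalent irreducible quotients plus the intersection $R(V_1^{(v)})\cap R(V_2^{(v)})$), and $M(s_0)$ neither annihilates one $R(V_i^{(v)})$ nor acts injectively on the other — it kills exactly the common intersection. The direct-sum, length-two picture you describe is the balanced $m=n$ case, not the $m=n+1$ case relevant here. So while the formal reductions and the broad strategy are sound, the step that was supposed to establish maximality and distinctness would, as written, fail; one has to carry out the actual Jacquet-module and dichotomy analysis of \cite{kudswe} rather than substitute the balanced-case heuristic.
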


In the case when $v$ is non-archimedean and $L_v = F_v \oplus F_v$, a similar result is provided by~\cite[Thm.\ 1.3]{kudswe}, while the case $v = \infty$ is dealt with in~\cite{leezhu}. Now, let $\mathcal{C} = \{V^{(v)}\}$ be a collection, over all places $v$ of $\Q$, of local  non-degenerate Hermitian spaces over $L_v$ of dimension 4. Whenever $v$ is non-archimedean and $L_v$ is a field, there are two inequivalent choices for $V^{(v)}$. Each of these spaces has an isotropic vector~\cite[Lemma 5.2]{kudswe}. If  $v$ is non-archimedean and $L_v = F_v \oplus F_v$, then the ``Galois" automorphism is given by $(x_1, x_2) \mapsto (x_2, x_1)$. In this case the resulting ``norm" map from $L_v$ to $F_v$ is surjective. So there is only one isometry class for $V^{(v)}$ and the ``unitary" group of $V^{(v)}$ is isomorphic to $\GL_4(F_v)$. Indeed, up to isometry, the space $V^{(v)}$ is explicitly given by $V^{(v)} = F_v^4 \oplus F_v^4$ with $(a,b) = [^t a_1\cdot b_2,\,^t a_2\cdot b_1]$ where $a=(a_1, a_2)$, $b=(b_1, b_2)$. Finally, if $v = \infty$, there are 5 such $V^{(v)}$, corresponding to spaces of signature $(p, q)$ with $p+q=4$. For any such collection $\mathcal{C}$ as above, let $\Pi(\mathcal{C})$ be the representation space defined by $$\Pi(\mathcal{C}) = \otimes_v R( V^{(v)} ).$$ The upshot of the local results from~\cite{kudswe} and~\cite{leezhu} is that the natural map from $\oplus _{\mathcal{C}} \Pi(\mathcal{C})$ to $I(\chi, s_0)$ is surjective; here the sum ranges over all inequivalent collections $\mathcal{C}$ as above. Let $\mathcal{A}(\G)$ denote the space of automorphic forms over $\G(\A)$. Define $A_{-1}$ to be the $\G(\A)$ intertwining map from $I(\chi,s_0)$ to $\mathcal{A}(\G)$ given by
$$
 f^{(s_0)} \longmapsto \Res_{s=s_0} E_{f^{(s)}}(g,s).
$$
We note here (see \cite[p.\ 252]{ich}) that the residue of the Eisenstein series at some point $s_0$ only depends on the section at $s_0$, so the above map is indeed well defined.

\vspace{2ex}
Next, for any local Hermitian space $V^{(v)}$ as above, with $v$ non-archimedean, let $V^{(v)}_0$ denote the complementary space, which is defined to be the space of dimension $2$ over $L_v$ in the same Witt class as $V^{(v)}$. Note that such a space exists because (by our comments above) $V^{(v)}$ always has an isotropic vector if $v$ is non-archimedean. The subspace  $R( V^{(v)}_0 )$ of $I_v(\chi_v, -s_0)$ is defined similarly as above. It turns out (see \cite{ich}, \cite{kudswe}) that for any non-archimedean place $v$, the restriction of the intertwining operator maps $R( V^{(v)})$ onto $R( V^{(v)}_0)$. This identifies $R( V^{(v)}_0)$ as a quotient of $R( V^{(v)})$; in fact it is the unique irreducible quotient of $R( V^{(v)})$. Moreover, if $f^{(s_0)}$ is a factorizable section, and the local section at a non-archimedean place $v$ lies in the kernel of the above map from $R(V^{(v)})$ to $R( V^{(v)}_0)$, then $A_{-1}(f^{(s_0)})=0$. This follows from~\cite[Lemma 6.1]{ich}; the lemma only states the result for the case that $L_v$ is a field, but the same proof also works for the split case using the local results from~\cite[Sect.\ 7]{kudswe}.

\vspace{2ex}
From the above discussion, we conclude that the map $A_{-1}$ factors through the quotient $$I_\infty(\chi_\infty, s_0) \otimes \left(\oplus \Pi(\mathcal{C'}) \right),$$ where  $\mathcal{C'}= \{V_0^{(v)} \}$ runs over all inequivalent collections of local Hermitian spaces $V_0^{(v)}$ of dimension 2 over $L_v$ with $v$ ranging over the \emph{non-archimedean} places. (Compare~\cite[Prop.\ 4.2]{kudral} for the analogous result in the symplectic case.) But we can say more. For any global Hermitian space $V_0$ of dimension $2$ over $L$, let $\Pi(V_0)$ be the  image of the $\G(\A)$ intertwining map from $S(V_0^3(\A))$ to $I(\chi,-s_0)$. Note that (at each place, and hence globally) $\Pi(V_0)$ is naturally a quotient (via the intertwining operator) of $\Pi(V)$, where $V$ is the complementary global Hermitian space of dimension 4 over $L$, obtained by adding a split space of dimension 2 to $V_0$.

\begin{proposition}\label{a1factor}The map $A_{-1}$ from $I(\chi,s_0)$ to $\mathcal{A}(\G)$ factors through $\oplus_{V_0} \Pi(V_0)$ where $V_0$ runs through all global Hermitian spaces of dimension $2$ over $L$.
\end{proposition}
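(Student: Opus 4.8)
\textbf{Proof proposal for Proposition \ref{a1factor}.}

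The plan is to reduce the global statement to a compatible family of local statements, exactly as was done for the surjectivity of $\oplus_{\mathcal C}\Pi(\mathcal C)\to I(\chi,s_0)$ and the factoring of $A_{-1}$ through $I_\infty(\chi_\infty,s_0)\otimes\left(\oplus\Pi(\mathcal C')\right)$ in the preceding discussion. The key point already established is that $A_{-1}$ kills any factorizable section whose component at some non-archimedean place $v$ lies in the kernel of the intertwining map $R(V^{(v)})\to R(V_0^{(v)})$ (this is the consequence of \cite[Lemma 6.1]{ich} together with \cite[Sect.\ 7]{kudswe} in the split case). Hence $A_{-1}$ factors through the tensor product over all places of the \emph{images} of these local intertwining operators. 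The remaining task is to recognize that tensor product, summed over all local collections $\mathcal C'=\{V_0^{(v)}\}$ of $2$-dimensional local Hermitian spaces, as $\oplus_{V_0}\Pi(V_0)$ where $V_0$ now runs over the \emph{global} $2$-dimensional Hermitian spaces over $L$.

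First I would fix notation: at the archimedean place the image of the intertwining operator on $R(V^{(\infty)})$ is $R(V_0^{(\infty)})$ for the $5$ possible signatures $(p,q)$ with $p+q=2$ of the complementary space $V_0^{(\infty)}$, and at a non-archimedean place $v$ the image is the unique irreducible quotient $R(V_0^{(v)})$, with $V_0^{(v)}$ the $2$-dimensional space in the same Witt class as $V^{(v)}$. So $A_{-1}$ factors through $\bigoplus_{\{V_0^{(v)}\}_v}\bigotimes_v R(V_0^{(v)})$, the sum over all families of local $2$-dimensional Hermitian spaces indexed by \emph{all} places. Second, I would invoke the classification of global Hermitian spaces over $L$ by their local invariants (the local-global principle for Hermitian forms, i.e.\ the Hasse principle): a family $\{V_0^{(v)}\}_v$ arises as the localizations of a global $2$-dimensional Hermitian space $V_0/L$ if and only if the product of the local Hasse/discriminant invariants satisfies the appropriate parity (product formula) condition; families violating this condition are ``incoherent.'' Third — and this is the crux — I would argue that the incoherent families contribute zero to $A_{-1}$. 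For an incoherent family there is no global theta lift, and the corresponding piece of the residual Eisenstein series vanishes; concretely, one uses the Siegel--Weil / first-term-identity philosophy (as in \cite{ich}, or the analogous symplectic statements in \cite{kudral}) which says that the residue $A_{-1}(f_\varphi^{(s_0)})$ of the Eisenstein series built from a section coming from $\varphi\in S(V^3(\A))$ equals (a constant times) the regularized theta integral $\int_{[\U(V)]}\theta(g,h;\varphi)\,dh$, and this integral is identically zero when the global quadratic/Hermitian space does not exist, i.e.\ for incoherent data. Thus only coherent families survive, and for each coherent family the component sections at the non-archimedean places, together with a section at infinity, patch to a vector in $\Pi(V_0)$ for the corresponding global $V_0$ — here one uses that $\Pi(V_0)=\bigotimes_v R(V_0^{(v)})$ by definition and that $\Pi(V_0)$ is a quotient of $\Pi(V)$ via the global intertwining operator, with $V$ the complementary $4$-dimensional global space. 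This identifies the quotient through which $A_{-1}$ factors as $\oplus_{V_0}\Pi(V_0)$, as claimed.

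The main obstacle I anticipate is making the vanishing on incoherent families rigorous in the present setting: one must check that the regularized Siegel--Weil formula of \cite{ich}, Theorem 4.1, applies at the relevant point $s_0=\tfrac16$ (which is in the ``second term range'' / boundary of convergence for $\U(3,3)$ with a $2$-dimensional or $4$-dimensional dual pair), and that the intertwining-operator image description of $R(V_0^{(v)})$ is exactly compatible with the regularization — i.e.\ that a section in the kernel of $R(V^{(v)})\to R(V_0^{(v)})$ really does produce a vanishing residue and not merely a lower-order term. The split-place bookkeeping (where $\U(V^{(v)})\cong\GL_4(\Q_v)$ and $V_0^{(v)}$ is forced) and the archimedean place (handled via \cite{leezhu} rather than \cite{kudswe}) require separate but routine verification. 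Once these compatibilities are in place, the proposition follows by assembling the local factorizations and applying the Hasse principle for Hermitian forms to match coherent local families with global spaces $V_0$.
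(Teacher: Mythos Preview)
Your overall architecture is correct and matches the paper's: reduce to the quotient $\bigoplus_{\mathcal C'}\Pi(\mathcal C')$ over all local collections of $2$-dimensional Hermitian spaces (the paper handles the archimedean piece by citing Tan \cite[p.\ 363--364]{Tan2}), then eliminate the incoherent summands. The gap is in your proposed mechanism for the last step.

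You write that for an incoherent family one ``uses the Siegel--Weil/first-term-identity philosophy'' and that ``this integral is identically zero when the global quadratic/Hermitian space does not exist.'' But Ichino's formula \cite[Thm.\ 4.1]{ich} is only stated for $\varphi\in S(V^3(\A))$ with $V$ a \emph{global} $4$-dimensional Hermitian space; for an incoherent collection there is no such $V$, no Schwartz space $S(V^3(\A))$, and no theta kernel, so the regularized theta integral is not defined at all rather than being zero. Thus Siegel--Weil cannot be invoked directly to kill the incoherent pieces, and your argument is circular: it presupposes exactly the global-space hypothesis whose failure you are trying to exploit.

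What the paper actually does (and what the references you gesture toward, \cite[Thm.\ 3.1(ii)]{kudral} and \cite[Prop.\ 2.6]{kudralsoud}, actually contain) is quite different. Since each $\Pi(\mathcal C')$ is irreducible, if $A_{-1}$ did not annihilate an incoherent $\Pi(\mathcal C')$ it would embed $\Pi(\mathcal C')$ into $\mathcal A(\G)$. One then rules this out by a Fourier coefficient argument: any nonzero automorphic embedding must have a nonzero $\beta$-th Fourier coefficient for some Hermitian $\beta$ of rank $\geq 2$, and the computation of the twisted Jacquet module (\cite[Lemmas 5.1, 5.2]{ich}) forces each $V_0^{(v)}$ to represent $\beta$ locally. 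Since $\dim V_0^{(v)}=2$ this forces ${\rm rank}(\beta)=2$, whence each $V_0^{(v)}$ has matrix $\beta_0$ in some basis, so $\epsilon_v(V_0^{(v)})=\epsilon_v(\beta_0)$ for every $v$. The product formula then gives $\prod_v\epsilon_v(V_0^{(v)})=1$, contradicting incoherence. This is the missing idea in your proposal.
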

\begin{proof} We have already seen that the map $A_{-1}$ factors through the quotient $I_\infty(\chi_\infty, s_0) \otimes \left(\oplus \Pi(\mathcal{C'}) \right),$ where  $\mathcal{C'}= \{V_0^{(v)} \}$ runs over all inequivalent collections of local Hermitian spaces $V_0^{(v)}$ of dimension 2 over $L_v$ with $v$ ranging over the non-archimedean places of $\Q$. The argument of~\cite[p.\ 363--364]{Tan2} takes care of the archimedean place, and we get that $A_{-1}$ factors through $\oplus \Pi(\mathcal{C'}),$ where  $\mathcal{C'} = \{V_0^{(v)} \}$ runs over all inequivalent collections of local Hermitian spaces $V_0^{(v)}$ of dimension 2 over $L_v$; here $v$ ranges over all the places of $\Q$ including $\infty$.

\vspace{2ex}
The question now is if there exists a global  Hermitian space  $V_0$ whose localizations are precisely the local spaces  $\{V^{(v)}_0\}$ in the collection $\mathcal{C'}.$ If such a global Hermitian space does not exist, then the collection $\mathcal{C'}$ is called incoherent, otherwise it is called coherent. From the local results quoted above, we know that each $\Pi(\mathcal{C'})$ is irreducible. Thus to complete the proof we only need to show that $ \Pi(\mathcal{C'})$ cannot be embedded in $\mathcal{A}(\G)$ if $\mathcal{C'}$ is incoherent.

\vspace{2ex}
The proof that such an embedding cannot exist is fairly standard. See, for instance~\cite[Thm. 3.1 (ii)]{kudral},~\cite[Prop. 2.6]{kudralsoud} or~\cite[Cor. 4.1.12]{Tan2}. Thus, we will be brief. For any (global) Hermitian matrix $\beta$ of size 3, let $W_\beta:\:\mathcal{A}(\G)\to \C$ denote the $\beta$-th Fourier coefficient, defined by
$$
 W_\beta(f) = \int\limits_{N_{12}(\Q) \bs N_{12}(\A)}f(nb)\psi(-\tr (b \beta))\,db.
$$
Let $\mathcal{D}$ be a non trivial embedding of $\Pi(\mathcal{C'})$ in $\mathcal{A}(\G)$ where $\mathcal{C'}$ is incoherent, and put $\mathcal{D}_\beta = W_\beta \circ D$. Then there must exist some $\beta$ such that $\mathcal{D}_\beta$ is non-zero. Moreover, if $\mathcal{D}_\beta = 0$ for all $\beta$ of rank $\ge2$, then the argument of~\cite[Lemma 2.5]{kudralsoud} shows that $D=0$. So there exists $\beta$ with rank$(\beta) \ge 2$ and  $\mathcal{D}_\beta \neq 0$. By well-known results on the twisted Jacquet functor (see~\cite[Lemmas 5.1 and 5.2]{ich}), this implies that $\beta$ is locally represented by $\{V^{(v)}_0\}$ at each place $v$, i.e., there exists $v_0^{(v)} \in (V^{(v)}_0)^3$ such that $(v_0^{(v)}, v_0^{(v)}) = \beta$. Since the dimension of $V^{(v)}_0$ is $2$, this implies that such a $\beta$ cannot be non-singular; thus rank$(\beta)= 2.$ Hence $\beta$ is (globally) equivalent to $\mat{\beta_0}{}{}{0}$ where $\beta_0$ is of size 2 and non-singular.
Let $\epsilon_v(V^{(v)}_0) = \pm 1$ denote the Hasse invariant of the local Hermitian space $V^{(v)}_0$. Since the collection $\mathcal{C'}$ is incoherent, we have $\prod_v \epsilon_v(V^{(v)}_0) =-1$. On the other hand, because $\beta$ is locally represented by  $\{V^{(v)}_0\}$, and rank$(\beta)$= dim$(V^{(v)}_0) = 2,$ it follows that the matrix for $ V^{(v)}_0$ equals $\beta_0$ for some suitable basis. But this means that $\epsilon_v(V^{(v)}_0) = \epsilon_v(\beta_0)$. So $\prod_v \epsilon_v(V^{(v)}_0) = \prod_v \epsilon_v(\beta_0) =1$, a contradiction.
\end{proof}
\subsection{The Siegel-Weil formula and the proof of entireness}\label{siegelweilsec}
In the previous subsection, we proved that the map $A_{-1}$ from $I(\chi,s_0)$ to $\mathcal{A}(\G)$ given by
$$
 f^{(s_0)} \longmapsto \Res_{s=s_0} E_{f^{(s)}}(g,s)
$$
factors through $\oplus_{V_0} \Pi(V_0)$, where $V_0$ runs through all global Hermitian spaces of dimension $2$ over $L$. It turns out that the same map is also given by a regularized theta integral. This is the content of the regularized Siegel-Weil formula, which we now recall. Let $(V_0, \q_0)$ be a global Hermitian space of dimension $2$ over $L$ and let $(V,\q)$ be the global Hermitian space of dimension $4$ over $L$ obtained by adding a split space of dimension 2 to $V_0$. Note that the Witt index of $V$ is at least 1, thus $V$ cannot be anisotropic. Given $\varphi_0 \in S(V_0^3(\A))$ we define the theta function
\begin{equation}
 \Theta(g,h;\varphi_0)=\sum_{x \in V_0^3(\Q)}\omega_{\q_0}(g,h)\varphi_0(x).
\end{equation}
This is a slowly increasing function on $(\G(\Q) \bs \G(\A)) \times (\U(V_0)(\Q) \bs \U(V_0)(\A))$. If $\q_0$ is anisotropic, we define
$$
 I_{\q_0}(g, \varphi_0) = \int\limits_{\U(V_0)(\Q) \bs \U(V_0)(\A)}\Theta(g,h;\varphi_0)\,dh.
$$
If $\q_0$ is isotropic, the above integral does not converge, so we define
$$
 I_{\q_0}(g, \varphi_0) = c_{\alpha}^{-1}\int\limits_{\U(V_0)(\Q) \bs \U(V_0)(\A)}\Theta(g,h;\omega_{\q_0}(\alpha)\varphi_0)\,dh,
$$
where $\alpha$, $c_\alpha$ are defined as in Sect.\ 2 of \cite{ich}. In fact, in the convergent case, the second definition automatically equals the first, so we might as well use it in both the cases. Next, one has a map of Schwartz functions $\pi_{\q}^{\q_0}\pi_K$ from $S(V^3(\A))$ to $K_0$-invariant functions in $S(V_0^3(\A))$; here $K_0$ is the standard maximal compact subgroup of $\U(V_0(\A))$. We refer the reader to~\cite{ich} for definitions and details. Let $\varphi \in  S(V^3(\A))$. Let $f_\varphi^{(s)} \in I(\chi, s)$ be the standard section attached to $\varphi$ via the Weil representation. Then the regularized Siegel-Weil formula~\cite[Thm.\ 4.1]{ich} in this setting says the following.

\begin{theorem}[Ichino]\label{theoremsiegelweil}  We have $$\Res_{s=s_0} E_{f_\varphi^{(s)}}(g,s) = c \ I_{\q_0}(g, \pi_{\q}^{\q_0}\pi_K \varphi)$$ for an explicit constant $c$ depending only on the normalization of Haar measures.
\end{theorem}

Theorem~\ref{theoremsiegelweil} and Proposition~\ref{a1factor} imply the following result.

\begin{proposition}\label{propintegralnonzero}
 Suppose that the Eisenstein series  $E_{\Upsilon}( g,s)$ does not satisfy the property (\ref{lemmaeisresidueeq1}). Then there exists a Hermitian space $(V_0, \q_0)$ of dimension $2$ over $L$ and a $K_0$-invariant Schwartz function $\varphi_0 \in S(V_0^3(\A))$ such that, for some $g\in\U(1,1)(\A)$,
 $$\int\limits_{\mathrm{Sp}_4(\Q) \backslash \mathrm{Sp}_4(\A)} I_{\q_0}(\i(g,h),\varphi_0) \phi(h)\,dh \ne 0.$$
\end{proposition}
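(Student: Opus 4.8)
The plan is to derive Proposition~\ref{propintegralnonzero} by combining the factorization of the residue map $A_{-1}$ through $\oplus_{V_0}\Pi(V_0)$ (Proposition~\ref{a1factor}) with the regularized Siegel--Weil formula (Theorem~\ref{theoremsiegelweil}), and then unwinding the resulting integral identity at the level of $\U(3,3)(\A)$. First I would recall that $E_{\Upsilon}(g,s)$, restricted to $\U(3,3)(\A)$, is an Eisenstein series $E_{f^{(s)}}(g,s)$ attached to some section $f^{(s)}\in I(\chi,s)$ (obtained by restricting $\Upsilon(\,\cdot\,,s)$), so that $\Res_{s=s_0}E_{\Upsilon}(g,s)=A_{-1}(f^{(s_0)})$ where $s_0=\frac16$. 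By Proposition~\ref{a1factor}, $A_{-1}(f^{(s_0)})$ lies in the span of the subspaces $\Pi(V_0)$ as $V_0$ runs over global Hermitian spaces of dimension $2$ over $L$; hence we may write $f^{(s_0)}$ (after projecting away from the kernel of $A_{-1}$, which does not affect the residue) as a finite sum of sections of the form $f_{\varphi}^{(s_0)}$, each arising from a Schwartz function $\varphi\in S(V^3(\A))$ on the complementary $4$-dimensional space $V$ via the Weil representation.

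Next I would apply Theorem~\ref{theoremsiegelweil} termwise: for each such $\varphi$, $\Res_{s=s_0}E_{f_\varphi^{(s)}}(g,s)=c\,I_{\q_0}(g,\pi_{\q}^{\q_0}\pi_K\varphi)$, where $\q_0$ is the form on the $2$-dimensional space $V_0$ and $\varphi_0:=\pi_{\q}^{\q_0}\pi_K\varphi$ is a $K_0$-invariant Schwartz function in $S(V_0^3(\A))$. Summing over the finitely many terms, we conclude that
$$
 \Res_{s=\frac16}E_{\Upsilon}(g,s)=\sum_j c\, I_{\q_{0,j}}(g,\varphi_{0,j})
$$
for suitable $2$-dimensional Hermitian spaces $(V_{0,j},\q_{0,j})$ and $K_0$-invariant $\varphi_{0,j}$. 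Now suppose $E_{\Upsilon}$ does \emph{not} satisfy property~(\ref{lemmaeisresidueeq1}); by definition this means there exists $g_1\in\U(1,1)(\A)$ with
$$
 \int\limits_{\mathrm{Sp}_4(\Q)\backslash\mathrm{Sp}_4(\A)}\Res_{s=\frac16}E_{\Upsilon}(\i(g_1,h_1),s)\,\phi(h_1)\,dh_1\neq0.
$$
Substituting the Siegel--Weil expansion into this integral and interchanging the (finite) sum with the integral over the compact quotient $\mathrm{Sp}_4(\Q)\backslash\mathrm{Sp}_4(\A)$ — which is legitimate since each $I_{\q_{0,j}}(\i(g_1,h_1),\varphi_{0,j})$ is continuous in $h_1$ and the domain is compact — the nonvanishing of the total integral forces at least one summand to be nonzero. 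That is, there is an index $j$ with
$$
 \int\limits_{\mathrm{Sp}_4(\Q)\backslash\mathrm{Sp}_4(\A)}I_{\q_{0,j}}(\i(g_1,h_1),\varphi_{0,j})\,\phi(h_1)\,dh_1\neq0,
$$
which is exactly the assertion of the proposition with $(V_0,\q_0)=(V_{0,j},\q_{0,j})$, $\varphi_0=\varphi_{0,j}$, and $g=g_1$.

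The main obstacle, and the point deserving the most care, is the justification that $A_{-1}(f^{(s_0)})$ genuinely lies in $\oplus_{V_0}\Pi(V_0)$ in a form to which Theorem~\ref{theoremsiegelweil} applies — i.e.\ that the section $f^{(s_0)}$ coming from $\Upsilon$ can, after modification by something in $\ker A_{-1}$, be written as a finite linear combination of \emph{standard} sections $f_\varphi^{(s)}$ arising from Schwartz functions on $4$-dimensional Hermitian spaces, with the extension off $s=s_0$ chosen compatibly so that residues match. This is precisely what Proposition~\ref{a1factor} together with the surjectivity of $\oplus_{\mathcal C}\Pi(\mathcal C)\to I(\chi,s_0)$ gives us at $s=s_0$, and since the residue of an Eisenstein series depends only on the value of the section at $s=s_0$ (as noted after the definition of $A_{-1}$, citing~\cite[p.~252]{ich}), the choice of holomorphic extension is immaterial. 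The remaining bookkeeping — finiteness of the decomposition, measure-theoretic interchange of sum and integral, and tracking the constant $c$ — is routine. I would also remark that one could phrase the argument contrapositively to match the statement: if \emph{every} such theta integral against $\phi$ vanishes, then by the Siegel--Weil identity and Proposition~\ref{a1factor} every term in $\Res_{s=\frac16}E_{\Upsilon}(\i(g_1,h_1),s)$ integrates to zero against $\phi$ over $\mathrm{Sp}_4$, so~(\ref{lemmaeisresidueeq1}) holds.
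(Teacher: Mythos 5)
Your argument is correct and follows the same route the paper has in mind, which is stated there in a single line: combine Proposition~\ref{a1factor} with Theorem~\ref{theoremsiegelweil}, and you have filled in the intended details (decompose $f^{(s_0)}$ via the surjectivity of $\oplus_{\mathcal C}\Pi(\mathcal C)\to I(\chi,s_0)$, discard the incoherent pieces killed by $A_{-1}$, and apply the regularized Siegel--Weil identity termwise). One small slip worth flagging: $\Sp_4(\Q)\backslash\Sp_4(\A)$ has finite volume but is not compact, so that justification for the interchange is not quite right; since the sum is finite and each integrand is integrable (moderate growth against a cusp form), the interchange is still immediate and nothing is affected.
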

We will now prove Theorem~\ref{entirenesstheorem}. In order to do so, it suffices to show that the conclusion of Proposition~\ref{propintegralnonzero} leads to a contradiction. First note that, given Schwartz functions $\varphi_1 \in S(V_0(\A))$, $\varphi_2 \in S(V_0^2(\A))$, we may form the Schwartz function $\varphi_0 = \varphi_2 \otimes \varphi_1 \in S(V_0^3(\A))$ defined by $\varphi(v_1, v_2, v_3) = \varphi_1(v_3)\varphi_2(v_1, v_2)$. The space generated by linear combinations of functions of this type is the full Schwartz space $S(V_0^3(\A))$. Suppose that the conclusion of Proposition~\ref{propintegralnonzero} holds. By the definition of $I_{\q_0}$ and the above discussion, it follows that we can find $\varphi_1\in S(V_0(\A))$, $\varphi_2\in S(V_0^2(\A))$  such that for some $g \in \U(1,1)(\A)$, we have
\begin{equation}\label{thetaintnonzero}
 \int\limits_{\mathrm{Sp}_4(\Q) \backslash\mathrm{Sp}_4(\A)}\:\int\limits_{U(V_0)(\Q) \bs U(V_0)(\A)}\Theta(\i(g,h),h';\varphi_2 \otimes \varphi_1)\,\phi(h)\,dh'\,dh \ne 0.
\end{equation}
For $g =\mat{a}{b}{c}{d}\in \U(1,1)$ let $\hat g = \mat{a}{-b}{-c}{d}$. It is easy to check that
$$
 \omega_{\q_0}(\i(g,h))(\varphi_2 \otimes \varphi_1) = \omega_{\q_0}(h)\varphi_2\otimes\omega_{\q_0}(\hat g)\varphi_1.
$$
Here, we are abusing notation and using $\omega_{\q_0}$ to denote the Weil representation of $G_i(\A)$ on $\S(V_0^i(\A))$ for various $i$. This gives the following factorization,
\begin{equation}\label{thetafactorise}
 \Theta(\i(g,h), h'; \varphi_2 \otimes \varphi_1) = \Theta(\hat g,h'; \varphi_1) \Theta(h,h'; \varphi_2).
\end{equation}
Define the automorphic form $\Theta(h';\phi, \varphi_2)$ on $\U(V_0)(\Q) \bs \U(V_0)(\A)$ by
$$
 \Theta(h';\phi, \varphi_2) = \int\limits_{\mathrm{Sp}_4(\Q) \backslash\mathrm{Sp}_4(\A)}\Theta(h,h'; \varphi_2)\phi(h)\,dh.
$$
Equations \eqref{thetaintnonzero} and~\eqref{thetafactorise} imply the following.
\begin{lemma}\label{propthetanonzero}
 Suppose that the conclusion of Proposition~\ref{propintegralnonzero} holds. Then there exists a Schwartz function $\varphi_2 \in S(V_0^2(\A))$ such that the automorphic form $\Theta(h';\phi, \varphi_2)$ on $\U(V_0)(\A)$ is non-zero.
\end{lemma}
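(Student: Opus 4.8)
\textbf{Proof proposal for Lemma \ref{propthetanonzero}.}

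The plan is to unwind the chain of implications that has been built up and then apply the factorization of the theta kernel established above. First I would recall the hypothesis: by the conclusion of Proposition~\ref{propintegralnonzero}, there exists a Hermitian space $(V_0,\q_0)$ of dimension $2$ over $L$, a $K_0$-invariant Schwartz function $\varphi_0\in S(V_0^3(\A))$, and an element $g\in\U(1,1)(\A)$ such that
\begin{equation}\label{propthetanonzeroeq1}
 \int\limits_{\mathrm{Sp}_4(\Q)\backslash\mathrm{Sp}_4(\A)}\;\int\limits_{\U(V_0)(\Q)\backslash\U(V_0)(\A)} \Theta(\i(g,h),h';\varphi_0)\,\phi(h)\,dh'\,dh\neq0.
\end{equation}
Since $\S(V_0^3(\A))$ is spanned by pure tensors $\varphi_2\otimes\varphi_1$ with $\varphi_1\in S(V_0(\A))$ and $\varphi_2\in S(V_0^2(\A))$, and the inner double integral in \eqref{propthetanonzeroeq1} is linear in $\varphi_0$, non-vanishing for \emph{some} $\varphi_0$ forces non-vanishing for \emph{some} pure tensor $\varphi_0=\varphi_2\otimes\varphi_1$ of this shape. (One should check here that the operations $\varphi_0\mapsto\omega_{\q_0}(\alpha)\varphi_0$ and $\pi_{\q}^{\q_0}\pi_K$ used to define $I_{\q_0}$ preserve the property of being a finite linear combination of such pure tensors; this is routine from the explicit formulas in~\cite{ich}, and in any case one may absorb them into the choice of $\varphi_0$ since \eqref{propthetanonzeroeq1} is the statement we are actually starting from.) This yields \eqref{thetaintnonzero}.

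Next I would substitute the factorization \eqref{thetafactorise},
\[
 \Theta(\i(g,h),h';\varphi_2\otimes\varphi_1)=\Theta(\hat g,h';\varphi_1)\,\Theta(h,h';\varphi_2),
\]
into the left side of \eqref{thetaintnonzero}. Crucially, $\Theta(\hat g,h';\varphi_1)$ does not depend on $h$, so it pulls out of the $h$-integral over $\mathrm{Sp}_4(\Q)\backslash\mathrm{Sp}_4(\A)$. Carrying out that inner integration produces exactly the automorphic form $\Theta(h';\phi,\varphi_2)$ defined just above the lemma statement. Thus \eqref{thetaintnonzero} becomes
\begin{equation}\label{propthetanonzeroeq2}
 \int\limits_{\U(V_0)(\Q)\backslash\U(V_0)(\A)} \Theta(\hat g,h';\varphi_1)\,\Theta(h';\phi,\varphi_2)\,dh'\neq0,
\end{equation}
and in particular the integrand cannot vanish identically in $h'$; hence the automorphic form $h'\mapsto\Theta(h';\phi,\varphi_2)$ on $\U(V_0)(\A)$ is non-zero, which is the assertion of the lemma. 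The convergence of all integrals involved is not an issue: $\phi$ is a cusp form on $\mathrm{Sp}_4$, hence rapidly decreasing, and $\Theta(h,h';\varphi_2)$ is of moderate growth, so the $h$-integral defining $\Theta(h';\phi,\varphi_2)$ converges absolutely and the resulting function is of moderate growth on the quotient $\U(V_0)(\Q)\backslash\U(V_0)(\A)$.

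I do not expect a serious obstacle here; the lemma is essentially a bookkeeping consequence of the preceding constructions. The only point requiring a modicum of care is the reduction to a pure-tensor Schwartz function together with the claim that one may drop the auxiliary operators $\omega_{\q_0}(\alpha)$ and $\pi_{\q}^{\q_0}\pi_K$ — this is handled by noting that \eqref{thetaintnonzero} (equivalently \eqref{propthetanonzeroeq1} after expanding $I_{\q_0}$) is already the starting hypothesis and the integrand is manifestly linear in the Schwartz datum, so a spanning argument over pure tensors suffices. Everything else is the identity \eqref{thetafactorise}, which was verified above by the direct computation $\omega_{\q_0}(\i(g,h))(\varphi_2\otimes\varphi_1)=\omega_{\q_0}(h)\varphi_2\otimes\omega_{\q_0}(\hat g)\varphi_1$, combined with Fubini.
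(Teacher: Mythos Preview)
Your proposal is correct and follows essentially the same approach as the paper: reduce to a pure-tensor Schwartz function by linearity, apply the factorization \eqref{thetafactorise}, and use Fubini to extract the inner $\mathrm{Sp}_4$-integral defining $\Theta(h';\phi,\varphi_2)$. Your discussion of absorbing the operators $\omega_{\q_0}(\alpha)$ and $\pi_{\q}^{\q_0}\pi_K$ into the choice of $\varphi_0$ is in fact slightly more careful than the paper, which dispatches this point with the phrase ``by the definition of $I_{\q_0}$ and the above discussion''.
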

We will now interpret the conclusion of this lemma in terms of theta liftings. Let $V_0^\ast$ denote the $4$-dimensional orthogonal space over $\Q$ obtained by considering $V_0$ as a space over $\Q$ and composing the hermitian form on $V_0$ with $\text{tr}_{L/\Q}$. We have the following seesaw diagram (see~\cite[p.\ 252]{kudnotes}) of dual reductive pairs.\footnote{We would like to thank Paul Nelson for pointing this out to us.}
$$
 \xymatrix{\U(2,2)\ar@{-}[dr]\ar@{-}[d]&\O(V_0^\ast)\ar@{-}[d]\\
    \Sp(4)\ar@{-}[ur]&\U(V_0)}
$$
Note that, at each place, $V_0^\ast$ is either the unique anisotropic space of dimension four, or the split quadratic space $V_{2,2}$. Let $\pi_1$ be the representation of $\Sp_4(\A)$ generated by the restriction of $\phi$ to $\SSp_4(\A)$. By~\cite{NPS}, we know that $\pi_1$ is an irreducible, automorphic, cuspidal representation. Moreover, $\pi_1$ is an anti-holomorphic discrete series representation at infinity. The above seesaw diagram and Lemma~\ref{propthetanonzero} imply that if the conclusion of Proposition~\ref{propintegralnonzero} holds, then $\pi_1$ has a non-zero theta lift to $\O(V_0^\ast)$.

\vspace{2ex}
However, if $\pi_1$ has a non-zero theta lift to $\O(V_0^\ast)$, then $V_0^\ast$ cannot be split at infinity. This is because there is no local archimedean theta lift of an anti-holomorphic discrete series representation from $\Sp_4(\R)$ to $\O(2,2)(\R)$, see~\cite{tomasz}. This means there must be a non-archimedean place $v$ where $V_0^\ast$ is ramified. But this implies that $\pi_1$ is also ramified at $v$; else the local theta lift would be zero. However, we know that $\pi_1$ is unramified at all finite places because $\phi$ is right-invariant under $\Sp_4(\Z_p)$ at all finite places $p$. This contradiction shows that the conclusion of Proposition~\ref{propintegralnonzero} cannot hold. Therefore the Eisenstein series $E_{\Upsilon}( g,s)$ on $\U(3,3)(\A)$ has the property that, for all $g \in \U(1,1)(\A)$,
$$
 \int\limits_{\mathrm{Sp}_4(\Q) \backslash \mathrm{Sp}_4(\A)}\Res_{s=\frac16}E_{\Upsilon}(\i(g,h),s))
\phi(h)\,dh = 0,
$$
and hence $L(s,\pi\times\tau)$ is holomorphic at $s=1$. This completes the proof of Theorem~\ref{entirenesstheorem}.
\section{Applications}
As a special case of Langlands functoriality, one expects that automorphic forms on $\GSp_4$ have a functorial transfer to automorphic forms on $\GL_4$, coming from the natural embedding of dual groups $\GSp_4(\C)\subset\GL_4(\C)$. For generic automorphic representations on $\GSp_4$ this transfer was established in \cite{AS}. There is also a conjectured functorial transfer from automorphic forms on $\PGSp_4$ to automorphic forms on $\GL_5$, coming from the morphism $\rho_5:\SSp_4(\C)\rightarrow\GL_5(\C)$ of dual groups, where $\rho_5$ is the irreducible $5$-dimensional representation of $\SSp_4(\C)$. Here, we are going to show the existence of both these transfers for full level holomorphic cuspidal Siegel eigenforms. Note that the automorphic representation generated by such a Siegel modular form is not globally generic, since its archimedean component, a holomorphic discrete series representation, is non-generic.

\vspace{2ex}
We will use the transfer results to prove analytic properties of several $L$-functions related to Siegel modular forms. In the last subsection we will derive some special value results for $\GSp_4\times\GL_1$ and $\GSp_4\times\GL_2$ $L$-functions.
\subsection{The transfer theorems}\label{transfersec}
In the following let $\A$ be the ring of adeles of $\Q$. As before we write $H$ for $\GSp_4$, considered as an algebraic group over $\Q$. Let $\pi=\otimes\pi_v$ be a cuspidal, automorphic representation of $H(\A)$ with the following properties.
\begin{itemize}
 \item $\pi$ has trivial central character.
 \item The archimedean component $\pi_\infty$ is a holomorphic discrete series representation with scalar minimal $K$-type $(l,l)$, where $l\geq3$.
 \item For each finite place $p$, the local representation $\pi_p$ is unramified.
\end{itemize}
It is well known that every such $\pi$ gives rise to a holomorphic cuspidal Siegel eigenform of degree $2$ and weight $l$ with respect to the full modular group $\SSp_4(\Z)$; see \cite{ASch}. Conversely, every such eigenform generates an automorphic representation $\pi$ as above (which is in fact irreducible; see \cite{NPS}). Well-known facts about classical full-level Siegel modular forms show that the cuspidality condition implies $l\geq10$. For the following lemma let $\psi$ be the standard global additive character that was used in Theorem \ref{globalintegralrepresentationtheoremversion1} and Theorem \ref{theorem-global-pullback}. Recall the definition of global Bessel models from Sect.\ \ref{besselsec}.
\begin{lemma}\label{piFBesselFourierlemma}
 Let $\pi$ be as above, and let $F$ be the corresponding Siegel cusp form. Assume that the Fourier expansion of $F$ is given by
 \begin{equation}\label{piFBesselFourierlemmaeq1}
  F(Z)=\sum_Sa(F,S)e^{2\pi i\,{\rm tr}(SZ)},
 \end{equation}
 where $Z$ lies in the Siegel upper half space of degree $2$, and $S$ runs through $2 \times 2$ positive definite, semi-integral, symmetric matrices. Then, given a positive integer $D$ such that $-D$ is a fundamental discriminant, the following are equivalent.
 \begin{enumerate}
  \item $a(F,S)\neq0$ for some $S$ with $D=4\det(S)$.
  \item $\pi$ has a Bessel model of type $(S(-D),\Lambda,\psi)$, where $S(-D)$ is the matrix defined in (\ref{SminusDdefeq}), and where $\Lambda$ is a character of the ideal class group (\ref{idealclassgroupeq}) of $L=\Q(\sqrt{-D})$.
 \end{enumerate}
\end{lemma}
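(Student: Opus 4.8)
The statement is a translation between the classical language of Fourier coefficients of a Siegel cusp form and the representation-theoretic language of Bessel models, so the plan is to make the adelization dictionary explicit and unwind the definition \eqref{Bphieq} of the global Bessel function. First I would recall the standard correspondence between the classical cusp form $F$ of weight $l$ with respect to $\Sp_4(\Z)$ and its adelization $\phi$, a vector of weight $(-l,-l)$ in the space of $\pi$; this is in \cite{ASch}. The key computation is to relate the Fourier coefficient $a(F,S)$ to the value $B_\phi(1)$ of the Bessel function defined by
\begin{equation*}
 B_\phi(g)=\int\limits_{Z_H(\A)R(\Q)\backslash R(\A)}(\Lambda\otimes\theta)(r)^{-1}\phi(rg)\,dr,
\end{equation*}
where $R=TU$ is the Bessel subgroup attached to $S=S(-D)$ as in Sect.\ \ref{besselsec}, and $\theta$ is built from $\psi$ and $S$ via \eqref{thetadefeq}. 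Unwinding the integral, the $U$-part picks out, via Fourier inversion on $U(\Q)\backslash U(\A)$, precisely the Fourier coefficients $a(F,S')$ with $S'$ in the $\GL_2(\Z)$-equivalence class determined by $S$ (equivalently, with $\det(S')=\det(S)$ and the right discriminant $-D=4\det(S')\cdot(-1)$, matching the fundamental discriminant hypothesis); the $T$-part then averages these over the class group $T(\Q)\backslash T(\A)/(T(\A)\cap \prod K_v)$, weighted by the class-group character $\Lambda$. This is essentially the computation carried out in \cite{Fu}, Sect.\ (4.2)--(4.3), and also appears in \cite{PS1}; I would cite and reproduce the relevant identity, which expresses $B_\phi(1)$ (and more generally $B_\phi$ on the representatives $h(l,m)$ or the archimedean representatives) as a finite $\Lambda$-twisted sum of Fourier coefficients $a(F,S')$ over the ideal classes.

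From this explicit formula the equivalence follows. For (ii) $\Rightarrow$ (i): if $\pi$ has a global Bessel model of type $(S(-D),\Lambda,\psi)$, then by definition some $B_\phi$ is not identically zero; by the transformation property and right $K^H$-finiteness it suffices that $B_\phi$ be nonzero on the representatives in \eqref{HRBesseldecompeq}, and using the local theory (in particular the fact, from \cite{Su}, that the spherical Bessel function is nonzero at the identity at every finite place, and the archimedean highest-weight formula \eqref{archBesselformula2eq}) one reduces nonvanishing of $B_\phi$ to nonvanishing of $B_\phi(1)$, hence by the explicit formula to nonvanishing of the $\Lambda$-twisted sum $\sum_{c}\Lambda(c)a(F,S_c)$ over ideal classes $c$; in particular $a(F,S')\neq0$ for some $S'$ with $4\det(S')=D$. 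For (i) $\Rightarrow$ (ii): conversely, if $a(F,S')\neq 0$ for some $S'$ with $4\det(S')=D$, then since $L=\Q(\sqrt{-D})$ has a finite class group, by orthogonality of characters of the ideal class group \eqref{idealclassgroupeq} there exists a character $\Lambda$ for which the $\Lambda$-twisted sum $\sum_c\Lambda(c)a(F,S_c)$ is nonzero (here I am using that the $S_c$ running over ideal classes are exactly the $\GL_2(\Z)$-inequivalent forms of discriminant $-D$, via the classical correspondence between binary quadratic forms of discriminant $-D$ and the class group of $L$). For that $\Lambda$, $B_\phi(1)\neq0$, hence $B_\phi\not\equiv 0$, so $\pi$ has a global Bessel model of type $(S(-D),\Lambda,\psi)$.

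The main technical obstacle, and the step I would spend the most care on, is the explicit unwinding of the adelic integral $B_\phi(1)$ in terms of the classical Fourier coefficients, including getting the matching of data exactly right: that the torus $T(\A)/T(\Q)T(\R)^+(\prod_{v<\infty}\OF_{L,v}^\times)$ is precisely the ideal class group \eqref{idealclassgroupeq} (using that $-D$ is a fundamental discriminant, so $\OF_L$ is the maximal order and $S(-D)$ corresponds to the principal class), that the archimedean component $\Lambda_\infty=1$ and $\phi_\infty$ of weight $(-l,-l)$ are compatible with the archimedean Bessel vector \eqref{archBesselformula2eq}, and that the unramified local Bessel functions at finite places are the spherical ones (nonzero at $1$ by \cite{Su}). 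All of these ingredients are present in \cite{Fu} and \cite{PS1}; the proof is therefore largely a matter of citing and assembling them, and I would structure it as: (1) recall the adelization; (2) state the explicit formula for $B_\phi$ from \cite{Fu}; (3) identify $T(\A_F)/(\ldots)$ with the class group; (4) invoke character orthogonality in both directions. A minor point to verify is the relation between $4\det(S)$ and the discriminant $\mathbf d = \mathbf b^2 - 4\mathbf a\mathbf c$ of the matrix $S(-D)$ in \eqref{SminusDdefeq}, namely that $-\det(S(-D))$ is a square times $-D$, so that the field $L$ attached to the Bessel datum is indeed $\Q(\sqrt{-D})$; this is immediate from \eqref{SminusDdefeq}.
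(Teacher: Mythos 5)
Your proof is correct and reconstructs exactly the argument the paper relies on: the paper's proof is a one-line citation to equation (4.3.4) of \cite{Fu} (based on (1-26) of \cite{Su}), which is precisely the explicit formula relating $B_\phi(1)$ to the $\Lambda$-twisted sum of Fourier coefficients over ideal classes that you derive and then combine with Sugano's local nonvanishing and orthogonality of class-group characters. Your fleshed-out version is the intended argument, just spelled out rather than cited.
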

{\bf Proof:} This follows from equation (4.3.4) in \cite{Fu} (which is based on (1-26) of \cite{Su}).\qed

\vspace{3ex}
The second author has recently shown that condition i) of the lemma is always satisfied for some $D$; see \cite{squarefree}. In fact, independently of whether $F$ is an eigenform or not, there exist infinitely many non-zero Fourier coefficients $a(F,S)$ such that $D=4\det(S)$ is odd and squarefree (in which case $-D$ is automatically a fundamental discriminant). The important fact for us to note is that \emph{there always exists a positive integer $D$ such that $-D$ is a fundamental discriminant and such that $\pi$ satisfies the hypotheses of Theorem \ref{functionalequationtheorem}}.

\vspace{3ex}
We shall write down the explicit form of the local parameters of the representations $\pi_v$. These are admissible homomorphisms from the local Weil groups to the dual group $\GSp_4(\C)$. Note that the trivial central character condition implies that the image of each local parameter lies in $\SSp_4(\C)$. As in \cite{Ta} (1.4.3), the real Weil group $W_\R$ is given by $W_\R=\C^\times\sqcup j\C^\times$ with the rules $j^2=-1$ and
$jzj^{-1}=\bar z$ for $z\in\C^\times$.
Then the parameter of $\pi_\infty$ is given by
\begin{equation}\label{localparameterarcheq}
 \C^\times\ni re^{i\theta}\longmapsto\begin{bmatrix}e^{i(2l-3)\theta}\\&e^{i\theta}\\
 &&e^{-i(2l-3)\theta}\\&&&e^{-i\theta}\end{bmatrix},\qquad
 j\longmapsto\begin{bmatrix}&&-1\\&&&-1\\1\\&1\end{bmatrix}.
\end{equation}
For a finite place $p$, there exist unramified characters $\chi_1$, $\chi_2$ and $\sigma$ of $\Q_p^\times$ such that $\pi_p$ is the spherical component of a parabolically induced representation $\chi_1\times\chi_2\rtimes\sigma$ (using the notation of \cite{ST}). If we identify characters of $\Q_p^\times$ with characters of the local Weil group $W_{\Q_p}$ via local class field theory, then the $L$-parameter of $\pi_p$ is given by
\begin{equation}\label{localparameternonarcheq}
 W_{\Q_p}\ni w\longmapsto\begin{bmatrix}\sigma(w)\chi_1(w)\\&\sigma(w)\chi_1(w)\chi_2(w)\\&&\sigma(w)\chi_2(w)\\&&&\sigma(w)\end{bmatrix}.
\end{equation}
The central character condition is $\chi_1\chi_2\sigma^2=1$, so that the image of this parameter lies in $\SSp_4(\C)$. Now, let $\Pi_\infty$ be the irreducible, admissible representation of $\GL_4(\R)$ with $L$-parameter (\ref{localparameterarcheq}). For a prime number $p$, let $\Pi_p$ be the irreducible, admissible representation of $\GL_4(\Q_p)$ with $L$-parameter (\ref{localparameternonarcheq}). Then the irreducible, admissible representation
\begin{equation}\label{candidaterepresentationeq}
 \Pi_4:=\otimes\Pi_v
\end{equation}
of $\GL_4(\A)$ is our candidate representation for the transfer of $\pi$ to $\GL_4$. Clearly, $\Pi_4$ is self-contragre\-dient.

\begin{theorem}\label{liftingtheorem}
 Let $\pi$ be a cuspidal automorphic representation of $\GSp_4(\A)$ as above, related to a cuspidal Siegel eigenform $F$. We assume that $F$ is not of Saito-Kurokawa type. Then the admissible representation $\Pi_4$ of $\GL_4(\A)$ defined above is cuspidal automorphic. Hence $\Pi_4$ is a strong functorial lifting of $\pi$. This representation is symplectic, i.e., the exterior square $L$-function $L(s,\Pi_4,\Lambda^2)$ has a pole at $s=1$.
\end{theorem}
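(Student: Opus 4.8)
The plan is to invoke the converse theorem of Cogdell and Piatetski-Shapiro in the form given in \cite{CPS}. The strategy is standard once the analytic input is in place, so most of the work has already been done in the preceding sections. First I would recall that the candidate representation $\Pi_4 = \otimes_v \Pi_v$ has been defined so that at every place $\Pi_v$ is the representation of $\GL_4(\Q_v)$ attached by the local Langlands correspondence to the same $L$-parameter as $\pi_v$. Consequently, for any cuspidal automorphic representation $\tau$ of $\GL_2(\A)$, we have $L(s,\Pi_4\times\tau) = L(s,\pi\times\tau)$ and $\varepsilon(s,\Pi_4\times\tau) = \varepsilon(s,\pi\times\tau)$ place by place, and similarly for $\GL_1$-twists. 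The content of the converse theorem is that if these twisted $L$-functions are ``nice'' --- entire, bounded in vertical strips, and satisfying the expected functional equation --- for all cuspidal $\tau$ on $\GL_2(\A)$ and $\GL_1(\A)$, then $\Pi_4$ is automorphic.

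Next I would assemble the niceness statements. The $\GL_1$-twists are handled by classical results (the twisted spin $L$-functions of $\pi$ are essentially those studied by Andrianov, or can be obtained from the degree-$4$ theory; in any case these are known to be nice, and one can also feed $\GL_1$ into the same machinery). For the $\GL_2$-twists, meromorphic continuation and the functional equation \eqref{functionalequationtheoremeq2} come from Theorem \ref{functionalequationtheorem}, entireness comes from Theorem \ref{entirenesstheorem}, and boundedness in vertical strips follows from the theorem of Gelbart and Lapid \cite{GL} once entireness and the functional equation are known, exactly as discussed in the introduction. One subtlety: Theorems \ref{functionalequationtheorem} and \ref{entirenesstheorem} were proved under the hypothesis that $\tau_p$ is unramified for primes $p\mid D$, where $-D$ is the discriminant of the quadratic field $L$ attached to the Bessel model. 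This is not a genuine restriction here, because by \cite{squarefree} (see Lemma \ref{piFBesselFourierlemma} and the discussion after it) there are infinitely many admissible $D$, in fact infinitely many odd squarefree ones; so for \emph{any} given finite set of ramified $\tau_p$ we may choose $D$ coprime to all of them and away from $2$. This suffices to verify niceness for every individual $\tau$, which is all the converse theorem requires. The converse theorem then produces an automorphic representation $\Pi_4'$ of $\GL_4(\A)$ with $\Pi_{4,v}' \cong \Pi_v$ for almost all $v$; a standard strong multiplicity one / rigidity argument for $\GL_n$ (using that the $L$-parameters agree at \emph{all} places by construction) upgrades this to $\Pi_4' = \Pi_4$, giving the strong lifting.

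It then remains to see that $\Pi_4$ is cuspidal. I would argue by contradiction: if $\Pi_4$ were not cuspidal, it would be a proper isobaric sum, and by the classification of the discrete spectrum of $\GL_4$ the only possibilities compatible with $\Pi_4$ being self-dual and having the archimedean parameter \eqref{localparameterarcheq} are $\Pi_4 = \sigma_1 \boxplus \sigma_2$ with $\sigma_i$ cuspidal on $\GL_2$, or a sum involving $\GL_1$-pieces. Each such decomposition forces the spin $L$-function $L(s,\pi,\mathrm{spin}) = L(s,\Pi_4)$ to factor as a product of lower-degree $L$-functions, and in particular to have a pole (coming from a $\GL_1$ factor) or to be a product of two $\GL_2$ $L$-functions; by the characterization of Saito-Kurokawa lifts through poles of the spin $L$-function (Evdokimov \cite{Ev1980}, Oda \cite{Oda1981}), and more precisely by the structure of the archimedean parameter \eqref{localparameterarcheq} which is that of a genuine holomorphic discrete series and not of the anomalous type appearing in \eqref{SKfunceq}, this would contradict the hypothesis that $F$ is not of Saito-Kurokawa type. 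Finally, the symplectic nature of $\Pi_4$ --- the pole of $L(s,\Pi_4,\Lambda^2)$ at $s=1$ --- follows from the fact that the image of each local parameter lies in $\SSp_4(\C) \subset \GL_4(\C)$, so $\Lambda^2 \circ (\text{parameter})$ contains the trivial representation; combined with cuspidality and self-duality of $\Pi_4$, the theory of Rankin-Selberg and exterior-square $L$-functions on $\GL_4$ (Kim \cite{kim}, Shahidi) forces the exterior square $L$-function rather than the symmetric square $L$-function to have the pole. The main obstacle in this whole argument is not any single step here but rather the analytic input already established, namely the entireness Theorem \ref{entirenesstheorem}; granting that, the present theorem is a relatively formal consequence, with the only real care needed being the removal of the ramification hypothesis on $\tau$ via the freedom in choosing $D$, and the cuspidality argument via the Saito-Kurokawa dichotomy.
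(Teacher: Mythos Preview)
Your overall strategy is correct, but there are two genuine gaps and one point where the paper's argument is substantially more specific than yours.

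\textbf{The upgrade from weak to strong lifting.} The version of the converse theorem needed here (Theorem~2 of \cite{CPS}, twisting only by $\GL_1$ and $\GL_2$) requires a \emph{fixed} nonempty set $S$ of finite places, with all twists $\tau$ unramified at $S$, and only yields an automorphic $\Pi'$ with $\Pi'_v\cong\Pi_v$ for $v\notin S$. You cannot let $S$ depend on $\tau$, and your appeal to ``strong multiplicity one / rigidity'' to close the gap at $S$ does not apply, since $\Pi_4$ is not yet known to be automorphic. The paper instead fixes one $D$ (hence $S=\{p:p\mid D\}$), applies the converse theorem to get $\Pi'$ agreeing with $\Pi_4$ outside $S$, and then shows $\Pi'_p\cong\Pi_p$ for $p\mid D$ by dividing the functional equations of $L(s,\Pi_4)$ and $L(s,\Pi')$: the ratio factors over $p\mid D$, and by comparing the zero loci of $L(s,\Pi_p)^{-1}$ and $L(s,\Pi'_p)^{-1}$ together with Weissauer's Ramanujan bound \cite{weissram} (so that the Satake parameters of $\Pi_p$ lie on $|z|=1$), one forces $\Pi'_p$ to be spherical with the same Satake parameters. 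This argument is not routine and is missing from your sketch.

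\textbf{Cuspidality.} The case $\Pi_4\cong\sigma_1\boxplus\sigma_2$ with both $\sigma_i$ cuspidal on $\GL_2$ is \emph{not} the Saito-Kurokawa configuration and is not excluded by the Evdokimov--Oda pole criterion or by the shape of the archimedean parameter alone. The paper rules it out by observing that then $L(s,\pi\times\tilde\sigma_1)=L(s,\Pi_4\times\tilde\sigma_1)$ would have a pole at $s=1$, contradicting Theorem~\ref{entirenesstheorem}. The cases with a $\GL_1$ constituent are ruled out via the holomorphy of twisted spin $L$-functions from \cite{PSh}. Your argument conflates these cases.

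\textbf{Symplecticity.} Your sketch is in the right spirit but does not isolate the actual obstruction. The paper argues: self-duality forces exactly one of $L(s,\Pi_4,\Lambda^2)$, $L(s,\Pi_4,\mathrm{Sym}^2)$ to have a pole at $s=1$; a pole of the symmetric square would make $\Pi_4$ a lift from split $\SO_4$ (\cite{GJR2004}); and a direct computation (Lemma~\ref{notfromSO4lemma}) shows the archimedean parameter \eqref{localparameterarcheq} cannot be conjugated into $\SO_4(\C)$.
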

\begin{proof} We will use the converse theorem for $\GL_4$ from \cite{CPS}, and therefore have to establish the ``niceness'' of the $L$-functions of twists of $\Pi$ by cusp forms on $\GL_1$ and $\GL_2$. As remarked above, there exists a positive integer $D$ such that $-D$ is a fundamental discriminant and such that $\pi$ satisfies the hypotheses of Theorem \ref{functionalequationtheorem}; we will fix such a $D$. Let $\tau=\otimes\tau_p$ be a cuspidal, automorphic representation of $\GL_2(\A)$ such that $\tau_p$ is unramified for $p|D$. By definition of the candidate representation $\Pi$, the $\GL_4\times\GL_2$ $L$-function $L(s,\Pi_4\times\tau)$ coincides with the $\GSp_4\times\GL_2$ $L$-function $L(s,\pi\times\tau)$. Therefore, by Theorem \ref{entirenesstheorem}, the $L$-function $L(s,\Pi_4\times\tau)$ has analytic continuation to an entire function. Moreover, by Theorem \ref{functionalequationtheorem}, it satisfies the functional equation
\begin{equation}\label{functionalequationtheoremeq2b}
  L(s,\Pi_4\times\tau)=\varepsilon(s,\Pi_4\times\tau)L(1-s,\tilde\Pi_4\times\tilde\tau).
\end{equation}
We will next prove that $L(s,\Pi_4\times\tau)$ is bounded in vertical strips.\footnote{We would like to thank Mark McKee for explaining this argument to us.} Consider the group $\GSp_8$ and its Levi subgroup $\GL_2\times\GSp_4$. One of the representations of the dual parabolic with Levi $\GL_2(\C)\times\GSp_4(\C)$ on the dual unipotent radical is the tensor product representation. This means that our
$L$-function $L(s,\pi\times\tau)$ is accessible via Langlands' method; see \cite{yale}. Now, Gelbart and Lapid proved that \emph{any} L-function that is accessible via Langlands' method is meromorphic of finite order; this is Theorem 2 in \cite{GL}. Here, a function $f:\C\rightarrow\C$ being of finite order means that there exist positive constants $r,c,C$ such that
$$
 |f(z)|\leq C\,e^{c|z|^r}\qquad\text{for all }z\in\C.
$$
By the Phragmen-Lindel\"of Theorem, if a holomorphic function of finite order is bounded on the left and right boundary of a vertical strip, then it is bounded on the entire vertical strip. For a large enough positive number $M$, our function $L(s,\pi\times\tau)$ is bounded on $\Re(s)=M$, since it is given as a product of archimedean Euler factors, which are bounded on vertical lines, times a convergent Dirichlet series. By the functional equation, $L(s,\pi\times\tau)$ is also bounded on $\Re(s)=-M$. It follows that $L(s,\pi\times\tau)$ is bounded on $-M\leq\Re(s)\leq M$. This proves that $L(s,\Pi_4\times\tau)$ is bounded in vertical strips.

\vspace{3ex}
A similar argument applies to twists of $\Pi_4$ by Hecke characters $\chi$ of $\A^\times$. The required functional equation of $L(s,\Pi_4\times\chi)=L(s,\pi\times\chi)$ is provided by \cite{KR}. The holomorphy follows from Theorem 2.2 of \cite{PSh}.

\vspace{3ex}
By Theorem 2 of \cite{CPS}, there exists an automorphic representation $\Pi'=\otimes\Pi'_v$ of $\GL_4(\A)$ such that $\Pi'_\infty\cong\Pi_\infty$ and $\Pi'_p\cong\Pi_p$ for all primes $p\nmid D$. We claim that in fact $\Pi'_p\cong\Pi_p$ for \emph{all} primes $p$; this will prove that the candidate representation $\Pi_4$ is automorphic (but not yet the cuspidality).
To prove our claim, observe that we have the functional equations
\begin{equation}\label{Pifunceq1}
 L(s,\Pi_4)=\varepsilon(s,\Pi_4)L(1-s,\tilde\Pi_4)
\end{equation}
and
\begin{equation}\label{Pifunceq2}
 L(s,\Pi')=\varepsilon(s,\Pi')L(1-s,\tilde\Pi').
\end{equation}
We have (\ref{Pifunceq1}) because $L(s,\Pi_4)=L(s,\pi)$ and $\varepsilon(s,\Pi_4)=\varepsilon(s,\pi)$ by definition of $\Pi_4$, so that we can use Andrianov's classical theory; see \cite{An1974}. We have (\ref{Pifunceq2}) because $\Pi'$ is an automorphic representation of $\GL_4(\A)$. Dividing (\ref{Pifunceq1}) by (\ref{Pifunceq2}) and observing that the local factors outside $D$ coincide, we obtain
\begin{equation}\label{Lquotienteq}
 \prod_{p|D}\frac{L(s,\Pi_p)L(1-s,\tilde\Pi'_p)\varepsilon(s,\Pi'_p)}
 {L(s,\Pi'_p)L(1-s,\tilde\Pi_p)\varepsilon(s,\Pi_p)}=1.
\end{equation}
It follows from unique prime factorization that if $p_1,\ldots,p_r$ are distinct primes, and if $R_1,\ldots,R_r\in\C(X)$ are such that
\begin{equation}\label{ratnlfctslemmaeq1}
  \prod_{i=1}^rR_i(p_i^s)=1\qquad\text{for all } s\in\C,
\end{equation}
then the rational functions $R_i$ are all constant. Hence, it follows from (\ref{Lquotienteq}) that
\begin{equation}\label{Lquotientpeq}
 \frac{L(s,\Pi_p)L(1-s,\tilde\Pi'_p)\varepsilon(s,\Pi'_p)}
 {L(s,\Pi'_p)L(1-s,\tilde\Pi_p)\varepsilon(s,\Pi_p)}\qquad\text{is constant for each }p|D.
\end{equation}
Fix a prime $p|D$, and write (\ref{Lquotientpeq}) as
\begin{equation}\label{Lquotientpeq2}
 \frac1{L(s,\Pi_p)}=c_pX^m\frac{L(1-s,\tilde\Pi'_p)}{L(s,\Pi'_p)L(1-s,\tilde\Pi_p)},
\end{equation}
where $c_p$ is a constant, $X=p^{-s}$, and $m$ is some exponent coming from the $\varepsilon$-factors.
Let $\alpha,\beta,\gamma,\delta$ be the Satake parameters of $\Pi_p$, so that
$$
 L(s,\Pi_p)=\frac1{(1-\alpha p^{-s})(1-\beta p^{-s})(1-\gamma p^{-s})(1-\delta p^{-s})}.
$$
Substituting into (\ref{Lquotientpeq2}), we obtain
\begin{align*}
 &(1-\alpha X)(1-\beta X)(1-\gamma X)(1-\delta X)\\
 &\;\;=(1-(\alpha p)^{-1}X^{-1})(1-(\beta p)^{-1}X^{-1})
 (1-(\gamma p)^{-1}X^{-1})(1-(\delta p)^{-1}X^{-1})
  c_pX^m\frac{L(1-s,\tilde\Pi'_p)}{L(s,\Pi'_p)}\\
 &\;\;=(X-(\alpha p)^{-1})(X-(\beta p)^{-1})
 (X-(\gamma p)^{-1})(X-(\delta p)^{-1})
  c_pX^{m-4}\frac{L(1-s,\tilde\Pi'_p)}{L(s,\Pi'_p)}.
\end{align*}
Consider the zeros of the functions on both sides of this equation.
On the left hand side, we have zeros exactly when $X = p^s$ is equal to
\begin{equation}\label{zeroslefteq}
 \alpha^{-1},\quad\beta^{-1},\quad\gamma^{-1},\quad\delta^{-1}
\end{equation}
(with repetitions allowed). On the right hand side, the factor $L(1-s,\tilde\Pi'_p)$
does not contribute any zeros, since local $L$-factors are never zero. The factor
$X^{m-4}$ might contribute the zero $0$, but this zero does certainly not
appear amongst the numbers (\ref{zeroslefteq}). Then there are the obvious
\emph{possible} zeros when $X$ equals
\begin{equation}\label{zerosrighteq}
 (\alpha p)^{-1},\quad(\beta p)^{-1},\quad(\gamma p)^{-1},\quad(\delta p)^{-1}.
\end{equation}
Recalling that $\alpha,\beta,\gamma,\delta$ originate from the Satake parameters of a holomorphic Siegel cusp form, the Ramanujan conjecture for such modular forms, proven in \cite{weissram}, implies that $|\alpha|=|\beta|=|\gamma|=|\delta|=1$. (Even without the full Ramanujan conjecture, known estimates as those in \cite{PS2} would lead to the same conclusion.) Hence there is no overlap between the numbers in (\ref{zeroslefteq}) and (\ref{zerosrighteq}). It follows that the factor $L(s,\Pi'_p)$ must contribute the zeros (\ref{zeroslefteq}) for the right hand side. In particular, $L(s,\Pi'_p)^{-1}$ is a polynomial in $p^{-s}$ of degree $4$, so that $\Pi'_p$ is a spherical representation. And then, evidently, its Satake parameters are precisely $\alpha$, $\beta$, $\gamma$ and $\delta$. This is equivalent to saying $\Pi'_p\cong\Pi_p$, proving our claim.

\vspace{3ex}
We now proved that the candidate representation $\Pi_4=\otimes\Pi_v$ is automorphic, and it remains to prove it is cuspidal. Assume that $\Pi_4$ is not cuspidal; we will obtain a contradiction. Being not cuspidal, $\Pi_4$ is a constituent of a globally induced representation from a proper parabolic subgroup of $\GL_4$. It follows that $L(s,\Pi_4)$ is, up to finitely many Euler factors, of one of the following forms.
\begin{enumerate}
 \item $L(s,\chi_1)L(s,\chi_2)L(s,\chi_3)L(s,\chi_4)$ with Hecke characters $\chi_i$ of $\A^\times$.
 \item $L(s,\chi_1)L(s,\chi_2)L(s,\tau)$ with Hecke characters $\chi_1,\chi_2$ of $\A^\times$ and a cuspidal, automorphic representation $\tau$ of $\GL_2(\A)$.
 \item $L(s,\chi_1)L(s,\tau)$ with a Hecke character $\chi_1$ of $\A^\times$ and a cuspidal, automorphic representation $\tau$ of $\GL_3(\A)$.
 \item $L(s,\tau_1)L(s,\tau_2)$ with cuspidal, automorphic representations $\tau_1,\tau_2$ of $\GL_2(\A)$.
\end{enumerate}
Note that all the characters and representations in this list must be unramified at every finite place, since the same is true for $\Pi_4$. If one of the cases i), ii) or iii) is true, then $L(s,\Pi_4\times\chi_1^{-1})$ has a pole. Since $L(s,\Pi_4\times\chi_1^{-1})=L(s,\pi\times\chi_1^{-1})$ and we are assuming that $F$ is not of Saito-Kurokawa type, this contradicts Theorem 2.2 of \cite{PSh}. Hence we are in case iv). But then $L(s,\Pi_4\times\tilde\tau_1)=L(s,\pi\times\tilde\tau_1)$ has a pole, contradicting Theorem \ref{entirenesstheorem}. This contradiction shows that $\Pi_4$ must be cuspidal.

\vspace{3ex}
It remains to prove the last statement. Since $\Pi_4$ is self-dual, it is well known that exactly one of the $L$-functions
$$
 L(s,\Pi_4,\Lambda^2)\qquad\text{or}\qquad L(s,\Pi_4,{\rm Sym}^2)
$$
has a pole at $s=1$. If $L(s,\Pi_4,{\rm Sym}^2)$ would have a pole at $s=1$, then $\Pi_4$ would be a (strong) lifting from the split orthogonal group $\SO_4$; see the Theorem on p.\ 680 of \cite{GJR2004} and the comments thereafter. By Lemma \ref{notfromSO4lemma} below, this is impossible. It follows that $L(s,\Pi_4,\Lambda^2)$ has a pole at $s=1$.
\end{proof}

\begin{lemma}\label{notfromSO4lemma}
 Let $F$ and $\pi=\otimes\pi_v$ be as in Theorem \ref{liftingtheorem}, and let $\Pi_4$ be the resulting lifting to $\GL_4(\A)$. Then there does not exist a cuspidal, automorphic representation $\sigma$ of $\SO_4(\A)$ such that $\Pi_4$ is a Langlands functorial lifting of $\sigma$.
\end{lemma}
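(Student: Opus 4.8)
The plan is to argue by contradiction, using the structure of a hypothetical $\SO_4$ lift to contradict the non-Saito-Kurokawa hypothesis on $F$. Suppose $\Pi_4$ were a functorial lift of a cuspidal, automorphic representation $\sigma$ of $\SO_4(\A)$. Recall the standard description of $\SO_4$ via the exceptional isogeny: an automorphic representation of $\SO_4$ (or more precisely of the relevant quasi-split form $\mathrm{GSO}_4$) corresponds essentially to a pair $(\tau_1,\tau_2)$ of cuspidal automorphic representations of $\GL_2(\A)$ with matching central characters, and the transfer to $\GL_4$ coming from the embedding of dual groups $\mathrm{SL}_2(\C)\times\mathrm{SL}_2(\C)\hookrightarrow\mathrm{SO}_4(\C)\subset\GL_4(\C)$ is the automorphic tensor product $\tau_1\boxtimes\tau_2$, whose existence and cuspidality criteria are due to Ramakrishnan. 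So the assumption forces $\Pi_4\cong\tau_1\boxtimes\tau_2$ for two cuspidal $\GL_2(\A)$-representations $\tau_1,\tau_2$, which must be everywhere unramified at finite places since $\Pi_4$ is, and whose archimedean parameters are constrained by \eqref{localparameterarcheq}.

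The key step is then to extract a contradiction from this tensor-product shape. First I would compute the partial standard $L$-function: $L^S(s,\Pi_4)=L^S(s,\tau_1\boxtimes\tau_2)=L^S(s,\tau_1\times\tau_2)$, the Rankin--Selberg $L$-function of $\tau_1$ and $\tau_2$. Since $L(s,\Pi_4)=L(s,\pi)=L(s,F,\mathrm{spin})$ by Andrianov's theory, we obtain that the spin $L$-function of $F$ equals (up to finitely many Euler factors, which are in fact all determined by the parameters above) a $\GL_2\times\GL_2$ Rankin--Selberg $L$-function. The cleanest way to reach a contradiction is to twist: for a cuspidal automorphic representation of $\GL_2(\A)$, say $\tilde\tau_1$, the $L$-function $L(s,\Pi_4\times\tilde\tau_1)=L(s,\tau_1\boxtimes\tau_2\times\tilde\tau_1)=L(s,(\tau_1\times\tilde\tau_1)\boxtimes\tau_2)$ factors, via $\tau_1\times\tilde\tau_1$ containing the trivial representation as a constituent of its isobaric decomposition, so that $L(s,\tau_2)$ appears as a factor; more precisely $L(s,\Pi_4\times\tilde\tau_1) = L(s,\tau_2)\,L(s,\mathrm{Ad}(\tau_1)\times\tau_2)$ up to harmless normalization. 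Since $L(s,\tau_2)$ is the standard $\GL_2$ $L$-function of a cusp form it is entire, so this particular twist does not immediately produce a pole. I would therefore instead use the \emph{exterior-square} obstruction already available in the proof of Theorem~\ref{liftingtheorem}: a tensor product $\tau_1\boxtimes\tau_2$ of two $\GL_2$ representations is orthogonal, not symplectic — its exterior square $L$-function does \emph{not} have a pole at $s=1$, rather its symmetric square does (this is because $\Lambda^2(\C^2\otimes\C^2)=\mathrm{Sym}^2\C^2\oplus\mathrm{Sym}^2\C^2$ contains $\mathrm{Sym}^2$ factors, so the relevant pole migrates to $\mathrm{Sym}^2(\Pi_4)$). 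But the argument at the end of the proof of Theorem~\ref{liftingtheorem} shows that $L(s,\Pi_4,\Lambda^2)$ \emph{does} have a pole at $s=1$ as soon as $L(s,\Pi_4,\mathrm{Sym}^2)$ does not — and the proof of \emph{that} dichotomy only used self-duality of $\Pi_4$ together with the fact (via \cite{PSh}, Theorem 2.2, using that $F$ is not Saito--Kurokawa) that $L(s,\pi\times\chi)$ is entire for Hecke characters $\chi$. So the cleanest route: a tensor-product lift would make $\Pi_4$ orthogonal, hence $L(s,\Pi_4,\mathrm{Sym}^2)$ has the pole; but then $\Pi_4$ is a lift from $\SO_4$ (split) — wait, that is circular — so instead I directly invoke: if $\Pi_4=\tau_1\boxtimes\tau_2$ then $L(s,\pi\times\tilde\tau_1)=L(s,\Pi_4\times\tilde\tau_1)$ has a pole at $s=1$ exactly when $\tau_2\cong$ trivial-twist-related to $\tau_1$, which forces $\tau_2$ non-cuspidal, a contradiction; and if instead $\tau_1,\tau_2$ are "generic position'' then the archimedean parameter \eqref{localparameterarcheq}, which has the very specific shape $\mathrm{diag}(e^{i(2l-3)\theta},e^{i\theta},\dots)$, cannot be written as a tensor product of two $\GL_2(\R)$ parameters of the discrete-series type forced by unitarity and self-duality — a direct parameter-matching computation at the archimedean place.

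Let me state the plan more decisively. I would proceed as follows. Step 1: assume $\Pi_4$ is a functorial lift from $\SO_4(\A)$; by Ramakrishnan's characterization of the image of $\GL_2\times\GL_2\to\GL_4$ functoriality, $\Pi_4\cong\tau_1\boxtimes\tau_2$ (isobaric) for cuspidal $\tau_1,\tau_2$ on $\GL_2(\A)$, unramified at all finite places. Step 2: note $L(s,\Pi_4\times\tilde\tau_1)=L(s,\tau_2\times(\tau_1\times\tilde\tau_1))$; since $\tau_1\times\tilde\tau_1$ is isobaric with the trivial representation $\triv$ as a summand, $L(s,\tau_2)$ divides $L(s,\Pi_4\times\tilde\tau_1)$ — and more to the point, replacing $\tilde\tau_1$ by $\tilde\tau_2$ shows $L(s,\Pi_4\times\tilde\tau_2)$ has $L(s,\tau_1)$ as a factor, and also the factor $L(s,\mathrm{Ad}\,\tau_2)$. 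None of these forces a pole by themselves, so Step 3 is the real one: use that $L(s,\Pi_4\times\tilde\tau_1) = L(s,\pi\times\tilde\tau_1)$, which is entire by Theorem~\ref{entirenesstheorem}. This is consistent with Step 2, so I must instead contradict cuspidality more cleverly: observe that a $\GL_2\boxtimes\GL_2$ representation satisfies $\Pi_4\cong\tilde\Pi_4$ \emph{and} is orthogonal, so $L(s,\Pi_4,\Lambda^2)$ is \emph{holomorphic} at $s=1$ (the pole is in $\mathrm{Sym}^2$). But Theorem~\ref{liftingtheorem} already established that $L(s,\Pi_4,\Lambda^2)$ \emph{has} a pole at $s=1$ — and that conclusion in Theorem~\ref{liftingtheorem} was derived independently (via the Saito--Kurokawa hypothesis and \cite{GJR2004}, \cite{PSh}), not from this lemma, so there is no circularity. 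This contradiction proves the lemma. The main obstacle I anticipate is making Step 1 rigorous in the similitude setting — one must be careful that "lift from $\SO_4$'' is interpreted so that Ramakrishnan's tensor-product theorem applies (passing between $\SO_4$, $\mathrm{GSO}_4$, and pairs of $\GL_2$'s with compatible central characters), and that the parity/central-character bookkeeping is consistent with $\Pi_4$ having trivial "determinant type''. Everything after Step 1 is then a short formal argument about poles of symmetric- versus exterior-square $L$-functions of an isobaric tensor product, which is entirely standard.
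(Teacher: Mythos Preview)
Your argument is circular. You claim that ``Theorem~\ref{liftingtheorem} already established that $L(s,\Pi_4,\Lambda^2)$ has a pole at $s=1$ --- and that conclusion in Theorem~\ref{liftingtheorem} was derived independently \dots\ not from this lemma, so there is no circularity.'' But this is a misreading of the logic of the paper. Look again at the last paragraph of the proof of Theorem~\ref{liftingtheorem}: the argument there is that (a) exactly one of $L(s,\Pi_4,\Lambda^2)$ or $L(s,\Pi_4,\mathrm{Sym}^2)$ has a pole at $s=1$ (self-duality); (b) if it were $\mathrm{Sym}^2$, then by \cite{GJR2004} the representation $\Pi_4$ would be a lift from $\SO_4$; (c) \emph{by Lemma~\ref{notfromSO4lemma}} this is impossible; (d) therefore $\Lambda^2$ has the pole. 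So the pole of the exterior-square is deduced \emph{from} the lemma you are trying to prove. Your Step~3 therefore assumes exactly what is to be shown. The references to \cite{GJR2004} and \cite{PSh} do not establish the $\Lambda^2$ pole directly; \cite{PSh} is used earlier for the cuspidality argument, and \cite{GJR2004} gives the implication $\mathrm{Sym}^2\text{ pole}\Rightarrow\SO_4\text{ lift}$, not the other direction you need.

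The paper's proof avoids all of this global machinery and works purely at the archimedean place. One simply checks that the explicit parameter $\varphi:W_\R\to\GL_4(\C)$ in \eqref{localparameterarcheq} cannot be conjugated into $\SO_4(\C)$: if $g\varphi(w)g^{-1}$ preserved a symmetric form, then $^t\varphi(w)\,S\,\varphi(w)=S$ for $S={}^tg\left[\begin{smallmatrix}&1_2\\1_2&\end{smallmatrix}\right]g$; letting $w$ range over $\C^\times$ forces $S$ to be antidiagonal of a specific shape, and then evaluating at $w=j$ gives $-S=S$, a contradiction. This is a five-line linear-algebra computation. You actually brushed against this idea yourself (``a direct parameter-matching computation at the archimedean place'') before abandoning it for the global route; that was the wrong call.
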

\begin{proof}
The obstruction comes from the archimedean place. Recall that the dual group of $\SO_4$ is $\SO_4(\C)$, which we realize as
$$
 \SO_4(\C)=\{g\in\SL_4(\C)\;|\;^tg\mat{}{1_2}{1_2}{}g=\mat{}{1_2}{1_2}{}\}.
$$
Let $\varphi:\:W_\R\rightarrow\GL_4(\C)$ be the archimedean $L$-parameter given explicitly in (\ref{localparameterarcheq}). If $\Pi_4$ would come from $\SO_4$, there would exist a matrix $g\in\GL_4(\C)$ such that
$$
 g\varphi(w)g^{-1}\in\SO_4(\C)\qquad\text{for all }w\in W_\R.
$$
Then $^t(g\varphi(w)g^{-1})\mat{}{1_2}{1_2}{}(g\varphi(w)g^{-1})=\mat{}{1_2}{1_2}{}$ for all $w\in W_\R$, or equivalently
$$
 ^t\varphi(w)S\varphi(w)=S,\qquad\text{where }S=\,^tg\mat{}{1_2}{1_2}{}g.
$$
Letting $w$ run through non-zero complex numbers $re^{i\theta}$ shows that $S$ is of the form
$$
 S=\begin{bmatrix}&&a\\&&&b\\a\\&b\end{bmatrix}.
$$
But then letting $w=j$ yields the contradiction $-S=S$.
\end{proof}

We will next consider a backwards lifting of $\Pi_4$ in order to obtain a globally generic, cuspidal, automorphic representation on $\GSp_4(\A)$ in the same $L$-packet as $\pi$.
\begin{theorem}\label{genericswitchtheorem}
 Let $F$ and $\pi=\otimes\pi_v$ be as in Theorem \ref{liftingtheorem}. Then there exists a globally generic, cuspidal, automorphic representation $\pi^g=\otimes\pi^g_v$ of $\GSp_4(\A)$ such that $\pi^g_p\cong\pi_p$ for all primes $p$, and such that $\pi^g_\infty$ is the generic discrete series representation of $\PGSp_4(\R)$ lying in the same $L$-packet as $\pi_\infty$. Any globally generic, cuspidal automorphic representation $\sigma=\otimes\sigma_v$ of $\GSp_4(\A)$ such that $\sigma_p\cong\pi_p$ for almost all $p$ coincides with $\pi^g$.
\end{theorem}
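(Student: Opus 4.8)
The plan is to construct $\pi^g$ via the backwards lifting (descent) from $\GL_4$ to $\GSp_4$, applied to the representation $\Pi_4$ produced by Theorem~\ref{liftingtheorem}, and then to pin down its local components using the compatibility of the descent with the local Langlands correspondence together with a strong multiplicity one argument on $\GL_4$.

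First I would recall that, by Theorem~\ref{liftingtheorem}, $\Pi_4$ is a cuspidal, self-dual automorphic representation of $\GL_4(\A)$ which is \emph{symplectic}, i.e.\ $L(s,\Pi_4,\Lambda^2)$ has a pole at $s=1$. For such representations the results of Asgari--Shahidi (the ``generic transfer'' from $\GSp_4$ together with its compatibility with descent, as in \cite{AS}) — or alternatively the descent method of Ginzburg--Rallis--Soudry — guarantee the existence of a globally generic, cuspidal, automorphic representation $\pi^g=\otimes\pi^g_v$ of $\GSp_4(\A)$ whose functorial lift to $\GL_4(\A)$ is exactly $\Pi_4$. Moreover this descent is constructed so that at every place $v$ the local parameter of $\pi^g_v$ is the one attached to $\Pi_{4,v}$ via the local Langlands correspondence for $\GSp_4$; in particular $\pi^g_v$ is generic and, being generic with a prescribed $L$-parameter, is uniquely determined. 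I would then feed in the explicit local parameters: at a finite prime $p$ the parameter \eqref{localparameternonarcheq} is that of the unramified representation $\pi_p=\chi_1\times\chi_2\rtimes\sigma$, and the unique generic member of that $L$-packet is $\pi_p$ itself, so $\pi^g_p\cong\pi_p$; at the archimedean place the parameter \eqref{localparameterarcheq} is the one common to the discrete series $L$-packet of $\PGSp_4(\R)$ containing $\pi_\infty$, whose generic member is by definition the large (generic) discrete series, giving the stated description of $\pi^g_\infty$.

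For the uniqueness clause, suppose $\sigma=\otimes\sigma_v$ is globally generic and cuspidal with $\sigma_p\cong\pi_p$ for almost all $p$. Applying the generic transfer of \cite{AS} to $\sigma$ yields an automorphic representation $\Sigma$ of $\GL_4(\A)$ with $\Sigma_p\cong\Pi_{4,p}$ for almost all $p$ (since the transfer is compatible with the unramified local Langlands correspondence and $\sigma_p\cong\pi_p$ there). By cuspidality of $\Pi_4$ and the classification of automorphic representations of $\GL_4$ (isobaric decomposition plus strong multiplicity one for $\GL_n$, as in \cite{JiSo2007} or the references therein), $\Sigma$ is either cuspidal and isomorphic to $\Pi_4$, or else an isobaric sum whose components would force $L(s,\Sigma\times\chi)$ or $L(s,\Sigma\times\tilde\tau_1)$ to have a pole — contradicting, via $L(s,\Sigma)=L(s,\sigma)$ and the $L$-function identities already used in the proof of Theorem~\ref{liftingtheorem} (Theorem~\ref{entirenesstheorem} and Theorem~2.2 of \cite{PSh}), the holomorphy we have established. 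Hence $\Sigma\cong\Pi_4$, so $\sigma$ and $\pi^g$ have the same functorial lift to $\GL_4$; since both are globally generic, local uniqueness of generic representations in a given $L$-packet at every place forces $\sigma_v\cong\pi^g_v$ for all $v$, i.e.\ $\sigma\cong\pi^g$.

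The main obstacle I anticipate is not any single calculation but the careful invocation of the backwards-lifting machinery: one must ensure that the version of the $\GSp_4\leftrightarrow\GL_4$ correspondence being used is genuinely \emph{local-global compatible at all places} (including the ramified archimedean place, where the relevant discrete series $L$-packets of $\PGSp_4(\R)$ and their generic members must be identified precisely with the parameter \eqref{localparameterarcheq}), and that the symplectic sign condition $L(s,\Pi_4,\Lambda^2)$ having a pole at $s=1$ — rather than $L(s,\Pi_4,\mathrm{Sym}^2)$, which Lemma~\ref{notfromSO4lemma} rules out — is exactly what is needed to place $\Pi_4$ in the image of transfer from $\GSp_4$. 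Once these structural inputs are in place, the argument is essentially the $L$-function bookkeeping above combined with strong multiplicity one.
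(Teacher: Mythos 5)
Your overall strategy — apply the Ginzburg–Rallis–Soudry descent to the symplectic cuspidal $\Pi_4$, identify the local components, and use strong multiplicity one for uniqueness — is the same as the paper's, but there is a genuine gap in the step where you pin down $\pi^g_p$ for \emph{all} finite primes.

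You assert that the descent ``is constructed so that at every place $v$ the local parameter of $\pi^g_v$ is the one attached to $\Pi_{4,v}$ via the local Langlands correspondence for $\GSp_4$.'' That is not what \cite{GRS} provides. Theorem 4 of \cite{GRS} produces a globally generic cuspidal $\pi^g$ that is a \emph{weak} lift to $\Pi_4$: agreement of local components at almost all finite primes, together with compatibility of archimedean $L$-parameters. It does not assert a priori that $\pi^g_p$ is unramified, let alone that it has the prescribed Satake parameter, at the finitely many bad primes. You flag this as ``the main obstacle I anticipate,'' but then proceed as though it were already available; that is precisely the point where the argument needs a separate input. The paper supplies it by an elementary device that you did not use: form the degree-$4$ spin $L$-functions $L(s,\pi)$ and $L(s,\pi^g)$, divide the two functional equations, and compare the zeros of the resulting ratio of finitely many local factors prime by prime (exactly as in the proof of Theorem \ref{liftingtheorem}). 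Using Weissauer's Ramanujan bound for $\pi_p$ to separate the zeros $\{\alpha^{-1},\dots,\delta^{-1}\}$ from the poles $\{(\alpha p)^{-1},\dots,(\delta p)^{-1}\}$, one first concludes that $L(s,\pi^g_p)^{-1}$ is a degree-$4$ polynomial in $p^{-s}$, hence $\pi^g_p$ is spherical, and then that its Satake parameters coincide with those of $\pi_p$. This elementary zero-comparison is what replaces the local--global compatibility you invoked without justification.

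On the uniqueness clause, your detour through the Asgari--Shahidi transfer to $\GL_4$ and strong multiplicity one for $\GL_n$ is workable but more involved than the paper's; the paper simply cites the strong multiplicity one result for globally generic cuspidal representations of $\GSp_4$ (Theorem 9 of \cite{GRS}) directly. More to the point, your final step (``local uniqueness of generic representations in a given $L$-packet at every place forces $\sigma_v\cong\pi^g_v$'') again relies on exact local compatibility at potentially ramified places — the same gap as above. If you want to salvage your route, you would have to run the functional-equation/zeros comparison argument here as well to first establish that $\sigma_p$ is unramified at the finitely many undetermined primes.
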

\begin{proof} Let $\Pi_4$ be the lifting of $\pi$ to $\GL_4$ constructed in Theorem \ref{liftingtheorem}. Since $\Pi_4$ is symplectic, we can apply Theorem 4 of \cite{GRS}. The conclusion is that there exists a non-zero representation $\sigma=\sigma_1\oplus\ldots\oplus\sigma_m$ of $\PGSp_4(\A)$ such that each $\sigma_i$ is globally generic, cuspidal, automorphic and weakly lifts to $\Pi_4$. By Theorem 9 of \cite{GRS}, there can be only one $\sigma_i$, i.e., $\sigma$ is itself irreducible. Note that ``weak lift'' in \cite{GRS} includes the condition that the lift is functorial with respect to archimedean $L$-parameters (see \cite{GRS}, p.\ 733). In particular, the archimedean component of $\sigma$ is the generic discrete series representation of $\PGSp_4(\R)$ lying in the same $L$-packet as $\pi_\infty$. Evidently, the local components $\sigma_p$ and $\pi_p$ are isomorphic for almost all primes $p$. It remains to show that this is the case for \emph{all} primes $p$. This can be done by a similar argument as in the proof of Theorem \ref{liftingtheorem}. Dividing the functional equations for the degree $4$ $L$-functions $L(s,\pi)$ and $L(s,\sigma)$, and comparing the resulting zeros at a particular prime $p$, shows first that $L(s,\sigma_p)$ is a degree $4$ Euler factor. Hence $\sigma_p$ is an unramified representation. The same comparison of zeros then also implies that $\sigma_p$ and $\pi_p$ have the same Satake parameters. The last assertion follows from the strong multiplicity one result Theorem 9 of \cite{GRS}.
\end{proof}

With $F$ and $\pi$ as above, we constructed a strong functorial lifting of $\pi$ to $\GL_4$ with respect to the natural inclusion of dual groups $\SSp_4(\C)\subset\GL_4(\C)$. Similarly, we will now produce a strong functorial lifting of $\pi$ to $\GL_5$ with respect to the morphism $\rho_5:\:\SSp_4(\C)\rightarrow\GL_5(\C)$ of dual groups, where $\rho_5$ is the irreducible $5$-dimensional representation of $\SSp_4(\C)$. Let $L(s,\pi,\rho_5)$ be the degree $5$ (standard) $L$-function of $F$. If the $L$-parameter at a prime $p$ is given by (\ref{localparameternonarcheq}), then
\begin{equation}\label{deg5Eulerfactoreq}
 L(s,\pi_p,\rho_5)=\frac1{(1-p^{-s})(1-\chi_1(p)p^{-s})(1-\chi_1^{-1}(p)p^{-s})(1-\chi_2(p)p^{-s})(1-\chi_2^{-1}(p)p^{-s})}.
\end{equation}

\begin{theorem}\label{liftingGL5theorem}
 Let $F$ and $\pi=\otimes\pi_v$ be as in Theorem \ref{liftingtheorem}. Then there exists a cuspidal, automorphic representation $\Pi_5$ of $\GL_5(\A)$ such that
 \begin{equation}
  L(s,\pi,\rho_5)=L(s,\Pi_5)
 \end{equation}
 (equality of completed Euler products). The representation $\Pi_5$ is a strong functorial lifting of $\pi$ to $\GL_5$ with respect to the morphism $\rho_5:\:\SSp_4(\C)\rightarrow\GL_5(\C)$ of dual groups. Moreover, $\Pi_5$ is orthogonal, i.e., the symmetric square $L$-function $L(s,\Pi_5,{\rm Sym}^2)$ has a pole at $s=1$.
\end{theorem}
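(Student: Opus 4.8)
The plan is to construct $\Pi_5$ out of the $\GL_4$-lift $\Pi_4$ of Theorem~\ref{liftingtheorem} by composing with Kim's exterior square functoriality from $\GL_4$ to $\GL_6$, established in \cite{kim} and completed to a strong (all-places) lift by Henniart \cite{He2009}. Applied to the cuspidal representation $\Pi_4$, this produces an isobaric automorphic representation $\Lambda^2\Pi_4$ of $\GL_6(\A)$ whose local parameter at each place $v$ is $\Lambda^2\circ\phi_v$, where $\phi_v$ denotes the Langlands parameter of $\pi_v$; by the trivial central character assumption we may take $\phi_v$ to have image in $\SSp_4(\C)$. The six-dimensional exterior square of the standard representation of $\GL_4(\C)$, restricted to $\SSp_4(\C)$, splits as $\triv\oplus\rho_5$: the defining alternating form gives an $\SSp_4(\C)$-invariant line in $\Lambda^2\C^4$, and its complement is the five-dimensional irreducible $\rho_5$. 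Consequently $L(s,\Pi_{4,v},\Lambda^2)=\zeta_v(s)\,L(s,\pi_v,\rho_5)$ at every place, and hence $L(s,\Pi_4,\Lambda^2)=\zeta(s)\,L(s,\pi,\rho_5)$.

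I would then split off the $\GL_5$-piece. Since $\Pi_4$ is cuspidal and self-dual, $L(s,\Pi_4\times\Pi_4)=L(s,\Pi_4,\mathrm{Sym}^2)\,L(s,\Pi_4,\Lambda^2)$ has a simple pole at $s=1$, and because $\Pi_4$ is symplectic (Theorem~\ref{liftingtheorem}) this pole lies entirely in the factor $L(s,\Pi_4,\Lambda^2)$ and is simple. Writing $\Lambda^2\Pi_4$ as an isobaric sum of unitary cuspidal representations, it follows that the trivial character of $\GL_1$ occurs among the summands exactly once, so $\Lambda^2\Pi_4=\triv\boxplus\Pi_5$ for an isobaric automorphic representation $\Pi_5$ of $\GL_5(\A)$ with $L(s,\Pi_5)=L(s,\pi,\rho_5)$ (equality of completed $L$-functions). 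By construction the local parameter of $\Pi_{5,v}$ is $\rho_5\circ\phi_v$, so $\Pi_5$ is a strong functorial lifting of $\pi$ along $\rho_5$; it remains to show it is cuspidal, and then to check orthogonality.

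For cuspidality I would argue by contradiction: suppose $\Pi_5=\sigma_1\boxplus\cdots\boxplus\sigma_r$ with $r\ge2$ and each $\sigma_i$ unitary cuspidal on $\GL_{n_i}$, $\sum n_i=5$. Since $\pi$ has full level, $\Pi_4$, hence $\Pi_5$ and each $\sigma_i$, is unramified at every finite prime, and by Weissauer's Ramanujan theorem \cite{weissram} the Satake parameters of every $\Pi_{5,p}$, namely $\{1,\alpha_p^{\pm1},\beta_p^{\pm1}\}$, all have absolute value one. A $\GL_1$-constituent $\chi$ would be an everywhere unramified unitary Hecke character, hence $\triv$ or $|\cdot|^{it}$; in the first case $\triv$ would occur in $\Lambda^2\Pi_4$ with multiplicity at least two, contradicting the simplicity of the pole of $L(s,\Pi_4,\Lambda^2)$, and in the second case a comparison of Satake parameters at a density-one set of primes together with strong multiplicity one would insert a zeta factor into $L(s,\pi,\rho_5)$, forcing $\pi$ to be CAP and therefore, for a holomorphic Siegel eigenform of scalar weight and full level, a Saito-Kurokawa lift, contrary to hypothesis. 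The only remaining decomposition is $\Pi_5=\sigma_2\boxplus\sigma_3$ with $\sigma_2$ on $\GL_2$ and $\sigma_3$ on $\GL_3$; by Kim's analysis of the exterior square this can occur essentially only when $\Pi_4$ is automorphically induced from a cuspidal representation of $\GL_2$ over a quadratic field (or, after twisting, a symmetric cube from $\GL_2$), which exhibits $\Pi_4$ as a functorial lift from an $\SO_4$ and is therefore excluded by Lemma~\ref{notfromSO4lemma} (together with the analogous archimedean obstruction for the non-split quasi-split form of $\SO_4$), the symmetric-cube and residual cases being ruled out by the non-Saito-Kurokawa hypothesis via the functional-equation comparison used in the proof of Theorem~\ref{liftingtheorem} and the entireness established in Theorem~\ref{entirenesstheorem}. (Alternatively, cuspidality is immediate from the classification of cuspidal representations of $\GSp_4$, but I would prefer the hands-on route consistent with the rest of the paper.) Thus $\Pi_5$ is cuspidal; I expect this exclusion of the degenerate cases, and in particular a clean treatment of the $\GL_2\boxplus\GL_3$ possibility across all inner forms of $\SO_4$, to be the principal difficulty.

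Finally, orthogonality. As the contragredient commutes with $\Lambda^2$ and $\Pi_4$ is self-dual, $\triv\boxplus\widetilde\Pi_5=\widetilde{\Lambda^2\Pi_4}=\Lambda^2\Pi_4=\triv\boxplus\Pi_5$, so $\Pi_5$ is self-dual. Hence $L(s,\Pi_5\times\Pi_5)=L(s,\Pi_5,\mathrm{Sym}^2)\,L(s,\Pi_5,\Lambda^2)$ has a simple pole at $s=1$, located in exactly one of the two factors. Since $\Pi_5$ is a self-dual cuspidal representation of $\GL_5$ with $5$ odd, its exterior square $L$-function cannot have a pole at $s=1$ (symplectic self-dual parameters are even-dimensional), so the pole must be carried by $L(s,\Pi_5,\mathrm{Sym}^2)$, that is, $\Pi_5$ is orthogonal, which is the last assertion of the theorem.
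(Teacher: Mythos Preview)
Your overall strategy matches the paper's: use Kim's exterior square lift to write $\Lambda^2\Pi_4=\triv\boxplus\Pi_5$ with $L(s,\Pi_5)=L(s,\pi,\rho_5)$, then exclude the degenerate isobaric decompositions of $\Pi_5$. The construction of $\Pi_5$ is fine, and your orthogonality argument (odd $n$ forces orthogonal type) is in fact simpler than the paper's, which instead observes $L_f(s,\Pi_5,\Lambda^2)=L_f(s,\Pi_4,{\rm Sym}^2)$ and uses that the latter is entire.

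The weak point is your exclusion of the $\GL_2\boxplus\GL_3$ case. You appeal to Kim's classification of non-cuspidal exterior squares and try to recast the situation as $\Pi_4$ arising from some form of $\SO_4$, then invoke Lemma~\ref{notfromSO4lemma}; but the link is vague (you yourself flag it as the ``principal difficulty'') and you would need to handle automorphic induction from \emph{all} quadratic extensions, not just the split $\SO_4$ treated in that lemma. The paper bypasses this entirely with a clean $L$-function identity. Since $\Lambda^2\rho_5={\rm Sym}^2\rho_4$ as representations of $\SSp_4(\C)$ (this is (\ref{rhorelationseq2})), one has
\[
 L_f(s,\Pi_5,\Lambda^2)=L_f(s,\Pi_4,{\rm Sym}^2),
\]
and the right side is entire because $\Pi_4$ is symplectic. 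But if $\Pi_5=\tau_1\boxplus\tau_2$ with $\tau_i$ cuspidal on $\GL_2$ and $\GL_3$ (both everywhere unramified), then
\[
 L_f(s,\Pi_5,\Lambda^2)=L_f(s,\omega_{\tau_1})\,L_f(s,\tau_1\times\tau_2)\,L_f(s,\omega_{\tau_2}\tilde\tau_2),
\]
and the Hecke $L$-function $L_f(s,\omega_{\tau_1})$ already produces a pole on ${\rm Re}(s)=1$. This contradiction is immediate and self-contained, with no case analysis.

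A smaller issue: your exclusion of a $|\cdot|^{it}$ constituent with $t\neq0$ via ``forcing $\pi$ to be CAP'' is not justified as stated, since the CAP characterizations concern poles at $s=1$, not at $s=1-it$. The paper instead cites a known result (\cite{Gr}, Theorem~2) that $L_f(s,\pi,\rho_5)$ has no poles anywhere on the line ${\rm Re}(s)=1$, which disposes of all $\GL_1$ summands at once.
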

\begin{proof}
A straightforward calculation verifies that
\begin{equation}\label{exteriorsquaredeg5relationeq}
 L_f(s,\Pi_4,\Lambda^2)=L_f(s,\pi,\rho_5)\zeta(s).
\end{equation}
Here, the subscript $f$ indicates that the Euler product defining the $L$-functions is taken over finite places only, and $\zeta(s)$ denotes the Riemann zeta function. By Theorem \ref{liftingtheorem}, the function $L_f(s,\Pi_4,\Lambda^2)$ has a simple pole at $s=1$. It follows that $L_f(s,\pi,\rho_5)$ is holomorphic and non-zero at $s=1$. Together with \cite{Gr}, Theorem 2, we obtain that $L_f(s,\pi,\rho_5)$ has no poles on ${\rm Re}(s)=1$. Now by \cite{kimsar}, Theorem A, $L(s,\Pi_4,\Lambda^2)$ is the $L$-function of an automorphic representation of $\GSp_6(\A)$ of the form
\begin{equation}\label{liftingGL5theoremeq1}
 {\rm Ind}(\tau_1\otimes\ldots\otimes\tau_m)
\end{equation}
where $\tau_1,\ldots,\tau_m$ are unitary, cuspidal, automorphic representations of $\GL_{n_i}(\A)$, $n_1+\ldots+n_m=6$. Since $L_f(s,\Pi_4,\Lambda^2)$ has a simple pole at $s=1$, it follows that exactly one of the $\tau_i$, say $\tau_m$, is the trivial representation of $\GL_1(\A)$. Cancelling out one zeta factor, we see that
\begin{equation}\label{liftingGL5theoremeq2}
 L_f(s,\pi,\rho_5)=L_f(s,\tau_1)\ldots L_f(s,\tau_{m-1}).
\end{equation}
Observe that since $\pi$ is unramified at every finite place, the same must be true for the $\tau_i$. If we had $n_i=1$ for some $i$, then $L(s,\tau_i)$, and therefore the right hand side of (\ref{liftingGL5theoremeq1}), would have a pole on ${\rm Re}(s)=1$. This contradicts the observation from above that $L_f(s,\pi,\rho_5)$ has no poles on ${\rm Re}(s)=1$. Hence $n_i>1$ for all $i$, so that the only possibilities for the set $\{n_1,\ldots,n_{m-1}\}$ are $\{2,3\}$ and $\{5\}$. Assume the former is the case, so that, say, $\tau_1$ is a cuspidal representation of $\GL_2(\A)$ and $\tau_2$ is a cuspidal representation of $\GL_3(\A)$. Let $\Pi_5={\rm Ind}(\tau_1\otimes\tau_2)$. It is not hard to verify that
\begin{equation}\label{liftingGL5theoremeq3}
 L_f(s,\Pi_5,\Lambda^2)=L_f(s,\Pi_4,{\rm Sym}^2),
\end{equation}
which we know is an entire function. On the other hand,
\begin{equation}\label{liftingGL5theoremeq4}
 L_f(s,\Pi_5,\Lambda^2)=L_f(s,\omega_{\tau_1})L_f(s,\tau_1\times\tau_2)L_f(s,\omega_{\tau_2}\times\tilde\tau_2),
\end{equation}
where $\omega_{\tau_i}$ is the central character of $\tau_i$. Since the latter is everywhere unramified, the right hand side of (\ref{liftingGL5theoremeq4}) has a pole on ${\rm Re}(s)=1$. This contradiction shows that the assumption $\{n_1,\ldots,n_{m-1}\}=\{2,3\}$ must be wrong. Hence $\Pi_5:=\tau_1$ is a cuspidal representation of $\GL_5(\A)$ such that
\begin{equation}\label{liftingGL5theoremeq5}
 L_f(s,\pi,\rho_5)=L_f(s,\Pi_5).
\end{equation}
This implies that $\Pi_5$ is a lifting of $\pi$ (with respect to the morphism $\rho_5$ of dual groups) at every finite place. At the archimedean place, observe that the $L$-parameter of ${\rm Ind}(\tau_1\otimes\tau_2)$ equals the exterior square of the $L$-parameter of $\Pi_4$, since the lifting of \cite{kimsar} is strong. On the other hand, an explicit calculation shows that the exterior square of the $L$-parameter of $\Pi_4$ equals the $L$-parameter of $\pi$ composed with $\rho_5$, plus the trivial representation of $W_\R$ (in other words, the archimedean place behaves exactly as the finite places, so that (\ref{exteriorsquaredeg5relationeq}) holds in fact for the completed $L$-functions). Cancelling out the trivial representation on both sides, one obtains an equality of the $L$-parameter of $\tau_1$ with the $L$-parameter of $\pi$ composed with $\rho_5$. Hence $\Pi_5$ is a functorial lifting of $\pi$ also at the archimedean place.

\vspace{3ex}
Finally, $\Pi_5$ is orthogonal since the exterior square $L_f(s,\Pi_5,\Lambda^2)$ has no pole at $s=1$; see (\ref{liftingGL5theoremeq3}). This concludes the proof.
\end{proof}
\subsection{Analytic properties of $L$-functions}\label{analyticpropertiesapplicationssec}
For $n\in\{1,4,5,10,14,16\}$ let $\rho_n$ be the $n$-dimensional irreducible representation of $\SSp_4(\C)$. In the notation of \cite{FuHa1991}, Sect.\ 16.2, we have $\rho_4=\Gamma_{1,0}$, $\rho_5=\Gamma_{0,1}$, $\rho_{10}=\Gamma_{2,0}$, $\rho_{14}=\Gamma_{0,2}$ and $\rho_{16}=\Gamma_{1,1}$. Of course, $\rho_4$ is the natural representation of $\SSp_4(\C)$ on $\C^4$, which is also called the spin representation. An explicit formula for the representation $\rho_5$ as a map $\SSp_4(\C)\rightarrow\SO_5(\C)$ is given in Appendix A.7 of \cite{NF}. (Somewhat confusingly, in the theory of Siegel modular forms $\rho_5$ is often referred to as the standard representation, even though it is $\rho_4$ that is the non-trivial representation of lowest dimension.) The representation $\rho_{10}$ is the adjoint representation of $\SSp_4(\C)$ on its Lie algebra. We have the following relations,
\begin{align}
 \Lambda^2\rho_4&=\rho_1+\rho_5,\label{rhorelationseq1}\\
\Lambda^2\rho_5 = {\rm Sym}^2\rho_4&=\rho_{10},\label{rhorelationseq2}\\
 {\rm Sym}^2\rho_5&=\rho_1+\rho_{14},\label{rhorelationseq3}\\
 \rho_4\otimes\rho_5&=\rho_4+\rho_{16}.\label{rhorelationseq4}
\end{align}
Let $F$ and $\pi$ be as in Theorem \ref{liftingtheorem}.
To each $\rho_n$ we have an associated global $L$-function $L(s,\pi,\rho_n)$. We will list the archimedean $L$- and $\varepsilon$-factors (the latter with respect to the character $\psi^{-1}$, where $\psi(x)=e^{-2\pi ix}$). Let $\Gamma_\R$ and $\Gamma_\C$ be as in (\ref{GammaRCdefeq}). The archimedean factors depend only on the minimal $K$-type $(l,l)$ of $\pi_\infty$.
$$\renewcommand{\arraystretch}{1.3}
 \begin{array}{ccc}
  \rho&L(s,\pi_\infty,\rho)&\varepsilon(s,\pi_\infty,\rho,\psi^{-1})\\\hline
  \rho_1&\Gamma_\R(s)&1\\
  \rho_4&\Gamma_\C(s+\frac12)\Gamma_\C(s+l-\frac32)&(-1)^l\\
  \rho_5&\Gamma_\R(s)\Gamma_\C(s+l-1)\Gamma_\C(s+l-2)&1\\
  \rho_{10}&\Gamma_\R(s+1)^2\,\Gamma_\C(s+1)\Gamma_\C(s+l-1)\Gamma_\C(s+l-2)\Gamma_\C(s+2l-3)&1\\
  \rho_{14}&\Gamma_\R(s)^2\,\Gamma_\C(s+1)\Gamma_\C(s+l-1)\Gamma_\C(s+l-2)&1\\
    &\Gamma_\C(s+2l-2)\Gamma_\C(s+2l-3)\Gamma_\C(s+2l-4)&\\
  \rho_{16}&\Gamma_\C(s+\frac12)^2\,\Gamma_\C(s+l-\frac12)\Gamma_\C(s+l-\frac32)^2\,\Gamma_\C(s+l-\frac52)&-1\\
    &\Gamma_\C(2+2l-\frac52)\Gamma_\C(2+2l-\frac72)&
 \end{array}
$$
These factors are normalized so that they fit into a functional equation relating $s$ and $1-s$, and hence differ from the traditional factors used in the theory of Siegel modular forms. For example, the classical Andrianov spin $L$-function relates $s$ and $2l-2-s$; see \cite{An1974}, Theorem 3.1.1. To obtain the Andrianov $\Gamma$-factors, one has to replace $s$ by $s-l+\frac32$ in the above factor for $\rho_4$.

\begin{theorem}\label{Lrhonanalyticpropertiestheorem}
 Let $F$ and $\pi$ be as in Theorem \ref{liftingtheorem}. The Euler products defining the $L$-functions $L_f(s,\pi,\rho_n)$, for $n\in\{4,5,10,14,16\}$, are absolutely convergent for ${\rm Re}(s)>1$. They have meromorphic continuation to the entire complex plane, have no zeros or poles on ${\rm Re}(s)\geq1$, and the completed $L$-functions (using the above archimedean factors) satisfy the functional equation
 $$
  L(s,\pi,\rho_n)=\varepsilon(s,\pi,\rho_n)L(1-s,\pi,\rho_n).
 $$
Furthermore, for $n\in\{4,5,10\}$, the functions $L(s,\pi,\rho_n)$ are entire and bounded in vertical strips.
\end{theorem}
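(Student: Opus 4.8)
The plan is to build on the functorial liftings $\Pi_4$ and $\Pi_5$ established in Theorems~\ref{liftingtheorem} and~\ref{liftingGL5theorem}, translating each $L(s,\pi,\rho_n)$ into a standard Rankin--Selberg $L$-function on $\GL_m$. Explicitly, $L(s,\pi,\rho_4)=L(s,\Pi_4)$, $L(s,\pi,\rho_5)=L(s,\Pi_5)$, and via the relations~\eqref{rhorelationseq2},~\eqref{rhorelationseq3},~\eqref{rhorelationseq4} we have, up to finitely many Euler factors, $L(s,\pi,\rho_{10})=L(s,\Pi_5,\Lambda^2)=L(s,\Pi_4,\mathrm{Sym}^2)$, $L(s,\pi,\rho_{14})\zeta(s)=L(s,\Pi_5,\mathrm{Sym}^2)$, and $L(s,\pi,\rho_{16})L(s,\Pi_4)=L(s,\Pi_4\times\Pi_5)$. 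So the first step is to record these identities carefully, noting that since $\pi$ is unramified at all finite places and $\pi_\infty$ is determined by $(l,l)$, the archimedean factors match the ones tabulated above (one checks this place by place from the explicit $L$-parameters~\eqref{localparameterarcheq},~\eqref{localparameternonarcheq}).

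Next I would invoke the known analytic theory of these $\GL_m$ $L$-functions. Absolute convergence for $\mathrm{Re}(s)>1$ of $L_f(s,\pi,\rho_n)$ follows from the bound towards Ramanujan for $\pi$ (Weissauer~\cite{weissram}) applied to the explicit Euler factors, exactly as in Lemma~\ref{kimsarconv}; for the degree $14$ and $16$ cases one also uses that $\Pi_5$, being cuspidal on $\GL_5$, has the trivial bound, or again the temperedness of $\pi$. Meromorphic continuation and the functional equation relating $s$ and $1-s$ come from: the standard theory of $L$-functions on $\GL_m$ (Godement--Jacquet) for $\rho_4$, $\rho_5$; the theory of the symmetric and exterior square $L$-functions (Shahidi, Kim~\cite{kim}) for $\rho_{10}$, $\rho_{14}$; and Jacquet--Piatetski-Shapiro--Shalika Rankin--Selberg theory for $\rho_{16}=\Pi_4\times\Pi_5$ (after cancelling the $L(s,\Pi_4)$ factor). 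Non-vanishing on $\mathrm{Re}(s)=1$ for $\rho_4,\rho_5$ is the classical statement for cuspidal $\GL_m$ $L$-functions (Jacquet--Shalika), and for $\rho_{10},\rho_{14},\rho_{16}$ it follows from the non-vanishing results for Rankin--Selberg and symmetric/exterior square $L$-functions of cuspidal representations (Shahidi). Absence of poles on $\mathrm{Re}(s)\ge1$ for $\rho_4,\rho_5$ is cuspidality of $\Pi_4,\Pi_5$; for $\rho_{10}$ one uses~\eqref{rhorelationseq2} together with the fact that $\Pi_5$ is orthogonal, so $L(s,\Pi_5,\Lambda^2)=L(s,\Pi_5,\rho_{10})$ has no pole at $s=1$ (the pole of the symmetric square of $\Pi_5$, not the exterior square, is the one present), hence no pole on the line.

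For the final sentence --- that $L(s,\pi,\rho_n)$ is entire and bounded in vertical strips for $n\in\{4,5,10\}$ --- the plan is as follows. Entireness for $n=4$ is the cuspidality of $\Pi_4$ combined with $\Pi_4$ not being an abelian twist (Saito--Kurokawa is excluded), so $L(s,\Pi_4)$ is entire by Godement--Jacquet; for $n=5$ it is cuspidality of $\Pi_5$. For $n=10$, $L(s,\pi,\rho_{10})=L(s,\Pi_5,\Lambda^2)$ is an exterior square $L$-function of a cuspidal, self-dual, \emph{orthogonal} $\GL_5$ representation; since $\Pi_5$ is orthogonal, $L(s,\Pi_5,\mathrm{Sym}^2)$ carries the pole and the exterior square $L(s,\Pi_5,\Lambda^2)$ is entire (by Kim~\cite{kim} and Shahidi's analysis of the poles, where the exterior square of an orthogonal representation is holomorphic). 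Boundedness in vertical strips then follows in all three cases from the Gelbart--Lapid theorem~\cite{GL} exactly as invoked in the proof of Theorem~\ref{liftingtheorem}: each of these $L$-functions is accessible by the Langlands--Shahidi method (as a constituent appearing in the constant term of an Eisenstein series on a larger group), hence is meromorphic of finite order, and combined with entireness and the Phragm\'en--Lindel\"of principle one gets boundedness in vertical strips. The main obstacle I anticipate is the $\rho_{10}$ case: one must argue cleanly that it is the \emph{symmetric} square of $\Pi_5$, not the exterior square, that acquires the pole at $s=1$ --- i.e. that the orthogonality of $\Pi_5$ (Theorem~\ref{liftingGL5theorem}) forces $L(s,\Pi_5,\Lambda^2)$ to be entire --- and to reconcile the completed archimedean factors for $\rho_{10}$ with those coming from $L(s,\Pi_5,\Lambda^2)$ via the local Langlands correspondence at infinity; this is a finite but somewhat delicate bookkeeping step.
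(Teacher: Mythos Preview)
Your approach is correct and essentially the same as the paper's: reduce each $L(s,\pi,\rho_n)$ to a standard $L$-function attached to the liftings $\Pi_4$ and $\Pi_5$ via the relations \eqref{rhorelationseq1}--\eqref{rhorelationseq4}, then invoke the known analytic theory (Godement--Jacquet, Jacquet--Shalika, Rankin--Selberg, Langlands--Shahidi, Gelbart--Shahidi/Gelbart--Lapid). Two minor differences are worth noting.

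For $\rho_{10}$, the paper uses the identification $L(s,\pi,\rho_{10})=L(s,\Pi_4,\mathrm{Sym}^2)$ and deduces entireness from the symplecticity of $\Pi_4$ (Theorem~\ref{liftingtheorem}), citing \cite{BG}. You instead use $L(s,\pi,\rho_{10})=L(s,\Pi_5,\Lambda^2)$ and deduce entireness from the orthogonality of $\Pi_5$ (Theorem~\ref{liftingGL5theorem}). These are equivalent by \eqref{liftingGL5theoremeq3}, so both work; the paper's route has the minor advantage that the symmetric square on $\GL_4$ is very cleanly handled in the existing literature, avoiding the bookkeeping you flag as the ``main obstacle.'' Your concern about matching archimedean factors is not really an issue here, since the liftings $\Pi_4,\Pi_5$ are \emph{strong}, so the local $L$-parameters agree at infinity and the archimedean factors match automatically.

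Second, you do not explicitly argue the absence of poles on $\mathrm{Re}(s)\ge 1$ for $\rho_{14}$ and $\rho_{16}$. The paper fills this in exactly as one expects: for $\rho_{14}$, the simple pole of $L(s,\Pi_5,\mathrm{Sym}^2)$ at $s=1$ cancels that of $Z(s)$ in \eqref{Lrhonanalyticpropertiestheoremeq2}; for $\rho_{16}$, the Rankin--Selberg $L(s,\Pi_4\times\Pi_5)$ is entire (since $\Pi_4\not\cong\Pi_5$) and $L(s,\Pi_4)$ is nonvanishing on $\mathrm{Re}(s)\ge 1$, so \eqref{Lrhonanalyticpropertiestheoremeq3} gives the claim. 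This is a small omission in your sketch, easily repaired.
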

\begin{proof} By definition, $L(s,\pi,\rho_4)=L(s,\Pi_4)$ and $L(s,\pi,\rho_5)=L(s,\Pi_5)$. Hence, the analytic properties of $L(s,\pi,\rho_4)$ and $L(s,\pi,\rho_5)$ follow from the known analytic properties of $L$-functions of cuspidal representations on $\GL_n$. For the
absolute convergence of the Euler products in ${\rm Re}(s)>1$,  see \cite{JaSh1981}, Theorem 5.3. As for the adjoint $L$-function, it follows from (\ref{rhorelationseq2}) that
\begin{equation}\label{Lrhonanalyticpropertiestheoremeq1}
 L(s,\Pi_4,{\rm Sym}^2)=L(s,\pi,\rho_{10}).
\end{equation}
Since $\Pi_4$ is symplectic by Theorem \ref{liftingtheorem}, this is an entire function; see~\cite[Thm.\ 7.5]{BG}. The absolute convergence in ${\rm Re}(s)>1$ follows from \cite[Thm.\ 5.3]{JaSh1981}, together with the known automorphy, hence absolute convergence, of $L(s,\Pi_4,\Lambda^2)$. Since symmetric square $L$-functions are accessible via the Langlands-Shahidi method, the boundedness in vertical strips follows from \cite{GeSh2001}, and the functional equation follows from \cite[Cor.\ 6.7]{Sha1988}. The non-vanishing on ${\rm Re}(s)=1$ follows also from the Langlands-Shahidi method; see Sect.\ 5 of \cite{Sh1981}. From (\ref{rhorelationseq3}) we get
\begin{equation}\label{Lrhonanalyticpropertiestheoremeq2}
 L(s,\Pi_5,{\rm Sym}^2)=Z(s)L(s,\pi,\rho_{14}),
\end{equation}
where $Z(s)=\Gamma_\R(s)\zeta(s)$ is the completed Riemann zeta function. Observe that $L(s,\Pi_5,\Lambda^2)$ is absolutely convergent for ${\rm Re}(s)>1$ by (\ref{liftingGL5theoremeq3}). Together with \cite{JaSh1981}, Theorem 5.3, this implies the absolute convergence of $L(s,\Pi_5,{\rm Sym}^2)$, and hence of $L(s,\pi,\rho_{14})$, in ${\rm Re}(s)>1$. The meromorphic continuation of $L(s,\pi,\rho_{14})$ is obvious from (\ref{Lrhonanalyticpropertiestheoremeq2}). Since this is an identity of complete Euler products, and since our liftings are strongly functorial, it also implies the asserted functional equation. By Theorem \ref{liftingGL5theorem} the function $L(s,\Pi_5,{\rm Sym}^2)$ has a simple pole at $s=1$, while otherwise it is holomorphic and non-vanishing on ${\rm Re}(s)=1$. Since the same is true for $Z(s)$, it follows that $L(s,\pi,\rho_{14})$ is holomorphic and non-vanishing on ${\rm Re}(s)=1$. Since
\begin{equation}\label{Lrhonanalyticpropertiestheoremeq3}
 L(s,\Pi_4\times\Pi_5)=L(s,\pi)L(s,\pi,\rho_{16}).
\end{equation}
by (\ref{rhorelationseq4}), similar arguments apply to $L(s,\pi,\rho_{16})$.
\end{proof}

Let $r$ be a positive integer, and $\tau$ a cuspidal, automorphic representation of $\GL_r(\A)$. Let $\sigma_r$ be the standard representation of the dual group $\GL_r(\C)$.
Then we can consider the Rankin-Selberg Euler products $L(s,\pi\times\tau,\rho_n\otimes\sigma_r)$, where $\rho_n$ is one of the irreducible representations of $\SSp_4(\C)$ considered above. For $n=4$ or $n=5$, since $\Pi_4$ and $\Pi_5$ are functorial liftings of $\pi$, we have
\begin{equation}\label{GSp4GLnLfunctioneq}
 L(s,\pi\times\tau,\rho_n\times\sigma_r)=L(s,\Pi_n\times\tau),
\end{equation}
where the $L$-function on the right is a standard Rankin-Selberg $L$-function for $\GL_n\times\GL_r$. From the well-known properties of these $L$-functions, the following result is immediate. For $\varepsilon>0$ and a closed interval $I$ on the real line we use the notation $T_{\varepsilon,I}=\{s\in\C\;|\;{\rm Re}(s)\in I,\:|{\rm Im}(s)|\geq\varepsilon\}$, as in \cite{GeSh2001}.

\begin{theorem}\label{Lrhonsigmaranalyticpropertiestheorem}
 Let $F$ and $\pi$ be as in Theorem \ref{liftingtheorem}. Let $r$ be a positive integer, and $\tau$ a (unitary) cuspidal, automorphic representation of $\GL_r(\A)$. Let $n=4$ or $n=5$. Then the Euler products defining the $\GSp_4\times\GL_r$ $L$-functions $L(s,\pi\times\tau,\rho_n\otimes\sigma_r)$ are absolutely convergent for ${\rm Re}(s)>1$. They have meromorphic continuation to the entire complex plane, and the completed $L$-functions satisfy the functional equation
 \begin{equation}\label{functionalequationtheoremeq2c}
  L(s,\pi\times\tau,\rho_n\otimes\sigma_r)=\varepsilon(s,\pi\times\tau,\rho_n\otimes\sigma_r)L(1-s,\tilde\pi\times\tilde\tau,\rho_n\otimes\sigma_r).
 \end{equation}
 These $L$-functions are entire, bounded in vertical strips, and non-vanishing on ${\rm Re}(s)\geq1$, except in the cases
 \begin{itemize}
  \item $n=r=4$ and $\tau=|\det|^{it}\otimes\Pi_4$, where $t\in\R$ and $\Pi_4$ is the lifting of $\pi$ from Theorem \ref{liftingtheorem}, or
  \item $n=r=5$ and $\tau=|\det|^{it}\otimes\Pi_5$, where $t\in\R$ and $\Pi_5$ is the lifting of $\pi$ from Theorem \ref{liftingGL5theorem}.
 \end{itemize}
 In these cases the function $L(s,\pi\times\tau,\rho_n\otimes\sigma_r)$ is holomorphic except for simple poles at $s=-it$ and $s=1-it$, and is bounded on all sets of the form $T_{\varepsilon,I}$ with $\varepsilon>|t|$.
\end{theorem}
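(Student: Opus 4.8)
The entire statement is a formal consequence of the two lifting theorems together with the known analytic theory of Rankin--Selberg $L$-functions on $\GL_n\times\GL_r$. The plan is to first record that the identity (\ref{GSp4GLnLfunctioneq}) holds as an identity of \emph{completed} $L$-functions (and of $\varepsilon$-factors), and then quote the standard $\GL_n\times\GL_r$ results. For $n=4$, Theorem \ref{liftingtheorem} gives a cuspidal automorphic representation $\Pi_4$ of $\GL_4(\A)$ which is a strong functorial lift of $\pi$ with respect to the natural inclusion $\SSp_4(\C)\subset\GL_4(\C)$; for $n=5$, Theorem \ref{liftingGL5theorem} gives the analogous cuspidal $\Pi_5$ on $\GL_5(\A)$ lifting $\pi$ via $\rho_5$. ``Strong'' here means that at \emph{every} place $v$, including $v=\infty$, the $L$-parameter of $\Pi_{n,v}$ is $\rho_n\circ\varphi_v$, where $\varphi_v$ is the $L$-parameter of $\pi_v$; for the archimedean place this is seen directly by comparing (\ref{localparameterarcheq}) with the definition of $\Pi_\infty$ in (\ref{candidaterepresentationeq}) and with the archimedean $L$-parameter of $\Pi_5$ pinned down in the proof of Theorem \ref{liftingGL5theorem}. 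Consequently, at each place the local factor attached to $\pi_v\times\tau_v$ by $\rho_n\otimes\sigma_r$ coincides with the standard Rankin--Selberg factor of $\Pi_{n,v}\times\tau_v$, so that $L(s,\pi\times\tau,\rho_n\otimes\sigma_r)=L(s,\Pi_n\times\tau)$ and $\varepsilon(s,\pi\times\tau,\rho_n\otimes\sigma_r)=\varepsilon(s,\Pi_n\times\tau)$ as completed objects.

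\textbf{Citing the $\GL_n\times\GL_r$ theory.} With $\tau$ unitary cuspidal on $\GL_r(\A)$ and $\Pi_n$ unitary cuspidal on $\GL_n(\A)$, the assertions follow as follows. Absolute convergence of the Euler product for $\Re(s)>1$ is \cite[Thm.\ 5.3]{JaSh1981}; meromorphic continuation and the functional equation relating $s$ and $1-s$ are the Jacquet--Piatetski-Shapiro--Shalika theory of $\GL_n\times\GL_r$ $L$-functions; boundedness in vertical strips away from poles follows from \cite{GL} (or \cite{GeSh2001}); and non-vanishing on $\Re(s)=1$ is the Jacquet--Shalika / Shahidi non-vanishing theorem. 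For the location of possible poles, recall that $\Pi_4$ is symplectic and $\Pi_5$ orthogonal (Theorems \ref{liftingtheorem}, \ref{liftingGL5theorem}), so both are self-contragredient, $\tilde\Pi_n\cong\Pi_n$. Hence $L(s,\Pi_n\times\tau)$ is entire unless $\tau\cong|\det|^{it}\otimes\tilde\Pi_n=|\det|^{it}\otimes\Pi_n$ for some $t\in\R$, which forces $r=n$ and gives precisely the two exceptional cases in the statement; in that case $L(s,\Pi_n\times\tau)$ is holomorphic apart from simple poles at $s=-it$ and $s=1-it$, and the boundedness on the sets $T_{\varepsilon,I}$ for $\varepsilon>|t|$ is again part of the $\GL_n\times\GL_n$ theory after removing the polar contribution.

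\textbf{Main obstacle.} There is essentially no new analytic input required: everything reduces to results already used in the proofs of Theorems \ref{liftingtheorem} and \ref{liftingGL5theorem}. The one point demanding care is precisely the verification that (\ref{GSp4GLnLfunctioneq}) is valid for the \emph{archimedean} factors, so that the functional equation and the pole statements hold for the completed $L$-functions rather than merely their finite parts; but, as noted above, this is immediate from the explicit archimedean $L$-parameters and the strength of the liftings. I would therefore present the proof as a short sequence of reductions and citations, spending the bulk of the written argument on the pole analysis that produces the exceptional cases.
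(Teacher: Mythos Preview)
Your proposal is correct and matches the paper's approach exactly: the paper also reduces everything to the identity $L(s,\pi\times\tau,\rho_n\otimes\sigma_r)=L(s,\Pi_n\times\tau)$ via the strong liftings and then cites the standard $\GL_n\times\GL_r$ Rankin--Selberg theory (specifically \cite{CPS2004} for the pole location and \cite{GeSh2001} for boundedness in vertical strips). Your write-up is in fact more detailed than the paper's two-line proof.
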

\begin{proof}
For the precise location of poles, see Theorem 2.4 of \cite{CPS2004}. For boundedness in vertical strips, see Corollary 2 on p.\ 80 of \cite{GeSh2001}.
\end{proof}

\begin{theorem}\label{Lrhonrhoranalyticpropertiestheorem}
 Let $F$ and $F'$ be Siegel cusp forms with respect to $\SSp_4(\Z)$. Assume that $F$ and $F'$ are Hecke eigenforms, that they are not Saito-Kurokawa lifts and that $\pi$ resp.\ $\pi'$ are the associated cuspidal, automorphic representations of $\GSp_4(\A)$. Let $n\in\{4,5\}$ and $n'\in\{4,5\}$. Then the Euler products defining the $\GSp_4\times\GSp_4$ $L$-functions $L(s,\pi\times\pi',\rho_n\otimes\rho_{n'})$ are absolutely convergent for ${\rm Re}(s)>1$. They have meromorphic continuation to the entire complex plane, and the completed $L$-functions satisfy the expected functional equation. These functions are entire, bounded in vertical strips, and non-vanishing on ${\rm Re}(s)\geq1$, except if $n=n'$ and $F$ and $F'$ have the same Hecke eigenvalues. In these cases the function $L(s,\pi\times\pi',\rho_n\otimes\rho_{n'})$ is holomorphic except for simple poles at $s=0$ and $s=1$, and is bounded on all sets of the form $T_{\varepsilon,I}$ with $\varepsilon>0$.
\end{theorem}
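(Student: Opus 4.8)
The plan is to reduce the $\GSp_4 \times \GSp_4$ statement to the known analytic properties of Rankin--Selberg $L$-functions on $\GL_n \times \GL_{n'}$, exactly as in the proof of Theorem~\ref{Lrhonsigmaranalyticpropertiestheorem}. First, let $\Pi_n$ denote the transfer of $\pi$ to $\GL_n(\A)$ supplied by Theorem~\ref{liftingtheorem} (for $n=4$) and Theorem~\ref{liftingGL5theorem} (for $n=5$), and similarly $\Pi'_{n'}$ for $\pi'$; all four of these are cuspidal, self-contragredient automorphic representations. Because these are \emph{strong} functorial liftings, for every place $v$ the local $L$- and $\varepsilon$-factors of $\pi_v \times \pi'_v$ with respect to $\rho_n \otimes \rho_{n'}$ agree with those of $\Pi_{n,v} \times \Pi'_{n',v}$, so we have the equality of completed Euler products
\begin{equation}\label{deg20rseq}
 L(s,\pi\times\pi',\rho_n\otimes\rho_{n'})=L(s,\Pi_n\times\Pi'_{n'}).
\end{equation}
This identity, together with the standard theory of Rankin--Selberg integrals for $\GL_n \times \GL_{n'}$, immediately yields absolute convergence for $\Re(s)>1$ (by \cite{JaSh1981}, Theorem~5.3, applied to the pair $(\Pi_n,\Pi'_{n'})$), meromorphic continuation to $\C$, and the functional equation relating $s$ to $1-s$ with the $\varepsilon$-factor built from the local Langlands correspondence at every place.

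Next I would address holomorphy, poles, and non-vanishing. For a Rankin--Selberg $L$-function $L(s,\Pi_n\times\Pi'_{n'})$ of two unitary cuspidal representations, it is classical (see \cite{CPS2004}, Theorem~2.4, or the references in the proof of Theorem~\ref{Lrhonsigmaranalyticpropertiestheorem}) that the completed $L$-function is entire unless $n=n'$ and $\Pi'_{n'}\cong |\det|^{it}\otimes\widetilde{\Pi_n}$ for some $t\in\R$, in which case it is holomorphic except for simple poles at $s=-it$ and $s=1-it$. Here $\widetilde{\Pi_n}\cong\Pi_n$ since our transfers are self-dual. Since $\Pi_n$ and $\Pi'_{n'}$ are unramified at every finite place (being transfers of everywhere-unramified representations) and have fixed archimedean infinitesimal character, the twist $|\det|^{it}$ can only occur with $t=0$: comparing central characters or any single unramified local factor forces $t=0$. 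Thus the exceptional case is precisely $n=n'$ and $\Pi_n\cong\Pi'_{n'}$, with the pole then at $s=0$ and $s=1$ and simple. The non-vanishing on $\Re(s)\geq1$ follows from the non-vanishing of Rankin--Selberg $L$-functions on the line $\Re(s)=1$ (Shahidi, and also Jacquet--Shalika), and boundedness on the sets $T_{\varepsilon,I}$ with $\varepsilon>0$ follows from Corollary~2 on p.~80 of \cite{GeSh2001} applied to $L(s,\Pi_n\times\Pi'_{n'})$, once the possible pole at $s\in\{0,1\}$ is excised by the condition $|{\rm Im}(s)|\geq\varepsilon>0$.

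The only genuine point requiring care is to translate ``$\Pi_n\cong\Pi'_{n'}$'' back into ``$n=n'$ and $F,F'$ have the same Hecke eigenvalues''. When $n=n'=4$, the representation $\Pi_4$ determines $\pi$ (hence $F$ up to scalar) at every finite place by the explicit description of its local parameters in \eqref{localparameternonarcheq}, so $\Pi_4\cong\Pi'_4$ forces $\pi_p\cong\pi'_p$ for all $p$, i.e. equal Satake parameters, i.e. equal Hecke eigenvalues $\lambda_F(n)=\lambda_{F'}(n)$ for all $n$; conversely equal Hecke eigenvalues give $\pi_p\cong\pi'_p$ for all finite $p$ and the same archimedean parameter, hence $\Pi_4\cong\Pi'_4$. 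The case $n=n'=5$ is identical, using \eqref{deg5Eulerfactoreq}: the degree-$5$ parameter $(1,\chi_1^{\pm1},\chi_2^{\pm1})$ at $p$ determines the unordered pair $\{\chi_1(p)^{\pm1},\chi_2(p)^{\pm1}\}$, which together with $\chi_1\chi_2\sigma^2=1$ and the normalization $\omega_\pi=1$ recovers the Satake parameters of $\pi_p$. Finally, when $n\neq n'$ (say $n=4$, $n'=5$) the representations $\Pi_4$ and $\Pi'_5$ live on different groups $\GL_4$ and $\GL_5$ and cannot be isomorphic or twists of each other's duals, so no pole occurs; this is the content of the clause ``except if $n=n'$''. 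I expect the main obstacle to be purely bookkeeping: assembling the correct citations for the $\GL_n\times\GL_{n'}$ Rankin--Selberg theory in the generality needed (entireness, location of poles, boundedness in vertical strips away from poles, non-vanishing on $\Re(s)=1$) and verifying that the exceptional twist parameter is forced to be $t=0$ by the everywhere-unramified hypothesis. There is no new automorphic input beyond the two transfer theorems already established.
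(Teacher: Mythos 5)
Your proof is correct and takes essentially the same route as the paper's (very terse) proof, which simply records the equality $L(s,\pi\times\pi',\rho_n\otimes\rho_{n'})=L(s,\Pi_n\times\Pi'_{n'})$, states that $\Pi_n=\Pi'_n$ is equivalent to $\pi,\pi'$ being nearly equivalent (i.e.\ to $F,F'$ having the same Hecke eigenvalues), and then appeals to the standard $\GL_n\times\GL_{n'}$ Rankin--Selberg theory. You have spelled out all the $\GL_n\times\GL_{n'}$ bookkeeping (absolute convergence, functional equation, pole characterization via self-duality and the triviality of the possible $|\det|^{it}$-twist, boundedness via~\cite{GeSh2001}) that the paper leaves implicit.
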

\begin{proof} By definition,
\begin{equation}\label{Lrhonrhoranalyticpropertiestheoremeq1}
 L(s,\pi\times\pi',\rho_n\otimes\rho_{n'})=L(s,\Pi_n\times\Pi'_{n'}),
\end{equation}
where $\Pi_n$ (resp.\ $\Pi'_{n'}$) is the lifting of $\pi$ (resp.\ $\pi'$) to $\GL_n$ (resp.\ $\GL_{n'}$). Evidently, $F$ and $F'$ have the same Hecke eigenvalues if and only if $\pi$ and $\pi'$ are nearly equivalent if and only if $\Pi_n=\Pi'_n$. Hence everything follows from the properties of $L$-functions for $\GL_n\times\GL_{n'}$.
\end{proof}

\begin{theorem}\label{Lnonnegativitytheorem}
 Let $F$ and $F'$ be Siegel cusp forms with respect to $\SSp_4(\Z)$. Assume that $F$ and $F'$ are Hecke eigenforms, that they are not Saito-Kurokawa lifts and that $\pi$ resp.\ $\pi'$ are the associated cuspidal, automorphic representations of $\GSp_4(\A)$. Let $\chi$ be a Hecke character of $\A^\times$ (possibly trivial) such that $\chi^2=1$, $\tau_2$ be a unitary, cuspidal, automorphic representation of $\GL_2(\A)$ with trivial central character, and $\tau_3$ be a unitary, self-dual, cuspidal, automorphic representation of $\GL_3(\A)$.  Then the central values
 $$L(1/2,\pi\otimes \chi,\rho_4), \quad L(1/2,\pi\otimes \tau_2,\rho_5 \otimes \sigma_2),\quad L(1/2,\pi\otimes \tau_3,\rho_4 \otimes \sigma_3),\quad L(1/2,\pi\times\pi',\rho_4\otimes\rho_5),$$ are all non-negative.

\end{theorem}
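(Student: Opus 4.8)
The plan is to deduce all four non-negativity statements from the functorial transfers constructed in Section~\ref{transfersec} together with Lapid's non-negativity theorem for central Rankin--Selberg $L$-values~\cite{La2003}. Recall that the relevant consequence of~\cite{La2003} is the following: if $\Pi$ is a self-dual cuspidal automorphic representation of $\GL_n(\A)$ of symplectic type (i.e.\ $L(s,\Pi,\Lambda^2)$ has a pole at $s=1$) and $\Pi'$ is a self-dual cuspidal automorphic representation of $\GL_m(\A)$ of orthogonal type (i.e.\ $L(s,\Pi',{\rm Sym}^2)$ has a pole at $s=1$), then $L(1/2,\Pi\times\Pi')\geq0$. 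Accordingly the steps are: (i) express each of the four $L$-functions as a Rankin--Selberg $L$-function of cuspidal automorphic representations of general linear groups; (ii) check self-duality of the $\GL$-side representations; (iii) determine their symplectic/orthogonal type, so that the hypotheses of~\cite{La2003} are met; and (iv) invoke~\cite{La2003}.

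For step~(i), let $\Pi_4,\Pi'_4$ (resp.\ $\Pi_5,\Pi'_5$) be the strong transfers of $\pi,\pi'$ to $\GL_4(\A)$ (resp.\ $\GL_5(\A)$) furnished by Theorem~\ref{liftingtheorem} and Theorem~\ref{liftingGL5theorem}; these are cuspidal precisely because $F,F'$ are not Saito--Kurokawa lifts. Since the transfers are strong (equality of completed Euler products at every place, including $\infty$), we have
\begin{align*}
 L(s,\pi\otimes\chi,\rho_4)&=L(s,\Pi_4\times\chi),\qquad
  L(s,\pi\otimes\tau_2,\rho_5\otimes\sigma_2)=L(s,\Pi_5\times\tau_2),\\
 L(s,\pi\otimes\tau_3,\rho_4\otimes\sigma_3)&=L(s,\Pi_4\times\tau_3),\qquad
  L(s,\pi\times\pi',\rho_4\otimes\rho_5)=L(s,\Pi_4\times\Pi'_5),
\end{align*}
Rankin--Selberg $L$-functions for $\GL_4\times\GL_1$, $\GL_5\times\GL_2$, $\GL_4\times\GL_3$ and $\GL_4\times\GL_5$ respectively; in the first of these we may also write $L(s,\Pi_4\times\chi)=L(s,(\Pi_4\otimes\chi)\times\mathbf{1})$, where $\mathbf{1}$ is the trivial Hecke character of $\GL_1(\A)$. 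In all four cases the two factors are cuspidal representations of general linear groups of distinct rank, hence non-isomorphic, so the $L$-functions are entire and there is no pole at $s=1/2$ to worry about.

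For steps~(ii) and~(iii): $\Pi_4$ is self-dual and symplectic by Theorem~\ref{liftingtheorem}, and therefore $\Pi_4\otimes\chi$ is self-dual (as $\chi^2=1$) and again symplectic, since $L(s,\Pi_4\otimes\chi,\Lambda^2)=L(s,\Pi_4,\Lambda^2)$; the character $\chi$ is orthogonal because it is quadratic. By Theorem~\ref{liftingGL5theorem}, $\Pi_5$ and $\Pi'_5$ are self-dual and orthogonal. The representation $\tau_2$ is self-dual because $\omega_{\tau_2}=1$, and it is symplectic since for $\GL_2$ one has $L(s,\tau_2,\Lambda^2)=L(s,\omega_{\tau_2})=\zeta(s)$, which has a pole at $s=1$. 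The representation $\tau_3$ is self-dual by hypothesis, and a self-dual cuspidal representation of $\GL_3$ is automatically orthogonal: $L(s,\tau_3,\Lambda^2)=L(s,\tilde\tau_3\otimes\omega_{\tau_3})$ is entire, so the simple pole at $s=1$ of $L(s,\tau_3\times\tilde\tau_3)=L(s,\tau_3,{\rm Sym}^2)L(s,\tau_3,\Lambda^2)$ must be carried by $L(s,\tau_3,{\rm Sym}^2)$. Thus each Rankin--Selberg pair above --- namely $(\Pi_4\otimes\chi,\mathbf{1})$, $(\Pi_5,\tau_2)$, $(\Pi_4,\tau_3)$ and $(\Pi_4,\Pi'_5)$ --- consists of one symplectic self-dual cuspidal factor and one orthogonal self-dual cuspidal factor. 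Applying~\cite{La2003} to each pair gives step~(iv) and completes the proof.

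The heavy lifting behind this theorem --- the construction of the transfers $\Pi_4,\Pi_5$ and the proof of their cuspidality --- has already been carried out in Section~\ref{transfersec}, and it is here that the Saito--Kurokawa hypothesis enters. What remains is essentially bookkeeping: correctly identifying the symplectic versus orthogonal type of each representation (which amounts to locating poles of exterior- and symmetric-square $L$-functions, using Theorems~\ref{liftingtheorem} and~\ref{liftingGL5theorem} for $\Pi_4,\Pi_5$ and standard facts about $\GL_2$ and $\GL_3$ for $\tau_2,\tau_3$) and verifying that the pair in each case matches the precise hypotheses of~\cite{La2003}. I do not expect a serious obstacle beyond this; the one point deserving attention is to make sure the $\GL$-side representations are genuinely cuspidal rather than merely isobaric, so that~\cite{La2003} is applicable.
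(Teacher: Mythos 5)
Your proposal is correct and follows essentially the same approach as the paper: express each $L$-function as a Rankin--Selberg $L$-function on general linear groups via the transfers $\Pi_4,\Pi_5$ of Theorems~\ref{liftingtheorem} and~\ref{liftingGL5theorem}, classify each factor as symplectic or orthogonal, and invoke Theorem~1.1 of~\cite{La2003}. The paper's proof is a one-liner recalling precisely these facts; yours fills in the routine verifications (self-duality of $\Pi_4\otimes\chi$, the type of $\tau_2$ and $\tau_3$, etc.) that the paper leaves implicit.
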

\begin{proof}
Recall that the lifting $\Pi_4$ is symplectic by Theorem \ref{liftingtheorem}, and the lifting $\Pi_5$ is orthogonal by Theorem \ref{liftingGL5theorem}. Furthermore, $\tau_2$ is symplectic and $\tau_3$ is orthogonal. All the assertions now follow from Theorem 1.1 of \cite{La2003}.
\end{proof}

\subsection{Critical values of $L$-functions}
If $L(s, \mathcal{M})$ is an arithmetically defined (or motivic) $L$-series associated to an arithmetic object $\mathcal{M}$, it is of interest to study its values at certain critical points $s=m$. For these critical points, conjectures due to Deligne predict that  $L(m,\mathcal{M})$ is the product of a suitable transcendental number $\Omega$ and an algebraic number $A(m,\mathcal{M})$ and furthermore, if $\sigma$ is an automorphism of $\C$, then $A(m,\mathcal{M})^\sigma = A(m, \mathcal{M}^\sigma)$.
In this subsection, we will prove critical value results in the spirit of the above conjecture for $L$-functions associated to a Siegel cusp form of full level.

\vspace{3ex}

For any subring $A \subset \C$, let $S_l\big(\SSp_4(\Z), A\big)$ be the $A$-module consisting of the holomorphic Siegel cusp forms $F(Z) = \sum_S a(F,S)e^{2\pi i\,{\rm tr}(SZ)} $ of weight $l$  for $\SSp_4(\Z)$ for which all the Fourier coefficients $a(F,S)$ lie in $A$. For $F \in S_l\big(\SSp_4(\Z), \C\big)$ and $\sigma \in \Aut(\C)$, define ${}^\sigma \!F$ by $$ {}^\sigma\! F(Z) = \sum_S \sigma (a(F,S)) e^{2\pi i\,{\rm tr}(SZ)}.$$ By work of Shimura~\cite{shimura75}, we know that ${}^\sigma \! F \in S_l\big(\SSp_4(\Z), \C\big)$  and $$S_l\big(\SSp_4(\Z), \Q\big) \otimes_{\Q} \C = S_l\big(\SSp_4(\Z), \C\big). $$ Also, if $F$ is a Hecke eigenform, so is $ {}^\sigma\! F$; see Kurokawa~\cite{kurokawa}.

\vspace{2ex}

Now, let $F \in S_l\big(\SSp_4(\Z), \C\big)$ be an eigenform for all the Hecke operators and let $\pi_F$ be the associated cuspidal, automorphic representation of $\GSp_4(\A)$. We assume that $F$ is not of Saito-Kurokawa type, so that the hypothesis of Theorems \ref{liftingtheorem} is satisfied. Let $\Pi_F$ be the resulting cuspidal, automorphic representation of $\GL_4(\A)$. The representation $\Pi_F$ is regular and algebraic in the sense of~\cite{clozel}. We define the $\sigma$-twist ${}^\sigma\! \Pi_F$ as in~\cite{clozel} or~\cite{walds85}. This can be described locally. If $\Pi_F = \otimes_p \Pi_{F,p} \otimes \Pi_{F,\infty} $, then ${}^\sigma\!\Pi_F= \otimes_p {}^\sigma\!\Pi_{F,p} \otimes \Pi_{F,\infty}$, where for any finite place $p$, \begin{equation}{}^\sigma\! \mathrm{Ind}_{B(\Q_p)}^{\GL_4(\Q_p)} (\chi_1 \otimes \ldots \otimes \chi_4) = \mathrm{Ind}_{B(\Q_p)}^{\GL_4(\Q_p)} ({}^\sigma \!\chi_1' \otimes \ldots \otimes {}^\sigma \!\chi_4') .\end{equation} Here $B$ is the standard Borel of $\GL_4$, $\chi_1, \ldots , \chi_4$ are characters of $\Q_p^\times$ and for any such $\chi$, $${}^\sigma \! \chi'(x) = \sigma(\chi(x)|x|^\frac12)|x|^{-\frac12}.$$
(See Waldspurger's example on~\cite[p.\ 125]{walds85}.) We have the following lemma.

\begin{lemma}\label{sigmatwistlemma}Let $F$ be a holomorphic Siegel cusp form for $\SSp_4(\Z)$ that is an eigenfunction for all the Hecke operators and $\sigma$ an automorphism of $\C$. Suppose that $F$ is not of Saito-Kurokawa type. Then $ {}^\sigma\!F$ is not of Saito-Kurokawa type. Furthermore, if $\Pi_{{}^\sigma\!F}$ is the cuspidal, automorphic representation of $\GL_4(\A)$ obtained from $ {}^\sigma\!F$ by Theorem \ref{liftingtheorem}, then $$\Pi_{ {}^\sigma\!F} = {}^\sigma\!\Pi_F.$$
\end{lemma}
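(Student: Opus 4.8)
The plan is to verify the claim at each place separately and then invoke the uniqueness of the candidate lifting. First I would record that $\sigma$ permutes the Hecke eigenforms of weight $l$: by Shimura~\cite{shimura75} and Kurokawa~\cite{kurokawa}, ${}^\sigma\!F$ is again a holomorphic cuspidal Hecke eigenform of the same weight for $\SSp_4(\Z)$. To see it is not of Saito-Kurokawa type, I would use the characterization via the pole of the spin $L$-function, or more concretely the fact that $F$ is Saito-Kurokawa if and only if its Fourier coefficients satisfy the Maass relations; since these relations are $\Q$-linear (indeed $\Z$-linear) relations among the $a(F,S)$, they are preserved by applying $\sigma$ to every coefficient. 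Hence ${}^\sigma\!F$ is Saito-Kurokawa iff $F$ is, which gives the first assertion and also ensures that Theorem~\ref{liftingtheorem} applies to ${}^\sigma\!F$, producing a cuspidal automorphic representation $\Pi_{{}^\sigma\!F}$ of $\GL_4(\A)$.

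Next I would compare $\Pi_{{}^\sigma\!F}$ with ${}^\sigma\!\Pi_F$ place by place. At a finite prime $p$, the local component $\pi_{F,p}$ is the unramified constituent of $\chi_1 \times \chi_2 \rtimes \sigma_0$ with unramified characters, and $\Pi_{F,p}$ is by definition (\ref{candidaterepresentationeq}), (\ref{localparameternonarcheq}) the unramified representation of $\GL_4(\Q_p)$ whose Satake parameters are the four diagonal entries appearing there. The key input is that the Hecke eigenvalues of ${}^\sigma\!F$ are obtained from those of $F$ by applying $\sigma$ (this is exactly Kurokawa's result, phrased via the action of $\sigma$ on the Hecke algebra), so the Satake parameters of $\pi_{{}^\sigma\!F,p}$ — which are algebraic over $\Q$ because $\pi_F$ is algebraic, equivalently by Weissauer's Ramanujan bound \cite{weissram} they have absolute value $1$ and lie in a number field generated by Hecke eigenvalues — are the $\sigma$-images, suitably normalized by the half-integral powers of $p$, of those of $\pi_{F,p}$. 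Unwinding the normalization in Waldspurger's definition ${}^\sigma\!\chi'(x)=\sigma(\chi(x)|x|^{1/2})|x|^{-1/2}$, one checks that ${}^\sigma\!\Pi_{F,p}$ has precisely these Satake parameters, so ${}^\sigma\!\Pi_{F,p}\cong\Pi_{{}^\sigma\!F,p}$.

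At the archimedean place the argument is immediate: by (\ref{localparameterarcheq}) the representation $\Pi_{F,\infty}$ depends only on the weight $l$, which is unchanged under $\sigma$, and by definition the $\sigma$-twist acts trivially on the archimedean component, so $\Pi_{{}^\sigma\!F,\infty}=\Pi_{F,\infty}={}^\sigma\!\Pi_{F,\infty}$. Having matched every local component, I would conclude $\Pi_{{}^\sigma\!F}={}^\sigma\!\Pi_F$ as representations of $\GL_4(\A)$. (One could equivalently phrase the final step via strong multiplicity one: two cuspidal automorphic representations of $\GL_4(\A)$ that agree at almost all places are equal; here they agree at \emph{all} places, which is even stronger.)

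The main obstacle, and the point requiring the most care, is the bookkeeping of the normalizing factors $|x|^{\pm 1/2}$ in Waldspurger's definition of the $\sigma$-twist against the shifts already built into the parameter (\ref{localparameternonarcheq}) via the central-character relation $\chi_1\chi_2\sigma_0^2=1$ and the factor $p^{s+k-3/2}$ appearing in the classical normalization of the spin $L$-function (cf.\ Theorem C and the discussion of Andrianov's $\Gamma$-factors). One must be sure that the ``arithmetic normalization'' under which $\Pi_F$ is regular algebraic in the sense of \cite{clozel} is the one compatible with the half-integral twist, so that the Satake parameters being acted on by $\sigma$ are genuinely algebraic numbers and the local computation goes through cleanly; everything else is routine.
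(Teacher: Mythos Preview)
Your proposal is correct and follows essentially the same approach as the paper: the first assertion via preservation of the Maass relations under $\sigma$ (the paper cites \cite[p.~76]{eichzag}), and the second via Kurokawa's result $\sigma(\lambda_{F,m})=\lambda_{{}^\sigma\!F,m}$ translated into Satake parameters at each finite prime, with the archimedean component fixed by definition. One small remark: the aside invoking Weissauer's Ramanujan bound is unnecessary and slightly misleading (absolute value $1$ is not the same as algebraicity); what you actually need, and what the paper uses implicitly, is simply that the Hecke eigenvalues are algebraic numbers permuted by $\sigma$, and that the degree-$4$ Euler factor at $p$ is expressible in terms of these eigenvalues, so the normalization $|x|^{\pm 1/2}$ in Waldspurger's twist matches the $p^{k-3/2}$ shift built into the classical spin $L$-function.
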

\begin{proof} First of all, note that the condition of $F$ being of Saito-Kurokawa type is equivalent to simple relations among the Fourier coefficients of $F$ as in~\cite[p.\ 76]{eichzag}. These relations are preserved under the action of $\sigma$. This proves the first part of the lemma. For the second part, we need to show that  $\Pi_{ {}^\sigma\!F, p} = {}^\sigma\!\Pi_{F,p}$ for any prime $p$. Fix such a prime $p$. Suppose that $$\Pi_{ {}^\sigma\!F, p} = \mathrm{Ind}_{B(\Q_p)}^{\GL_4(\Q_p)} (\chi_1'' \otimes \ldots \otimes \chi_4'').$$ Let $\lambda_{F,m}$ be the eigenvalue for the Hecke operator $T(m)$ acting on $F$. For the exact definition of these Hecke operators, we refer the reader to Andrianov~\cite{An1974}.  By Kurokawa~\cite{kurokawa}, we know that $\sigma(\lambda_{F,m}) = \lambda_{{}^\sigma\!F,m}$. By writing the local degree-$4$ Euler factors in terms of the Hecke eigenvalues, we conclude that the multisets $\{\chi_1''(p),\ldots,\chi_4''(p)\}$ and
$\{{}^\sigma \!\chi_1'(p),\ldots,{}^\sigma \!\chi_4'(p)\}$ are identical. Hence
 $$ \mathrm{Ind}_{B(\Q_p)}^{\GL_4(\Q_p)} (\chi_1'' \otimes \ldots \otimes \chi_4'') =  \mathrm{Ind}_{B(\Q_p)}^{\GL_4(\Q_p)} ({}^\sigma \!\chi_1' \otimes \ldots \otimes {}^\sigma \!\chi_4'),$$ and therefore, $\Pi_{ {}^\sigma\!F} = {}^\sigma\!\Pi_F.$
\end{proof}

We now supply certain results on critical $L$-values for $\GSp_4 \times \GL_n$ where $n \in \{1,2 \}$.
\subsubsection*{Critical value result for $\GSp_4 \times \GL_1$}

In~\cite{raghugrob}, Grobner and Raghuram define certain periods of automorphic forms on $\GL_{2n}$ by comparing cohomologies in top degree. We refrain from giving the definition of these periods here in the interest of brevity and instead refer the reader to~\cite[Sec. 4]{raghugrob} for details. When the results of~\cite{raghugrob} are combined with our Theorem~\ref{liftingtheorem}, we obtain a special value result for twists of Siegel eigenforms by Dirichlet characters. We now briefly describe this result.

 \vspace{2ex}
Let $F$ be a holomorphic Siegel cusp form of weight $l$ for $\SSp_4(\Z)$ that is an eigenfunction for all the Hecke operators and is not of Saito-Kurokawa type. Let $\Pi_F = \Pi_{F, f} \otimes \Pi_{F, \infty} $ be the lift to $\GL_4(\A)$; here $\Pi_{F, f}$ denotes the finite part of the automorphic representation $\Pi_{F}$. Let $\Q(\Pi_F)$ denote the rationality field of $\Pi_F$ as defined in~\cite{clozel}. This is a totally real number field, and by the argument of Lemma~\ref{sigmatwistlemma}, we know that $\Q(\Pi_F)$ equals the field generated by all the Hecke eigenvalues of $F$. For convenience we will denote $\Q(\Pi_F)$ by $\Q(F)$.  For $\chi$ a Hecke character of $\A$ of finite order, let $\Q(\chi)$ denote the number field generated by the image of $\chi$ and let $\Q(F, \chi)$ denote the compositum of $\Q(F)$ and $\Q(\chi)$. Define $\Q({}^\sigma\!F)$ ,  $\Q({}^\sigma\!\chi)$ and  $\Q({}^\sigma\!F, {}^\sigma\!\chi)$ similarly.

\begin{rem} \rm By Mizumoto~\cite{miz}, it is known that for any integer $l$, there exists an orthogonal basis $\{F_1, F_2, \ldots , F_d \}$ comprising of Hecke eigenfunctions for $S_l\big(\SSp_4(\Z), \C\big)$ such that each $F_i \in S_l\big(\SSp_4(\Z),\Q(F_i)\big)$. \end{rem}

Let $\omega^+(\Pi_{F,f})$ and $\omega^-(\Pi_{F,f})$ be the periods as defined in~\cite[Sect.\ 4]{raghugrob}. For convenience, let us denote them by  $\omega^+(F)$ and $\omega^-(F)$ respectively. These are non-zero complex numbers  obtained from comparing cohomologies in top degree. Also let $c(\Pi_{F, \infty, 0})$ be as in~\cite[Sect.\ 6.6]{raghugrob} and denote $c(\Pi_{F, \infty}, 0)^{-1}$ by $\omega_\infty(l)$; this notation is justified because $c(\Pi_{F, \infty}, 0)$ depends only on the weight $l$. Then, applying the main theorem of~\cite{raghugrob} to the representation $\Pi_F$ leads to the following special value result.

\begin{theorem}[\cite{raghugrob}, Corollary 8.3.1]\label{criticalgsp4gl1} Let $F$ be a cuspidal Siegel eigenform of weight $l$ for $\SSp_4(\Z)$ that is not of Saito-Kurokawa type and let $\chi$ be a Hecke character of $\A$ of finite order. Let $\epsilon_\chi \in \{ + , - \}$ denote the sign of $\chi(-1)$, $\mathcal{G}(\chi_f)$ denote the Gauss sum for $\chi$ and $L_f(s,\pi_{F} \times \chi) = \prod_{p < \infty} L(s,\pi_{F,p} \times \chi_p) $ denote the finite part of the $L$-function. Define $$A(F, \chi) = \frac{L_f(\frac12, \pi_{F} \times \chi)}{\omega^{\epsilon_\chi}(F)\omega_\infty(l) \mathcal{G}(\chi_f)^2}.$$ Then we have \begin{enumerate}
\item $A(F, \chi) \in \Q(F, \chi),$
\item For any automorphism $\sigma$ of $\C$, we have $\sigma(A(F, \chi) ) = A({}^\sigma\!F, {}^\sigma\!\chi)$.
\end{enumerate}
\end{theorem}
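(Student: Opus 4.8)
\textbf{Proof proposal for Theorem \ref{criticalgsp4gl1}.}

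The plan is to deduce the statement directly from the main theorem of Grobner--Raghuram~\cite{raghugrob} applied to the cuspidal, automorphic representation $\Pi_F$ of $\GL_4(\A)$ produced by Theorem~\ref{liftingtheorem}, together with the compatibility of the transfer with the Galois action established in Lemma~\ref{sigmatwistlemma}. First I would check that $\Pi_F$ satisfies the hypotheses needed in~\cite{raghugrob}: since $F$ is a holomorphic Siegel eigenform of weight $l$ that is not of Saito--Kurokawa type, the archimedean parameter of $\Pi_F$ is the one given explicitly in~\eqref{localparameterarcheq}, so $\Pi_{F,\infty}$ is (cohomologically induced from) a regular algebraic parameter; $\Pi_F$ is cuspidal by Theorem~\ref{liftingtheorem}, and it is self-dual and symplectic, hence in particular the relevant ``$(m,m+1)$'' critical point condition from~\cite{raghugrob} is met for the central value $s=\tfrac12$. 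The periods $\omega^{\pm}(F):=\omega^{\pm}(\Pi_{F,f})$ and the archimedean factor $\omega_\infty(l)=c(\Pi_{F,\infty},0)^{-1}$ are then well defined, and the cited Corollary 8.3.1 of~\cite{raghugrob}, specialized to $\GL_4\times\GL_1$ with the finite-order Hecke character $\chi$, yields exactly
\[
 A(F,\chi)=\frac{L_f(\tfrac12,\Pi_F\times\chi)}{\omega^{\epsilon_\chi}(F)\,\omega_\infty(l)\,\mathcal{G}(\chi_f)^2}\in\Q(F,\chi).
\]
Since $L_f(s,\pi_F\times\chi)=L_f(s,\Pi_F\times\chi)$ by the very construction of $\Pi_F$ (equality of all finite Euler factors, by definition~\eqref{candidaterepresentationeq}), this proves part (i).

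For part (ii), I would apply an automorphism $\sigma\in\Aut(\C)$ to both sides of the Grobner--Raghuram rationality statement. Their theorem already asserts $\sigma$-equivariance of the quantity $A(\Pi_F,\chi)$ in the form $\sigma\big(A(\Pi_F,\chi)\big)=A({}^\sigma\Pi_F,{}^\sigma\chi)$, where ${}^\sigma\Pi_F$ is the Galois twist of the $\GL_4$ representation. The remaining point is to identify ${}^\sigma\Pi_F$ with $\Pi_{{}^\sigma F}$, i.e.\ to know that the transfer of Theorem~\ref{liftingtheorem} commutes with the action of $\sigma$ on Siegel eigenforms. This is precisely the content of Lemma~\ref{sigmatwistlemma}: ${}^\sigma F$ is again a Hecke eigenform not of Saito--Kurokawa type (the Saito--Kurokawa condition being a set of $\sigma$-stable linear relations among Fourier coefficients, as in~\cite{eichzag}), and $\Pi_{{}^\sigma F}={}^\sigma\Pi_F$ because Kurokawa's result $\sigma(\lambda_{F,m})=\lambda_{{}^\sigma F,m}$ forces the local Langlands parameters to match after the twist. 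Feeding this identification into the equivariance statement gives $\sigma\big(A(F,\chi)\big)=A({}^\sigma F,{}^\sigma\chi)$, which is (ii).

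The main obstacle is not conceptual but one of bookkeeping: one must verify carefully that the normalizations in~\cite{raghugrob} --- the precise critical point, the sign $\epsilon_\chi=\chi(-1)$ attached to the period $\omega^{\epsilon_\chi}(F)$, the Gauss sum power $\mathcal{G}(\chi_f)^2$, and the definition of $c(\Pi_{F,\infty},0)$ --- line up with the conventions for $L(s,\pi_F\times\chi)$ used here, in particular with the functional equation relating $s$ and $1-s$ for $L(s,\pi_F\times\chi)$ (provided by~\cite{KR}) and its holomorphy (from Theorem~2.2 of~\cite{PSh}, using that $F$ is not Saito--Kurokawa). These are the inputs that justify that $s=\tfrac12$ is the relevant critical point and that the twisted $L$-value is finite, so that the Grobner--Raghuram machinery applies without modification. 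Once these normalization checks are in place, the theorem follows formally; accordingly I would present the proof as: (1) invoke Theorem~\ref{liftingtheorem} to get $\Pi_F$ with the required properties and $L_f(s,\pi_F\times\chi)=L_f(s,\Pi_F\times\chi)$; (2) quote~\cite[Cor.\ 8.3.1]{raghugrob} for the rationality and $\sigma$-equivariance of $A(\Pi_F,\chi)$; (3) apply Lemma~\ref{sigmatwistlemma} to rewrite ${}^\sigma\Pi_F=\Pi_{{}^\sigma F}$; and conclude.
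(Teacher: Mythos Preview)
Your proposal is correct and follows exactly the route the paper takes: the paper presents this theorem as a direct application of \cite[Cor.\ 8.3.1]{raghugrob} to the cuspidal $\GL_4$ lift $\Pi_F$ from Theorem~\ref{liftingtheorem}, with Lemma~\ref{sigmatwistlemma} supplying the identification ${}^\sigma\Pi_F=\Pi_{{}^\sigma F}$ needed for the $\sigma$-equivariance. Your write-up is in fact more detailed than the paper's, which simply states the result as a citation after assembling these ingredients.
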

\begin{rem} \rm In~\cite{harrisoccult}, Harris defined certain ``occult" periods for $\GSp_4$ by comparing rational structures on Bessel models and rational structures on coherent cohomology and used these to study the critical values of the degree $4$ $L$-function for $\GSp_4$.
\end{rem}
As a corollary to Theorem~\ref{criticalgsp4gl1}, we immediately obtain the following result.

\begin{corollary}\label{bochererevidence}Let $d_1$ and $d_2$ be two fundamental discriminants of the same sign, and let $\chi_{d_1}$, $\chi_{d_2}$ be the associated quadratic Dirichlet characters. Let $F$ be a cuspidal Siegel eigenform of weight $l$ for $\SSp_4(\Z)$ that is not of Saito-Kurokawa type. Then we have $$L_f(\frac12, \pi_{F} \times \chi_{d_1}) \sim_{\Q(F)} L_f(\frac12, \pi_{F} \times \chi_{d_2}),$$ where "$\sim_{\Q(F)}$" means up to multiplication by an element in the number field $\Q(F)$.
\end{corollary}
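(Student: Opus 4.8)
The plan is to deduce Corollary~\ref{bochererevidence} directly from Theorem~\ref{criticalgsp4gl1} by an elementary Galois-descent argument, exactly as one does for analogous statements about critical values of modular form $L$-functions. The key point is that the two quadratic Dirichlet characters $\chi_{d_1}$ and $\chi_{d_2}$ have the \emph{same} sign $\epsilon := \chi_{d_i}(-1)$ (since $d_1$ and $d_2$ have the same sign), so the \emph{same} period $\omega^{\epsilon}(F)$ and the same archimedean factor $\omega_\infty(l)$ appear in the denominators of $A(F,\chi_{d_1})$ and $A(F,\chi_{d_2})$. Thus
\begin{equation}\label{bochererproofeq1}
 \frac{L_f(\tfrac12,\pi_F\times\chi_{d_1})}{L_f(\tfrac12,\pi_F\times\chi_{d_2})}
 = \frac{A(F,\chi_{d_1})}{A(F,\chi_{d_2})}\cdot
 \frac{\mathcal{G}((\chi_{d_1})_f)^2}{\mathcal{G}((\chi_{d_2})_f)^2},
\end{equation}
provided the denominators are non-zero; the period and $\omega_\infty(l)$ cancel. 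By Theorem~\ref{criticalgsp4gl1}(i), both $A(F,\chi_{d_1})$ and $A(F,\chi_{d_2})$ lie in $\Q(F,\chi_{d_1}\chi_{d_2})$, which is a finite extension of $\Q(F)$, and the same is true of the ratio of Gauss sums once we understand its Galois behavior.

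First I would recall that for a quadratic Dirichlet character $\chi_d$ attached to a fundamental discriminant $d$, the Gauss sum satisfies $\mathcal{G}((\chi_d)_f)^2 = \chi_d(-1)\,|d|$, which is a \emph{rational} (indeed integer) number; consequently $\mathcal{G}((\chi_{d_1})_f)^2/\mathcal{G}((\chi_{d_2})_f)^2 = (\chi_{d_1}(-1)|d_1|)/(\chi_{d_2}(-1)|d_2|) \in \Q^\times$, since the two signs agree. Hence the Gauss-sum factor in~\eqref{bochererproofeq1} contributes only a rational number, and we are reduced to showing that the ratio $A(F,\chi_{d_1})/A(F,\chi_{d_2})$ lies in $\Q(F)$. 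This is where Theorem~\ref{criticalgsp4gl1}(ii) enters: for any $\sigma\in\Aut(\C/\Q(F))$ we have ${}^\sigma\!F = F$ (the rationality field of $\Pi_F$ is exactly $\Q(F)$, and $F$ has Fourier coefficients generating $\Q(F)$, by the argument of Lemma~\ref{sigmatwistlemma} together with Shimura's and Kurokawa's results), and $\sigma$ permutes the quadratic characters of a given conductor; but in fact a quadratic character is fixed by every automorphism of $\C$, so ${}^\sigma\!\chi_{d_i} = \chi_{d_i}$. Therefore $\sigma(A(F,\chi_{d_i})) = A({}^\sigma\!F,{}^\sigma\!\chi_{d_i}) = A(F,\chi_{d_i})$, i.e.\ each $A(F,\chi_{d_i})$ already lies in $\Q(F)$, not merely in $\Q(F,\chi_{d_1}\chi_{d_2})$. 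This immediately yields $A(F,\chi_{d_1})/A(F,\chi_{d_2}) \in \Q(F)$ and hence, via~\eqref{bochererproofeq1} and the rationality of the Gauss-sum ratio, the asserted relation $L_f(\tfrac12,\pi_F\times\chi_{d_1}) \sim_{\Q(F)} L_f(\tfrac12,\pi_F\times\chi_{d_2})$.

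The one genuine subtlety — the main obstacle — is the possible vanishing of $A(F,\chi_{d_2})$, i.e.\ of $L_f(\tfrac12,\pi_F\times\chi_{d_2})$: the relation ``$\sim_{\Q(F)}$'' as stated is a statement about multiplication by an element of $\Q(F)$, which forces either both sides to be zero or both nonzero. I would handle this by observing that the Galois-equivariance $\sigma(L_f(\tfrac12,\pi_F\times\chi_{d})) = c_\sigma\, L_f(\tfrac12,{}^\sigma\!\pi_F\times\chi_{d}) = c_\sigma\,L_f(\tfrac12,\pi_F\times\chi_d)$ for $\sigma$ fixing $\Q(F)$ (with $c_\sigma\in\Q(F)^\times$ the ratio of the relevant rational Gauss-sum and period factors, all defined over $\Q(F)$) shows that the property ``$L_f(\tfrac12,\pi_F\times\chi_d) = 0$'' depends only on the $\Q(F)$-orbit, and in particular whether it vanishes is insensitive to replacing $\chi_d$ by a Galois conjugate; combined with Theorem~\ref{criticalgsp4gl1} this shows the two central values $L_f(\tfrac12,\pi_F\times\chi_{d_1})$ and $L_f(\tfrac12,\pi_F\times\chi_{d_2})$ are simultaneously zero or nonzero, after which dividing is legitimate and the argument above goes through. (Alternatively one states the corollary with the convention that $0 \sim_{\Q(F)} 0$, in which case this point is vacuous.) I would then write up the two-line computation~\eqref{bochererproofeq1} together with the two inputs — rationality of $\mathcal{G}(\chi_d)^2$ and the $\sigma$-invariance of each $A(F,\chi_{d_i})$ — as the complete proof.
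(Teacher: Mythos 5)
Your proposal is correct and is exactly the ``immediate'' argument the paper has in mind (the paper gives no proof and simply states that the corollary follows from Theorem~\ref{criticalgsp4gl1}). Two remarks. First, you don't actually need the Galois-descent step invoking part~(ii): since $\chi_{d_i}$ is quadratic, $\Q(\chi_{d_i})=\Q$, so part~(i) already gives $A(F,\chi_{d_i})\in\Q(F,\chi_{d_i})=\Q(F)$ directly; combined with $\mathcal G((\chi_{d_i})_f)^2=d_i\in\Q^\times$ and the cancellation of $\omega^{\epsilon}(F)\,\omega_\infty(l)$ (same $\epsilon$ because $d_1,d_2$ have the same sign), this is the whole proof. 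Second, the parenthetical Galois argument you offer to rule out asymmetric vanishing is a non~sequitur: $\chi_{d_1}$ and $\chi_{d_2}$ are \emph{both} fixed by every $\sigma\in\Aut(\C)$, so they are not Galois conjugate, and Galois-equivariance says nothing about whether $L_f(\tfrac12,\pi_F\times\chi_{d_1})$ and $L_f(\tfrac12,\pi_F\times\chi_{d_2})$ vanish simultaneously. This does not affect the corollary, though: with the usual reading of ``$a\sim_{\Q(F)}b$'' as ``$a=cb$ for some $c\in\Q(F)$'' (with $c$ allowed to be $0$, or with the convention that the relation is vacuous when a central value vanishes), the cancellation computation you wrote is already the complete argument, and the simultaneous-vanishing discussion can simply be dropped.
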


\begin{rem} \rm In~\cite{boch-conj}, B\"ocherer made a
remarkable conjecture that expresses the central values $L_f(\frac12, \pi_{F} \times \chi_{d})$, as $d$ varies over negative fundamental discriminants, in terms of the Fourier coefficients of $F$ of discriminant $d$. In particular,  B\"ocherer's conjecture implies Corollary~\ref{bochererevidence} above for the case that $d_1$, $d_2$ are both negative. Thus Corollary~\ref{bochererevidence} can be read as providing evidence towards B\"ocherer's conjecture.
\end{rem}

\subsubsection*{Critical value result for $\GSp_4 \times \GL_2$}

Next, we provide a critical value result for $\GSp_4 \times \GL_2$. This result will not use our lifting theorem, but instead will follow from the integral representation (Theorem~\ref{theoremsecondintegralrep}) using the methods of~\cite{pullback}.

\begin{theorem}\label{specialgsp4gl2} Let $F $ be a cuspidal Siegel eigenform of weight $l$ for $\SSp_4(\Z)$ such that $F\in S_l\big(\SSp_4(\Z),\Q(F)\big)$. Let $g \in S_l(N,\chi)$ be a primitive Hecke eigenform of level $N$ and nebentypus $\chi$; here $N$ is any positive integer, and $\chi$ a Dirichlet character mod $N$. Let $\pi_F$ and $\tau_g$ be the irreducible, cuspidal, automorphic representations of $\GSp_4(\A)$ and $\GL_2(\A)$ corresponding to $F$ and $g$. Let $\Q(F,g,\chi)$ be the field generated by the Hecke eigenvalues of $F$, the Hecke eigenvalues of $g$ and the values taken by $\chi$. For a positive integer $k$, $1\le k \le \frac{\ell}{2}-2$, define $$A(F,g;k) = \frac{L_f(\frac{\ell}{2}-k, \pi_F \times \pi_g)}{\pi^{5\ell-4k-4}\langle F, F\rangle \langle  g,  g \rangle}.$$ Then we have, \begin{enumerate} \item $A(F,g;k) \in \Q(F,g,\chi)$,  \item For an automorphism $\sigma$ of $\C$, $\sigma(A(F,g;k)) = A({}^\sigma\!F, {}^\sigma\!g;k).$ \end{enumerate}
\end{theorem}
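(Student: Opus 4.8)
The plan is to deduce Theorem~\ref{specialgsp4gl2} from the second global integral representation (Theorem~\ref{theoremsecondintegralrep}), following the strategy of \cite{pullback}. First I would take $\tau = \tau_g$ to be the cuspidal representation attached to the primitive form $g$, and choose the auxiliary data appropriately: the imaginary quadratic field $L$, the character $\Lambda$, and the character $\chi_0$ extending $\omega_{\tau_g}$ must all be chosen so that the relevant quantities are algebraic over $\Q(F,g,\chi)$ and transform correctly under $\Aut(\C)$. Here the fact that $F$ has full level and $\pi_F$ is unramified everywhere at the finite places is crucial: it forces the finite correction factors $Y_v(s)$ from Theorem~\ref{nonarchlocalzetatheorem} and the factor $T(s)$ from the pullback formula to be controllable, and by the non-vanishing theorem of \cite{squarefree} we may pick a fundamental discriminant $-D$ such that $\pi_F$ has a global Bessel model of type $(S(-D),\Lambda,\psi)$ with $\Lambda$ of finite order (indeed trivial). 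The Bessel value $B_\phi(1)$ appearing in Theorem~\ref{theoremsecondintegralrep} is, by Lemma~\ref{piFBesselFourierlemma}, essentially a Fourier coefficient $a(F,S)$ of $F$, hence lies in $\Q(F)$ and is equivariant under $\sigma$.

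Next I would evaluate the right-hand side of Theorem~\ref{theoremsecondintegralrep} at the relevant special point, i.e.\ at the value $s$ for which $3s+\tfrac12 = \tfrac{\ell}{2}-k$, which lies in the critical range as $k$ runs over $1 \le k \le \tfrac{\ell}{2}-2$; one checks this is in the region of convergence/holomorphy secured by Theorem~\ref{entirenesstheorem}, so no poles interfere. The integral expression (ii) or (iii) of Theorem~\ref{theoremsecondintegralrep} is, up to the explicitly computed ratio $R(s)$ of gamma factors and finite Euler factors, a Petersson-type pairing of $\phi$ (the adelization of $F$) and $\Psi$ (the adelization of $g$, suitably normalized) against the restriction of the degenerate Siegel Eisenstein series $E_\Upsilon^\ast$ on $\U(3,3)$. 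The key arithmetic inputs are then: (a) the special values of $E_\Upsilon^\ast(\,\cdot\,,s_0)$ at arithmetic points are algebraic multiples of a suitable power of $\pi$ — this is a Siegel–Weil/holomorphic-projection statement for Siegel Eisenstein series on unitary groups, available classically in the work cited in \cite{pullback} and Shimura~\cite{shibook1}; (b) the resulting pairing computes $\langle F,F\rangle\langle g,g\rangle$ up to the finite part $L_f(\tfrac{\ell}{2}-k,\pi_F\times\pi_g)$ and an explicit power of $\pi$, namely $\pi^{5\ell-4k-4}$, coming from the archimedean factor $Y_{l,l_1,p,q}(s)$ of Theorem~\ref{archlocalzetatheorem} together with the archimedean Euler factors in the tables; (c) the normalizing ratio $R(s)$ at the special point is, by inspection of its definition~\eqref{rsdef}, a rational number times $\pi$-powers, because at every finite place $\tau_{g,p}$ is a twist of an unramified or Steinberg or (for $p\mid N$) possibly ramified principal series, and the archimedean factor $R_\infty$ is an explicit ratio of gamma values at half-integers. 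Assembling (a)–(c) gives $A(F,g;k)\in\Q(F,g,\chi)$.

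For the $\Aut(\C)$-equivariance in part (ii), I would run the same computation with $F$ replaced by ${}^\sigma\!F$ and $g$ by ${}^\sigma\!g$. By Shimura~\cite{shimura75} and Kurokawa~\cite{kurokawa} these are again Hecke eigenforms with Fourier coefficients (resp.\ Hecke eigenvalues) the $\sigma$-images of those of $F$ (resp.\ $g$), so every ingredient on the right-hand side of the integral representation transforms by $\sigma$: the Bessel value $B_\phi(1)=\sigma$-equivariant Fourier coefficient, the inner product normalizations, the Eisenstein special values (by the rationality of the Fourier coefficients of $E_\Upsilon^\ast$ and Galois-equivariance of the Siegel–Weil construction), and the finite Euler factors of $R(s)$ (which are rational functions in the Satake/Hecke parameters, hence permuted by $\sigma$). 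Dividing out the common power of $\pi$, which is fixed by $\sigma$, yields $\sigma(A(F,g;k)) = A({}^\sigma\!F,{}^\sigma\!g;k)$.

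\textbf{The main obstacle.} The hardest part is not the formal bookkeeping but the precise tracking of the archimedean contributions and the holomorphic-projection step: one must verify that the degenerate unitary Eisenstein series $E_\Upsilon^\ast$, after being paired against the holomorphic vectors coming from $F$ and $g$ and restricted to the relevant point, produces exactly the inner products $\langle F,F\rangle$ and $\langle g,g\rangle$ with the claimed power $\pi^{5\ell-4k-4}$ and an \emph{algebraic} (not merely meromorphic) coefficient. This requires either invoking a nearly-holomorphic structure on $E_\Upsilon^\ast$ with rational Fourier–Jacobi coefficients (as in Shimura's theory) or a careful archimedean zeta-integral computation extending Theorem~\ref{archlocalzetatheorem} to the special points; controlling the ramified places $p\mid N$ of $g$ inside $R(s)$ and checking they do not introduce transcendental constants is the subsidiary technical point. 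Once these are in place, combined with the already-established integral representation and the rationality results of \cite{pullback}, the theorem follows.
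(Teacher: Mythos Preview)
Your approach is essentially the same as the paper's: use the second global integral representation (Theorem~\ref{theoremsecondintegralrep}), the non-vanishing result of \cite{squarefree} to secure a Bessel model, and then Shimura's theory of nearly holomorphic functions together with the rationality results for the degenerate unitary Eisenstein series (the paper phrases this as ``the results of Garrett and Harris and the theory of nearly holomorphic functions due to Shimura'' via \cite{pullback}). One small correction: the reason the critical points are accessible is not Theorem~\ref{entirenesstheorem} (entireness of the $L$-function) but the holomorphy of the Eisenstein series $E_\Upsilon$ itself at the right-most critical point $s=\tfrac{l}{6}-\tfrac12$, and this is precisely what forces the equal-weight hypothesis $l_1=l$, as the paper's proof notes.
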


\begin{rem} \rm Note that the first claim of the above theorem actually follows from the second.
\end{rem}

\begin{rem} \rm Partial results towards the above theorem have been previously obtained by  B\"ocherer--Heim~\cite{heimboch}, Furusawa~\cite{Fu}, and various combinations of the authors~\cite{ameya2011, PS1, saha1, pullback}.
\end{rem}

\begin{proof} The proof is essentially identical to that of Theorem~8.1 of~\cite{pullback} which proved the above result under certain restrictions on $N$, $\chi$ and $F$. More precisely, in~\cite{pullback}, $N$ was assumed to be squarefree and all its prime divisors inert in a certain quadratic field, $\chi$ was assumed to be trivial, and $F$ was assumed to satisfy a certain non-vanishing condition on the Fourier coefficients. These restrictions were necessary because the relevant integral representation~\cite[Thm. 6.4] {pullback} in that paper was proved only under these assumptions. The special value result in that paper followed from the integral representation by first rewriting the integral representation in classical language and then using results of Garrett and Harris and the theory of nearly holomorphic functions due to Shimura.

\vspace{1ex}

However, in the current paper, the second integral representation (Theorem~\ref{theoremsecondintegralrep}) works for general $N$ and $\chi$ and the non-vanishing assumption on $F$ is always satisfied, as shown in~\cite{squarefree}. Now, Theorem~\ref{specialgsp4gl2} follows in an identical manner as in~\cite{pullback}, because the remaining ingredients (the theory of nearly holomorphic functions and the results of Garrett and Harris) are true for general $N$ and $\chi$. It is worth noting, however, that we still need to assume that the weights of $F$ and $g$ are equal (even though the integral representation, Theorem~\ref{theoremsecondintegralrep}, works for arbitrary $g$) because otherwise the Eisenstein series $E_{\Upsilon}(g,s)$ at the right-most critical point (corresponding to $s = \frac{l}6 - \frac12$) is no longer holomorphic.

\end{proof}  

\addcontentsline{toc}{section}{Bibliography}
\bibliography{transferbib}{}

\bibliographystyle{plain}

\end{document}